\renewcommand{\widehat}{\hat}
\newcommand{\showcommentsbox}{yes}
\newsavebox{\commentbox}
\definecolor{Green}{RGB}{30, 150, 30}
\newtheorem{thm}{Theorem}[section]
\newtheorem{prop}[thm]{Proposition}
\newtheorem{claim}[thm]{Claim}
\newtheorem{lem}[thm]{Lemma}
\newtheorem{cor}[thm]{Corollary}
\newtheorem{thmi}{Theorem}
\newtheorem{cori}[thmi]{Corollary}
\theoremstyle{definition}
\newtheorem{defn}[thm]{Definition}
\theoremstyle{definition}
\theoremstyle{definition}
\theoremstyle{definition}
\newtheorem{remark}[thm]{Remark}
\theoremstyle{definition}
\theoremstyle{definition}
\newcommand*\Z{\mathbb{Z}}
\newcommand*\R{\mathbb{R}}
\newcommand*\Stab{\operatorname{Stab}}
\newcommand*\diam{\operatorname{diam}}
\newcommand*\nest{\sqsubseteq}
\newcommand*\propnest{\sqsubsetneq}
\newcommand*\mf[1]{\mathfrak{#1}}
\newcommand*\mc[1]{\mathcal{#1}}
\newcommand*\trans{\pitchfork}
\newcommand{\tsh}[1]{\left\{\kern-.7ex\left\{#1\right\}\kern-.7ex\right\}}
\newcommand{\fontact}{{\mathcal C}}
\newcommand{\orth}{\bot}
\newcommand{\stab}{\operatorname{Stab}}
\newcommand*\lk{\operatorname{lk}}
\newcommand*\st{\operatorname{st}}
\newcommand{\squidable}{admissible\ }
\newcommand{\Sat}{\operatorname{Sat}}
\newcommand{\cay}{\operatorname{Cay}}
\newcommand*\link{\operatorname{lk}}
\newcommand{\BS}{X}
\newcommand{\ST}{\mc{S}(T)}
\title[Equivariant HHS via quasimorphisms]{Equivariant hierarchically hyperbolic structures for 3--manifold groups via quasimorphisms}
\author{Mark Hagen}
\address{School of Mathematics, University of Bristol, Bristol, UK}
\email{markfhagen@posteo.net}
\author{Jacob Russell}
\address{Department of Mathematics, Rice University, Houston, TX, USA}
\email{jacob.russell@rice.edu}
\author{Alessandro Sisto}
	\address{Maxwell Institute and Department of Mathematics, Heriot-Watt University, Edinburgh, UK}
	\email{a.sisto@hw.ac.uk}
\author{Davide Spriano}
\address{Mathematical Institute, University of Oxford, Oxford, UK}
\email{spriano@maths.ox.ac.uk}
\begin{document}
\begin{abstract}
	Behrstock, Hagen, and Sisto classified 3--manifold groups admitting a hierarchically hyperbolic space structure. However, these structures were not always equivariant with respect to the group. In this paper, we classify 3--manifold groups admitting equivariant hierarchically hyperbolic structures. The key component of our proof is that the admissible groups introduced by Croke and Kleiner always admit equivariant hierarchically hyperbolic structures. For non-geometric graph manifolds, this is contrary to a conjecture of Behrstock, Hagen, and Sisto and also contrasts with results about CAT(0) cubical structures on these groups. Perhaps surprisingly, our arguments involve the construction of suitable quasimorphisms on the Seifert pieces, in order to construct actions on quasi-lines.
\end{abstract}

\maketitle

\tableofcontents

\section{Introduction}
Fundamental groups of $3$--manifolds are a major source of inspiration in geometric group theory,  providing a great part of the motivation for the notion of Gromov-hyperbolicity and all its generalisations, the study of actions on nonpositively-curved spaces, and the increasingly important role of special cube complexes.

One notion of ``coarse nonpositive curvature'', inspired partly by special cube complexes, is \emph{hierarchical hyperbolicity}. Hierarchically hyperbolic spaces and groups were introduced in~\cite{BHSI} as a means of isolating geometric features common to mapping class groups and certain CAT(0) cubical groups. After the definition took an easier-to-verify form in~\cite{BHSII},  a budding study of hierarchical hyperbolicity has emerged. This has included
\begin{itemize}
	\item finding new examples of hierarchically hyperbolic spaces and groups~\cite{BHSII,BerlaiRobbio,Miller,CHHS,BHS3,Berlyne, BerlyneRussell,vokes,HMS:artin,Veech,HS,RobbioSpriano,Russell:multicurve,Hughes,NguyenQing};
	\item development of new tools~\cite{DHS_boundary,DHS_corr,RST_convexity,CHHS,spriano_hyperbolic_1,Russell_Relative_Hyperbolicity};
	\item establishment of geometric and algebraic consequence of hierarchical hyperbolicity \cite{BHS3,BHS4,ANS:exponential,HagenPetyt,Petyt:quasicubical,HHP,DMS:cube}.
\end{itemize}

Very roughly, a \emph{hierarchically hyperbolic space structure} on a space $W$ consists of a set $\mf{S}$ indexing a collection of $\delta$--hyperbolic space $\{\mc{C}(U)\}_{U \in \mf{S}}$ and a collection of projection maps $\{\pi_U \colon W \to \mc{C}(U)\}_{U\in\mf{S}}$ satisfying a collection of axioms that allow for the coarse geometry of $W$ to be recovered from these projections; see~\cite[Definition 1.1]{BHSII} for the precise definition. Often, $W$ is a finitely-generated group $G$ equipped with a word metric. In this case, stronger results can be achieved when $G$ is not only a hierarchically hyperbolic space (HHS), but has a  structure that is compatible with the group action. 
These hierarchically hyperbolic \emph{groups} (HHG) are defined precisely in Definition~\ref{defn:HHG}, but essentially this means that $G$ acts cofinitely on $\mathfrak S$, with elements $g\in G$ inducing isometries $\fontact (U)\to \fontact (gU)$ so that all of the expected diagrams involving these isometries and the projections from the definition of an HHS commute.   

The difference between HHSs and HHGs is illustrated by the fact that being an HHS is a quasi-isometry invariant property, but being an HHG is not~\cite{PetytSpriano}.  While considerable geometric information can be gleaned from merely knowing that $G$ is an HHS (e.g. finiteness of the asymptotic dimension~\cite{BHS3} or control of quasiflats~\cite{BHS4}), one gets much more from the HHG property (e.g. semihyperbolicity~\cite{HHP,DMS:cube} and the Tits alternative~\cite{DHS_boundary,DHS_corr}, or the consequences listed in Corollary~\ref{cori:consequences}).

The first examples of hierarchically hyperbolic spaces beyond mapping class groups and some cube complexes were the fundamental groups of closed orientable $3$--manifolds whose prime decompositions excludes Nil and Sol pieces~\cite{BHSII}.  However, the hierarchically hyperbolic structures constructed for such groups in~\cite{BHSII} are in general non-equivariant.  In the present paper, we use new combinatorial techniques to produce \emph{equivariant} hierarchically hyperbolic structures for $3$--manifold groups. 
While many of the consequences of hierarchical hyperbolicity were known previously for $3$--manifold groups, we find this  satisfying as a  complete answer to the question of hierarchical hyperbolicity for $3$--manifold groups:

\begin{thmi}[Theorem \ref{thm:HHGs_and_3_manifolds}]\label{thmi:main}
	Let $M$ be a closed oriented $3$--manifold.  Then $\pi_1M$ is a hierarchically hyperbolic group if and only if $M$ has no  Nil, Sol, or non-octahedral flat manifolds in its prime decomposition. 
\end{thmi}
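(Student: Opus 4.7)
My plan is to prove both implications by working through the prime decomposition of $M$, using that HHGs are closed under free products with $\Z$ to reduce to handling one prime summand at a time. For each summand, I will either produce an obstruction to being HHG (for the forward direction) or construct an equivariant HHS structure (for the reverse direction), with the non-geometric case being where the paper's main technical innovation enters.

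For the ``only if'' direction, each excluded piece type contributes a peripheral subgroup of $\pi_1 M$ that is not itself an HHG, and this obstruction passes to $\pi_1 M$ via the peripheral embedding (once one sets up peripheral subgroups of HHGs correctly in the $3$--manifold context). Nil and Sol lattices fail to be HHGs by known obstructions arising from asymptotic-cone rigidity and the semihyperbolicity consequences of HHG structure: Sol's product-of-hyperbolic-planes-like cones violate the rank rigidity one expects from HHGs, and Nil's polynomial-growth profile with quartic growth degree is inconsistent with being HHG. For closed flat $3$--manifolds, a virtually-$\Z^3$ HHG must act on a cubical-type structure whose wall system is octahedral; this singles out exactly the octahedral Bieberbach groups, so any non-octahedral flat summand obstructs $\pi_1 M$ being an HHG.

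For the ``if'' direction, after prime-decomposing $M$ the cases are: hyperbolic $M_i$ (word-hyperbolic, trivially HHG), $S^2 \times \R$ geometry ($\pi_1 M_i$ virtually cyclic, trivially HHG), closed Seifert with hyperbolic base orbifold ($\pi_1 M_i$ is a finite-index central extension of a surface group, with HHG structure built from the base orbifold together with the fiber direction), octahedral flat (standard cubulation provides an HHG structure), and non-geometric graph or mixed manifolds. The last case, where $\pi_1 M_i$ is a Croke--Kleiner admissible group, is the main case and is exactly where the paper's central theorem applies.

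The main obstacle is therefore precisely the non-geometric case: one must make the Seifert fiber direction ``hierarchically visible'' in an equivariant way across the JSJ decomposition, contradicting a prior conjecture that such equivariant structures should not exist. The strategy indicated by the abstract is to construct quasimorphisms on each Seifert piece whose associated actions on quasi-lines furnish the fiber-level hyperbolic spaces in the HHG hierarchy; these are then assembled with the hyperbolic pieces (when present) along the Bass--Serre tree of the JSJ decomposition, which provides the combinatorial backbone for the top-level hyperbolic space. Verifying the HHG axioms—especially consistency, bounded geodesic image, and partial realization across JSJ edges where distinct piece geometries meet along peripheral tori—is where the bulk of the technical work will lie, and is what ultimately distinguishes this construction from the non-equivariant one of \cite{BHSII}.
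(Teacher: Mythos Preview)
Your proposal has the right overall shape, but there are genuine gaps in both directions.

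For the ``only if'' direction, your obstructions for Nil and Sol are not valid. Polynomial growth of degree four does not obstruct being an HHG (all $\Z^n$ are HHGs), and the asymptotic-cone remarks about Sol are too vague to constitute an argument. The paper instead uses that HHSs have at most quadratic Dehn function~\cite[Corollary~7.5]{BHSII}, while Nil and Sol lattices have cubic and exponential Dehn functions respectively. More seriously, you gloss over the step ``this obstruction passes to $\pi_1 M$ via the peripheral embedding,'' which is where the real work is. The paper argues that each $\pi_1 M_i$ is strongly quasiconvex in $\pi_1 M$ (as a peripheral in a relatively hyperbolic group), and then combines~\cite[Proposition~5.7]{RST_convexity} with~\cite[Proposition~5.6]{BHSII} to show that $\pi_1 M_i$ inherits an HHS structure; this already rules out Nil and Sol. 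For a flat piece this only gives an HHS, not an HHG, so one cannot directly invoke~\cite{PetytSpriano}. The paper closes this gap with an ``invariant quasiflats'' argument (Proposition~\ref{lem:flat_torus}): the virtually-$\Z^3$ subgroup picks out exactly three pairwise-orthogonal domains in $\mathfrak S$, from which one manufactures a genuine HHG structure on $\pi_1 M_i$ and then applies~\cite{PetytSpriano}. Your sketch does not identify this step.

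For the ``if'' direction, you conflate graph manifolds and mixed manifolds. Mixed manifolds are \emph{not} Croke--Kleiner admissible, since their hyperbolic JSJ pieces have trivial centre and so violate Definition~\ref{defn:admissible}\eqref{item:center_and_quotient}. The paper handles a non-geometric prime $M$ by invoking Dahmani's combination theorem to see that $\pi_1 M$ is hyperbolic relative to subgroups each of which is either $\Z^2$ or a non-geometric graph manifold group, and then applying~\cite[Theorem~9.1]{BHSII}. Thus the admissible-group theorem is applied only to the graph manifold peripherals, with relative hyperbolicity doing the rest; your outline misses this reduction.
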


In light of the previous characterisation of which 3-manifold groups are HHSs,  Theorem \ref{thmi:main} says the only additional obstruction to being HHG are non-octahedral flat manifolds in the prime decomposition.

Theorem \ref{thmi:main} disproves a conjecture of  Behrstock--Hagen--Sisto that there were examples of non-geometric graph manifold groups  that were hierarchically hyperbolic \emph{spaces}, but not hierarchically hyperbolic \emph{groups}; see~\cite[Remark 10.2]{BHSII}. This is a surprising result as this conjecture had a compelling heuristic justification. We explain this heuristic justification and how we circumvent it, then discuss the outline of our proof of Theorem \ref{thmi:main}.

\subsection{Comparison with cubulations: lines vs quasi-lines}
To explain the justification for the original belief that some graph manifold groups were not HHGs, we start with the \emph{octahedral} hypothesis in Theorem \ref{thmi:main}.  This says that the flat pieces are quotients of $\mathbb E^3$ by crystallographic groups with point group conjugate into $O_3(\mathbb Z)$ (see~\cite[Definition 2.2]{Hagen:crystallographic} or~\cite[Theorem 7.1]{Hoda:crystallographic}).  For crystallographic groups in any dimension, being octahedral is equivalent to cocompact cubulation~\cite{Hagen:crystallographic}. Petyt--Spriano showed that this is in turn equivalent to being an HHG~\cite{PetytSpriano}.  So, while every crystallographic group is an HHS via a quasi-isometry to $\mathbb Z^n$, many crystallographic groups, such as the $(3,3,3)$--triangle group, are not HHGs.

There is a similar obstruction to cocompactly cubulating $\pi_1M$ when $M$ is a non-geometric graph manifold~\cite{HP_charge}.  Specifically, $\pi_1M$ can be cocompactly cubulated if it is \emph{flip} in the sense of~\cite{KL_flip}, that is, in every Seifert piece there is a ``horizontal'' surface whose boundary circles are fibres in the adjacent Seifert pieces.  The idea  behind the obstruction to cubulation is then: if $\widetilde T\subset\widetilde M$ is an elevation of a JSJ torus to the universal cover, and $\pi_1M$ is cubulated, then the walls in $\widetilde M$ cut through $\widetilde T$ in at least two intersecting families of parallel lines.  If the ``flip'' condition fails, then in some $\widetilde T$, there will be at least three such families, and the dual cube complex will contain $\widetilde T\times \mathbb R\cong\mathbb E^3$, preventing cocompactness.  So, the obstruction to cocompact cubulation arises from specific $\mathbb Z^2$ subgroups getting ``over-cubulated'', as is the case with crystallographic groups.

The suspicion (confirmed in~\cite{PetytSpriano}) that cocompact cubulation is equivalent to the existence of an HHG structure for virtually abelian groups, together with the restrictions on cubulating graph manifolds, motivated the now disproven belief that non-flip graph manifold groups could fail to be HHGs. 

The proof of Theorem~\ref{thmi:main} shows that constructing an HHG structure needs less than is needed to cocompactly cubulate.   Roughly, in a cocompact cubulation of $\pi_1M$, the immersed walls in $M$ cut through each Seifert piece in a collection of surfaces whose boundary circles map to fibers in adjacent blocks; for each Seifert piece $B$ we thus need a $\pi_1B$--action on a line where certain elements act loxodromically and specific others fix points.  For an HHG structure, we only need an action of $\pi_1B$ on a \emph{quasi-line} such that the central $\mathbb Z$ acts loxodromically, but the $\mathbb Z$ subgroups corresponding to the fibers of the adjacent Seifert pieces act with bounded orbits.  The latter constraint is satisfiable even if $M$ is not flip.  This explains the involvement of quasimorphisms in our proof. The idea of using quasimorphisms in building HHG structures originated in this project, but has already found additional applications to Artin groups \cite{HMS:artin} and extensions of subgroups of mapping class groups \cite{Veech}.

Another simple application of these actions on quasi-lines is that central extensions of hyperbolic groups by $\mathbb{Z}$  are HHGs.
\begin{cori}[Corollary \ref{cor:central_extensions_by_Z}]
	If a group $G$ is a central extension 
	$\mathbb Z \hookrightarrow G \stackrel{\pi}{\twoheadrightarrow} F $
	where $F$ is a non-elementary hyperbolic group, then $G$ is a hierarchically hyperbolic group.
\end{cori}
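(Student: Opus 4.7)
The plan is to equip $G$ with an HHG structure shaped like an orthogonal product: one $\nest$-maximal domain $S$ with two orthogonal children, one carrying a copy of $\cay(F)$ and the other carrying a quasi-line that detects the central $\mathbb{Z}$.

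The first step is to produce a homogeneous quasimorphism $\phi\colon G\to\mathbb{R}$ whose restriction to the central $\mathbb{Z}$ is a non-trivial homomorphism. Existence of such a $\phi$ is equivalent to the Euler class of the extension $\mathbb{Z}\to G\to F$ being bounded in $H^2_b(F;\mathbb{R})$, and for $F$ hyperbolic this follows from Mineyev's theorem that the comparison map $H^2_b(F;\mathbb{R})\to H^2(F;\mathbb{R})$ is surjective. Feeding $\phi$ into the quasimorphism-to-quasi-line construction used for Seifert pieces earlier in the paper then yields an isometric action $G\curvearrowright L$ on a quasi-line that is cobounded, has $\mathbb{Z}$ acting loxodromically, and is accompanied by an equivariant coarsely Lipschitz orbit map $G\to L$.

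With these pieces in hand, I set $\mf{S}=\{S,V,H\}$, declare $V,H\propnest S$ with $V\orth H$, and assign $\fontact(S)$ a point, $\fontact(V)=\cay(F)$ (with $G$ acting through the quotient $G\twoheadrightarrow F$), and $\fontact(H)=L$. The projections are the obvious orbit maps. Cofiniteness and equivariance of the $G$-action on $\mf{S}$ are automatic since $G$ fixes every domain, and the nesting, orthogonality, transversality, bounded geodesic image and consistency axioms are trivial or follow from the absence of comparable non-orthogonal pairs. The essential remaining check is the uniqueness axiom: if $g,h\in G$ have close projections in both $\cay(F)$ and $L$, then $gh^{-1}$ is within bounded distance of the kernel $\mathbb{Z}$ of $G\twoheadrightarrow F$, and a small $L$-projection combined with $\phi|_{\mathbb{Z}}$ being a non-trivial (hence proper) homomorphism on $\mathbb{Z}$ forces $gh^{-1}$ to be bounded in $G$.

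I expect the main obstacle to be step two: a raw quasimorphism only gives a coarse $\mathbb{R}$-valued cocycle with arbitrary displacements rather than an honest isometric action on a quasi-line with cobounded orbits, and producing the latter is precisely what the paper's quasi-line technology is built to achieve. Once that construction is invoked, the HHG verification becomes essentially formal, and the corollary follows.
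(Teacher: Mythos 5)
Your proposal is correct and follows essentially the same route as the paper: a quasimorphism unbounded on the centre (via Mineyev/boundedness of the Euler class, Lemma~\ref{lem:cohomology}), promoted to a cobounded action on a quasi-line with $\mathbb Z$ loxodromic via the Abbott--Balasubramanya--Osin construction, followed by the observation that $G$ then acts metrically properly and coboundedly on the product of that quasi-line with $\cay(F)$. The only cosmetic difference is that the paper cites the general fact that such a factor-preserving product action yields an HHG rather than listing the domains and verifying the axioms by hand, and its properness check is exactly your uniqueness argument.
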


\noindent While these central extensions were known to be  hierarchically hyperbolic spaces by virtue of being quasi-isometric to $\mathbb Z \times F$, it did not appear to be known that they are in fact HHGs.

We now discuss the proof of Theorem \ref{thmi:main} in more detail.

\subsection{Reduction to graph manifolds and admissible groups}\label{subsec:graph_man_intro}
Let $M$ be a closed oriented $3$--manifold.  The proof of the forward direction of Theorem \ref{thmi:main}, that the existence of an HHG structure for $\pi_1M$ implies that $M$ has no Nil, Sol, or non-hyperoctahedral pieces in it prime decomposition, is a consequence of results in~\cite{PetytSpriano,BHSII,RST_convexity}. The idea is that we can push the HHG structure of $\pi_1 M$ to the fundamental groups of each of $M$'s prime pieces, implying they cannot be Nil, Sol, or non-octahedal flat.

The main part of the paper is  therefore devoted to other direction of Theorem \ref{thmi:main}, namely that if the prime decomposition of $M$ excludes Nil, Sol, and non-octahedral flat manifolds, then $\pi_1M$ is an HHG. As the geometric cases can largely be handled by appealing to results in the literature, the main new ingredient we need is that non-geometric graph manifold groups are HHGs.

\begin{cori}[Corollary~\ref{cor:graph_man}]\label{cori:graph_manifolds}
	If $M$ is a 3-dimensional non-geometric graph manifold, then $\pi_1 M$ is a hierarchically hyperbolic group.
\end{cori}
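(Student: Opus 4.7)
The plan is to reduce the statement to the main technical theorem of the paper, namely that admissible groups in the sense of Croke--Kleiner admit equivariant hierarchically hyperbolic structures. So my first step is to verify that $\pi_1 M$ is admissible. The JSJ decomposition of $M$ yields a Bass--Serre tree $T$ with a cocompact $\pi_1 M$-action; edge stabilizers are the $\mathbb{Z}^2$-subgroups coming from JSJ tori; and each vertex stabilizer $\pi_1 B_v$ is the fundamental group of a Seifert piece, fitting in a central extension
\begin{equation*}
1 \to \langle t_v \rangle \to \pi_1 B_v \to \pi_1 \Sigma_v \to 1,
\end{equation*}
with $\Sigma_v$ a $2$-orbifold with non-empty boundary. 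Because $M$ is non-geometric, each $\Sigma_v$ can be arranged to be hyperbolic (so $\pi_1 \Sigma_v$ is non-elementary hyperbolic and virtually free), and at every edge of $T$ the fibres $\langle t_v \rangle$ and $\langle t_w \rangle$ of the two adjacent pieces must be distinct inside the edge $\mathbb{Z}^2$; otherwise one could collapse the edge and, iterating, contradict the non-geometric hypothesis. These are precisely the Croke--Kleiner admissibility conditions, so the corollary will follow from the admissible case.

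For completeness I would sketch why admissibility should suffice. The main geometric object attached to each vertex $v \in T$ is a quasi-line $L_v$ with $\pi_1 B_v$-action on which $t_v$ acts loxodromically but each fibre $t_w$ of an adjacent Seifert piece (viewed inside $\pi_1 B_v$ via the edge $\mathbb{Z}^2$) acts with bounded orbits. Since $\pi_1 \Sigma_v$ is virtually free, the central extension splits on a finite-index subgroup and $\pi_1 B_v \cong F_v \times \langle t_v \rangle$, with each neighbouring fibre represented by $(f, t_v^n)$ for some non-trivial $f \in F_v$. Projection onto $\langle t_v \rangle$ is thus unbounded on neighbouring fibres, and the fix is to build instead a quasimorphism $\phi_v : \pi_1 B_v \to \mathbb{R}$ with $\phi_v(t_v)$ growing linearly while $\phi_v$ is bounded on each $\langle (f, t_v^n) \rangle$; the abundant supply of Brooks quasimorphisms on the free group $F_v$ makes this possible, since only finitely many conjugacy classes of adjacent fibres need to be tamed. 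The quasi-line $L_v$ is then the line-like graph on which $\pi_1 B_v$ acts via $\phi_v$.

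Having the $L_v$ in hand, one assembles the HHG structure: the top-level hyperbolic space is a quasi-tree modelled on $T$; nested below each vertex $v$ sit a ``base'' hyperbolic space $Y_v$ (essentially a Cayley graph of $\pi_1 \Sigma_v$) and the quasi-line $L_v$, orthogonal to each other; the projection maps are assembled from the action of $\pi_1 M$ on $T$ together with the projection $\pi_1 B_v \to \pi_1 \Sigma_v$ and the quasimorphism $\phi_v$. The hard part will be this last assembly: one must choose the quasimorphisms $\phi_v$ coherently across edges so that each JSJ-torus projects to a bounded set in both adjacent quasi-lines, and then verify the HHG axioms (consistency, bounded geodesic image, large links, uniqueness, partial realisation) \emph{equivariantly} with respect to the $\pi_1 M$-action on $T$. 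This equivariant upgrade, rather than the combinatorial bookkeeping at a single vertex, is the genuine obstacle to converting the known HHS structure of~\cite{BHSII} into an HHG.
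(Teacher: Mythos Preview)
Your first paragraph is the actual proof of the corollary and it matches the paper's own argument exactly: $\pi_1 M$ is an admissible graph of groups in the Croke--Kleiner sense, so one applies the main technical theorem (Theorem~\ref{thmi:main_technical}/\ref{thm:squidable_HHS}). The paper's proof of Corollary~\ref{cor:graph_man} is literally this one sentence, taking the admissibility of graph manifold groups as a classical fact.

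Your second and third paragraphs go beyond what is needed for the corollary and sketch the main technical theorem itself; here your outline diverges from what the paper actually does. Two points are worth flagging. First, the paper does \emph{not} verify the HHG axioms directly as you describe; instead it builds a combinatorial HHS $(\mathcal S(T),W)$ by blowing up the Bass--Serre tree and invokes the machinery of~\cite{CHHS}. In particular, the hyperbolic space attached to a vertex is not the Cayley graph of $\pi_1\Sigma_v$ but its electrification along the images of the incident edge groups (this is forced by the quasi-isometric embedding condition in the combinatorial HHS framework). Second, your quasimorphism construction via a virtual splitting $\pi_1 B_v \cong F_v \times \langle t_v\rangle$ and Brooks quasimorphisms on $F_v$ is specific to the graph-manifold case (where the base is virtually free); the paper instead uses Mineyev's bounded cohomology result and the Hull--Osin theorem on quasimorphisms relative to hyperbolically embedded subgroups, which works for arbitrary admissible groups. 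Your approach would suffice for the corollary as stated, but would not prove the general admissible case, and the passage from a virtual product to an action of the full $\pi_1 B_v$ on a quasi-line needs more care than ``projection onto $\langle t_v\rangle$'' suggests.
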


With Corollary \ref{cori:graph_manifolds} in hand, we can deduce the general case of Theorem \ref{thmi:main} using the fact that a group that is hyperbolic relative to HHGs is itself an HHG; see~\cite{BHSII}.

Our proof of Corollary~\ref{cori:graph_manifolds} only relies on the specific way a graph manifold group decomposes into a graph of groups. Hence, instead of working in the specific case of graph manifolds, we work in the setting of \emph{\squidable graphs of groups}. This is a class  of groups introduced by Croke and Kleiner to abstract the structure of $\pi_1 M$, when $M$ is a non-geometric graph manifold \cite{CrokeKleiner}.  Roughly, an \squidable graph of groups is a nontrivial finite graph of groups $\mathcal G$    where each edge group is $\mathbb Z^2$ and each vertex group $G_\mu$ has infinite cyclic center $Z_\mu$ with quotient $F_\mu = G_\mu / Z_\mu$ a non-elementary hyperbolic group. Additionally, the various edge groups need to be pairwise non-commensurable inside each vertex group.   The exact definition is Definition~\ref{defn:admissible}. Hence, hierarchical hyperbolicity of $\pi_1M$ is a special case of:

\begin{thmi}[Theorem~\ref{thm:squidable_HHS}, Proposition~\ref{prop:Hyperbolicity_of_links}]\label{thmi:main_technical}
	Let $\mathcal G$ be an \squidable graph of groups.  Then $\pi_1\mathcal G$ is a hierarchically hyperbolic group.  Moreover, if each quotient $F_\mu$ is a free group, then the associated hyperbolic spaces are quasi-isometric to trees.
\end{thmi}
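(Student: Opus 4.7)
The plan is to build a hierarchically hyperbolic group structure on $G = \pi_1\mathcal G$ whose index set reflects the graph of groups decomposition. At the top I would place a single domain whose associated hyperbolic space is the Bass-Serre tree $T$. For each vertex $v \in T$ with stabiliser a conjugate of some $G_\mu$, I would introduce two orthogonal subdomains: a \emph{horizontal} one, whose hyperbolic space is quasi-isometric to a Cayley graph of the hyperbolic quotient $F_\mu = G_\mu/Z_\mu$, and a \emph{vertical} one, whose hyperbolic space is a quasi-line on which $G_\mu$ acts so that the centre $Z_\mu$ acts loxodromically while the non-central $\mathbb{Z}$ summand of every incident edge group acts with bounded orbits. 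Domains at different vertices would be transverse to one another, with the Bass-Serre tree recording their relative position.

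The crux of the proof, as previewed in the introduction, is producing the vertical quasi-lines with those specific loxodromicity and boundedness constraints. The construction must navigate between two requirements on a quasimorphism $\phi_\mu \colon G_\mu \to \mathbb{R}$: it should be unbounded on $Z_\mu$ and bounded on the image of each non-central cyclic factor of each incident edge group. Admissibility ensures that the non-central edge-group generators have pairwise non-commensurable images in $F_\mu$, hence project to loxodromic elements with pairwise distinct endpoints on $\partial F_\mu$; this is what makes it possible to control them simultaneously using Brooks-style counting quasimorphisms on $F_\mu$. Producing unboundedness on the centre, however, requires more than pulling back from $F_\mu$, since such pullbacks vanish on $Z_\mu$: one needs a quasimorphism on $G_\mu$ that detects the central direction, for instance via a bounded-cohomology construction tied to the central extension $1\to Z_\mu\to G_\mu\to F_\mu\to 1$. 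Once $\phi_\mu$ is in hand, the quasi-line is obtained from the standard construction that turns a quasimorphism into a cobounded loxodromic action.

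With the three families of hyperbolic spaces constructed, I would assemble the HHG structure either directly from Definition~\ref{defn:HHG} or through a combinatorial HHS formalism. The $G$-action permutes vertex domains via its action on $T$ and acts on the top domain through the natural quotient. Projections are given by: $G \to T$ for the top; $G \to G_\mu \to F_\mu$ followed by closest-point projection for horizontal domains; and evaluation of $\phi_\mu$ for vertical domains. I expect the main obstacle to be verifying consistency and bounded geodesic image, especially when a geodesic in $T$ passes through a sequence of vertices: one must show that the projections to horizontal and vertical domains at intermediate vertices remain uniformly bounded. This is exactly where the design of the quasimorphisms pays off, because a $\mathbb{Z}^2$ edge group shared with a neighbour $v'$ of $v$ has bounded image in the vertical quasi-line of $v$ by construction, and bounded image in the horizontal Cayley graph of $F_\mu$ because its non-central factor is a loxodromic cyclic subgroup of a hyperbolic group. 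Cofiniteness of the $G$-action on the index set follows from finiteness of the underlying graph of groups.

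For the ``moreover'' clause, if each $F_\mu$ is free then its Cayley graph with respect to a free generating set is already a tree, so the horizontal hyperbolic spaces are trees; the Bass-Serre tree $T$ is evidently a tree; and the quasi-lines coming from quasimorphisms are quasi-isometric to $\mathbb{R}$, hence quasi-trees. Consequently, under this hypothesis every associated hyperbolic space is a quasi-tree.
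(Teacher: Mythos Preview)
Your overall architecture is right and matches the paper's: a top domain quasi-isometric to the Bass--Serre tree, plus at each vertex $v$ a pair of orthogonal domains, one a quasi-line built from a quasimorphism on $G_\mu$ and the other a ``horizontal'' hyperbolic space. The quasimorphism construction you describe is exactly what the paper does (Lemmas~\ref{lem:cohomology} and~\ref{lem:Generating_sets_that_makes_lines}): a bounded-cohomology quasimorphism detecting $Z_\mu$, corrected by quasimorphisms pulled back from $F_\mu$ to kill the adjacent edge-group directions.

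There is, however, a genuine gap in your choice of horizontal space. You take it to be a Cayley graph of $F_\mu$ itself, and then write that the edge group has ``bounded image in the horizontal Cayley graph of $F_\mu$ because its non-central factor is a loxodromic cyclic subgroup.'' But loxodromic means \emph{unbounded}: the image of the edge group $\tau_\alpha(G_\alpha)$ in $F_\mu$ is an infinite virtually cyclic subgroup, so it has infinite diameter in any Cayley graph of $F_\mu$. This breaks the HHS axioms: for adjacent vertices $v,w$ the domains $H_v$ and $H_w$ are transverse, so you need a bounded relative projection $\rho^{H_w}_{H_v}\subset \mathcal{C}(H_v)$, and the only natural candidate---the image of the shared edge group---is unbounded. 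Consistency and bounded geodesic image then fail for exactly the reason you flag as the ``main obstacle.''

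The fix, and what the paper actually does (Proposition~\ref{prop:Hyperbolicity_of_links}\eqref{item:3}, via Lemma~\ref{lem:cone-off_edge_groups}), is to take the horizontal space to be $F_\mu$ \emph{coned off along the images of all incident edge groups}. Lemma~\ref{lem:F_mu_are_relatively_hyp} shows $F_\mu$ is hyperbolic relative to these subgroups, so the cone-off is hyperbolic, and now each edge group has bounded image by construction. This also affects your ``moreover'' argument: the horizontal spaces are quasi-trees not because a free $F_\mu$ has a tree Cayley graph, but because coning off quasiconvex subsets of a quasi-tree yields a quasi-tree (Lemma~\ref{lem:bottleneck}). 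The paper packages the verification through the combinatorial HHS machinery of~\cite{CHHS} rather than checking Definition~\ref{defn:HHG} directly, which absorbs most of the bookkeeping you anticipate in your third paragraph.
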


Recently,  Nguyen and Qing showed that every admissible group that acts geometrically on a CAT(0) space is a hierarchically hyperbolic space~\cite[Theorem A]{NguyenQing}. Their result focuses on CAT(0) geometry and does not in general produce equivariant structures. Our proof of Theorem \ref{thmi:main_technical} will employ a much more combinatorial framework that will ensure equivariance and avoid the need for the action on a CAT(0) space.

\subsection{Consequences}\label{subsec:consequences}
Equivariant hierarchical hyperbolicity for fundamental groups of \squidable graphs of groups has several immediate consequences for these groups.
\begin{cori}{cori:consequences}\label{cori:consequences}
	Let $\mathcal G$ be an \squidable graph of groups, and let $G= \pi_1 \mc{G}$.  Then:
	\begin{enumerate}
		\item $G$ acts properly and coboundedly on an injective metric space, and is hence semihyperbolic;
		
		\item if $G$ is virtually torsion-free, then $G$ has uniform exponential growth;

		\item the action of  $G$ on the Bass--Serre tree is the largest (hence universal) acylindrical action of $G$ on a hyperbolic space;
		
		\item the Morse boundary of $G$ is an $\omega$--cantor set. In particular, it is totally disconnected.
	\end{enumerate}
\end{cori}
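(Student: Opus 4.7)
The plan is to deduce each of the four items by combining the HHG structure produced by Theorem~\ref{thmi:main_technical} with known ``black-box'' consequences of being an HHG, together with the specific structural features of the combinatorial HHS we build. Throughout write $G = \pi_1\mathcal{G}$.

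Items (1) and (2) are essentially direct applications of results already listed in the introduction. For (1), once $G$ is known to be an HHG, \cite{HHP} (or \cite{DMS:cube}) provides a proper cobounded action on an injective metric space, from which semihyperbolicity follows via the associated convex geodesic bicombing. For (2), apply the main theorem of \cite{ANS:exponential}: a virtually torsion-free HHG satisfying a mild non-degeneracy condition has uniform exponential growth. The hypothesis is met because every vertex group surjects onto the non-elementary hyperbolic quotient $F_\mu$, so $G$ contains a non-abelian free subgroup and in particular is not virtually cyclic.

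For (3), the key observation is to identify the top-level hyperbolic space of our combinatorial HHS. By construction, $Y^{+W}$ is $G$--equivariantly quasi-isometric to the Bass--Serre tree $T$. The action of an HHG on its top-level hyperbolic space is acylindrical by \cite{BHS3}, so $G \curvearrowright T$ is acylindrical. To promote acylindricity to universality, use that each vertex group $G_\mu$ contains a copy of $\mathbb{Z}^2$ (coming from any incident edge group) and hence fixes a point in every acylindrical action of $G_\mu$ on a hyperbolic space. Thus in every acylindrical $G$--action on a hyperbolic space, every vertex stabiliser is elliptic, and so by Bass--Serre theory the action is dominated by the Bass--Serre tree action. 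Combining this with the characterisation of largest/universal acylindrical actions from \cite{ABO} yields the claim.

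For (4), combine the boundary theory of HHSs developed in \cite{DHS_boundary, DHS_corr} with the specific hyperbolic spaces appearing in our combinatorial HHS. The only unbounded $\mathcal{C}(U)$ are $Y^{+W} \simeq T$, the quasi-lines $L_\mu$ (which contain no Morse directions), and the cone-offs of the $F_\mu$ (whose directions are not Morse in $G$, since the corresponding fibres lie in $\mathbb{Z}^2$ subgroups). Consequently every Morse ray in $G$ projects to an unbounded ray in $T$, so the Morse boundary embeds into $\partial T$, which is totally disconnected. The refinement from ``totally disconnected'' to ``$\omega$--Cantor'' is expected to be the main technical step: it requires an explicit combinatorial description of Morse boundary points recording which sequence of vertex groups a Morse geodesic traverses and how consecutive rays concatenate across the $\mathbb{Z}^2$ edge groups, along the lines of the arguments in \cite{DHS_boundary}.
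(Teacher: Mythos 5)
Your items (1) and (2) essentially match the paper's proof, which cites \cite[Corollary 3.8, Lemma 3.10]{HHP} for the injective/semihyperbolic statement and \cite[Corollary 4.8]{ANS:exponential} for uniform exponential growth; for the latter the paper's stated input is that the $\nest$--maximal domain of the HHG structure is $G$--equivariantly quasi-isometric to the Bass--Serre tree $T$ (Proposition~\ref{prop:Hyperbolicity_of_links}) and that $T$ has infinitely many ends, which is the same non-degeneracy you gesture at. For item (3) the paper simply combines that identification of the maximal domain with \cite[Theorem A]{ABD}, which already asserts that the action of an HHG on its $\nest$--maximal domain is the largest acylindrical action; your more hands-on route is not needed, and as written it contains an error: the fact that each $G_\mu$ contains a $\mathbb{Z}^2$ only shows that this $\mathbb{Z}^2$ is elliptic in any acylindrical action, not that $G_\mu$ itself fixes a point (an elliptic subgroup does not force ellipticity of the ambient group). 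The correct elementary argument goes through the centre: in an acylindrical action a loxodromic element of $G_\mu$ would have virtually cyclic elementary closure containing the infinite centre $Z_\mu$, forcing a power of a loxodromic to equal a power of an elliptic element; since $G_\mu$ is not virtually cyclic, Osin's trichotomy then makes $G_\mu$ elliptic. You would also still need acylindricity of $G\curvearrowright T$ itself, which does follow from the HHG structure once the maximal domain is identified with $T$.

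Item (4) is where the genuine gap lies. The claim to be proved is that the Morse boundary is an $\omega$--Cantor set; total disconnectedness is only the ``in particular'' clause. Your proposal sketches the embedding of the Morse boundary into $\partial T$ and then explicitly defers the $\omega$--Cantor identification as ``expected to be the main technical step,'' so item (4) is not actually established. The paper does not carry out any such combinatorial analysis: it invokes \cite[Corollary A.8]{Russell:multicurve}, which deduces the $\omega$--Cantor property directly from the HHG structure, or alternatively \cite[Theorem 1.2]{CCS_omega_Morse_boundary}, which applies to such graphs of groups without reference to hierarchical hyperbolicity. If you want a self-contained argument you must supply the identification of the boundary, not predict its shape; otherwise, cite one of these results as the paper does.
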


\begin{proof} 
	The first assertion follows from the fact that $G$ is an HHG (Theorem~\ref{thmi:main_technical}) by ~\cite[Corollary 3.8, Lemma 3.10]{HHP}.
	
	For the other assertions, we will need that  the $\nest$--maximal domain in the HHG structure is $G$--equivariantly quasi-isometric to the Bass--Serre tree $T$ for $\mc{G}$.  We prove this in Proposition~\ref{prop:Hyperbolicity_of_links}. Because the definition of an \squidable graph of groups ensures that $T$ has infinitely many ends, \cite[Corollary 4.8]{ANS:exponential} implies that $G$ has uniform exponential growth. It follows from~\cite[Theorem A]{ABD}  that the action of $G$ on $T$ is the largest acylindrical action of $G$ on a hyperbolic space\footnote{Theorem A of~\cite{ABD}, as written, can be read as suggesting that $3$--manifold groups are hierarchically hyperbolic \emph{groups}, although at the time they were only known to be hierarchically hyperbolic \emph{spaces} in the stated generality.  But, as noted in~\cite[Remark 5.3]{ABD}, Theorem A holds for $3$--manifold groups without needing an HHG structure.  Alternatively, by Theorem~\ref{thmi:main}, the statement in~\cite{ABD} holds once one excludes non-octahedral flat pieces from the prime decomposition.}.  The last item on the Morse boundary follows from \cite[Corollary A.8]{Russell:multicurve} (using HHGs) or \cite[Theorem 1.2]{CCS_omega_Morse_boundary} (using graphs of groups).
\end{proof}

\subsubsection*{HHGs on quasi-trees vs cubical groups}
We also note the following consequence for the question of when hierarchically hyperbolic structures are forced to arise from cubulation.  Corollary~\ref{cori:graph_manifolds} provides a hierarchically hyperbolic structure in which the constituent hyperbolic spaces are all quasi-isometric to trees.  Such hierarchically hyperbolic structures also arise on fundamental groups of compact special cube complexes~\cite{BHSI} and more generally, groups acting geometrically on cube complexes admitting \emph{factor systems} \cite{BHSI,HS}.  However, there are many examples of graph manifolds whose fundamental groups are virtually special but not virtually \emph{compact} special, and indeed not even virtually cocompactly cubulated~\cite{HP_charge}.  Hence Corollary~\ref{cor:graph_man} provides examples of groups that are not cocompactly cubulated, but do admit HHG structures in which the hyperbolic spaces are all quasi-trees. 

\subsection{Proof ingredients: combinatorial HHS and quasi-morphisms}



To prove admissible groups are HHGs, we employ the recent \emph{combinatorial HHS} machinery from~\cite{CHHS}. For a group $G$, this requires constructing a simplicial complex $Y$ and a graph $W$, which are then combined in a graph $Y^{+W}$. Intuitively, the role of those spaces is as follows: the complex $Y$ encodes the index set of a hierarchically hyperbolic structure, the complex $Y$ and the graph $Y^{+W}$ together encode the associated hyperbolic spaces, and $W$ is the (equivariant) quasi-isometry model of $G$.

In our case, the space $Y$ is an augmented version of the Bass-Serre tree, where each vertex is ``blown up'' to contain a copy of the coset it represents. 
This technique of building combinatorial HHSs by ``blowing up the vertex groups''  in some naturally-occurring hyperbolic graph is quite flexible, and  has analogues in a number of other contexts.  For example, it is applied in the context of certain Artin groups in~\cite{HMS:artin}, extensions of lattice Veech groups in~\cite{Veech}, and extensions of multicurve stabilisers in~\cite{Russell:multicurve}. In~\cite{CHHS}, it is  explained how to build combinatorial HHSs for  right-angled Artin groups and mapping class groups by respectively blowing up the Kim--Koberda \emph{extension graph} \cite{KimKoberda} and the curve graph.

 In a general combinatorial HHS, $Y$ is a simplicial complex with a $G$--action  that has finitely many orbits of links of simplices, and  $W$ is a graph whose vertices are maximal simplices of $Y$,  where the action of $G$ on $Y$ induces an isometric action of $G$ of $W$. Given $Y$ and $W$, the graph $Y^{+W}$ is constructed from $Y^{(1)}$ by joining every vertex of the maximal simplex $\Sigma$ to every vertex of the maximal simplex $\Delta$ by an edge whenever $\Sigma$ and $\Delta$ represent adjacent vertices of $W$.  The group $G$ acts naturally on the resulting graph $Y^{+W}$.

The spaces $Y$, $W$, and $Y^{+W}$ encode the HHS structure as follows. The elements of the index set correspond to the links $\link(\Delta)$ of non-maximal simplices $\Delta$ of $Y$ (including the empty simplex, whose link is $Y$).  The hyperbolic space associated to $\link(\Delta)$ is the subgraph $\link(\Delta)^{+W}$ of $Y^{+W}$ spanned by the vertices in $\link(\Delta)\subset Y$.  Accordingly, we have to choose the edges of $W$ in a way that ensures that all of these spaces (including $Y^{+W}$ itself) are hyperbolic, while also ensuring that the action of $G$ on $W$ is proper and cobounded. 

Hierarchical hyperbolicity demands not only the construction of a collection of hyperbolic spaces, but also a coarse projection from $W$ to each $\link(\Delta)^{+W}$ (satisfying a list of properties~\cite[Definition 1.1]{BHSII}).  To arrange this, the definition of a combinatorial HHS requires the following: consider all of the simplices $\Delta'\subset Y$ with the same link as $\Delta$, and remove their vertex sets (and incident edges) from $Y^{+W}$ to obtain a graph $Y_\Delta$, which contains $\link(\Delta)^{+W}$.  We ask that the inclusion $\link(\Delta)^{+W}\hookrightarrow Y_\Delta$ is a quasi-isometric embedding, for each non-maximal simplex $\Delta$.  The exact definition of a combinatorial HHS is Definition~\ref{defn:CHHS}, which involves some additional (combinatorial) conditions.

\subsubsection*{Our combinatorial HHS and the role of quasimorphisms}
Given an \squidable graph $\mathcal G$ of groups,  let $T$ be the Bass--Serre tree. The idea for constructing the simplicial complex $Y$ for $ \pi_1 \mc{G}$ is as follows: ``blow up'' each vertex $v$ of $T$ to become the cone on a discrete set whose elements correspond to  the associated coset of the vertex group. Two such cones are then graph-theoretically joined according to the edges of $T$, resulting in a $3$--dimensional simplicial complex.  The action of $\pi_1\mathcal G$ on $T$ induces an action on $Y$, by construction.


Having constructed the simplicial complex $Y$, we now need to construct the graph $W$ whose vertices are maximal simplices of $Y$  and will serve as the geometric model for $ \pi_1 \mc{G}$. This is where quasimorphisms come in.

Specifically, for each vertex group $G_\mu$, we construct an action of $G_\mu$ of a quasi-line $L_\mu$ so that the center $Z_\mu$ of $G_\mu$ acts loxodromically, and each cyclic subgroup conjugate to the images of  the center of adjacent vertex groups acts elliptically.
This is achieved by first choosing an appropriate quasimorphism and then using a result of Abbott--Balasubramanya--Osin~\cite{ABO} to promote it to an action on a quasi-line; see Lemma \ref{lem:Generating_sets_that_makes_lines}.

Using the action on this quasi-line, each vertex groups in our \squidable graph of group is \emph{equivariantly} quasi-isometric to the product $L_\mu \times F_\mu$, where $F_\mu$ is the hyperbolic quotient $G_\mu / Z_\mu$. Balls in the $L_\mu$ factor therefore give us coarse ``level surfaces'' in this product. Moreover, if $\omega$ is adjacent to $\mu$, the fact that the center $Z_\omega$ acts elliptically on $L_\mu$ means that $Z_\omega$ is sent into  one  of these coarse level surfaces by the  edge maps in $\mc{G}$. 

Now, maximal simplices of $Y$ consist of an edge $\{u,v\}$ of $T$ and a pair of elements $s,t$ in the corresponding cosets of  the vertex groups. Using the above product structure, each of $s$ and $t$  determine a level surface in each of the vertex groups corresponding to $u$ and $v$, and these two level surface will intersect in uniformly bounded subsets. Roughly, we define $W$ so that there is an edge between two vertices if these bounded diameter subsets associated to the two maximal simplices of $Y$ are close; see see Definition~\ref{defn:W} and Proposition~\ref{prop:P_vs_sigma} for details. This definition will make $W$ an equivariant quasi-isometric model for $ \pi_1 \mc{G}$.

The definition of edges in $W$ will ensure that the extra edges in $Y^{+W}$ are only added between vertices of $Y$ that are uniformly close under the collapse map $Y \to T$. Hence $Y^{+W}$ will be quasi-isometric to the Bass--Serre tree $T$ and hence hyperbolic. The other hyperbolic spaces coming from our combinatorial HHS are all either bounded diameter or correspond to one of the two factors of the product $L_\mu \times F_\mu$ for one of the vertex groups. One set of spaces will be quasi-isometric to the quasi-lines $L_\mu$, while the other will be quasi-isometric to hyperbolic cone-offs of the $F_\mu$. The quasi-isometric embedding conditions for these hyperbolic spaces are verified using a combination of closest point projection in the Bass--Serre tree $T$ with the hyperbolic geometry of the $F_\mu$ factor of each vertex group.

\subsection{Outline}
Section~\ref{sec:background} contains background on coarse geometry, graphs of groups and hierarchical hyperbolicity. This includes the definition of  an admissible graph of groups (Section \ref{subsec:graphs_of_groups}) and combinatorial HHS (Section \ref{subsec:HHS}). Section \ref{sec:main_results} presents the statements of our main results in more detail and deduces Theorem~\ref{thmi:main} from Theorem~\ref{thmi:main_technical}. The rest of the paper is devoted to the proof of Theorem~\ref{thmi:main_technical}.
In Section \ref{sec:quasilines}, we use quasimorphism to produce actions of central $\mathbb{Z}$--extensions on quasi-lines. In Section~\ref{sec:blow_up}, we construct the simplicial complex $Y$ and the graph $W$ that will comprise our combinatorial HHS for an \squidable graph of groups. Section~\ref{sec:proof} contains the proof that $(Y,W)$ is a combinatorial HHS. Section~\ref{subsec:combinatorial_conditions} focuses on  describing the link of simplices of $Y$ and verifying the combinatorial parts of  the definition of a combinatorial HHS. Section~\ref{subsec:hyperbolicity} contains the proof that $Y^{+W}$ and the $\link(\Delta)^{+W}$ are hyperbolic. Section~\ref{subsec:QI_embedding}   is devoted to checking that the inclusions $\link(\Delta)^{+W}\to Y_\Delta$ are quasi-isometric embeddings.

\subsection*{Acknowledgments}
Hagen was partly supported by EPSRC New Investigator Award EP/R042187/1. Russell was supported by NSF grant DMS-2103191. Spriano was partly supported by the Christ Church Research Centre. We would like to thank  the LabEx of the Institut Henri Poincar\'e (UAR 839 CNRS-Sorbonne Universit\'e) for their support during the trimester program ``Groups acting on Fractals, Hyperbolicity and Self-similarity''.   We thank the referee for their careful reading and numerous helpful comments.

\section{Preliminaries}\label{sec:background} 

\subsection{Coarse Geometry and Groups}
We recall some basic notions from coarse geometry and outline some techniques we will use repeatedly. For a metric space $Y$, we will use $d_Y$ to denote the distance in the space $Y$. The metric spaces we will consider will be undirected graphs, which we always equip with the path metric coming from declaring each edge to have length 1. For a graph $Y$, we let $Y^{(0)}$ denote the set of vertices of $Y$.

Let $\kappa \geq 1$, $\xi \geq 0$ and $f \colon Y \to Q$ be a map between metric spaces. The map $f$ is a \emph{$(\kappa,\xi)$--quasi-isometric embedding} if for all $x,y \in Y$ we have \[\frac{1}{\kappa} d_Q(f(x),f(y)) - \xi \leq d_Y(x,y) \leq \kappa d_Q(f(x),f(y)) +\xi. \] The map $f$ is \emph{$\xi$--coarsely onto} (or coarsely surjective) if for all $q \in Q$, there exists $y \in Y$ so that $d_Q(q ,f(y)) \leq \xi$. If $f$ is a $(\kappa,\xi)$--quasi-isometric embedding that is $\xi$--coarsely onto, we say $f$ is a \emph{$(\kappa,\xi)$--quasi-isometry}. A \emph{$\xi$--quasi-inverse} of $f$ is a map $h \colon Q \to Y$ so that $d_Y(y, h(f(y)) \leq \xi$ for each $y \in Y$. The map $f$ will be \emph{$(\kappa,\xi)$--coarsely Lipschitz} if \[ d_Y(x,y) \leq \kappa d_Q(f(x),f(y)) +\xi \] for all $x,y \in Y$. We often omit the constants when their specific value is not relevant. Note that the map $f$ is a quasi-isometry if and only if $f$ is coarsely Lipschitz and has a coarsely Lipschitz quasi-inverse (where the constants on either side of this equivalence determine the constants on the other).

A \emph{(quasi)-geodesic} in a metric space $Y$ is an (quasi)-isometric embedding of a closed interval $I \subseteq \mathbb{R}$ into $Y$. When $Y$ is a graph, we additionally require that the endpoints of the (quasi)-geodesic are vertices of $Y$.

At times it will be convenient to work with coarsely defined maps. A \emph{$\xi$--coarse map} from a metric space $Y$ to a metric space $Q$ is a function $f \colon Y \to 2^Q$ where for each $y \in Y$, $f(y)$ is a subset of $Q$ with diameter at most $\xi$. By a slight abuse of notation, we still write $f\colon Y \to Q$ to denote a coarse map. We say that a coarse map is coarsely Lipschitz, coarsely onto, a quasi-isometric embedding, a quasi-inverse or a quasi-isometry if it satisfies the same inequalities as described in the previous paragraph (where the distance between two sets is the minimal distance between two elements).

For graphs, we frequently use the following criteria to determine whether a map is coarsely Lipschitz and when an inclusion is a quasi-isometric embedding. The proofs are left as straightforward exercises.

\begin{lem}[Locally Lipschitz is Lipschitz]\label{lem:bounded_images_of_edges}
	For each $\xi \geq 0$ and $\kappa \geq 0$, there exists $\xi' \geq 0$ and $\kappa' \geq 1$ so that the following holds. Let $Y$ and $Q$ be graphs and suppose $f_0 \colon Y^{(0)} \to Q$ is a $\xi$--coarse map.  Let $f \colon Y \to Q$ be the map that extends $f_0$ by sending each edge $e$ of $Y$ to union of the images of the endpoints of $e$ under $f_0$. If  $d_Q(f_0(x),f_0(y)) \leq \kappa$ for each $x,y \in Y^{(0)}$ that are joined by an edge of $Y$, then $f$ is a $(\kappa',\kappa')$--coarsely Lipschitz $\xi'$--coarse map.
\end{lem}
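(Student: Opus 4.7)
My plan is to iterate the edge hypothesis along a $Y$-geodesic to control $f_0$ on vertices, then extend the resulting bound to all of $Y$ using that every point is within distance $1$ of a vertex and that each $f(x)$ is the union of at most two $f_0$-images.

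First I would strengthen the hypothesis from ``some pair of image points at distance $\le \kappa$'' to ``every pair of image points at distance $\le \kappa + 2\xi$'' for adjacent vertices. This is a one-line triangle inequality: given $x, y \in Y^{(0)}$ joined by an edge and arbitrary $a \in f_0(x), b \in f_0(y)$, pick witnesses $a^* \in f_0(x), b^* \in f_0(y)$ realising $d_Q(f_0(x), f_0(y)) \le \kappa$, and bound $d_Q(a,b) \le d_Q(a,a^*) + d_Q(a^*, b^*) + d_Q(b^*, b) \le \xi + \kappa + \xi$, using that $f_0(x)$ and $f_0(y)$ each have diameter at most $\xi$.

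Second, I would chain this along a $Y$-geodesic $x = v_0, v_1, \dots, v_n = y$ (with $n = d_Y(x,y)$) by picking intermediate points $a_i \in f_0(v_i)$ and applying the triangle inequality $n$ times, giving the vertex-level bound
\[
d_Q(a,b) \;\le\; (\kappa + 2\xi)\, d_Y(x,y) \qquad \text{for all } a \in f_0(x),\ b \in f_0(y).
\]
To extend to arbitrary $x, y \in Y$, observe that any point $x \in Y$ lies within distance $1$ of a vertex and $f(x)$ is contained in $f_0(u) \cup f_0(u')$ for some adjacent (or equal) vertices $u, u'$; in particular $\operatorname{diam} f(x) \le 2\xi + \kappa$, which gives the coarse-map constant $\xi' := 2\xi + \kappa$. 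Given $a \in f(x), b \in f(y)$, choose vertices $u_a, u_b$ with $a \in f_0(u_a), b \in f_0(u_b)$ and $d_Y(x, u_a), d_Y(y, u_b) \le 1$. Then $d_Y(u_a, u_b) \le d_Y(x,y) + 2$, and the vertex bound gives $d_Q(a,b) \le (\kappa + 2\xi)(d_Y(x,y) + 2)$, so taking $\kappa' = \max\{2\xi + \kappa,\ 2(\kappa + 2\xi)\}$ yields the claimed coarsely Lipschitz constants.

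There is no substantive obstacle here: the argument is a standard ``bounded-on-edges implies coarsely Lipschitz'' lemma for graphs. The only points requiring care are (i) the initial upgrade from min-distance between the sets $f_0(x), f_0(y)$ to a uniform bound on all pairs of representatives, and (ii) tracking the additive $+2$ introduced when passing from edge-interior points to their nearest vertices. Both are handled by the triangle inequality and do not affect the dependence of $\kappa', \xi'$ on $\kappa, \xi$ alone.
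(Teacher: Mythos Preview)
Your argument is correct and is exactly the standard proof; the paper itself does not supply a proof, stating only that ``the proofs are left as straightforward exercises.'' The two points you flag as requiring care---upgrading the min-distance hypothesis to an all-pairs bound via the $\xi$-diameter of $f_0$-images, and absorbing the additive $+2$ from moving to nearest vertices---are precisely the content of the exercise, and you handle both correctly.
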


\begin{lem}[Coarse retracts are undistorted]\label{lem:coarse_retract}
	Let $Y$ and $Q$ be graphs and assume there is an injective simplicial map $i \colon Q \to Y$. If there is a $(\kappa,\kappa)$--coarsely Lipschitz $\kappa$--coarse map $f \colon Y^{(0)} \to Q$ so that $f (i(Q)) = i(Q)$ and for each $ q \in Q$, $d_Q( q, i^{-1} \circ f \circ i (q)) \leq \kappa$, then the map $i \colon Q \to Y$ is a quasi-isometric embedding with constants determined by $\kappa$.
\end{lem}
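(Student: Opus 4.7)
The strategy is the standard ``coarse retract'' argument: the map $i$ being simplicial gives the upper Lipschitz bound for free, and the existence of the coarse retract $f$ gives the lower bound by pushing a geodesic in $Y$ down to $Q$.

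First I would observe that, because $i\colon Q\to Y$ is an injective simplicial map of graphs, any edge path in $Q$ of length $n$ between vertices $p,q$ is sent to an edge path in $Y$ of length $n$ between $i(p),i(q)$. Hence $d_Y(i(p),i(q))\leq d_Q(p,q)$ for all vertices $p,q\in Q$, so $i$ is $1$--Lipschitz on vertices (and $1$--Lipschitz on $Q$ by extending simplicially).

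Next I would establish the lower bound on $d_Q$ using $f$. Fix vertices $p,q\in Q$ and set $n=d_Y(i(p),i(q))$. Choose any $a\in f(i(p))$ and $b\in f(i(q))$. By the coarse Lipschitz hypothesis applied along a geodesic in $Y$ from $i(p)$ to $i(q)$, iterating Lemma~\ref{lem:bounded_images_of_edges}-style estimates along consecutive vertices, one gets $d_Q(a,b)\leq \kappa n+\kappa'$ for some constant $\kappa'$ depending only on $\kappa$. By the hypothesis $f(i(Q))=i(Q)$, we may write $a=i(p')$ and $b=i(q')$, and the retract condition $d_Q(q,i^{-1}\circ f\circ i(q))\leq \kappa$ gives $d_Q(p,p'),\,d_Q(q,q')\leq \kappa$. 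The triangle inequality then yields
\[
d_Q(p,q)\;\leq\; d_Q(p,p')+d_Q(p',q')+d_Q(q',q)\;\leq\; \kappa\, d_Y(i(p),i(q))+\kappa'',
\]
for a constant $\kappa''$ determined by $\kappa$. Combined with the upper bound from the previous paragraph, this shows $i$ is a quasi-isometric embedding on vertices, with constants depending only on $\kappa$.

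Finally I would extend from vertices to all of $Q$. Since $i$ is simplicial, every point of $Q$ lies within distance $1/2$ of a vertex, and likewise for points of $i(Q)\subset Y$; the quasi-isometric embedding property thus passes from vertex sets to the whole graphs at the cost of additive constants bounded in terms of $\kappa$. The only thing to be careful about is a consistent convention for the coarse Lipschitz inequality (min-distance vs.\ set-diameter between $f$-images), but both conventions lead to the same bound up to an additive constant determined by $\kappa$. There is no real obstacle here; the content is just keeping track of constants, which is why the lemma is stated as a routine exercise.
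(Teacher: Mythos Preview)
Your argument is correct and is exactly the standard proof the paper has in mind; indeed, the paper omits the proof entirely, stating that it is ``left as [a] straightforward exercise.'' One minor notational slip: since $f$ already maps $Y^{(0)}$ into $Q$, the elements $a\in f(i(p))$ and $b\in f(i(q))$ lie in $Q$ directly, so there is no need to write $a=i(p')$, $b=i(q')$ and pull back via $i^{-1}$---the retract hypothesis gives $d_Q(p,a),\,d_Q(q,b)\leq \kappa$ immediately---but this does not affect the argument.
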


We will apply Lemma \ref{lem:coarse_retract} exclusively in the case where $Q$ is a connected subgraph of $Y$. In this case, we emphasise that the map $f$ is coarsely Lipschitz with respect to the intrinsic path metric on $Q$ and not the metric the $Q$ inherits as a subset of $Y$. A map $f$ satisfying the conditions of Lemma \ref{lem:coarse_retract} is called  a \emph{coarse retract} of $Y$ to $Q$.

We say a graph $Y$ is \emph{$\delta$--hyperbolic} if for any geodesic triangle in $Y$, the $\delta$--neighborhood of any two sides covers the third side. Special cases of hyperbolic graphs are \emph{quasi-trees} and \emph{quasi-line}, which are graphs that are quasi-isometric to a tree or line respectively. We will need to use some ideas from the theory of relatively hyperbolic groups and spaces. Given a collection of coarsely connected subsets $\mc{Q}$ of a graph $Y$, we define the \emph{electrification of $Y$ with respect to $\mc{Q}$} to be the space obtained from $Y$ by adding an additional edge between $x,y \in Y^{(0)}$ whenever there is $Q \in \mc{Q}$ so that $x,y \in Q$. We denote this electrification by $\widehat{Y}_{\mc Q}$. We say that $Y$ is \emph{hyperbolic relative to $\mc{Q}$} if $\widehat{Y}_Q$ is $\delta$--hyperbolic for some $\delta \geq 0$ and if it satisfies the \emph{bounded subset penetration} property; see \cite[Definition 3.7]{Sisto_Met_Rel} for full details.

Many of the graphs we will study will be the Cayley graphs of groups. 

\begin{defn}
	Let $G$ be a group and $J$ be a symmetric generating set for $G$. We let $\cay(G,J)$ denote the simplicial graph whose vertices are the elements of $G$ and where two elements $g,h$ are joined by an edge if $g^{-1} h \in J$. 
\end{defn}

\noindent Note that the generating set $J$ does not need to be finite; in fact we will consider non-locally finite Cayley graphs throughout the paper. 

Suppose a group $G$ is acting by isometries on metric space $Y$. We say $G$ acts \emph{coboundedly} if there  exists a bounded set $B$ such that $G\cdot B = Y$. We say the  action of $G$ on $Y$ is \emph{metrically proper} if for any bounded diameter subset $K$ of $Y$, the set $\{g \in G : g\cdot K \cap K \neq \emptyset\}$ is finite. A version of the Milnor-Schwartz lemma say that if a finitely generated group $G$ groups act metrically proper and coboundedly on a metric space $Y$, then the orbit map gives a quasi-isometry $\cay(G,J) \to Y$ for any finite generating set $J$. 

A finitely generated group $G$ is \emph{hyperbolic} if for some (and hence any) finite generating set $J$, the  graph $\cay(G,J)$ is $\delta$--hyperbolic for some $\delta \geq 0$. A finitely generated group $G$ is \emph{hyperbolic relative} to a finite collection of subgroups $\{Q_1,\dots,Q_n\}$ if for some (and hence any) finite generating set $J$, the  graph $\cay(G,J)$ is hyperbolic relative to the collection of all cosets of the $Q_i$'s. In particular, the Cayley graph $\cay(G,J \cup Q_1\dots Q_n)$ is hyperbolic. 

The next lemma is a useful tool that allows to verify that the electrification of a quasi-tree with respect to quasiconvex subsets is again a quasi-tree.

\begin{lem}\label{lem:bottleneck}
	For all $\delta,\kappa\geq 1$ there exists $\delta'$ such that the following holds.  Let $\Gamma$ be a 
	graph that is $(\delta,\delta)$--quasi-isometric to a tree and $\mc{Q}$ be a collection of 
	$\kappa$--quasiconvex subspaces of $\Gamma$. Then  the electrification $\widehat{\Gamma}_\mc{Q}$ is 
	$(\delta',\delta')$--quasi-isometric to a tree.
\end{lem}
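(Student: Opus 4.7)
The plan is to reduce to the case where the underlying space is a tree, and then to apply Manning's bottleneck criterion. Fix a $(\delta,\delta)$-quasi-isometry $f\colon\Gamma\to T$ to a simplicial tree, together with a quasi-inverse $g\colon T\to \Gamma$. Both spaces are hyperbolic, so a standard argument yields a constant $\kappa_1=\kappa_1(\delta,\kappa)$ such that each image $f(Q)$ is $\kappa_1$-quasi-convex in $T$. In a tree, any $\kappa_1$-quasi-convex set lies within Hausdorff distance $\kappa_1$ of its convex hull, since the hull is a union of geodesics between pairs of its points and each such geodesic lies in the $\kappa_1$-neighbourhood of the set. Let $T_Q\subseteq T$ be the convex hull of $f(Q)$ and set $\mc{T} = \{T_Q : Q\in\mc{Q}\}$.

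The central step is to verify that $\widehat{T}_{\mc{T}}$ is a quasi-tree via Manning's bottleneck criterion. Given vertices $x,y$ of $T$, the candidate midpoint is the $T$-midpoint $m$ of the unique $T$-geodesic $[x,y]_T$. For any $\widehat{T}_{\mc{T}}$-path $\beta$ from $x$ to $y$, lift $\beta$ to a $T$-path by replacing each electric edge between $p,p'\in T_Q$ by the $T$-geodesic $[p,p']_T$, which is contained in the subtree $T_Q$. Because every path in a tree passes through every point of the unique geodesic between its endpoints, the lift passes through $m$: either $m$ agrees (up to a unit) with a vertex of $\beta$, or $m$ lies on a lifted electric edge and hence in the corresponding $T_Q$, in which case $m$ is joined by a single electric edge to each endpoint of that edge. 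In either case $\beta$ comes within $1$ of $m$ in $\widehat{T}_{\mc{T}}$. The same lifting argument applied to a $\widehat{T}_{\mc{T}}$-geodesic yields $d_{\widehat{T}_{\mc{T}}}(x,m) + d_{\widehat{T}_{\mc{T}}}(m,y)\le d_{\widehat{T}_{\mc{T}}}(x,y)+1$, so $m$ is an approximate midpoint in the electric metric, and (the approximate-midpoint form of) Manning's criterion produces a quasi-tree constant depending only on $\delta$ and $\kappa$.

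To conclude, I would show that $\widehat{\Gamma}_{\mc{Q}}$ is quasi-isometric to $\widehat{T}_{\mc{T}}$ with uniform constants. The induced map $\hat{f}\colon\widehat{\Gamma}_{\mc{Q}}\to \widehat{T}_{\mc{T}}$ is coarsely Lipschitz because $\Gamma$-adjacent pairs are $O(\delta)$-close in $T$, while vertices in a common $Q$ have images in $f(Q)$, which lies within $\kappa_1$ of a common $T_Q$, hence within $O(\kappa_1)$ of each other in $\widehat{T}_{\mc{T}}$ via one electric edge of $T_Q$. For the quasi-isometric embedding I lift a $\widehat{T}_{\mc{T}}$-path $f(x)=t_0,\dots,t_n=f(y)$ through $g$: each $T$-step produces a $\widehat{\Gamma}_{\mc{Q}}$-step of length $O(\delta)$, while each electric step between $t_i,t_{i+1}\in T_Q$ is realised by a bounded detour from $g(t_i)$ to a point of $Q$ whose image is near $t_i$ (using that $T_Q$ is $\kappa_1$-close to $f(Q)$), one electric edge in $Q$, and a bounded return to $g(t_{i+1})$. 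The main obstacle is the uniform bookkeeping of constants through these lifting manoeuvres; since every bound depends only on $\delta$ and $\kappa$, the composition with the quasi-tree $\widehat{T}_{\mc{T}}$ gives the required uniform $\delta'$.
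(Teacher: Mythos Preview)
Your reduction to the tree case and the quasi-isometry $\widehat{\Gamma}_{\mc{Q}}\simeq\widehat{T}_{\mc{T}}$ are fine, but the verification of Manning's criterion for $\widehat{T}_{\mc{T}}$ contains a genuine gap. From the lifting argument you correctly obtain
\[
d_{\widehat{T}_{\mc{T}}}(x,m)+d_{\widehat{T}_{\mc{T}}}(m,y)\le d_{\widehat{T}_{\mc{T}}}(x,y)+1,
\]
but this does \emph{not} say that $m$ is an approximate midpoint in the electric metric: it only says $m$ lies coarsely on an electric geodesic. Concretely, if $T$ is a long path $v_0,\dots,v_{100}$ and $T_Q=\{v_0,\dots,v_{50}\}$, then for $x=v_0$, $y=v_{100}$ the $T$--midpoint $m=v_{50}$ has $d_{\widehat{T}}(x,m)=1$ and $d_{\widehat{T}}(m,y)=50$, so $m$ is nowhere near the electric midpoint. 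A bottleneck criterion with no midpoint constraint is vacuous (take $m=x$), so as written the argument does not close. The fix is easy: the function $z\mapsto d_{\widehat{T}_{\mc{T}}}(x,z)$ is $1$--Lipschitz along $[x,y]_T$ and takes the values $0$ and $d_{\widehat{T}_{\mc{T}}}(x,y)$ at the endpoints, so by the intermediate value theorem there is a point $z_0\in[x,y]_T$ with $d_{\widehat{T}_{\mc{T}}}(x,z_0)=\tfrac12 d_{\widehat{T}_{\mc{T}}}(x,y)$; your lifting argument then shows every path comes within $1$ of $z_0$, and (applying it to a geodesic) that $z_0$ is within $1$ of the true electric midpoint.

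For comparison, the paper avoids the transfer to a tree entirely and works directly in $\widehat{\Gamma}_{\mc{Q}}$ using the version of the bottleneck criterion in which $z$ is an arbitrary point on a $\widehat{\Gamma}$--geodesic. The substitute for your midpoint argument is a citation to Kapovich--Rafi, which says that $\Gamma$--(quasi)geodesics stay Hausdorff-close in $\widehat{\Gamma}$ to $\widehat{\Gamma}$--geodesics; this furnishes a point $z\in\Gamma$ near $\widehat z$, after which one lifts the arbitrary path and uses the quasi-tree property of $\Gamma$ together with $\kappa$--quasiconvexity of the $Q_i$. Your route is more self-contained (no Kapovich--Rafi black box) at the cost of the extra quasi-isometry bookkeeping and the midpoint correction above.
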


\begin{proof}
	We use the following consequence of Manning's bottleneck criterion~\cite[Theorem 4.6]{Manning:pseud}, formulated in~\cite[Section 3.6]{Old_BBF} (see also~\cite[Proposition 2.3]{Veech}):  a space is a quasi-tree if and only if there exists $\xi$ as follows: for any two points $x,y$, path $p$ between them and point $z$ on a geodesic between $x$ and $y$, we have $d(z, p)\leq \xi$. Moreover, the constants of the quasi-isometry to a tree and $\xi$ each determine the other
	
	Let $\xi$ be such a constant for the quasi-tree $\Gamma$ and let $\widehat{\Gamma} = \widehat{\Gamma}_\mc{Q}$. Our goal is to find an analogous $\widehat{\xi}$ for $\widehat{\Gamma}$. Let \(x,y\) be two points of $\Gamma^{(0)} = \widehat{\Gamma}^{(0)}$ and let $\widehat{\beta}$ be a \( \widehat{\Gamma}\)-geodesic between them. Let
	\(\widehat{z}\) be a point on \(\widehat{\beta}\) and \(\widehat{\gamma}\) be some path in $\widehat{\Gamma}$ connecting $x$ and $y$ between them. Let $\beta$ be a $\Gamma$--quasi-geodesic between $x,y$. 
	By \cite[Corollary 2.6]{KapovichRafi}, the Hausdorff distance in $\widehat{\Gamma}$ between $\beta$ and $\widehat{\beta}$ is uniformly bounded by some $R$. Thus, there exists $z \in \beta$ such that $d_{\widehat{\Gamma}}(\widehat{z}, z) \leq R$. Let $\gamma$ be the $\Gamma$--path obtained from $\widehat{\gamma}$ by replacing $\widehat{\Gamma}-\Gamma$ edges with geodesics of $\Gamma$. Since $\Gamma$ is a quasi-tree, there is a point $p\in \gamma$ with $d_{\Gamma}(p, z) \leq \xi$. If $p$ is also a point of $\widehat{\gamma}$ we are done.  Otherwise, $p$ is on a geodesic with endpoints on a $\kappa$-quasiconvex $Q_i$, we have $d_{\Gamma}(p, Q_i) \leq \kappa$. As $Q_i$ is coned-off in $\widehat{\Gamma}$ and $\widehat{p}$ intersects $Q_i$, we obtain $d_{\widehat{\Gamma}}(p, \widehat{\gamma})\leq \kappa+1$. 
	By the triangular inequality, $$d_{\widehat{\Gamma}}(\widehat{z}, \widehat{\gamma}) \leq d_{\widehat{\Gamma}}(\widehat{z}, z) + d_{\widehat{\Gamma}}(z,p) + d_{\widehat{\Gamma}}(p, \widehat{\gamma}).$$
	As each of the above quantities is uniformly bounded, we get the claim.
\end{proof}

We conclude with a lemma relating quotients and Cayley graphs with respect to infinite generators. 

\begin{lem}\label{lem:quotient_VS_cone-off}
	Let $G$ be a group and $N\unlhd G$ a normal subgroup. For any generating set $K$ of $G$ satisfying $N\subseteq K$ the quotient map $\pi\colon G \to G/N$ induces a $(2,1)$--quasi-isometry \[\pi \colon \mathrm{Cay}(G, K) \to \mathrm{Cay}(G/N, \pi(K)).\]
\end{lem}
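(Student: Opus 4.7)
The plan is to verify the two standard properties that make $\pi$ a $(2,1)$-quasi-isometry in the paper's convention: a Lipschitz upper bound on $d_{G/N}(\pi(g),\pi(h))$ in terms of $d_G(g,h)$, and a lift of any geodesic in $\mathrm{Cay}(G/N,\pi(K))$ to a path in $\mathrm{Cay}(G,K)$ of at most twice the length (plus constants). Coarse surjectivity is immediate since $\pi$ is already surjective on vertex sets, so the only content is to compare the two graph metrics.

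For the easy direction, suppose $g,h \in G$ are joined by an edge in $\mathrm{Cay}(G,K)$, so that $g^{-1}h \in K$. Then $\pi(g)^{-1}\pi(h) = \pi(g^{-1}h) \in \pi(K)$, so either $\pi(g)=\pi(h)$ or the two project to an edge in $\mathrm{Cay}(G/N,\pi(K))$. In particular $d_{G/N}(\pi(g),\pi(h)) \leq d_G(g,h)$, by extending multiplicatively along a $K$-path from $g$ to $h$.

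For the other direction, let $n = d_{G/N}(\pi(g),\pi(h))$ and fix a geodesic $\pi(g)=\bar g_0,\bar g_1,\ldots,\bar g_n = \pi(h)$ in $\mathrm{Cay}(G/N,\pi(K))$. Choose any lifts $g_0 = g$, $g_n = h$, and arbitrary $g_1,\ldots,g_{n-1} \in G$ with $\pi(g_i) = \bar g_i$. For each $i$, the fact that $\bar g_i^{-1}\bar g_{i+1} \in \pi(K)$ lets us write $g_i^{-1}g_{i+1} = k_i n_i$ with $k_i \in K$ and $n_i \in N$. The key point is that since the hypothesis includes $N \subseteq K$, every element of $N$ is a single-step generator of $\mathrm{Cay}(G,K)$. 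Hence the sequence $g_i,\ g_i k_i,\ g_i k_i n_i = g_{i+1}$ is a path of length at most $2$ in $\mathrm{Cay}(G,K)$, and concatenating these gives $d_G(g,h) \leq 2n$.

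Combining the two inequalities and absorbing the $n=0$ case (where $g^{-1}h \in N \subseteq K$, so $d_G(g,h) \leq 1$) into an additive constant yields the $(2,1)$-quasi-isometric embedding in the paper's convention. There is no substantive obstacle; the only subtlety worth flagging is the role of the hypothesis $N \subseteq K$, which is exactly what lets us pay a factor of $2$ rather than an unbounded cost to correct the $N$-ambiguity between successive lifts.
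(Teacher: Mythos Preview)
Your proof is correct and follows essentially the same approach as the paper: both show $\pi$ is $1$--Lipschitz on vertices, then lift a geodesic in $\mathrm{Cay}(G/N,\pi(K))$ to a path in $\mathrm{Cay}(G,K)$ of at most twice the length by using that $N\subseteq K$ makes each coset of $N$ have diameter at most $1$. Your write-up is in fact cleaner than the paper's (which has some notational slips), and your explicit handling of the $n=0$ case is a nice touch.
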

\begin{proof}
	Let $\Gamma = \cay(G, K)$ and $\Omega = \cay(G/N, \pi(K) )$. By construction, the map $\pi$ gives a 1-Lipschitz map $\Gamma \to \Omega$. For each $x \in G/N$, let $\theta(x)$ be the be an element of the coset $gN$ in $G$ so that $\pi(gN) = x$. 
	Given any $x_1,x_2 \in G/N$ with $x_1^{-1} x_2 \in \pi(K)$ we can find $y_1$ in the same coset as $\theta(x_1)$ and $y_2$ in the coset as $\theta(x_2)$ so that $y_1^{-1}y_2 \in K$. Since  each coset $gN$ has diameter $1$ in $\Gamma$, we have $d_\Omega(\pi(x_1), \pi(x_2)) \leq d_\Gamma(x_1, x_2)\leq 2d_\Omega(\pi(x_1), \pi(x_2)) + 1$.
\end{proof}

\subsection{Graphs of groups}\label{subsec:graphs_of_groups}
We start with recalling some definitions and notations from Bass--Serre theory. For a comprehensive background, we refer the reader to \cite{ScottWall}.
Firstly, we recall that for Bass--Serre it is useful to use the language of bi-directed graphs.

\begin{defn}
	A \emph{bi-directed graph} $\Gamma$ consists of sets $V(\Gamma)$, $E(\Gamma)$ and maps 
	\begin{align*}
		E(\Gamma) &\to V(\Gamma) \times V(\Gamma);  &  E(\Gamma) &\to E(\Gamma) \\
		\alpha &\mapsto (\alpha^{+}, \alpha^{-}) &  \alpha & \mapsto \bar{\alpha}
	\end{align*}
	satisfying $\bar{\bar{\alpha}} = \alpha$, $\bar{\alpha}\neq \alpha$ and  $(\bar{\alpha})^{-} = \alpha^{+}$.
\end{defn}
The elements of $V(\Gamma)$ are called \emph{vertices},  the ones of $E(\Gamma)$ are called \emph{edges}, the vertex $\alpha^{-}$ is the \emph{source} of $\alpha$, $\alpha^{+}$ is the \emph{target} and $\bar{\alpha}$ is the \emph{reverse edge}. A bi-directed graph $\Gamma$ is \emph{finite} if both $V(\Gamma), E(\Gamma)$ are finite sets.
A \emph{subgraph} of $\Gamma$ is a bi-directed graph $\Gamma'$ such that $V(\Gamma') \subseteq V(\Gamma)$ and $E(\Gamma') \subseteq E(\Gamma)$. 
Given a bi-directed graph $\Gamma$, it is standard to associate to it a an undirected graph $\vert \Gamma \vert$, where the vertices are the elements of $V(\Gamma)$ and the edges are pairs of edges of the form $\{\alpha,\bar{\alpha}\}$. We call these pairs of edges $\{\alpha,\bar \alpha\}$  \emph{undirected edges} of $\mc{G}$. An \emph{orientation} on an undirected edge is choice of one of the directed edges. We say that a bi-directed graph $\Gamma$ is \emph{connected}, respectively a \emph{tree} if $\vert \Gamma \vert$ is connected, respectively a tree.
We say that a subgraph $T$ of $\Gamma$ is a \emph{spanning tree} if $V(T) = V(\Gamma)$ and $T$ is a tree.

The correspondence between $\Gamma$ and $\vert \Gamma \vert$ gives an equivalence between undirected graphs and bi-directed graphs.  The reason behind distinguishing the two classes is that the language of undirected graphs is more natural when considering graphs as metric spaces, whereas bi-directed graphs highlight combinatorial properties used to describe graphs of groups.

\begin{defn}
	A \emph{graph of groups} $\mc{G}$ consists of a finite connected bi-directed graph $\Gamma$, two collections of groups $\{G_\mu \mid \mu \in V(\Gamma)\}$ and $\{G_\alpha \mid \alpha \in E(\Gamma)\}$ satisfying $G_{\alpha} = G_{\bar{\alpha}}$, and injective homomorphisms $\tau_{\alpha} \colon G_{\alpha} \to G_{\alpha^{+}}$ for each $\alpha \in E(\Gamma)$. 
\end{defn}

\begin{defn}\label{defn:graph_of_groups}
	Let $\mc{G}= (\Gamma, \{G_\mu\}, \{G_\alpha\}, \{\tau_{\alpha}\})$ be a graph of groups. We define the group $F\mc{G}$ as:
	\[F\mc{G} = \left(\bigast_{\mu \in V(\Gamma)}G_\mu\right) \ast \left(\bigast_{\alpha \in E(\Gamma)} \langle t_{\alpha} \rangle\right).\]
	Let $\mathrm{Sp}(\Gamma)$ be a spanning tree of $\Gamma$. The \emph{fundamental group} of $\mc{G}$ with respect to $\mathrm{Sp}(\Gamma)$, denoted by $\pi_1(\mc{G},\mathrm{Sp}(\Gamma))$, is the group obtained adding the following relations to $F\mc{G}$:
	\begin{enumerate}
		\item $t_\alpha = t_{\bar{\alpha}}^{-1}$;
		\item $t_\alpha = 1$ if $\alpha \in E(\mathrm{Sp}(\Gamma))$;
		\item $t_\alpha \tau_{\alpha}(x)t_\alpha^{-1} = \tau_{\bar{\alpha}}(x)$ for all $x \in G_\alpha$. \label{item:edges_transition} 
	\end{enumerate}
\end{defn}

Given a graph of groups $\mathcal{G}$ we can associate to it the Bass--Serre tree $T$; see e.g.~\cite[Section 4]{ScottWall}. This is the bi-directed graph whose vertices are cosets of the vertex groups, and two cosets are joined by a directed edge if there are representatives $gG_\mu$ and $hG_\omega$ such that the vertices $\mu$ and $\omega$  are connected by an edge $\alpha$ with $\alpha^+ = \mu$  and $h t_\alpha = g $. For a vertex $v$ of $T$ we let $\check{v}$ denote the vertex $\mu$ of $\mc{G}$ so that $v = gG_\mu$. Similarly, given an edge $e$ of $T$, we define $\check{e}$ to be the edge of $\mc{G}$ joining $\check{e}^+$  and $\check{e}^-$.

If the vertex $v \in T^{(0)}$ is the coset $gG_\mu$, then stabiliser  $\stab_{\pi_1 \mc{G}}(v)$ is the conjugate of the vertex group $g G_\mu g^{-1}$. Similarly, for each edge $e$ of $T$, the stabiliser  $\stab_{\pi_1 \mc{G}}(e)$ is $g \tau_{\bar \alpha}(G_\alpha) g^{-1} = g t_\alpha \tau_\alpha(G_\alpha) t_\alpha^{-1} g^{-1}$ where $\check{e} = \alpha$, and $g$ is an element of $ \pi_1 \mc{G}$ so that $gG_{\check{e}^-}$ and $gt_\alpha G_{\check{e}^+}$ are the vertices $e^-$ and $e^+$ respectively.

Even though $T$ is a bi-directed graph, we will at times think of it as a metric space. When we do this, we are implicitly referring to $|T|$, the undirected graph obtained from $T$. We will use $E$ to denote unoriented edges of $T$ and $e$ to denote an orientation on $E$.

Given a graph of groups, we want to provide a geometric model that encodes the geometry of the entire fundamental group. To achieve this we will take the cosets of the vertex groups and join them together using the information coming from the tree and the edge group. We call the resulting space the \emph{Bass--Serre space}. In order to keep track of the geometry of the edge spaces, it is useful to introduce a combinatorial notion of edges with midpoints.

\begin{defn}
	A \emph{subdivided edge} is a (undirected) graph isomorphic to the graph with vertices \(v_0, v_1, v_2\) and edges  between \(v_0\) and \(v_{1}\), and between \(v_1\) and \(v_2\). The vertex \(v_1\) is called the \emph{middle vertex}. Two vertices \(x,y\) of a graph \(\Gamma\) are \emph{connected by a subdivided edge} if there is a subgraph of \(\Gamma\) isomorphic to a subdivided edge with \(v_0 = x\) and \(v_2 = y\).
\end{defn}

\begin{defn}[Bass--Serre space]\label{defn:BS_Space}
	Let $\mc{G}$ be a graph of finitely generated groups. For each vertex group $G_\mu$ and edge group $G_\alpha$ fix once and for all finite symmetric generating sets $J_\mu$ and  $J_\alpha$ respectively, such that $J_\alpha = J_{\bar{\alpha}}$ and $\tau_{\alpha}(J_\alpha) \subseteq J_{\alpha^{+}}$.  We build the  \emph{Bass--Serre space} $\BS$ for the graph of groups $\mc{G}$ in three steps.

	\textbf{Step 1: vertex spaces.} For each vertex $v = gG_\mu$ of $T$, we define \(X_v\) to be the graph with vertex set \(gG_{\mu}\) and with an edge between \(x,y \in gG_{\mu}\) if \(x^{-1}y\in J_\mu\).  We call each $X_v$ the \emph{vertex space} for $v \in T^{(0)}$. Because each vertex group injects into $ \pi_1 \mc{G}$, each $X_v$ is graphically isomorphic to  the Cayley graph of $G_{\check{v}}$ with respect to the generating set $J_{\check{v}}$.

	\textbf{Step 2: subdivided edges.} Given an undirected edge $E$ of $T$, pick an orientation $e \in E$ and let $\alpha = \check{e}$,  $\mu = \alpha^+$,  and $\omega = \alpha^-$.  Fix an element $g \in  \pi_1 \mc{G}$ so that $g G_{\omega} = X_{e^-}$ and $gt_\alpha G_\mu = X_{e^+}$. For each $a \in G_{\alpha}$, add a subdivided edge between  $ g\tau_{\bar \alpha}(a) \in g G_\omega = X_{e^-}$ and $g t_\alpha \tau_\alpha(a) \in gG_\mu = X_{e^+}$. By Definition \ref{defn:graph_of_groups}.\eqref{item:edges_transition}, if $x =g \tau_{\bar{\alpha}}(a)$, then $xt_\alpha = g \tau_{\bar{\alpha}}(a) t_\alpha = g t_\alpha \tau_{\alpha}(a)$.  Hence, all such $x$ and $xt_\alpha$ are joined by a subdivided edge and the addition of these subdivided edges does not rely on our specific choice of representative $g \in  \pi_1 \mc{G}$. Since $t_\alpha^{-1} = t_{\bar \alpha}$, the addition of these subdivided edges is also independent of the orientation chosen for $E$.
	
	\textbf{Step 3: edges spaces.} Let $E$ be an undirected edge of \(T\) with orientation $e$.  Let $e^+ = v$, $e^- =w$, and $\check{e}= \alpha$. For each subdivided edge added between \(X_v\) and \(X_w\) there is a middle vertex. Let \(a, b\) be two of these middle vertices and \(x, y\) the vertices of \(X_v\) adjacent to them. To complete the Bass--Serre space, we add an edge between any two such $a,b$ if $x^{-1}y \in \tau_\alpha(J_\alpha) $ in $ \pi_1 \mc{G}$. This is independent of the orientation for $E$ because if $p,q$ are the vertices of $X_w$ adjacent to $a$ and $b$ respectively, then $p = x t_\alpha$ and $q = y t_\alpha$. Thus $p^{-1}q = t_\alpha^{-1}x^{-1} y t_\alpha$, which implies $x^{-1}y \in \tau_\alpha(J_\alpha) $ if and only if $p^{-1}q \in \tau_{\bar \alpha}(J_\alpha)$ by Definition \ref{defn:graph_of_groups}.\eqref{item:edges_transition}.  
	
	For each (directed) edge $e$ in $T$, we use $X_e$ to denote the (undirected) graph whose vertices are all of the middle vertices of the subdivided edges between $X_{e^+}$ and $X_{e^-}$ with the edges defined as above. We call $X_e$ the \emph{edge space} for $e$ and note that $X_e = X_{\bar e}$. Each edge space $X_e$ is graphically isomorphic to the Cayley graph of the edge group $G_{\check{e}}$ with generating set $J_{\check{e}}$. We let  \(\tau_e \colon X_e\to X_{e^+}\) denote be the map that associates to each middle vertex the only vertex of \(X_v\) adjacent to it, and we define \(\tau_{\bar{e}}\) analogously.  Figure \ref{fig:edge_space} gives a schematic of the edge spaces and $\tau_e$ maps.
	
	The  \emph{Bass--Serre space} $\BS$ for the graph of groups $\mc{G}$ is the  space constructed from taking all the vertex spaces in Step 1, adding in all the subdivided edges from Step 2, and then adding in all the edges of the edge spaces in Step 3. The group $ \pi_1 \mc{G}$ acts on the disjoint union of the vertex spaces by left multiplication. This action can be extended to the subdivided edges and edge spaces to give an action of $ \pi_1 \mc{G}$ of $\BS$ by isometries. The edge space maps $\tau_e$ and $\tau_{\bar{e}}$ are equivariant with respect to this action.
\end{defn}

\begin{remark}\label{rmk:tau_coarsely_Lipschitz}
	For every $x,y\in X_e$, we have $d_X(x, \tau_e(x)) = 1$ and  $d_{X_{e^+}}(\tau_e(x),\tau_e(y)) \leq d_{X_{e}}(x,y) +2 $.
\end{remark}

\begin{center}
	\begin{figure}
		\begin{tikzpicture}
			\node[anchor=south west,inner sep=0] (image) at (0,0) {\includegraphics[width=0.6\textwidth]{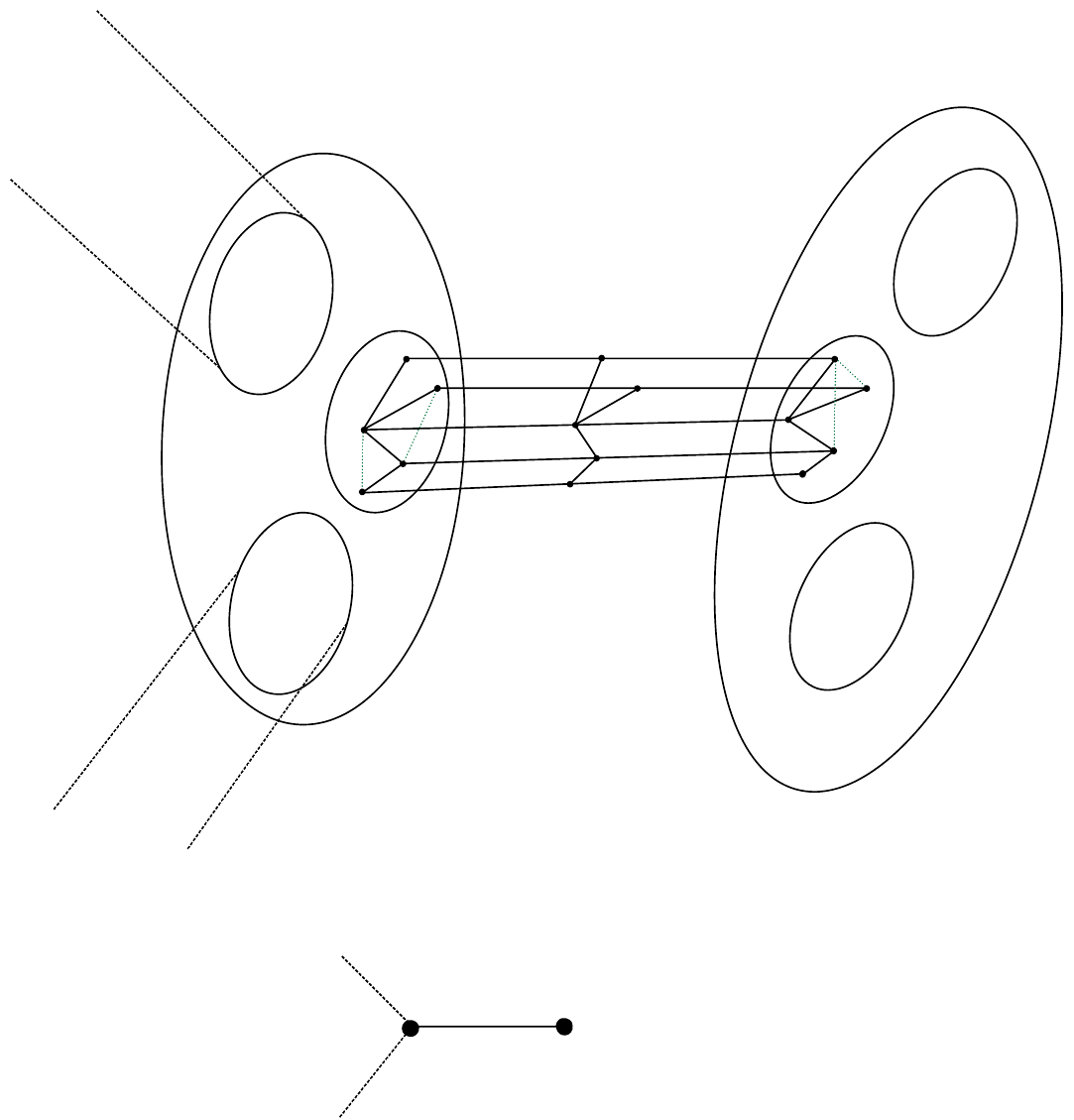}};
			\begin{scope}[x={(image.south east)},y={(image.north west)}]
				\node at (0.3, 0.9) {$X_v $};
				\node at (0.55, 0.71) {$X_e $};
				\node at (0.9, 0.95) {$X_w $};
				\node at (0.39, 0.11) {$v$};
				\node at (0.46, 0.11) {$e $};
				\node at (0.53, 0.11) {$w $};
				\node at (0.23,0.6) {$\tau_e(X_e)$};
			\end{scope}
		\end{tikzpicture}\caption{The cosets corresponding to the edge $e$ are connected by a subdivided edge. In the picture, we assume that $e^+ = v$. To every edge of $X_e$ corresponds an edge in $X_v, X_w$, but additional edges might be present.}\label{fig:edge_space}
	\end{figure}
\end{center}

While the inclusion of the vertex and edge spaces in to the Bass--Serre space are simplicial injections, their images maybe very distorted in the total metric on $X$. However, as there are only finitely many vertex and edge groups, we have uniform control over this distortion

\begin{lem}\label{lem:uniform_distortion}
	Let $\mc{G}$ be a graph of groups with Bass--Serre tree $T$ and Bass--Serre space $\BS$. There exists a monotone diverging function $h \colon [0,\infty) \to [0,\infty)$ so that for each vertex $v$ and edge $e$ of $T$ we have
	\[ d_{X_v} (x,y)  \leq h \bigl( d_X(x,y) \bigr) \text{ and }  d_{X_e} (x,y)  \leq h \bigl( d_X(x,y) \bigr) \]
	for any $x,y \in X_v$ or $x,y \in X_e$.
\end{lem}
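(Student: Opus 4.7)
The plan is to show that the Bass--Serre space $X$ is uniformly locally finite; the lemma will then follow from finiteness of balls together with $\pi_1\mc{G}$--equivariance of the action on $X$.

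First, I would verify that every vertex of $X$ has uniformly bounded degree. A middle vertex in an edge space $X_e$ has exactly $|J_{\check e}| + 2$ neighbors, namely the $X_e$--edges and the two subdivided-edge halves. For a type-(i) vertex $g \in X_v$, the $J_{\check v}$--edges contribute $|J_{\check v}|$ neighbors, and the remaining neighbors are middle vertices $m$ with $\tau_e(m) = g$ for some edge $e$ of $T$ at $v$. Here one uses the standard feature of Bass--Serre trees: for each oriented edge $\alpha$ of $\Gamma$ incident to $\check v$, the edges of $T$ at $v$ coming from $\alpha$ are indexed by cosets of $\tau_\alpha(G_\alpha)$ in $G_{\check v}$, and the $X_v$--side vertices of the middle vertices of such an edge comprise exactly the corresponding coset. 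Since these cosets partition $G_{\check v}$, the point $g$ is the $X_v$--side of exactly one middle vertex per oriented edge of $\Gamma$ incident to $\check v$. Thus the number of subdivided edges at $g$ equals the finite number of half-edges of $\Gamma$ at $\check v$. Combined with the finiteness of $\Gamma$ and of each $J_\mu, J_\alpha$, this gives a uniform degree bound on $X$.

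Given uniform local finiteness, $B_X(x_0, R)$ is a finite set for every $x_0 \in X$ and every $R \geq 0$. For a vertex space $X_v$ and any basepoint $x_0 \in X_v$, the intersection $B_X(x_0, R) \cap X_v$ is then a finite subset of the connected graph $X_v$ and therefore has finite diameter in the path metric $d_{X_v}$. Because the stabilizer of $X_v$ in $\pi_1\mc{G}$ is a conjugate of $G_{\check v}$ and acts transitively on $X_v$ by isometries of $X$, this diameter depends only on $R$ and on the $\pi_1\mc{G}$--orbit of $v$ in $T$. The same reasoning applied to edge spaces produces an analogous function for each edge orbit.

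To conclude, I would use that $\pi_1\mc{G}$ acts on $T$ with finite quotient $\Gamma$, so there are only finitely many orbits of vertex spaces and edge spaces. Taking the pointwise maximum of the resulting finite-valued distortion functions over a set of orbit representatives, and, if necessary, also with the identity function to ensure divergence, produces the desired monotone diverging $h$ with $d_{X_v}(x,y) \leq h(d_X(x,y))$ and $d_{X_e}(x,y) \leq h(d_X(x,y))$ uniformly in $v$ and $e$. The main point requiring care is the combinatorial verification that subdivided edges at a type-(i) vertex are counted by half-edges of $\Gamma$ at $\check v$; once uniform local finiteness of $X$ is in hand, the distortion bound is a routine finiteness-plus-equivariance argument.
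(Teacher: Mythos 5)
Your proposal is correct and follows essentially the same route as the paper: local finiteness of $X$ gives finite balls, transitivity of the vertex/edge stabilizers gives a well-defined distortion function per orbit, and finiteness of the orbits lets one take a maximum. The only difference is that you actually carry out the degree count establishing uniform local finiteness (correctly, via the coset/half-edge bookkeeping for subdivided edges), whereas the paper simply asserts that $X$ is locally finite.
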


\begin{proof}
	For each $v \in T^{(0)}$, define $h_v \colon [0,\infty) \to [0,\infty)$ to be $$h_v(r) = \max_{\{x,y \in X : d_{X_v}(x,y) \leq r\}} \{ d_{X_v}(x,y)\}.$$ Because  $X$ is locally finite and $ \pi_1 \mc{G}$ acts transitively on the vertices of the vertex and edge spaces respectively,  $h_v$ exists and is a monotone diverging function.  We similarly define $h_e$ for each edge $e$ of $T$. If two vertices, $v$ and $w$,  or two edges, $e$ and $f$, are in the same $ \pi_1 \mc{G}$--orbit then $h_v = h_w$ and $h_e = h_f$. Since $ \pi_1 \mc{G}$ acts of $T$  with finitely many orbits of edges and vertices, we can find the desired $h$ by taking the minimum over all of these finitely many orbits.
\end{proof}

Croke and Kleiner introduce the following class of \emph{admissible} graphs of groups to abstract the properties of the graphs of groups structure of the fundamental groups of non-geometric graph manifolds \cite{CrokeKleiner}. This will be the class of graphs of groups that we will study.

\begin{defn}\label{defn:admissible}
	Let $\mc{G} = (\Gamma, \{G_\mu\}, \{G_\alpha\}, \{\tau_{\alpha}\})$ be a graph of groups.  We say $\mc{G}$ is \emph{\squidable} if the following hold:
	\begin{enumerate}
		\item \label{item:edges_exist} $\Gamma$ contains at least 1 edge.
		\item\label{item:center_and_quotient} Each vertex group $G_\mu$ has center $Z_\mu$ that is a infinite cyclic group, and $G_\mu / Z_\mu = F_\mu$ is a non-elementary hyperbolic group. 
		\item\label{item:edge_groups} Each edge group $G_\alpha$ is isomorphic to $\mathbb{Z}^2$. 
		\item\label{item:finite_index} If $\alpha$ is an edge with $\mu = \alpha^+$ and $\omega = \alpha^-$, then $\langle \tau_{\alpha}^{-1}(Z_\mu),\tau_{\bar \alpha}^{-1}(Z_\omega) \rangle$ is a finite index subgroup of $G_\alpha \cong \mathbb{Z}^2$. 
		\item\label{item:non_commensurable} If $\alpha_1$,$\alpha_2$ are distinct edges with $\alpha_1^+ = \alpha_2^+$, then
		\begin{itemize}
			\item for each $g \in  \pi_1 \mc{G}$, $g\tau_{\alpha_1}(G_{\alpha_1})g^{-1}$ is not commensurable with $\tau_{\alpha_2}(G_{\alpha_2})$;
			\item for each $g \in  \pi_1 \mc{G} - \tau_{\alpha_1}(G_{\alpha_1})$, $g\tau_{\alpha_1}(G_{\alpha_1})g^{-1}$ is not commensurable with $\tau_{\alpha_1}(G_{\alpha_1})$  .
		\end{itemize}
		
	\end{enumerate}
\end{defn}

We conclude this section with a few basic consequence of Definition \ref{defn:admissible}. First we apply a theorem of Bowditch to obtain that the hyperbolic quotients, $F_\mu$, are actually hyperbolic relative to the  subgroups coming from the incident edge groups.

\begin{lem}\label{lem:F_mu_are_relatively_hyp}
	Let $\mc{G}$ be an \squidable graph of groups. For each vertex $\mu$, let $\pi_\mu$ be the quotient map $\pi_\mu \colon G_\mu \to F_\mu$, where $F_\mu$ is the quotient $G_\mu / Z_\mu$. The group $F_\mu$ is hyperbolic relative to the collection $\{ \pi_\mu(\tau_\alpha(G_\alpha)) : \alpha \text{ is an edge with } \alpha^+ = \mu\}$.
\end{lem}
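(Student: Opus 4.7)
The plan is to invoke the theorem of Bowditch stating that a hyperbolic group is hyperbolic relative to any finite collection of almost malnormal infinite quasiconvex subgroups. Since $F_\mu$ is non-elementary hyperbolic and $\Gamma$ is finite, it suffices to verify that the finite collection $\mathcal{H} := \{H_\alpha : \alpha^+ = \mu\}$, where $H_\alpha := \pi_\mu(\tau_\alpha(G_\alpha))$, consists of infinite quasiconvex subgroups and is almost malnormal.

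First I would analyse each $H_\alpha$ algebraically. Condition~\eqref{item:finite_index} of Definition~\ref{defn:admissible} forces $\tau_\alpha^{-1}(Z_\mu)$ to be a non-trivial subgroup of $G_\alpha \cong \mathbb{Z}^2$. Since $Z_\mu$ is infinite cyclic, $\tau_\alpha^{-1}(Z_\mu)$ cannot have rank $2$, so it has rank exactly $1$. Consequently, $\tau_\alpha(G_\alpha) \cap Z_\mu$ is infinite cyclic, and
\[
H_\alpha \;\cong\; \tau_\alpha(G_\alpha) / (\tau_\alpha(G_\alpha) \cap Z_\mu)
\]
is virtually infinite cyclic. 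Virtually infinite cyclic subgroups of a hyperbolic group are elementary, and in particular quasiconvex, which handles the quasiconvexity hypothesis.

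For almost malnormality, suppose $g \in F_\mu$ and $\alpha_1, \alpha_2$ are edges at $\mu$ with $H_{\alpha_1} \cap g H_{\alpha_2} g^{-1}$ infinite. As each $H_{\alpha_i}$ is virtually cyclic, any infinite subgroup has finite index, so $H_{\alpha_1}$ and $g H_{\alpha_2} g^{-1}$ are commensurable in $F_\mu$. Pick a lift $g' \in G_\mu$ of $g$. Noting that $\pi_\mu^{-1}(H_{\alpha_i}) = \tau_{\alpha_i}(G_{\alpha_i}) \cdot Z_\mu$ contains $\tau_{\alpha_i}(G_{\alpha_i})$ with finite index (because $\tau_{\alpha_i}(G_{\alpha_i}) \cap Z_\mu$ is finite index in $Z_\mu$, again by condition~\eqref{item:finite_index}), pulling back gives that $\tau_{\alpha_1}(G_{\alpha_1})$ and $g' \tau_{\alpha_2}(G_{\alpha_2}) g'^{-1}$ are commensurable in $G_\mu$, hence also in $\pi_1\mathcal{G}$. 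Condition~\eqref{item:non_commensurable} of Definition~\ref{defn:admissible} then forces $\alpha_1 = \alpha_2$ and $g' \in \tau_{\alpha_1}(G_{\alpha_1})$, which yields $g \in H_{\alpha_1}$, as required.

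The hard part is the bookkeeping in the last paragraph: one must carefully track the finite-index relationships among $\tau_\alpha(G_\alpha)$, $\tau_\alpha(G_\alpha) \cdot Z_\mu$, and their images and preimages under $\pi_\mu$, in order to legitimately reduce the almost malnormality of the collection $\mathcal{H}$ in $F_\mu$ to the non-commensurability statements of Definition~\ref{defn:admissible}\eqref{item:non_commensurable}, which are phrased in $\pi_1\mathcal{G}$. Once this reduction is set up correctly, the rest of the argument is a direct application of standard facts about virtually cyclic subgroups of hyperbolic groups and Bowditch's relative hyperbolicity criterion.
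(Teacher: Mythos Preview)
Your proposal is correct and follows essentially the same strategy as the paper: verify that the $H_\alpha$ are virtually cyclic (hence quasiconvex), reduce almost malnormality in $F_\mu$ to the non-commensurability condition \eqref{item:non_commensurable}, and invoke Bowditch's criterion. The only minor difference is that you deduce non-triviality of $\tau_\alpha(G_\alpha)\cap Z_\mu$ from condition \eqref{item:finite_index}, whereas the paper instead argues that a $\mathbb Z^2$ subgroup of $G_\mu$ must meet the kernel $Z_\mu$ of $\pi_\mu$ since the hyperbolic quotient $F_\mu$ contains no $\mathbb Z^2$; either route works. Your finite-index bookkeeping in lifting commensurability from $F_\mu$ to $G_\mu$ is in fact spelled out more explicitly than in the paper.
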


\begin{proof}

	Let $I_\mu$ be the set of edges $\alpha$ of $\mc{G}$ with $\alpha^+ = \mu$ and let $A_\alpha = \tau_\alpha(G_\alpha)$ for each $\alpha \in I_\alpha$. We want to show that $\{\pi_\mu(A_\alpha)\colon \alpha \in I\mu\}$ is an almost malnormal collection of quasiconvex subgroups as this implies $F_\mu$ is relatively hyperbolic by \cite[Theorem 7.11]{Bowditch_Rel_Hyp}. 
	
	We first establish that $\pi_\mu(A_\alpha)$ is virtually cyclic for each $\alpha\in I_\mu$.   By construction,  $\pi_\mu(A_\alpha)$ is the quotient of $A_\alpha$ by $A_\alpha \cap Z_\mu$.
	Since $A_\alpha \cong \mathbb{Z}^2$ and  $F_\mu$ is hyperbolic, $A_\alpha$ must intersect $Z_\mu$ in a non-trivial subgroup. Hence $\pi_\mu(A_\alpha)$ must be virtually cyclic. Note, this implies each $\pi_\mu(A_\alpha)$ is quasiconvex in $F_\mu$ as virtually cyclic subgroups of hyperbolic groups are always quasiconvex.
	
	We now show the set $\{\pi_\mu(A_\alpha) : \alpha \in I_\mu\}$ is an almost malnormal collection of subgroups of $F_\mu$. Since each $\pi_\mu(A_\alpha)$ is virtually cyclic, if the collection fails to be almost malnormal,  there must be $\alpha_1,\alpha_2 \in I_\mu$ so that some conjugate of $\pi_\mu(A_{\alpha_1})$ is commensurable to $\pi_\mu(A_{\alpha_2})$ in $F_\mu$. Because $Z_\mu \cong \mathbb{Z}$ and each $A_\alpha \cong \mathbb{Z}^2$, this would imply a conjugate of $A_{\alpha_1}$ is commensurable to $A_{\alpha_2}$ in $ \pi_1 \mc{G}$. As this would contradict Definition \ref{defn:admissible}.\eqref{item:non_commensurable}, we must have that $\{\pi_\mu(A_\alpha) : \alpha \in I_\mu\}$ is an almost malnormal. The lemma now follows by applying \cite[Theorem 7.11]{Bowditch_Rel_Hyp}.
\end{proof}

\noindent Lastly, we note that by choosing appropriate infinite generating sets for the vertex groups $G_\mu$, we can make Cayley graphs that are quasi-isometric to the hyperbolic groups $F_\mu$ as well as the electrification of $F_\mu$ by the cyclic subgroups from the incident edge groups.
Recall each vertex group is a central extension $Z_\mu \to G_\mu \to F_\mu$ where $Z_\mu$ is  cyclic and $F_\mu$ is hyperbolic.

\begin{lem}\label{lem:cone-off_edge_groups}
	Let $\mc{G}$ be an \squidable graph of groups. Let $I_\mu$ be the set of edges $\alpha$ of $\mc{G}$ with $\alpha^+ = \mu$, then let $\mc{E}_\mu = \bigcup_{\alpha \in I_\mu} \tau_\alpha (G_\alpha)$. For each finite generating set $J_\mu$  of $G_\mu$ we have:
	\begin{enumerate}
		\item The quotient map $\pi_\mu \colon G_\mu \to F_\mu$ induces a quasi-isometry $$\pi_\mu \colon \cay(G_\mu, J_\mu \cup Z_\mu) \to \cay(F_\mu, \pi_\mu(J_\mu)),$$ in particular $\cay(G_\mu, J_\mu \cup Z_\mu)$ is hyperbolic and hyperbolic relative to the collection $\{g\tau_\alpha(G_\alpha) : \alpha \in I_\mu \text{ and } g \in G_\mu\}$.
		\item The quotient map $\pi_\mu \colon G_\mu \to F_\mu$ induces a quasi-isometry $$\pi_\mu \colon \cay(G_\mu, J_\mu \cup \mc{E}_\mu) \to \cay(F_\mu, \pi_\mu(J_\mu \cup \mc{E}_\mu)).$$ Hence, $ \cay(G_\mu, J_\mu \cup \mc{E}_\mu)$ is hyperbolic and will be a quasi-tree whenever $F_\mu$ is virtually free.
	\end{enumerate} 
	The quasi-isometry constants are independent of $\mc{G}$.
\end{lem}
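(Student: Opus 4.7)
The plan is to deduce both parts from Lemma \ref{lem:quotient_VS_cone-off}, together with Lemma \ref{lem:F_mu_are_relatively_hyp} for the relative hyperbolicity conclusions and Lemma \ref{lem:bottleneck} for the quasi-tree statement when $F_\mu$ is virtually free.

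For part (1), observe that $Z_\mu \subseteq J_\mu \cup Z_\mu$, so Lemma \ref{lem:quotient_VS_cone-off} applies directly and gives a $(2,1)$-quasi-isometry
\[ \pi_\mu \colon \cay(G_\mu, J_\mu \cup Z_\mu) \to \cay(F_\mu, \pi_\mu(J_\mu \cup Z_\mu)). \]
Since $\pi_\mu(Z_\mu) = \{1\}$, the target is $\cay(F_\mu, \pi_\mu(J_\mu))$, a Cayley graph of the hyperbolic group $F_\mu$ with respect to a finite generating set, hence hyperbolic; the source is then hyperbolic too. To transfer the relative hyperbolicity of $F_\mu$ provided by Lemma \ref{lem:F_mu_are_relatively_hyp}, I would identify each peripheral coset $g\tau_\alpha(G_\alpha)$ in $G_\mu$ with the coset $\pi_\mu(g)\pi_\mu(\tau_\alpha(G_\alpha))$ in $F_\mu$ up to bounded Hausdorff distance in $\cay(G_\mu, J_\mu \cup Z_\mu)$. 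This works because $\pi_\mu^{-1}\pi_\mu(g\tau_\alpha(G_\alpha)) = g\cdot Z_\mu\cdot\tau_\alpha(G_\alpha)$ is a finite union of $\tau_\alpha(G_\alpha)$-cosets: Definition \ref{defn:admissible}.\eqref{item:finite_index} forces $Z_\mu \cap \tau_\alpha(G_\alpha)$ to be finite index in $Z_\mu \cong \mathbb{Z}$, and any coset of $Z_\mu$ has diameter one in the cone-off.

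For part (2), the extra step is to insert $Z_\mu$ into the generating set without changing the quasi-isometry type. By the same finite-index observation, every element of $Z_\mu$ is $J_\mu$-uniformly close to an element of $Z_\mu \cap \tau_\alpha(G_\alpha) \subseteq \mc{E}_\mu$, so the identity map is a quasi-isometry $\cay(G_\mu, J_\mu \cup \mc{E}_\mu) \to \cay(G_\mu, J_\mu \cup \mc{E}_\mu \cup Z_\mu)$ with constants depending only on $J_\mu$. Lemma \ref{lem:quotient_VS_cone-off} applied to $K = J_\mu \cup \mc{E}_\mu \cup Z_\mu$ then gives the desired $(2,1)$-quasi-isometry to $\cay(F_\mu, \pi_\mu(J_\mu \cup \mc{E}_\mu))$.

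Finally, this target Cayley graph of $F_\mu$ is uniformly quasi-isometric to the electrification of $\cay(F_\mu, \pi_\mu(J_\mu))$ along the cosets of $\pi_\mu(\tau_\alpha(G_\alpha))$ for $\alpha \in I_\mu$, so by Lemma \ref{lem:F_mu_are_relatively_hyp} it is hyperbolic. If $F_\mu$ is virtually free, then $\cay(F_\mu, \pi_\mu(J_\mu))$ is a quasi-tree and the peripheral cosets are quasiconvex, as $\pi_\mu(\tau_\alpha(G_\alpha))$ is virtually cyclic in the hyperbolic group $F_\mu$; Lemma \ref{lem:bottleneck} then upgrades the electrification to a quasi-tree. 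The only delicate point is the translation between cosets of $\tau_\alpha(G_\alpha)$ in $G_\mu$ and their images in $F_\mu$, which is exactly where Definition \ref{defn:admissible}.\eqref{item:finite_index} is used; once that Hausdorff bound is established, the remainder is assembly from the cited lemmas and the universality of the $(2,1)$ constants in Lemma \ref{lem:quotient_VS_cone-off} gives the final clause about $\mc{G}$-independence.
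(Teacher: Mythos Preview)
Your approach is essentially the same as the paper's: apply Lemma~\ref{lem:quotient_VS_cone-off} for the quasi-isometries, Lemma~\ref{lem:F_mu_are_relatively_hyp} for the (relative) hyperbolicity, and Lemma~\ref{lem:bottleneck} for the quasi-tree conclusion. You are in fact more careful than the paper in part~(2): the paper invokes Lemma~\ref{lem:quotient_VS_cone-off} directly, which literally requires $Z_\mu\subseteq J_\mu\cup\mc{E}_\mu$, while you correctly observe that only a finite-index subgroup of $Z_\mu$ lies in $\mc{E}_\mu$ (via Definition~\ref{defn:admissible}.\eqref{item:finite_index}) and repair this by passing through $\cay(G_\mu, J_\mu\cup\mc{E}_\mu\cup Z_\mu)$.

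One small imprecision: the quasi-isometry $\cay(G_\mu, J_\mu\cup\mc{E}_\mu)\to\cay(G_\mu, J_\mu\cup\mc{E}_\mu\cup Z_\mu)$ has constants depending not just on $J_\mu$ but also on the index $[Z_\mu:Z_\mu\cap\tau_\alpha(G_\alpha)]$, so your final sentence overstates what the $(2,1)$ from Lemma~\ref{lem:quotient_VS_cone-off} alone buys for part~(2). The paper's proof glosses over this point as well, so this is not a divergence from the paper but a shared looseness in the ``independent of $\mc{G}$'' clause.
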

\begin{proof}
	The fact that the map are quasi-isometries follows from Lemma \ref{lem:quotient_VS_cone-off}. The first relative hyperbolicity follows from Lemma~\ref{lem:F_mu_are_relatively_hyp}. For the second, since $F_\mu$ is hyperbolic relative to the subgroups $\{\pi_\mu(\tau_\alpha(G_\alpha)) : \alpha \in I_\mu\}$ (Lemma~\ref{lem:F_mu_are_relatively_hyp}),  the graph $\cay(F_\mu, \pi_\mu(J_\mu \cup \mc{E}_\mu))$ is hyperbolic. Moreover, if $F_\mu$ is virtually free, then $\cay(F_\mu,\pi_\mu(J_\mu))$ is a quasi-tree. Hence the fact that $\cay(F_\mu, \pi_\mu(J_\mu \cup \mc{E}_\mu))$ is a quasi-tree  is a consequence of Lemma~\ref{lem:bottleneck}.\end{proof}

\subsection{Hierarchically hyperbolic groups}\label{subsec:HHS}
As we will not directly require the full definition of a hierarchically hyperbolic space, we will only review the necessary data to define a hierarchically hyperbolic group. We direct the reader to \cite{BHSII} or \cite{HHS_Survey} for complete details on the HHS axioms.

Fix $E \geq 1$. An \emph{$E$--hierarchically hyperbolic space (HHS) structure} on a geodesic metric space $\mc{X}$ starts with a set $\mf{S}$ indexing a collection of $E$--hyperbolic spaces $\{\mc{C}(V)\}_{V \in \mf{S}}$. For each $V \in \mf{S}$, there is an $(E,E)$--coarsely Lipschitz, $E$--coarsely surjective \emph{projection map} $\varphi_V \colon \mc{X} \to \mc{C}(V)$. The set $\mf{S}$ is also equipped with three combinatorial relations:  nesting ($\nest$), orthogonality ($\perp$), and  transversality ($\trans$). To be a hierarchically hyperbolic space structure for $\mc{X}$, the set $\mf{S}$ and these relations and projections need to satisfy a number of axioms. The most relevant for us are:
\begin{itemize}
	\item Every pair of distinct elements of $\mf{S}$ is related by exactly one of $\nest$, $\perp$, or $\trans$.
	\item $\trans$ and $\perp$ are both symmetric, while $\nest$ is a partial order.
	\item If $V \perp W$ and $U \nest V$, then $U \perp W$.
	\item If $V \propnest W$ or $V \trans W$, then there exists a distinguished subset $\rho_W^V \subseteq \mc{C}(W)$ with diameter at most $E$.
\end{itemize}

We use $\mf{S}$ to denote the entire HHS structure (the spaces, projections, relations, and distinguished subsets) and the pair $(\mc{X},\mf{S})$ to denote the hierarchically hyperbolic space $\mc{X}$ equipped with the specific HHS structure $\mf{S}$.
An HHS structure can be transferred across a quasi-isometry $f \colon \mc{Y} \to \mc{X}$, by replacing the projection maps $\varphi_V$ with $\varphi_V \circ f$. 
In particular, if a finitely generated group $G$  acts  metrically properly and coboundedly on an HHS $(\mc{X},\mf{S})$, then $\mf{S}$ is also a hierarchically hyperbolic space structure for $G$ equipped with any word metric (or equivalently any Cayley graph of $G$ with respect to a finite generating set). However, the maps and relations defining $\mf{S}$ need not be equivariant with respect to the action of $G$. If the HHS structure is compatible with the group action, then we can have the following stronger definition of a hierarchically hyperbolic \emph{group}.

\begin{defn}\label{defn:HHG}
	Suppose a finitely generated group $G$ is acting isometrically on an $E$--hierarchically hyperbolic space $(\mc{X},\mf{S})$. We say $\mf{S}$ is  an $E$--\emph{hierarchically hyperbolic group} structure if
	
	\begin{enumerate}
		\item \label{item:geometric_action} $G$ acts metrically properly and coboundedly on $\mc{X}$.
		\item\label{item:HHG_bijections} There is an $\nest$, $\perp$, and $\trans$ preserving action of $G$ on the index set $\mf{S}$ by bijections.
		\item\label{item:HHG_finite_orbits} $\mf{S}$ has finitely many $G$--orbits.
		\item\label{item:HHG_equivariance} For each $V \in \mf{S}$ and $g\in G$, there exists an isometry $g_V \colon \mc{C}(V) \rightarrow \mc{C}(gV)$ satisfying the following for all $V,U \in \mf{S}$ and $g,h \in G$.
		\begin{itemize}
			\item The map $(gh)_V \colon \mc{C}(V) \to \mc{C}(ghV)$ is equal to the map $g_{hV} \circ h_V \colon \mc{C}(V) \to \mc{C}(hV)$.
			\item For each $x \in \mc{X}$, $g_V(\varphi_V(x)) = \varphi_{gV}(g \cdot x)$. 
			\item If $V \trans U$ or $V \propnest U$, then $g_V(\rho_U^V) = \rho_{gU}^{gV}$. 
		\end{itemize}
	\end{enumerate}
	We say $G$ is a \emph{hierarchically hyperbolic group} (HHG) if there exists an HHS $(\mc{X},\mf{S})$ so that $\mf{S}$ is an $E$--HHG structure for $G$ for some $E \geq 1$.
\end{defn}

Modulo the incompleteness of our description of a hierarchically hyperbolic space structure, the above definition of a hierarchically hyperbolic group is precise.  

There are examples of  finitely generated groups that have hierarchically hyperbolic \emph{space}  structures, but do not have any hierarchically hyperbolic \emph{group} structures. In fact, there are groups that are not HHGs, but have finite index subgroups that are HHGs \cite{PetytSpriano}.

We will need the following proposition, originally due to Paul Plummer, to check that certain 3--manifold groups are not HHG.

\begin{prop}[Invariant quasiflats for virtually abelian subgroups]\label{lem:flat_torus}
	Let $(G,\mathfrak S)$ be an HHG.  Let $A\subset G$ be a virtually $\Z^k$ subgroup for some $k\geq 1$.  Then there exists $\ell\geq k$ and $U_1,\ldots,U_\ell\in\mathfrak S$ such that the 
	following hold:
	\begin{enumerate}
		\item $\{U_1,\ldots,U_\ell\}$ is $A$--invariant.
		\item $U_i\orth U_j$ for $1\leq i<j\leq \ell$.
		\item There exists $L<\infty$ such that $\diam(\varphi_V(A))\leq L$ for $V\in\mathfrak S-\{U_1,\ldots,U_\ell\}$.
		\item For each $i\leq \ell$, the image $\varphi_{U_i}(A)$ of $A$ in $\fontact (U_i)$ is a quasi-line.
	\end{enumerate}
	Hence the ($A$--invariant) hierarchically quasiconvex hull $F_A$ of $A$ is quasi-isometric to $\Z^\ell$.
\end{prop}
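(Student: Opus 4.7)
The plan is to take the candidate collection $\{U_1, \ldots, U_\ell\}$ to be $\mathcal{S}_A := \{V \in \mathfrak{S} : \diam(\varphi_V(A)) = \infty\}$ and verify the four assertions in turn. First, the $A$-invariance in item~(1) is immediate from the equivariance axiom (Definition~\ref{defn:HHG}.\eqref{item:HHG_equivariance}): for $a \in A$ we have $\varphi_{aV}(A) = a_V(\varphi_V(a^{-1}A)) = a_V(\varphi_V(A))$, so the diameter is constant on $A$-orbits. Once pairwise orthogonality is established, the HHS axiom bounding the size of $\perp$-collections forces $\mathcal{S}_A$ to be finite, yielding the set $\{U_1,\ldots,U_\ell\}$.

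The main obstacle is item~(2), pairwise orthogonality. Given distinct $V, W \in \mathcal{S}_A$, I would first pass to a finite-index subgroup $A_0 \leq A$ fixing both $V$ and $W$; such $A_0$ exists because the $A$-orbits in $\mathcal{S}_A$ sit inside finite collections of pairwise orthogonal or nested domains, whose size is bounded by the HHS complexity and $\perp$-rank. To rule out $V \trans W$, apply Behrstock's inequality at the identity to get $\min(d_V(1,\rho^W_V), d_W(1,\rho^V_W)) \leq E$; say the former. Because $A_0$ preserves both $V$ and $W$, the set $\rho^W_V$ is $A_0$-invariant in $\fontact(V)$, so for every $a \in A_0$ we get $d_V(a,\rho^W_V) = d_V(1, a_V^{-1}\rho^W_V) = d_V(1,\rho^W_V) \leq E$. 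This bounds $\varphi_V(A_0)$, contradicting $V \in \mathcal{S}_A$. The nested case $V \propnest W$ is handled analogously by the bounded geodesic image axiom applied to the $A_0$-invariant subset $\rho^V_W \subset \fontact(W)$, forcing $\varphi_W(A_0)$ into a bounded neighbourhood of $\rho^V_W$. For item~(3), $\varphi_V(A)$ is bounded by definition when $V \notin \mathcal{S}_A$; uniformity of $L$ follows because the hierarchically quasiconvex hull of $A$ meets only finitely many $G$-orbits of domains on which its projection can be unbounded, and each such bound is $A$-orbit invariant.

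For item~(4) and the closing assertion, each $\varphi_{U_i}(A)$ is an unbounded subset of the hyperbolic space $\fontact(U_i)$ coarsely invariant under the finite-index abelian subgroup $A_0$ stabilising $U_i$; since a finitely generated abelian group acting by isometries on a hyperbolic space with unbounded orbits must act loxodromically along a common axis, $\varphi_{U_i}(A)$ is a quasi-line. Finally, the bound $\ell \geq k$ follows from the HHS distance formula: up to bounded error, the $G$-word-length of $a \in A$ coincides with the thresholded sum of $d_{U_i}(1,a)$ over $i \leq \ell$ (all other terms being bounded by item~(3)); each $d_{U_i}(1,\cdot)$ is coarsely linear in one direction of $A \cong \Z^k$, so a sum of $\ell$ such contributions dominating the $\Z^k$-word-length forces $\ell \geq k$. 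Combining this with hierarchical quasiconvexity of the hull $F_A$ and the product geometry arising from pairwise orthogonality of the $U_i$ yields $F_A$ quasi-isometric to $\Z^\ell$.
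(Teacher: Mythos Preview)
Your strategy of taking $\{U_1,\ldots,U_\ell\}$ to be $\mathcal S_A=\{V:\diam(\varphi_V(A))=\infty\}$ is natural, and the observation that an $A_0$--invariant bounded set $\rho^W_V$ forces all $A_0$--orbits in $\mathcal C(V)$ to be bounded is a nice way to rule out transversality and proper nesting. However, the argument for item~(2) is circular as written: to show $V\perp W$ you pass to a finite-index $A_0\le A$ fixing both, and you justify the existence of $A_0$ by asserting that $A$--orbits in $\mathcal S_A$ lie in finite pairwise-orthogonal collections---but pairwise orthogonality of $\mathcal S_A$ is precisely what you are trying to prove. A~priori $\mathcal S_A$ could be infinite and the $A$--orbit of $V$ could be infinite; nothing you have written excludes this before orthogonality is established. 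The paper sidesteps this by invoking \cite[Theorem~5.1]{PetytSpriano}, which directly produces a finite $A$--invariant pairwise-orthogonal set $\{U_1,\ldots,U_\ell\}$ such that every domain with unbounded $A$--projection nests into some $U_i$.

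There are two further gaps. For item~(3), finiteness of $\diam(\varphi_V(A))$ for $V\notin\mathcal S_A$ is tautological, but \emph{uniformity} is not, and your sentence about the hull does not supply it; the paper needs a genuine argument (Claim~\ref{claim:all_bounded}) showing $\operatorname{Big}(a_i)\subseteq\{U_1,\ldots,U_\ell\}$ for each generator $a_i$ of a free-abelian finite-index subgroup, then combining \cite[Proposition~6.4]{DHS_boundary} with an induction on word length in the $a_i$. For item~(4), the assertion that a finitely generated abelian group with unbounded orbit on a hyperbolic space must act lineally is false: a parabolic $\mathbb Z\le\operatorname{Isom}(\mathbb H^2)$ has unbounded orbits but no loxodromic. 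The paper excludes the horocyclic case via \cite[Theorem~3.1]{DHS_corr}, which says infinite-order elements of an HHG are loxodromic on the relevant $\mathcal C(U_i)$; this is an HHG-specific fact, not a consequence of abelianness.
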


Hierarchically quasiconvex hulls are discussed in~\cite[Section 6]{BHSII}.  

\begin{proof}[Proof of Proposition~\ref{lem:flat_torus}]
	We adopt the standard convention that for $a,b \in G$ and $V \in \mf{S}$, $d_V(a,b)$ denotes $d_V(\varphi_v(a),\varphi_V(b))$. Let $\mathbbm{1}$ denote the identity in $G$ and equip both $A$ and $G$ with word metrics from finite generating sets.
	
	Apply \cite[Theorem 5.1]{PetytSpriano} to obtain a nonempty $A$--invariant set of elements $U_1,\ldots,U_\ell\in\mathfrak S$ such that
	\begin{itemize}
		\item $U_i\orth U_j$ for $1\leq i<j\leq \ell$;
		\item if $W\in\mathfrak S$ has the property that $\varphi_W(A)$ is unbounded, then $W\nest U_i$ for some $i$;
		\item $\varphi_{U_i}(A)$ is unbounded for each $i\leq \ell$.
	\end{itemize}
	
	\textbf{Each $\varphi_{U_i}(A)$ is a quasiline:}  Since $\ell<\infty$, there is a finite-index subgroup $\ddot A\leq A$ such that $\ddot A\cdot U_i=U_i$ for all $i$.  Assume, by 
	passing to a further finite-index subgroup, that $\ddot A\cong\Z^k$.  In particular, $\ddot A$ acts on each of the $E$--hyperbolic spaces $\mc{C}(U_i)$.
	
	Since $\ddot A$ has finite index in $A$, and $\varphi_{U_i}$ is $(E,E)$--coarsely lipschitz and $\ddot A$--equivariant, we have that $\varphi_{U_i}(A)$ 
	and 
	$\ddot A\cdot\varphi_{U_i}(\mathbbm{1})$ lie at finite Hausdorff distance. In particular, the above choice of the $U_i$ implies the orbit $\ddot A\cdot \varphi_{U_i}(\mathbbm{1})$ is unbounded for each $U_i$.  Proposition 3.1 of~\cite{CCMT} therefore provides four options for the action of $\ddot A$ on $\mc{C}(U_i)$: \emph{focal}, \emph{general}, \emph{horocyclic}, or \emph{lineal}. We verify the the action must be lineal.
	
	Since $\ddot A$ is abelian, it does not contain a free sub-semigroup and hence the action on $\mc{C}(U_i)$ action cannot be \emph{focal} or \emph{general}. By~\cite[Theorem 3.1]{DHS_corr}, any infinite-order element of $\ddot A$ is loxodromic on $\fontact (U_i)$, so the action is not 
	\emph{horocyclic}.  Hence the action is \emph{lineal}. In particular, the orbit $\ddot A\cdot \varphi_{U_i}(\mathbbm{1})$, with the 
	metric inherited from $\mc{C}(U_i)$, is $(C,C)$--quasi-isometric to $\Z$ and $C$--quasiconvex, 
	where $C$ depends on $\ddot A$ and the HHS constant $E$. Up to enlarging $C$, we can assume that $\varphi_{U_i}(A)$ is a $C$--quasiconvex $(C,C)$--quasiline.  Moreover, since there are finitely many $i$, 
	we 
	can assume that the same constant $C$ works for all $i$.

	\textbf{Bounding remaining domains:}  We now bound the diameter of $\varphi_V(A),\ V\not\in\{U_1,\ldots,U_\ell\}$.
	
	\begin{claim}\label{claim:all_bounded}
		There exists $L\ge0$ such that $\diam(\varphi_V(A))\leq L$ for all $V\in\mathfrak S-\{U_1,\ldots,U_\ell\}$.
	\end{claim}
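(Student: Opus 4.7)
The plan is to argue by contradiction, combining the HHG finite-orbit axiom with the defining property of $\{U_1,\ldots,U_\ell\}$ from~\cite[Theorem~5.1]{PetytSpriano}. The rough idea is that an unbounded sequence of projection-diameters on domains outside $\{U_i\}$ should produce, via translation by elements of $G$ and conjugation, a virtually abelian subgroup violating that defining property on a ``forbidden'' domain.

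Suppose no such $L$ exists. Then one can choose $V_n \in \mathfrak S \setminus \{U_1,\ldots,U_\ell\}$ and $a_n, b_n \in A$ with $d_{V_n}(a_n, b_n) \to \infty$. By Definition~\ref{defn:HHG}\eqref{item:HHG_finite_orbits}, after passing to a subsequence I may write $V_n = g_n V^*$ for a fixed $V^* \in \mathfrak S$ and elements $g_n \in G$. The equivariance in Definition~\ref{defn:HHG}\eqref{item:HHG_equivariance} then transfers the hypothesis to $\diam(\varphi_{V^*}(g_n^{-1}A)) \to \infty$: the projections to the single fixed hyperbolic space $\mc{C}(V^*)$ of the cosets $g_n^{-1}A$ have unboundedly-growing diameter. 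Passing from each coset $g_n^{-1}A$ to the corresponding conjugate subgroup $A_n := g_n^{-1}Ag_n$, each $A_n$ is again virtually $\mathbb{Z}^k$, so~\cite[Theorem~5.1]{PetytSpriano} applies to $A_n$ and produces its own support collection $\mathcal U_n \subseteq \mathfrak S$. I would then argue that the growth forces $V^*$ to be nested into or equal to some member of $\mathcal U_n$ for all large $n$; translating back via $g_n$ would force $V_n$ into the analogous collection $\{U_1,\ldots,U_\ell\}$ for $A$, contradicting the choice $V_n \notin \{U_i\}$.

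The main obstacle is upgrading the statement ``$\diam(\varphi_{V^*}(g_n^{-1}A))$ tends to infinity along a sequence of cosets'' into ``some specific subgroup has genuinely unbounded projection on a specific domain''. A cleaner route, avoiding ultralimits, is to pass to a finite-index subgroup $\ddot A \trianglelefteq A$ that fixes each $U_i$ setwise (possible since $\{U_i\}$ is finite and $A$-invariant), so that the lineal-vs-elliptic dichotomy from~\cite{CCMT, DHS_corr}, already used in the paragraph above the claim, applies to the $\ddot A$-action on each $\mc{C}(V_n)$. Since $V_n \notin \{U_i\}$, that action must be elliptic, and the problem reduces to showing the elliptic orbit-diameters are uniformly bounded along the single $G$-orbit $G \cdot V^*$; this uniformity should be extracted from the finitely many isometry types of $\mathbb{Z}^k$-actions induced by conjugation on the fixed hyperbolic space $\mc{C}(V^*)$ via the equivariance isometries $g_{n,V^*}\colon\mc{C}(V^*)\to\mc{C}(V_n)$.
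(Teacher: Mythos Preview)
Your proposal has genuine gaps in both routes. In the ``cleaner route'' you pass to a finite-index $\ddot A$ fixing each $U_i$ and then speak of ``the $\ddot A$-action on each $\mc{C}(V_n)$'', but $\ddot A$ has no reason to fix $V_n$ (you only arranged that it fixes the $U_i$), so there is no such action in general. More seriously, your assertion that ``since $V_n\notin\{U_i\}$, that action must be elliptic'' is not justified: the defining property of the $U_i$ from~\cite[Theorem~5.1]{PetytSpriano} says only that any $W$ with $\varphi_W(A)$ unbounded satisfies $W\nest U_i$ for some $i$, not $W=U_i$. Domains properly nested in some $U_i$ are not ruled out a priori, and indeed excluding them is the substantive content of the claim. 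Finally, the uniformity step (``finitely many isometry types of $\mathbb Z^k$-actions'') is not substantiated: conjugating by the varying $g_n$ gives genuinely different actions on $\mc{C}(V^*)$, not finitely many.

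The paper's proof proceeds quite differently and avoids these issues. It works one cyclic generator $a_i$ of $\ddot A\cong\mathbb Z^k$ at a time. For each $a_i$ it invokes the finite pairwise-orthogonal set $\operatorname{Big}(a_i)=\{W:\diam(\varphi_W(\langle a_i\rangle))=\infty\}$ from~\cite[Section~6]{DHS_boundary}, and proves $\operatorname{Big}(a_i)\subseteq\{U_1,\ldots,U_\ell\}$ by a short $\rho$-set argument: if $W\in\operatorname{Big}(a_i)$ were properly nested in some $U_j$, one uses the unbounded $\ddot A$-orbit in $\mc{C}(U_j)$ to find $g\in\ddot A$ moving $\rho^W_{U_j}$ far, whence $gW\neq W$; but $ga_i^mg^{-1}=a_i^m$ forces $gW\in\operatorname{Big}(a_i)$, contradicting pairwise orthogonality via~\cite[Lemma~1.5]{DHS_boundary}. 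Once $\operatorname{Big}(a_i)\subseteq\{U_j\}$ is established, \cite[Proposition~6.4]{DHS_boundary} gives a uniform bound $D(a_i)$ on $\diam(\varphi_V(\langle a_i\rangle))$ for all $V\notin\{U_j\}$, and a short induction using equivariance (writing $b=a_1^{n_1}\cdots a_k^{n_k}$ and peeling off one factor at a time) yields $d_V(\mathbbm 1,b)\leq kD$. The missing idea in your proposal is precisely this $\rho$-set/orthogonality argument pinning $\operatorname{Big}(a_i)$ down to the $U_j$ themselves rather than domains nested in them.
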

	
	\begin{proof}[Proof of Claim~\ref{claim:all_bounded}]
		It suffices to prove the claim for the finite-index subgroup $\ddot A$ of $A$, since the maps $\varphi_V$ are all $(E,E)$--coarsely lipschitz.  Choose $a_1,\ldots,a_k\in 
		A$ such that $a_1,\ldots,a_k$ generate the finite-index subgroup $\ddot A$ isomorphic to $\Z^k$.  For any $g \in G$, let $\operatorname{Big}(g)$ be the set $\{W \in \mf{S} : \diam(\varphi_W(\langle g \rangle)) = \infty\}$. By \cite[Lemma 6.7]{DHS_boundary}, $\operatorname{Big}(g)$ is a finite, pairwise orthogonal subset of $\mf{S}$ for any $g \in G$. Moreover, $\operatorname{Big}(g)$ is non-empty whenever $g$ has infinite order by \cite[Proposition~6.4]{DHS_boundary}.
		
		We claim that $\operatorname{Big}(a_i)$ is a non-empty subset of $\{U_1,\ldots,U_\ell\}$ for all $i$.  Let $W \in \operatorname{Big}(a_i)$. By \cite[Theorem 5.1]{PetytSpriano} or \cite[Lemma 6.3, Proposition 6.4]{DHS_boundary}, there is $m \in \mathbb{N}$ so that $a_i^m$ fixes $W$ and has unbounded orbits on $\mc{C}(W)$.
		By the choice of the $U_j$, 
		there exists $j$ such that $W\nest U_j$.  If $W \neq U_j$, then $W\propnest U_j$. Hence, $\rho^W_{U_j}$ is defined and is a subset of $\fontact (U_j)$ of diameter at most $E$. 
		Since $\ddot A$ has unbounded orbits in $\fontact( U_j)$, there is  $g\in 
		\ddot A$ such that $d_{U_j}(\rho^W_{U_j},g\rho^W_{U_j})>10^9E$. By the definition of an HHG and the fact that $\ddot A$ 
		fixes $U_j$, we have $g\rho^W_{U_j}=\rho^{gW}_{U_j}$, so $gW\neq W$.  Now, $ga^m_ig^{-1}$ has unbounded orbits on $\fontact (gW)$, but 
		$ga^m_ig^{-1}=a^m_i$. Hence, $W,gW\in \operatorname{Big}(a_i)$, but they are not orthogonal by~\cite[Lemma 1.5]{DHS_boundary}.  This contradicts that the elements of  $\operatorname{Big}(a_i)$ are pairwise orthogonal. Hence, $W = U_j$.
		
		Since we have shown that $\operatorname{Big}(a_i)\subseteq\{U_1,\ldots,U_\ell\}$ for all $i$, \cite[Proposition 6.4]{DHS_boundary} provides  a constant $D(a_i)$ such that $\diam(\varphi_V(\langle a_i\rangle))\leq 
		D(a_i)$ for all $V \in \mf{S}-\{U_1,\dots, U_\ell\}$.  Let $D=\max_{1\leq i\leq k}D(a_i)$.  For any $b\in\ddot A$, write $b=a_1^{n_1}\cdots a_k^{n_k}$.  Since 
		$a_1^{-n_1}V\not\in\{U_1,\ldots,U_k\},$ we have $$d_V(\mathbbm{1},b)\leq d_{a_1^{-n_1}V}(\mathbbm{1},a_2^{n_2}\cdots a_k^{n_k})+d_{a_1^{-n_1}V}(\mathbbm{1},a_1^{-n_1})\leq d_{a_1^{-n_1}V}(\mathbbm{1},a_2^{n_2}\cdots 
		a_k^{n_k})+D,$$
		and we get $d_V(\mathbbm{1},b)\leq kD$ by induction.  This bounds $\diam(\varphi_V(\ddot A))$, which proves the claim.
	\end{proof}
	
	This proves the enumerated statements.  The distance formula in a HHG~\cite[Theorem 4.5]{BHSII} now shows that the hull $F_A$ of $A$ is quasi-isometric to the product $\prod_{i=1}^\ell\varphi_{U_i}(A)$, i.e., to 
	the 
	product of $\ell$ quasilines, i.e., to $\Z^\ell$.  Since $\ddot A\cong\Z^k$ acts properly on $F_A$, we must have $k\leq \ell$.
\end{proof}

\subsection{Combinatorial hierarchical hyperbolicity}
To verify that our \squidable groups are hierarchically hyperbolic groups, we will employ the \emph{combinatorial hierarchical hyperbolicity} machinery introduced in~\cite{CHHS}. This allows us to forgo checking the axioms directly, and instead extract hierarchical hyperbolicity from an action on a well chosen simplicial complex. We  recall the required  definitions and theorems for this approach.

\begin{defn}[Join, link, and star]
	Let $Y$ be a flag simplicial complex. If $Q,Z$ are disjoint flag subcomplexes of $Y$ so that every vertex of $Q$ is joined by an edge to $Z$, then the \emph{join} of $Q$ and $Z$, $Q \star Z$, is the subcomplex of $Y$ spanned by $Q$ and $Z$. Given a simplex $\Delta$ of $Y$, the \emph{link} of $\Delta$, $\lk(\Delta)$, is the subcomplex of $Y$ spanned by the vertices of $Y$ that are joined by an edge to all the vertices of $\Delta$. The \emph{star} of $\Delta$, $\st(\Delta)$, is the join $\Delta \star \lk(\Delta)$. We consider $\emptyset$ as a simplex of $Y$ whose link and star are both $Y$.
\end{defn}

\begin{defn}
	Given a flag simplicial complex $Y$, a \emph{$Y$--graph} is any graph $W$ whose vertices are maximal simplices of $Y$. Here maximal means not contained in a larger simplex.
	
	If $W$ is a $Y$--graph for the flag simplicial complex $Y$, we define the $W$--augmented graph $Y^{+W}$ as the graph with the same vertex set as $Y$ and with two types of edges: 
	\begin{enumerate}
		\item ($Y$--edge) If two vertices $y_1,y_2 \in Y$ are joined by an edge in $Y$, then $y_1$ and $y_2$ are joined by an edge in $Y^{+W}$.
		\item ($W$--edge) If $\Delta_1$ and $\Delta_2$ are maximal simplices of $Y$ that are joined by an edge in $W$, then each vertex of $\Delta_1$ is joined by an edge to each vertex of $\Delta_2$ in $Y^{+W}$.
	\end{enumerate}
	We note that if a group $G$ acts by simplicial automorphisms on $Y$ that is an isometry of $Y^{+W}$, then there is an induced action by isometries of $G$ on $W$.
\end{defn}

\begin{defn}\label{defn:saturation}
	Let $\Delta$ and $\Delta'$ be simplices of the flag simplicial complex $Y$. We write $\Delta \sim \Delta'$ if $\lk(\Delta) = \lk(\Delta')$. We define the \emph{saturation} of $\Delta$, $\Sat(\Delta)$, to be the set of vertices of $Y$ contained in a simplex in the $\sim$--equivalence class of $\Delta$. That is $x \in \Sat(\Delta)$  if and only if there exists $\Delta'  \sim \Delta$  so that $x$ is  a vertex of $\Delta'$.
\end{defn}

\begin{defn}\label{defn:Y_Delta}
	Let $W$ be a $Y$--graph. For each simplex $\Delta$ of $Y$, define $Y_\Delta$  to be the subgraph of $Y^{+W}$ spanned by the vertices of $Y^{+W} - \Sat(\Delta)$. 
	
	Define $\mc{C}(\Delta)$ to be the subgraph of $Y_\Delta$ spanned by the vertices in $\lk(\Delta)$. Note, we are taking the link in $Y$, not in $Y^{+W}$, and then considering the subgraph of $Y_\Delta$ induced by those vertices. We give $\mc{C}(\Delta)$ its intrinsic path metric (as opposed to the metric induced as a subset of $Y_\Delta$). By construction, we have $\mc{C}(\Delta) = \mc{C}(\Delta')$ whenever $\Delta \sim \Delta'$.	Note, since $\emptyset$ is a simplex of $Y$ with $\lk(\emptyset) = Y$, we have $Y_\emptyset = \mc{C}(\emptyset) = Y^{+W}$. 
\end{defn}

\begin{defn}\label{defn:CHHS}
	Let $\delta \geq 0$, $Y$ be a flag simplicial complex and $W$ be a $Y$--graph. The pair $(Y,W)$  is a \emph{$\delta$--combinatorial HHS} if the following are satisfied.
	\begin{enumerate}[(I)]
		\item \label{CHHS:finite_complexity} Any chain of the form $\lk(\Delta_1) \subsetneq \lk(\Delta_2) \subsetneq \dots$ has length at most $\delta$.
		\item \label{CHHS:hyp_X} For each non-maximal simplex $\Delta \subset Y$, the space $\mc{C}(\Delta)$ is $\delta$--hyperbolic.
		\item \label{CHHS:geom_link_condition} For each non-maximal simplex $\Delta$, the inclusion $\mc{C}(\Delta) \to Y_\Delta$ is a $(\delta,\delta)$--quasi-isometric embedding.
		\item \label{CHHS:comb_nesting_condition} Whenever $\Delta$ and $\Omega$ are non-maximal simplices of $Y$, there exists a (possibly empty) simplex $\Pi$ of $\lk(\Delta)$ such that $\lk(\Delta \star \Pi) \subseteq \lk(\Omega)$ and for all non-maximal simplices $\Lambda$ of $Y$ so that $\lk(\Lambda) \subseteq \lk(\Delta) \cap \lk(\Omega)$ either
		\begin{enumerate}
			\item $\mathrm{diam}(\mc{C}(\Lambda))<\delta$ or;
			\item $\lk(\Lambda) \subseteq \lk(\Delta \star \Pi)$.
		\end{enumerate}
		\item \label{CHHS:C=C_0_condition} For each non-maximal simplex $\Delta \subset Y$ and $x,y \in \lk(\Delta)$, if $x$ and $y$ are not joined by a $Y$--edge of $Y^{+W}$, but are joined by a $W$--edge of $Y^{+W}$, then there exits simplices $\Lambda_x, \Lambda_y \subseteq \lk(\Delta)$ so that  $x \in \Lambda_x$, $y \in \Lambda_y$, and $\Delta \star \Lambda_x$ is joined by an edge of $W$ to $\Delta \star \Lambda_y$.
	\end{enumerate}
\end{defn}

\begin{thm}[{\cite[Thoerem 1.8]{CHHS}}]\label{thm:cHHS_implies_HHS}
	Let $(Y,W)$ be a $\delta$--combinatorial HHS.  
	\begin{enumerate}
		\item The graph $W$ is  a connected  and a hierarchically hyperbolic space.
		\item Suppose $G$ is a finitely generated group that acts on $Y$ by simplicial automorphism. If there are finitely many $G$--orbits of links of simplices of $Y$, and the action of $G$ on $Y$ induces a metrically proper and cobounded action on $W$, then $G$ is a hierarchically hyperbolic group.
	\end{enumerate}
\end{thm}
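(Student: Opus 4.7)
The plan is to extract from $(Y,W)$ an HHS structure on $W$ by taking the index set $\mathfrak{S}$ to be the $\sim$--equivalence classes of non-maximal simplices of $Y$, where $\Delta \sim \Delta'$ iff $\lk(\Delta) = \lk(\Delta')$. For each class $[\Delta]$ I would set $\fontact([\Delta]) := \fontact(\Delta)$, which is well-defined by construction of $\sim$ and is $\delta$--hyperbolic by axiom (\ref{CHHS:hyp_X}). The relations would be defined combinatorially on links: $[\Delta] \nest [\Omega]$ iff $\lk(\Delta) \subseteq \lk(\Omega)$, $[\Delta] \perp [\Omega]$ iff $\lk(\Delta) \subseteq \st(\Omega) \smallsetminus \lk(\Omega)$ (equivalently, every vertex of $\Delta$ is joined to every vertex of $\Omega$, after taking representatives realising the link), and $\trans$ otherwise. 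The unique $\nest$--maximal class is $[\emptyset]$ with $\fontact([\emptyset]) = Y^{+W}$. Finite complexity is axiom (\ref{CHHS:finite_complexity}).

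Next I would build the projections. A vertex of $W$ is a maximal simplex $\Sigma$ of $Y$. Define $\varphi_{[\Delta]}(\Sigma)$ as follows: if $\Sigma \cap \lk(\Delta) \ne \emptyset$, take that (bounded-diameter) set in $\mc{C}(\Delta)$; otherwise take the image in $\mc{C}(\Delta)$ of a closest vertex of $\Sigma$ to $\lk(\Delta)$ inside $Y_\Delta$ (which is available since $\Sigma$ contains a vertex outside $\Sat(\Delta)$, else $\Sigma$ would not be maximal, and because $\mc{C}(\Delta) \hookrightarrow Y_\Delta$ is a quasi-isometric embedding by axiom (\ref{CHHS:geom_link_condition}), any two such closest vertices lie at bounded distance in $\mc{C}(\Delta)$). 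For $[\Delta] \nest [\Omega]$ or $[\Delta] \trans [\Omega]$ set $\rho^{[\Delta]}_{[\Omega]}$ to be the image in $\mc{C}(\Omega)$ of $\lk(\Delta) \cap \lk(\Omega)$, coarsely well-defined by axiom (\ref{CHHS:comb_nesting_condition}) together with (\ref{CHHS:geom_link_condition}) (in the transverse case one has to take a closest-point projection in $Y_\Omega$).

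The core technical step is verifying the HHS axioms, and the main obstacle is the distance formula together with bounded geodesic image and consistency. The geometric link condition (\ref{CHHS:geom_link_condition}) is what makes this work: it converts combinatorial statements about simplices of $Y$ into quasi-isometric control on the $\fontact(\Delta)$'s inside $Y^{+W}$. The rough strategy is to read distances in $W$ as distances in $Y^{+W}$ (the vertex-$W$ incidence giving a quasi-isometry from $W$ to the subgraph of $Y^{+W}$ spanned by vertices of maximal simplices, using axiom (\ref{CHHS:C=C_0_condition}) to handle $W$--edges that would otherwise be invisible in $Y$), and to decompose $Y^{+W}$--geodesics via the quasi-isometric embeddings $\fontact(\Delta) \hookrightarrow Y_\Delta$ coming from (\ref{CHHS:geom_link_condition}): whenever a geodesic in $Y^{+W}$ makes progress across some link, it must make proportional progress in the corresponding $\fontact(\Delta)$. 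Condition (\ref{CHHS:comb_nesting_condition}) supplies the combinatorial ``container'' simplex $\Pi$ needed to check partial realisation and large link; together with finite complexity it runs an induction on the length of $\nest$--chains. Consistency and bounded geodesic image follow because $\rho$'s are realised inside links of containing simplices and projections away from $\Sat(\Delta)$ are computed in $Y_\Delta$, which quasi-isometrically contains $\fontact(\Delta)$.

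For the group statement, the whole construction is natural: a simplicial action of $G$ on $Y$ preserves $\sim$, hence induces an action on $\mathfrak{S}$ preserving $\nest$, $\perp$, $\trans$, and the finite-orbit hypothesis on links gives axiom (\ref{item:HHG_finite_orbits}) of Definition~\ref{defn:HHG}. The $G$--action on $W$ is by construction compatible with the isometries $g_{[\Delta]} \colon \fontact(\Delta) \to \fontact(g\Delta)$ induced on links, and the projection and $\rho$ maps are $G$--equivariant by their intrinsic definition. Under the metric properness and coboundedness hypothesis on $W$, the orbit map $G \to W$ is a $G$--equivariant quasi-isometry, which transports the HHS structure on $W$ to an HHG structure on $G$. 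I expect the primary difficulty throughout to be proving the distance formula, where one must leverage (\ref{CHHS:geom_link_condition}) to reduce estimates in $Y^{+W}$ to estimates in the individual $\fontact(\Delta)$, and simultaneously control the error introduced by vertices lying in $\Sat(\Delta)$ via the combinatorial matching provided by (\ref{CHHS:comb_nesting_condition}) and (\ref{CHHS:C=C_0_condition}).
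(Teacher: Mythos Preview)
This theorem is not proved in the present paper; it is quoted from~\cite[Theorem~1.8]{CHHS} and used as a black box. So there is no ``paper's own proof'' to compare against here.

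That said, your sketch is broadly the right blueprint for how~\cite{CHHS} actually proves the result: index set given by $\sim$--classes of non-maximal simplices, $\fontact([\Delta])=\mc{C}(\Delta)$, relations read off from link containments, projections built via the quasi-isometric embedding $\mc{C}(\Delta)\hookrightarrow Y_\Delta$, and the HHG upgrade obtained from naturality under simplicial automorphisms. A couple of points where your outline is imprecise relative to the actual argument: the projection $\varphi_{[\Delta]}(\Sigma)$ is not defined via a single closest vertex but rather by first observing that a maximal $\Sigma$ always meets $Y_\Delta$ (since $\Sigma\not\subseteq\Sat(\Delta)$) and then using the coarse retraction $Y_\Delta\to\mc{C}(\Delta)$ furnished by condition~\eqref{CHHS:geom_link_condition}; and the relative projections $\rho^{[\Delta]}_{[\Omega]}$ in the nested case are obtained from the simplex $\Pi$ supplied by condition~\eqref{CHHS:comb_nesting_condition}, not from $\lk(\Delta)\cap\lk(\Omega)$ directly. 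Your remark that the distance formula is the crux is accurate, and indeed in~\cite{CHHS} the bulk of the work goes into establishing uniqueness, large links, and bounded geodesic image by inducting on complexity via~\eqref{CHHS:finite_complexity} and using~\eqref{CHHS:comb_nesting_condition} to pass to sub-HHSs. But none of this is carried out in the paper you are reading.
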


\section{Statements of the main results}\label{sec:main_results}

We now state the main result of the paper, summarise where the various parts of the proof are found, and then deduce our application to 3-manifolds.

\begin{thm}\label{thm:squidable_HHS}
	Let $\mc{G}$ be an {\squidable} graph of groups. Let $\ST$ and $W = W_{r,R}$ be the spaces from Definitions \ref{defn:ST} and \ref{defn:W}.  For sufficiently large choices of $r\geq 0$ and $R \geq0$, the pair $(\mathcal S(T),W)$ is a $\delta$--combinatorial HHS with $\delta$ determined by $\mc{G}$.
	
	Moreover, $ \pi_1 \mc{G}$ is an HHG, because $\pi_1 \mc{G}$ acts on  $\ST$ with finitely many orbits of links of simplices, and the action on the set of maximal simplices of $\mathcal S(T)$ extends to a metrically proper and cobounded action on $W$. 
\end{thm}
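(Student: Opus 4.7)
The plan is to verify the five axioms of Definition~\ref{defn:CHHS} for the pair $(\mc{S}(T),W)$, and then invoke Theorem~\ref{thm:cHHS_implies_HHS} to upgrade this to an HHG structure on $\pi_1\mc{G}$, once I also check that $\pi_1\mc{G}$ acts on $\mc{S}(T)$ with finitely many orbits of links and that the induced action on $W$ is metrically proper and cobounded. Since $\mc{S}(T)$ is built by blowing up each vertex $v$ of the Bass--Serre tree $T$ into a cone on the corresponding coset and joining adjacent cones along edges of $T$, the action of $\pi_1\mc{G}$ on $T$ lifts naturally to a simplicial action on $\mc{S}(T)$.

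First I would classify the simplices of $\mc{S}(T)$ and compute their links by a short case analysis. A maximal simplex has the form $\{u,v,s,t\}$ where $\{u,v\}$ is an edge of $T$ and $s,t$ are coset vertices in the cones over $u,v$; every non-maximal simplex is a subset of such a simplex. This enumeration gives axiom~\ref{CHHS:finite_complexity} (because $\mc{S}(T)$ is uniformly three-dimensional) together with finite-orbitness of links (because $\pi_1\mc{G}$ acts cofinitely on $T$), and it also reduces axioms~\ref{CHHS:comb_nesting_condition} and~\ref{CHHS:C=C_0_condition} to a finite case analysis using only the combinatorics of $\mc{S}(T)$ and the adjacency rule in Definition~\ref{defn:W}.

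The heart of the proof is the pair of geometric axioms~\ref{CHHS:hyp_X} and~\ref{CHHS:geom_link_condition}. For hyperbolicity of $\mc{S}(T)^{+W}$, the definition of $W$ only puts edges between maximal simplices whose underlying $T$--edges are uniformly close, so the collapse $\mc{S}(T)^{+W}\to T$ is an equivariant quasi-isometry, and $T$ is a tree. For proper non-empty simplices $\Delta$, a case analysis on the type of $\Delta$ should reveal three possibilities for $\mc{C}(\Delta)$: either it is bounded; or it is quasi-isometric to one of the quasi-lines $L_\mu$ produced in Section~\ref{sec:quasilines}; or it is quasi-isometric to a cone-off of the hyperbolic quotient $F_\mu$ of the kind handled by Lemmas~\ref{lem:F_mu_are_relatively_hyp} and~\ref{lem:cone-off_edge_groups}. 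These quasi-isometry types are exactly the ones suggested by the coarse product decomposition $G_\mu\simeq L_\mu\times F_\mu$ underlying the construction, and hyperbolicity in each case follows from Lemma~\ref{lem:cone-off_edge_groups} and, for the quasi-tree refinement in the free case, Lemma~\ref{lem:bottleneck}.

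The main obstacle will be axiom~\ref{CHHS:geom_link_condition}: showing that each inclusion $\mc{C}(\Delta)\hookrightarrow \mc{S}(T)_\Delta$ is a quasi-isometric embedding. My strategy is to construct coarse retractions $\mc{S}(T)_\Delta\to \mc{C}(\Delta)$ by composing closest-point projection in the Bass--Serre tree $T$ (to land in the relevant vertex or edge space) with projection to the appropriate $L_\mu$ or $F_\mu$ factor of the coarse product structure on $G_\mu$, and then invoke Lemma~\ref{lem:coarse_retract}. Coarse Lipschitzness of these retractions is checked on edges via Lemma~\ref{lem:bounded_images_of_edges}; the delicate case is a $W$--edge of $\mc{S}(T)^{+W}$ crossing several vertex spaces, and this is precisely where the thresholds $r,R$ in Definition~\ref{defn:W} must be taken large enough to keep the image uniformly bounded. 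Once $(\mc{S}(T),W)$ is a combinatorial HHS, Theorem~\ref{thm:cHHS_implies_HHS} produces the HHG structure: coboundedness of the $\pi_1\mc{G}$--action on $W$ follows from cofiniteness of the $\pi_1\mc{G}$--action on edges of $T$, and metric properness follows from the fact that coset vertices in $\mc{S}(T)$ have trivial $\pi_1\mc{G}$--stabiliser together with the local finiteness of adjacency in $W$ guaranteed by Definition~\ref{defn:W}.
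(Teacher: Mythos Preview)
Your overall strategy matches the paper's: classify simplices and links to handle the combinatorial axioms~\eqref{CHHS:finite_complexity},~\eqref{CHHS:comb_nesting_condition},~\eqref{CHHS:C=C_0_condition}; identify the unbounded $\mc{C}(\Delta)$ with the quasi-lines $L_v$ and with electrifications of the hyperbolic quotients $F_\mu$ for axiom~\eqref{CHHS:hyp_X}; and build coarse retractions for axiom~\eqref{CHHS:geom_link_condition} using closest-point projection in $T$ together with the product structure on vertex groups. That is exactly what the paper does (Lemmas~\ref{lem:finite_complexity}, \ref{lem:C=C0_for_W}, \ref{lem:pseduo_wedges}, Proposition~\ref{prop:Hyperbolicity_of_links}, Lemmas~\ref{lem:tentancles_qi} and~\ref{lem:quasilines_qi}), and your description of which links are bounded versus quasi-lines versus coned-off $F_\mu$'s is correct.

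There is, however, a real gap in your argument for metric properness of the $\pi_1\mc{G}$--action on $W$. You claim this follows from trivial stabilisers of coset vertices together with ``local finiteness of adjacency in $W$'', but $W$ is \emph{not} locally finite. The quasi-lines $L_v$ have the same vertex set as the vertex spaces $X_v$---an entire coset of a vertex group---so any ball in $L_v$ contains infinitely many vertices, and hence infinitely many pairs $(s',t')$ can have $P_r(s',t')$ close to a fixed $P_r(s,t)$. The paper's argument (Lemma~\ref{lem:W_is_a_G_space}) instead pushes a bounded set $K_W\subset W$ forward to the union $K_X=\bigcup P_r(s,t)\subset X$, shows $K_X$ is bounded in the Bass--Serre space (using Proposition~\ref{prop:P_vs_sigma} to control the second type of $W$--edge), and then uses metric properness of the $\pi_1\mc{G}$--action on $X$. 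You need this detour through $X$; properness does not follow from combinatorics of $\mc{S}(T)$ and $W$ alone.
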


\begin{proof}
	Item \eqref{CHHS:finite_complexity} is verified in Lemma \ref{lem:finite_complexity}. Item \eqref{CHHS:hyp_X} is verified in Proposition \ref{prop:Hyperbolicity_of_links} and Lemma \ref{lem:description_of_links}. Item \eqref{CHHS:geom_link_condition} is immediate when $\Delta=\emptyset$ or $\mathcal C(\Delta)$ is bounded, while the other cases are verified in Lemmas \ref{lem:tentancles_qi} and Lemma \ref{lem:quasilines_qi} (with Corollary \ref{cor:simplex_type} guaranteeing that all cases are covered). Item \eqref{CHHS:comb_nesting_condition} is Lemma \ref{lem:pseduo_wedges} and, finally, Item \eqref{CHHS:C=C_0_condition} is Lemma \ref{lem:C=C0_for_W}.
	
	The statement on orbits of links is verified in Lemma \ref{lem:ST_cocompact}, while the metrically proper and cobounded action is shown in Lemma \ref{lem:W_is_a_G_space}. The conclusion that $ \pi_1 \mc{G}$ is an HHG then follows from~Theorem \ref{thm:cHHS_implies_HHS}.
\end{proof}

Theorem \ref{thm:squidable_HHS} proves that non-geometric graph manifolds are HHG.

\begin{cor}\label{cor:graph_man}
	If $M$ is a non-geometric graph manifold, then $\pi_1 M$ is a hierarchically hyperbolic group, where the hyperbolic spaces in the HHG structure are all quasi-isometric to trees.
\end{cor}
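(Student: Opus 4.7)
The plan is to exhibit the JSJ decomposition of $M$ as an admissible graph of groups in the sense of Definition \ref{defn:admissible}, and then invoke Theorem \ref{thm:squidable_HHS}. Since $M$ is a non-geometric graph manifold, it has a non-trivial JSJ decomposition in which every piece is Seifert fibered with non-empty boundary and every JSJ torus is incompressible. This yields a graph of groups $\mathcal G$ with $\pi_1 M = \pi_1\mathcal G$, whose vertex groups are fundamental groups of Seifert pieces and whose edge groups are fundamental groups of JSJ tori, hence isomorphic to $\mathbb Z^2$.

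Next I would verify each admissibility condition. Condition \eqref{item:edges_exist} holds because a non-geometric graph manifold has at least one JSJ torus. For condition \eqref{item:center_and_quotient}, each $G_\mu$ is the fundamental group of a compact Seifert fibered $3$-manifold with non-empty boundary, hence fits into a central extension $Z_\mu \to G_\mu \to F_\mu$ where $Z_\mu \cong \mathbb Z$ is generated by the regular fiber and $F_\mu$ is the fundamental group of the base $2$-orbifold; since the base has non-empty boundary, $F_\mu$ is virtually free, and it is non-elementary because a Seifert piece in the JSJ of a non-geometric graph manifold cannot have base a disk with at most two cone points. Condition \eqref{item:edge_groups} is immediate.

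The heart of the matter is condition \eqref{item:finite_index}, which captures the defining feature of the non-geometric case: the regular fibers of two Seifert pieces adjacent across a JSJ torus $T$ are not homotopic in $T$, since otherwise the two pieces would glue into a single larger Seifert fibered piece, contradicting maximality of the JSJ decomposition. Hence the images of the adjacent fibers in $\pi_1(T) \cong \mathbb Z^2$ span a finite index subgroup. Condition \eqref{item:non_commensurable} then follows from the standard malnormality of the peripheral $\mathbb Z^2$ subgroups of the fundamental group of a Seifert piece whose base $2$-orbifold has more than one boundary component (with the requisite tweak if some Seifert piece has only one boundary torus, in which case distinct JSJ tori incident to the piece correspond to different peripheral classes after conjugation, and the non-geometric hypothesis guarantees the conjugating element is not internal to the peripheral subgroup). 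This is the step I expect to require the most care.

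Having checked admissibility, Theorem \ref{thm:squidable_HHS} immediately gives that $\pi_1 M$ is an HHG. To upgrade to the quasi-tree statement, I would trace through the HHG structure built by that theorem: the associated hyperbolic spaces are the Bass--Serre tree, the quasi-lines $L_\mu$, and the cone-offs $\cay(G_\mu, J_\mu \cup \mc{E}_\mu)$ of Lemma \ref{lem:cone-off_edge_groups}. The first two are trees and quasi-lines respectively, and the third is a quasi-tree whenever $F_\mu$ is virtually free, which holds here because the base $2$-orbifolds have boundary. This yields the stronger conclusion that all hyperbolic spaces in the HHG structure are quasi-isometric to trees.
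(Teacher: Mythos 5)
Your proposal is correct and follows the same route as the paper: realize $\pi_1 M$ as an admissible graph of groups via the JSJ decomposition, apply Theorem \ref{thm:squidable_HHS}, and deduce the quasi-tree statement from Proposition \ref{prop:Hyperbolicity_of_links} using that the base orbifold groups $F_\mu$ are virtually free. The only difference is that you verify admissibility of the JSJ splitting in detail, whereas the paper treats this as known from Croke--Kleiner; your verification is sound, modulo the standard normalization ensuring each Seifert piece has orientable hyperbolic base orbifold so that the fiber subgroup is genuinely central.
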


\begin{proof}
	Since $\pi_1 M$ has the structure of an \squidable graph of groups, we can apply Theorem \ref{thm:squidable_HHS}. The hyperbolic spaces in the HHS structure coming from a combinatorial HHS are the $\mathcal C(\Delta)$ (as stated in \cite[Theorem 1.18]{CHHS}). These spaces are all quasi-isometric to trees in the case of $\pi_1 M$ by Proposition \ref{prop:Hyperbolicity_of_links} (and Lemma \ref{lem:description_of_links} for the bounded $\mathcal C(\Delta)$).
\end{proof}

We can now combine Corollary \ref{cor:graph_man} and  Corollary \ref{cor:central_extensions_by_Z} with results from the literature to classify when a 3-manifold group has an HHG structure in terms of the geometry of the prime pieces. We say that a flat 3-manifold is \emph{octahedral} if it is the quotient of $\mathbb R^3$ by a $3$--dimensional crystallographic group whose point group is conjugate in $GL_3(\mathbb R)$ into $O_3(\mathbb Z)$.

\begin{thm}\label{thm:HHGs_and_3_manifolds}
	Let $M$ be a closed oriented $3$--manifold.  $\pi_1 M$ is a hierarchically hyperbolic group if and only if $M$ has no  Nil, Sol, or non-octahedral flat manifolds in its prime decomposition. 
\end{thm}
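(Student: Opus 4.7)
The plan is to reduce both implications to the prime pieces of $M$, and for the backward direction, further reduce each non-geometric prime piece to either a graph manifold or a Seifert piece via relative hyperbolicity. Then I apply Corollaries \ref{cor:graph_man} and \ref{cor:central_extensions_by_Z} together with the fact that a group hyperbolic relative to HHGs is itself an HHG \cite{BHSII}.

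For the forward direction, assume $\pi_1 M$ is an HHG. By the Kneser--Milnor decomposition, $\pi_1 M \cong \pi_1 M_1 \ast \cdots \ast \pi_1 M_n$, with each $\pi_1 M_i$ a retract of $\pi_1 M$ and hence a hierarchically quasiconvex subgroup. The inheritance results for hierarchically quasiconvex subgroups \cite{RST_convexity} promote the HHG structure on $\pi_1 M$ to each $\pi_1 M_i$. It therefore suffices to show that none of Nil, Sol, or non-octahedral flat manifolds have HHG fundamental groups. The non-octahedral flat case is excluded directly by \cite{PetytSpriano}, where the HHG crystallographic groups are exactly the octahedral ones. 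For Nil and Sol pieces, I would apply Proposition \ref{lem:flat_torus}: in a Nil manifold group the virtually $\mathbb{Z}^3$ subgroup would have to sit as a hierarchically quasiconvex coarse product of three quasilines quasi-isometric to $\mathbb{Z}^3$, contradicting the quadratic distortion of the center; in a Sol manifold group, the normal $\mathbb{Z}^2$ would likewise have to embed as a flat, contradicting its exponential distortion. Alternatively, one invokes semihyperbolicity of HHGs \cite{HHP,DMS:cube} against the classical fact that Nil and Sol groups fail to be semihyperbolic.

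For the backward direction, the same free product decomposition makes $\pi_1 M$ hyperbolic relative to the family $\{\pi_1 M_i\}$, so it suffices to prove each prime factor is an HHG. I split cases by Thurston geometry when $M_i$ is geometric: the $\mathbb{H}^3$ case gives a hyperbolic group (hence HHG); the $S^3$ and $S^2 \times \mathbb{R}$ cases are finite or virtually cyclic; the flat case is octahedral by hypothesis and handled by \cite{PetytSpriano}; and the $\mathbb{H}^2 \times \mathbb{R}$ and $\widetilde{SL_2(\mathbb{R})}$ cases are virtually central $\mathbb{Z}$ extensions of non-elementary hyperbolic groups (surface or Fuchsian), which are HHGs by Corollary \ref{cor:central_extensions_by_Z}. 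If $M_i$ is non-geometric, either the JSJ decomposition consists entirely of Seifert pieces, making $M_i$ a graph manifold to which Corollary \ref{cor:graph_man} applies, or it contains a hyperbolic JSJ piece. In the latter mixed case, $\pi_1 M_i$ is hyperbolic relative to the fundamental groups of its maximal graph-manifold submanifolds (standard relative hyperbolicity for JSJ decompositions with a hyperbolic piece, together with the peripheral $\mathbb{Z}^2$'s from isolated hyperbolic–hyperbolic tori); each such peripheral is either $\mathbb{Z}^2$, a bounded Seifert piece (apply Corollary \ref{cor:central_extensions_by_Z}), or a graph manifold group (apply Corollary \ref{cor:graph_man}), so relative hyperbolicity produces the HHG structure on $\pi_1 M_i$.

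The main obstacle is bookkeeping the mixed case cleanly, in particular identifying the correct peripheral structure so that all peripheral subgroups are known to be HHGs; once this is in place, no new ingredients beyond the two corollaries and \cite{BHSII,PetytSpriano,RST_convexity} are needed.
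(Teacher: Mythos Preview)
Your backward direction matches the paper's approach closely, and your bookkeeping of the mixed case (peripherals being $\mathbb{Z}^2$, isolated Seifert pieces, or graph manifolds) is essentially what the paper does by citing \cite{Dahmani:combination,AFW,BigdelyWise}.

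The forward direction, however, has a real gap. You assert that \cite{RST_convexity} ``promotes the HHG structure on $\pi_1 M$ to each $\pi_1 M_i$'', but the inheritance results for hierarchically quasiconvex (or strongly quasiconvex) subgroups only yield an \emph{HHS} structure on $\pi_1 M_i$, not an HHG structure. This matters: every crystallographic group is an HHS (being quasi-isometric to $\mathbb{Z}^n$), so you cannot invoke \cite{PetytSpriano} on $\pi_1 M_i$ without first knowing it is an HHG. The paper handles this by applying Proposition~\ref{lem:flat_torus} \emph{inside the ambient HHG structure on $\pi_1 M$}: since $\pi_1 M_i$ is virtually $\mathbb{Z}^3$ and strongly quasiconvex, the proposition yields a $\pi_1 M_i$--invariant set of three pairwise orthogonal domains with quasi-line images, and these are then assembled into an HHG structure on $\pi_1 M_i$, after which \cite{PetytSpriano} applies.

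Your treatment of Nil via Proposition~\ref{lem:flat_torus} is also garbled: Nil manifold groups contain no virtually $\mathbb{Z}^3$ subgroup (they are not virtually abelian), so the proposition does not apply as you describe. The paper instead uses that HHGs (indeed HHSs) have at most quadratic Dehn function \cite[Corollary~7.5]{BHSII}, while Nil and Sol groups do not; this works at the HHS level, so the inheritance issue above does not arise for those two geometries. Your semihyperbolicity alternative would work for the same reason, but the Dehn function route is more direct.
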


\begin{proof}
	We first show that if $M$ has no Nil, Sol, or non-octahedral flat manifolds in its prime decomposition, then $\pi_1 M$ is an HHG. Since being hyperbolic relative to HHGs  will make $\pi_1 M$ an HHG \cite[Theorem 9.1]{BHSII}, it suffices to prove that $\pi_1 M$ is an HHG whenever $M$ is prime and not a Nil, Sol, or non-octahedral flat manifold.
	
	We first analyse the possible geometric cases from the geometrisation theorem.
	\begin{itemize}
		\item $S^3,S^2\times \mathbb R, \mathbb H^3$. In this case the fundamental group is hyperbolic, whence a hierarchically hyperbolic group.
		\item $\mathbb R^3$. The fact that the fundamental group of a manifold with geometry $\mathbb R^3$ is an HHG if and only if the manifold is octahedral follows from \cite[Theorem 4.4]{PetytSpriano}.
		\item $\mathbb H^2\times \mathbb R, \mathrm{PSL}_2(\mathbb R)$.  In these cases,  the fundamental group is a central  extension of $\mathbb{Z}$ by  a hyperbolic surface group, so we can apply Corollary  \ref{cor:central_extensions_by_Z} to conclude it is an HHG (the $\mathbb H^2 \times \mathbb R$ case was previously known, see, e.g. \cite[Proposition 3.1]{Hughes}). For later purposes, note that this case  also yields HHG fundamental groups when $M$ is a $\mathbb H^2 \times \mathbb R$ manifold with toroidal boundary.
	\end{itemize}
	
	In the non-geometric case,   $\pi_1 M$ is  hyperbolic relative to subgroups each of which is either $\mathbb Z^2$ or the fundamental group of a non-geometric graph manifold (this is a consequence of \cite[Theorem 0.1]{Dahmani:combination} and is stated explicitly as \cite[Theorem 9.12]{AFW}; see also \cite[Corollary E]{BigdelyWise}). Each peripheral is therefore an HHG, so the conclusion follows from \cite[Theorem 9.1]{BHSII}.
	
	We now assume $\pi_1 M$ has an HHG structure $\mf{S}$ and show $M$ cannot have a Nil, Sol, or non-octahedral flat manifold in its prime decomposition. If $M$ is prime and has either Nil or Sol geometry, then $\pi_1 M$ cannot be an HHG since it would not have quadratic Dehn function, contradicting~\cite[Corollary 7.5]{BHSII}. If $M$ is prime and is a non-octahedral flat manifold, then $\pi_1 M$ is not an HHG by \cite[Theorem 4.4]{PetytSpriano}. 
	
	For the non-prime case, let $M_1 \# \cdots \# M_n$ be the prime decomposition of $M$. Then $\pi_1 M$ is hyperbolic relative to $\pi_1 M_1, \dots, \pi_1 M_n$.  As the peripheral subgroups in a relative hyperbolic group, each $\pi_1M_i$ is strongly quasiconvex in $\pi_1 M$. Combining \cite[Proposition 5.7]{RST_convexity} and \cite[Proposition 5.6]{BHSII}, we have that restricting the projections in the HHG structure $\mf{S}$ to the subgroup $\pi_1M_i$ produces an HHS structure for $\pi_1M_i$ (but not necessarily an HHG structure). As before, this says $ M_i$ cannot have Nil or Sol geometry. 
	
	To rule out non-octahedral flat geometry, suppose $M_i$ is a flat manifold. Then, $\pi_1 M_i$ is virtually $\mathbb{Z}^3$ and Proposition \ref{lem:flat_torus} says there are $U_1,\dots, U_\ell \in \mf{S}$ so that
	\begin{itemize}
		\item $\{U_1,\dots,U_\ell\}$ is pairwise orthogonal and  $\pi_1 M_i$--invariant;
		\item $\diam(\varphi_V(\pi_1 M_i))$ is uniformly bounded for all $V \in \mf{S} -\{U_1,\dots,U_\ell\}$;
		\item for each $i \in \{1,\dots, \ell\}$, $\varphi_{U_i}(\pi_1 M_i)$ is a quasi-line in $\mc{C}(U_i)$;
		\item the \emph{hierarchically quasiconvex hull} of $\pi_1 M_i$ is quasi-isometric to $\mathbb{Z}^\ell$.
	\end{itemize}

	Since $\pi_1 M_i$ is strongly quasiconvex in $\pi_1 M$, it is undistorted  and the hierarchically quasiconvex hull of $\pi_1 M_i$ is uniformly close to $\pi_1 M_i$ in $\pi_1 M$. Hence $\pi_1M_i$ acts properly and cocompactly on its hierarchically quasiconvex hull. As $\pi_1M_i$ it  virtually $\mathbb{Z}^3$, this implies the number $\ell$ in the bulleted properties is $3$. Hence, we can make an HHG (and not just HHS) structure for $\pi_1 M_i$ by using the three quasilines $\varphi_{U_1}(\pi_1 M_i), \varphi_{U_2}(\pi_1 M_i), \varphi_{U_3}(\pi_1 M_i)$ and a finite number of bounded diameter spaces (this is the standard HHG structure on $\mathbb{Z}^3$ with the $\varphi_{U_3}(\pi_1 M_i)$ replacing the $x,y,z$ axes). By \cite[Theorem 4.4]{PetytSpriano}, this means $M_i$ must be an octahedral flat manifold.
\end{proof}

\begin{remark}[Concrete description of octahedral flat $3$--manifolds]\label{rem:octahedral}
	Combining~\cite{Hagen:crystallographic} and~\cite{Hoda:crystallographic}, a crystallographic group is octahedral (in any dimension) if and only if it is cocompactly cubulated if and only if it is \emph{Helly}.  In~\cite{PetytSpriano}, it is shown that for crystallographic groups, this is equivalent to being an HHG.  However, the octahedral flat $3$--manifolds can be explicitly listed, following Scott~\cite{Scott:geometry}.  Specifically, if $M$ is a compact orientable flat $3$--manifold, $M$ is octahedral if and only if is one of the following:
	\begin{itemize}
		\item the $3$--torus;
		\item made by gluing opposite faces of a cube with a $\frac12$ or $\frac14$ twist, on one pair;
		\item made by gluing opposite faces of a hexagonal prism with a $\frac13$--twist of the hexagonal faces;
		\item the \emph{Hantzsche–Wendt manifold}, which has point group $(\mathbb Z/2\mathbb Z)^2$.
	\end{itemize}
	The third one is tricky to visualise as octahedral, but here is an explanation in pictures instead of matrices.  Consider the tiling of $\mathbb E^3$ by hexagonal prisms; this is the universal cover of $M$ and so $\pi_1M$ acts freely with quotient the $3$--manifold described above.  In Figure~\ref{fig:prism}, we show one of these cells, $P$.  
	
	\begin{figure}[h]
		\centering
		\includegraphics[width=0.25\textwidth,height=4cm]{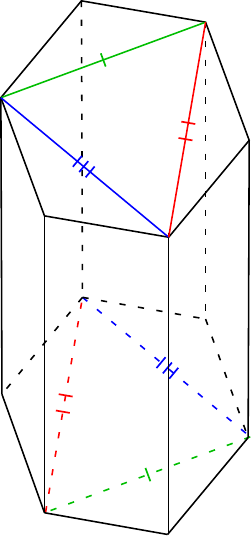}
		\caption{The $\frac13$--twist prism manifold is octahedral. The set of planes through lines of the same colors are preserved by the $\frac{1}{3}$--twist.}
		\label{fig:prism}
	\end{figure}
	
	Consider the six coloured segments in the figure, three in each of the two hexagonal faces of $P$.  As indicated by the colours/labels, these come in three pairs of parallel segments, with each hexagonal face contributing one of the segments in each pair.  Each parallel pair lies in a uniquely determined plane in $\mathbb E^3$.  This set of three planes is invariant under an order $3$ rotation of $P$ about the central vertical line.  Hence the $\pi_1M$--orbit of this family of 3 planes is a set of planes in $\mathbb E^3$ falling into three parallelism classes.  Cubulating the resulting wallspace (see e.g.~\cite{ChatterjiNiblo}) therefore gives a proper cocompact action of $\pi_1M$ on the standard tiling of $\mathbb R^3$ by $3$--cubes, whence $\pi_1M$ is octahedral by~\cite{Hagen:crystallographic} or~\cite{Hoda:crystallographic}.  One can also  directly compute a basis invariant under the point group.	
	
	According to \cite{Scott:geometry}, there is only one more compact oriented flat $3$--manifold.  This is also constructed from a hexagonal prism by identifying opposite faces, but the hexagons are identified using a $\frac16$ twist.  (So, one can still cubulate $\pi_1M$ as above, but this gives an action on $\mathbb R^6$, which is not cocompact.)  This manifold is not octahedral since $O_3(\mathbb Z)$ does not have an orientation-preserving element of order 6.
\end{remark}

\section{Quasi-lines from quasimorphisms}\label{sec:quasilines}
We now use quasimorphisms to construct the actions on quasi-lines. This is both an essential ingredient in our construction of a combinatorial HHS for an admissible graph of group and the key to proving that central extensions of $\mathbb{Z}$ by hyperbolic groups are HHGs.

We first build quasimorphisms for central extensions where the center is unbounded.

\begin{lem}\label{lem:cohomology} 
	Suppose that the central extension  of groups $\Z \xrightarrow{\iota} G \xrightarrow{\pi} F$ corresponds to a bounded element of $H^2(F,\Z)$. Then there exists a quasimorphism $\phi \colon G\to \Z$ which is unbounded on $\iota(\Z)$.
\end{lem}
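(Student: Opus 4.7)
The plan is to translate the boundedness of the extension class into the existence of a section of $\pi$ whose failure to be a homomorphism is bounded, and then take $\phi$ to be the ``$\mathbb{Z}$--coordinate'' with respect to this section.

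First, I would use the classical correspondence between central extensions of $F$ by $\mathbb{Z}$ and elements of $H^2(F,\mathbb{Z})$. Pick any set-theoretic section $s \colon F \to G$ of $\pi$ with $s(e_F) = e_G$, and define the associated $2$--cocycle
\[
c(f_1,f_2) = \iota^{-1}\bigl(s(f_1) s(f_2) s(f_1 f_2)^{-1}\bigr),
\]
which takes values in $\iota(\mathbb{Z}) \cong \mathbb{Z}$ because the extension is central. The hypothesis that $[c] \in H^2(F,\mathbb{Z})$ is \emph{bounded} means that $[c]$ lies in the image of the comparison map $H^2_b(F,\mathbb{Z}) \to H^2(F,\mathbb{Z})$, so there exists a bounded cocycle $c'$ cohomologous to $c$. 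Then $c - c' = \delta b$ for some $b \colon F \to \mathbb{Z}$, and replacing the section $s$ by $s'(f) = \iota(b(f)) s(f)$ changes the cocycle precisely by $\delta b$. After this replacement, we may assume from the start that $c$ itself is bounded, say $|c(f_1,f_2)| \le D$ for all $f_1,f_2 \in F$.

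Next, I would define the map $\phi \colon G \to \mathbb{Z}$ as follows. Every element $g \in G$ can be written uniquely as $g = \iota(n) s(\pi(g))$ for a unique $n \in \mathbb{Z}$, and I set $\phi(g) = n$. The key computation is: if $g = \iota(n_1) s(f_1)$ and $h = \iota(n_2) s(f_2)$, then using centrality of $\iota(\mathbb{Z})$,
\[
gh = \iota(n_1) s(f_1) \iota(n_2) s(f_2) = \iota\bigl(n_1 + n_2 + c(f_1,f_2)\bigr)\, s(f_1 f_2),
\]
so $\phi(gh) = \phi(g) + \phi(h) + c(f_1,f_2)$. The defect $|\phi(gh) - \phi(g) - \phi(h)| = |c(f_1,f_2)| \le D$ is uniformly bounded, so $\phi$ is a quasimorphism.

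Finally, for any $n \in \mathbb{Z}$ we have $\phi(\iota(n)) = n$ directly from the definition, since $\iota(n) = \iota(n) s(e_F)$. Hence $\phi$ is unbounded on $\iota(\mathbb{Z})$, as required. No step is especially difficult: the only conceptual content is recognising that ``bounded cohomology class'' is exactly the hypothesis needed to arrange a bounded $2$--cocycle, at which point the canonical projection onto the $\mathbb{Z}$--coordinate is automatically a quasimorphism that remains nontrivial on the center.
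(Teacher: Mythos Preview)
Your proof is correct and is essentially the same as the paper's: both choose a section whose associated $2$--cocycle is bounded and take $\phi$ to be the $\mathbb{Z}$--coordinate with respect to that section. You are slightly more careful in explaining why the bounded-class hypothesis lets you arrange a bounded cocycle (the paper simply asserts this), and you explicitly verify the unboundedness on $\iota(\mathbb{Z})$ using $s(e_F)=e_G$.
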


\begin{proof}
	The fact that the cohomology class associated to the central extension is bounded implies that there exists a (set-theoretic) section $s\colon F\to G$ so that there are only finitely many possible values of $s(f_1)s(f_2)s(f_1f_2)^{-1}$ as $f_1,f_2$ vary in $F$. Hence, if we define $c \in H^2(F,\Z)$ by $c(f_1,f_2) = \iota^{-1}\left(s(f_1)s(f_2)s(f_1f_2)^{-1}\right)$, then the absolute value of $c(f_1,f_2)$ is bounded independently of $f_1,f_2$.
	
	We now define $\phi$. Any $x\in G$ can be written in a unique way as $s(f_x)\iota(t_x)$ for $f_x \in F$ and $t_x \in \Z$.  Hence we can set $\phi(x)=t_x$. To show that $\phi$ is a quasimorphism note that, since $\iota(\Z)$ is central and $s(f_1)s(f_2) = \iota(c(f_1,f_2)) s(f_1f_2)$, we have
	$$xy=s(f_x)\iota(t_x)s(f_y)\iota(t_y)=s(f_x)s(f_y)\iota(t_x+t_y)=s(f_xf_y)\iota(c(f_x,f_y)+t_x+t_y).$$
	Hence, $\phi(xy)=\phi(x)+\phi(y)+c(f_x,f_y)$, and we are done since the absolute value of $c(f_x,f_y)$ is uniformly bounded.
\end{proof}

We now use quasimorphisms to show that the vertex groups of an \squidable graph of groups have the desired action of a quasi-line. 

\begin{lem}\label{lem:Generating_sets_that_makes_lines}
	Let $\mc{G}= (\Gamma, \{G_\mu\}, \{G_\alpha\}, \{\tau_{\alpha}\})$ be an \squidable   graph of groups. For each edge $\alpha$ of $\mc{G}$, denote $C_{\alpha}=\tau_\alpha ((\tau_{\bar{\alpha}})^{-1}(Z_{\alpha^-}))<G_{\alpha^+}$.
	Each vertex group $G_\mu$ has an infinite generating set $S_\mu$ so that the following hold. 
	\begin{enumerate}
		\item $Cay(G_\mu,S_\mu)$ is quasi-isometric to a line,\label{item:quasi-line}
		\item the inclusion $Z_\mu\hookrightarrow \mathrm{Cay}(G_\mu,S_\mu)$ is a $Z_\mu$--equivariant quasi-isometry, \label{item:unbounded_Z}
		\item for each edge $\alpha$ with $\alpha^+=\mu$, $C_{\alpha}$ is bounded in $\mathrm{Cay}(G_\mu,S_\mu)$ (in fact, the bound is uniform over all $\alpha,\mu$ since there are finitely many).\label{item:bounded_C}
	\end{enumerate}
\end{lem}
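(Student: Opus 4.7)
The strategy is two-stage: (a) build a quasimorphism $\phi: G_\mu \to \mathbb Z$ that is unbounded on $Z_\mu$ yet bounded on every $C_\alpha$ with $\alpha^+ = \mu$, and (b) use $\phi$ to define $S_\mu$ so that $\mathrm{Cay}(G_\mu, S_\mu)$ is quasi-isometric to $\mathbb Z$ via $\phi$. The three conclusions will then follow immediately from the three features of $\phi$.

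For stage (a), I would first apply Lemma \ref{lem:cohomology} to the central extension $Z_\mu \hookrightarrow G_\mu \twoheadrightarrow F_\mu$, producing a quasimorphism $\phi_0$ unbounded on $Z_\mu$. The required hypothesis, that the extension class is bounded in $H^2(F_\mu;\mathbb Z)$, is guaranteed by the hyperbolicity of $F_\mu$ via Mineyev's theorem on the surjectivity of the comparison map (equivalently, the Neumann--Reeves result that central $\mathbb Z$-extensions of hyperbolic groups are quasi-isometric to the direct product). For each edge $\alpha$ with $\alpha^+ = \mu$, the restriction $\phi_0|_{C_\alpha}$ is a quasimorphism on $C_\alpha \cong \mathbb Z$, so it lies within bounded distance of a linear function of some slope $\lambda_\alpha \in \mathbb R$. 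To cancel these slopes simultaneously, I would construct a quasimorphism $\psi: F_\mu \to \mathbb R$ with $\psi(\pi_\mu(c_\alpha)^n) = n\lambda_\alpha + O(1)$ for each $\alpha$ and $n \in \mathbb Z$, where $c_\alpha$ is a generator of $C_\alpha$. Since $\pi_\mu|_{C_\alpha}$ is injective (because $C_\alpha \cap Z_\mu$ is trivial, using admissibility condition (4)), setting $\phi := \phi_0 - \lfloor \psi \circ \pi_\mu \rfloor$ then produces a $\mathbb Z$-valued quasimorphism whose restriction to each $C_\alpha$ has slope $0$, hence is bounded, while remaining unbounded on $Z_\mu$ because $\psi \circ \pi_\mu$ is bounded on $Z_\mu$.

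For stage (b), I would set $S_\mu := \{g \in G_\mu : |\phi(g)| \leq D\}$ for some $D$ exceeding the defect of $\phi$; this is the shape of generating set produced by the Abbott--Balasubramanya--Osin machinery. A direct computation shows that $\phi$ changes by at most $D + \mathrm{defect}(\phi)$ across each edge of $\mathrm{Cay}(G_\mu, S_\mu)$, while conversely any $g \in G_\mu$ can be written as a product of approximately $|\phi(g)|/D$ elements of $S_\mu$ (by repeatedly multiplying by a fixed element of $S_\mu$ with near-maximal $\phi$-value and absorbing the remainder in the last step). Thus $\phi$ induces a $G_\mu$-equivariant quasi-isometry $\mathrm{Cay}(G_\mu, S_\mu) \to \mathbb Z$, giving item (1). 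Item (2) follows because $\phi|_{Z_\mu}$ is an unbounded quasimorphism on $\mathbb Z$, hence within bounded distance of a homomorphism of nonzero slope, so the $Z_\mu$-equivariant inclusion $Z_\mu \hookrightarrow \mathrm{Cay}(G_\mu, S_\mu)$ is a quasi-isometry. Item (3) is immediate from the boundedness of $\phi|_{C_\alpha}$; uniformity of the bound over all $\alpha, \mu$ follows since $\pi_1 \mc{G}$ has only finitely many orbits of vertex and edge groups.

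The hard part is constructing the auxiliary quasimorphism $\psi$ on $F_\mu$ with the required peripheral behaviour. This uses Lemma \ref{lem:F_mu_are_relatively_hyp} together with observations in its proof: $F_\mu$ is hyperbolic relative to the virtually cyclic, pairwise non-commensurable (indeed almost malnormal) peripherals $\pi_\mu(\tau_\alpha(G_\alpha))$, where non-commensurability comes directly from admissibility condition (5). In this setting, the Epstein--Fujiwara construction of counting quasimorphisms on hyperbolic groups, as extended by Hull--Osin to relatively hyperbolic groups, produces quasimorphisms with any prescribed homomorphic behaviour on the peripherals, which is exactly what is needed to realize the prescribed values $\lambda_\alpha$ on the cyclic subgroups $\pi_\mu(C_\alpha)$.
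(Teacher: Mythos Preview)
Your proposal is correct and follows essentially the same approach as the paper: both construct a quasimorphism on $G_\mu$ that is unbounded on $Z_\mu$ but bounded on each $C_\alpha$ (via Lemma~\ref{lem:cohomology}, Mineyev, relative hyperbolicity of $F_\mu$, and Hull--Osin), then invoke the Abbott--Balasubramanya--Osin machinery to produce $S_\mu$. The only cosmetic differences are that the paper homogenises to an $\mathbb R$--valued quasimorphism and builds one correction $\psi_\alpha$ per edge before combining linearly, whereas you take floors to stay $\mathbb Z$--valued and package all corrections into a single $\psi$; you also spell out the form of $S_\mu$ explicitly rather than just citing \cite{ABO}.
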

\begin{proof}
	By \cite[Lemma 4.15]{ABO}, if one can find an unbounded homogeneous quasimorphism $\bar{\phi}:G_\mu\to \R$, then there exists a generating set $S_\mu$ such that $\mathrm{Cay}(G_\mu, S_\mu)$ is quasi-isometric to a line and an element $g\in G$ acts loxodromically if and only $\bar{\phi}(g) \neq 0$. In particular, items (\ref{item:quasi-line}), (\ref{item:unbounded_Z}) and (\ref{item:bounded_C}) are equivalent to the existence of a quasimorphism $\phi:G_\mu\to \R$ so that $\phi(Z_\mu)$ is unbounded, but $\phi(C_{\alpha})$ is uniformly bounded for all edges $\alpha$ with $\alpha^+=\mu$ (then $\bar{\phi}$ is the homogenization of $\phi$). Note that each $C_\alpha$ does not intersect $Z_\mu$ in $G_\mu$. Hence, the quotient map  $\pi_\mu \colon G_\mu \to F_\mu$ is injective on $C_{\alpha}$ and we have that $\pi_\mu(C_{\alpha})< \pi_\mu(\tau_\alpha(G_\alpha))$ is infinite.
	
	We now construct certain auxiliary quasimorphisms. The first one, $\phi_\mu$, is just the homogenization of the quasimorphism from Lemma \ref{lem:cohomology}, which we can apply by condition Definition~\ref{defn:admissible}.\eqref{item:center_and_quotient} and the fact that every cohomology class of a hyperbolic group is bounded~\cite{Mineyev}. The other ones are constructed as follows. We claim that for each edge $\alpha$ of $\mc{G}$ with $\alpha^+=\mu$, there is a homogeneous quasimorphism $\psi_\alpha \colon F_\mu\to \R$ so that $\psi_\alpha(\pi_\mu(c_\alpha))=1$, where $c_\alpha$ is a fixed generator of $C_{\alpha}$, and $\psi_\alpha(c_{\alpha'})=0$ for all other edges $\alpha'$ with $\alpha'^+=\mu$.
	
	By Lemma~\ref{lem:F_mu_are_relatively_hyp}, $F_\mu$ is hyperbolic relative to the subgroups $\pi_\mu(\tau_\alpha(G_\alpha))$ for edge of $\mc{G}$ with $\alpha^+ = \mu$. In particular the subgroups $\pi_\mu(\tau_\alpha(G_\alpha))$ are (jointly) hyperbolically embedded in $F_\mu$. We can then appeal to \cite[Theorem 4.2]{HullOsin} to find the required quasimorphism. (The construction of Epstein--Fujiwara \cite{EpsteinFujiwara} should also be applicable to construct such quasimorphisms).
	
	Let $\phi_\alpha=\psi_\alpha\circ \pi_\mu$ and  observe that
	$$\phi:= \phi_\mu -\sum_{\alpha^+=\mu} \phi_\mu(c_\alpha)\phi_\alpha$$
	satisfies all the required properties. Thus, $\phi$ is the desired quasimorphism.
\end{proof}

To prove the first two bullet points of  Lemma \ref{lem:Generating_sets_that_makes_lines}, we do not need the full definition of an \squidable graph of groups. That is, if we have a central extension of groups $\Z \hookrightarrow G \stackrel{\pi}{\twoheadrightarrow} F $ corresponds to a bounded element of $H^2(F,\Z)$, then \cite[Lemma 4.15]{ABO} says the quasi-morphism from Lemma \ref{lem:cohomology} produces  a generating set $S$ for $G$ so that $\cay(G,S)$ is a quasi-line where the inclusion of the central $\Z$ is a $\Z$--equivariant quasi-isometry. This construction allows us to prove all such central extension are HHG.

\begin{cor}\label{cor:central_extensions_by_Z}
	If a group $G$ is a central extension 
	$Z \hookrightarrow G \stackrel{\pi}{\twoheadrightarrow} F $
	where $Z$ is an infinite cyclic group and $F$ is a hyperbolic group, then $G$ is a hierarchically hyperbolic group.
\end{cor}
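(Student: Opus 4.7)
The plan is to build an HHG structure with only three domains, mimicking the standard structure on a product of two hyperbolic spaces, and to secure equivariance by having $G$ fix every element of the index set. Since $F$ is hyperbolic, every cohomology class in $H^2(F,\Z)$ is bounded by Mineyev's theorem (the case of finite $F$ is trivial), so Lemma~\ref{lem:cohomology} produces a quasimorphism $\phi \colon G \to \Z$ whose restriction to $\iota(Z)$ is unbounded. Applying \cite[Lemma 4.15]{ABO} to the homogenization of $\phi$, I obtain a generating set $S$ of $G$ such that $L := \cay(G,S)$ is a quasi-line on which $Z$ acts loxodromically, so that the inclusion $Z \hookrightarrow L$ is a $Z$-equivariant quasi-isometry. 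This is precisely the content of the first two conclusions of Lemma~\ref{lem:Generating_sets_that_makes_lines}, in the simpler setting where there are no edge groups $C_\alpha$ to require the quasimorphism to vanish on.

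Next, fix a finite generating set $T$ of $F$, let $\Gamma = \cay(F,T)$, and set $\mc{X} = L \times \Gamma$ with the $\ell^1$ product metric. The group $G$ acts on $\mc{X}$ diagonally: on $L$ by left-multiplication on its own Cayley graph, and on $\Gamma$ through $\pi \colon G \to F$. The key verifications are that this action is metrically proper and cobounded. For properness, an element $g$ moving a fixed basepoint of $\mc{X}$ boundedly must project to a bounded subset of $F$, and hence lies in one of finitely many cosets $g_0 Z$; within each such coset, only finitely many elements can move the $L$-coordinate of the basepoint boundedly, since $Z$ sits as a quasi-isometrically embedded $\Z$ in $L$. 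Coboundedness follows from combining the cocompactness of the $F$-action on $\Gamma$ with the cocompactness of the $Z$-action on $L$: given any $(\ell, f)\in \mc{X}$, first pick $g_0\in G$ with $\pi(g_0)$ close to $f$, then translate by an element of $Z$ to push the $L$-coordinate close to $\ell$.

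Finally, I would equip $\mc{X}$ with the standard product HHS structure indexed by $\mathfrak S = \{S_\top, U, V\}$, with $U \perp V$ and $U, V \nest S_\top$; associated hyperbolic spaces $\mc{C}(S_\top)$ a single point, $\mc{C}(U) = L$, and $\mc{C}(V) = \Gamma$; with $\varphi_U$ and $\varphi_V$ the two coordinate projections; and with $\rho^U_{S_\top}, \rho^V_{S_\top}$ the unique point of $\mc{C}(S_\top)$. The HHS axioms are a routine check for a product of two hyperbolic spaces. Because $G$ fixes every element of $\mathfrak S$, the equivariance requirements of Definition~\ref{defn:HHG} collapse to the equivariance of $\varphi_U$ and $\varphi_V$, which holds by construction ($\varphi_V = \pi$ on the $\Gamma$-factor, and $\varphi_U$ is the identity on the $L$-factor). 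The only nontrivial step is the quasi-line construction of the first paragraph together with the properness verification of the second; after that, everything reduces to the standard product HHG structure.
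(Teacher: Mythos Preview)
Your proposal is correct and follows essentially the same approach as the paper: build the quasi-line $L$ via Mineyev plus Lemma~\ref{lem:cohomology} plus \cite[Lemma 4.15]{ABO}, verify that the diagonal $G$--action on $L \times \cay(F,\cdot)$ is metrically proper and cobounded, and conclude HHG. The only cosmetic difference is that the paper cites a black-box result (a group acting geometrically on a product of hyperbolic spaces, preserving factors, is an HHG; see \cite[Section 8.3]{BHSII} or \cite[Theorem A]{Hughes}) for the final step, whereas you write out the explicit three-domain product HHS structure that this black box would produce.
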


\begin{proof}
	Let $z$ be the generator for $Z$ and $J$ be a finite symmetric generating set for $G$ that contains $z$. We will identify $F$ with the quotient $G / Z$ and write elements of $F$ as coset of $Z$.  As described in the paragraph before Corollary \ref{cor:central_extensions_by_Z}, there is a generating set $S$ for $G$ so that $\cay(G,S)$ is a quasi-line and the inclusion of $Z$ into $\cay(G,S)$ is a $Z$--equivariant quasi-isometry. Let $L = \cay(G,S)$ and $H = \cay(F, \pi(J))$. We will prove  that the diagonal action of $G$ on $L \times H$ is metrically proper and  cobounded (where we fix, say, the $\ell_1$--metric on said product). This will imply that $G$ is an HHG as any group acting metrically properly and coboundedly on a product of hyperbolic spaces preserving the factors is an HHG; see \cite[Section 8.3]{BHSII} or  \cite[Proposition 3.1]{Hughes}.

	To prove coboundedness, let $r$ be large enough that every point in $L$ is within $r$ of an element of $Z$. Hence, for any vertex $(k,hZ)$ of $L \times H$, there is a power $z^n$ of $z$ so that $d_L(k,z^n h) \leq 2r$.  Thus $z^nh \cdot (1,Z) = (z^n h, hZ)$ is within $2r$ of $(k,hZ)$ and hence the action of $G$ of $L\times H$ is cobounded. 
	
	Moving on to metric properness, let $B^L(r)$ and $B^H(r)$ be the balls of radius $r \geq 0$ around the identity element in $L$ and $H$ respectively. Since $G$ acts coboundedly on $L \times H$, every bounded diameter set of $L \times H$ is contained in some $G$--translate of $B^L(r) \times B^H(r)$ for some $r$. Hence it suffices to prove that the set of $g \in G$ such that $g(B^L(r) \times B^H(r)) \cap B^L(r) \times B^H(r) \neq \emptyset$ is finite.
	
	If  $g(B_L(r) \times B^H(r)) \cap (B^L(r) \times B^H(r))\neq \emptyset$, then $gB^\ast(r) \cap B^\ast(r) \neq \emptyset$ for $\ast = L$ or $H$. The set $\{g\in G : gB^H(r) \cap B^H(r) \neq \emptyset\}$ is contained in the set $\{g \in G : d_H(gZ,Z) \leq 2r\}$. However, because $F$ is finitely generated, the later is the union of finitely many cosets of $Z$. Now, since orbit maps of the action of $Z$ on $L$ are quasi-isometries, each coset of $Z$ can only contain finitely many element $g$ for which $g B^L(r) \cap B^L \neq \emptyset$. Together, these say that the set $$\bigl\{ g \in G : g\bigl(B^L(r) \times B^H(r)\bigr) \cap \bigl(B^L(r) \times B^H(r)\bigr) \neq \emptyset\bigr\}$$ is finite.
\end{proof}

We now translate the content of Lemma \ref{lem:Generating_sets_that_makes_lines} into the language and notation of Bass--Serre space. Firstly, let us introduce the analogues of $\mathrm{Cay}(G_\mu, S_\mu)$.
\begin{defn}[Space $L_v$]\label{defn:L_v}
	Let $\mc{G}= (\Gamma, \{G_\mu\}, \{G_\alpha\}, \{\tau_{\alpha^{\pm}}\})$ be an \squidable  graph of groups. Let $\BS$ be the Bass--Serre space associated to $\mc{G}$ and $T$ be the Bass--Serre Tree of $\mc{G}$. For each vertex $\mu$ of $\mathcal{G}$, let $S_{\mu}$ be a generating set of $G_{\mu}$ as in Lemma \ref{lem:Generating_sets_that_makes_lines}.
	Without loss of generality we can assume $J_{\mu} \subseteq S_{\mu}$, where $J_{\mu}$ is the fixed finite generating set of $G_{\mu}$. 
	For a vertex $v \in T$ with $\mu= \check{v}$, let $gG_\mu$ be the corresponding coset of $G_\mu$. Define $L_v$ to be the graph with vertex set $gG_\mu$ and edges connecting $x, y \in gG_{\mu}$ if $x^{-1}y \in S_\mu$. Since $L_v$ is obtained from $X_v$ by adding extra edges to the same vertex set, there is a distance-non-increasing map $p_v \colon X_v \to L_v$ that is the identity on the vertices.\end{defn}

\begin{prop}\label{prop:geometry_of_Zs}
	Let $\mc{G}$ be an \squidable graph of groups with Bass--Serre tree $T$ and Bass--Serre space $X$. Let $e$ be an edge of $T$, with $v=e^+$ and $w = e^-$. Let $g,h \in G$ be such that $g Z_{\check w} \subseteq \tau_{\bar e}(X_w) \subset X_w$ and $h Z_{\check v}\subseteq X_v$. There exists $\xi \geq 1$, depending only on $\mc{G}$, so that:
	
	\begin{enumerate}
		\item \label{item:bounded_Zs}  $\diam(p_v\circ \tau_e \circ \tau_{\bar e}^{-1}(gZ_{\check w})) \leq \xi$.
		\item \label{item:Zs_QI_embed_in_Xv}  The restriction of $p_v$ to $hZ_{\check{v}}$ (seen with the induced metric of $X_v$) is a $(\xi,\xi)$--quasi-isometry.  In particular, the cosets $hZ_{\check{v}}$ are undistorted in $X_v$.     
		\item \label{item:distance_to_Zs} Let $x\in X_v$. Then
		$$d_{X_v}(x,\tau_e \circ \tau_{\bar e}^{-1}(gZ_{\check w})) \leq \xi d_{L_v}(p_v(x),p_v\circ \tau_e \circ \tau_{\bar e}^{-1}(gZ_{\check w})) + \xi$$ 
	\end{enumerate}

\end{prop}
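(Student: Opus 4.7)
The plan is to reduce each assertion to a coset comparison inside the vertex group $G_{\check v}$ and then apply Lemma~\ref{lem:Generating_sets_that_makes_lines}. Unpacking Definition~\ref{defn:BS_Space}, the set $\tau_e\circ\tau_{\bar e}^{-1}(gZ_{\check w})$ is a coset of $C_{\check e}:=\tau_{\check e}(\tau_{\bar{\check e}}^{-1}(Z_{\check w}))\leq G_{\check v}$ inside $X_v$, and $hZ_{\check v}$ is a coset of the central cyclic subgroup $Z_{\check v}$. Since $p_v$ and both metrics are $G_{\check v}$--equivariant, it suffices to verify each assertion on fixed representative cosets, with uniformity of $\xi$ coming from finiteness of the edge set of $\mathcal{G}$.

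Item (1) is then immediate from Lemma~\ref{lem:Generating_sets_that_makes_lines}\eqref{item:bounded_C}. For item (2), $p_v$ is the identity on vertices, so comparing the two metrics on $hZ_{\check v}$ reduces by equivariance to comparing the two word metrics on $Z_{\check v}$ coming from $J_{\check v}$ and $S_{\check v}$. The elementary upper bound $d_{X_v}(1,z^n)\leq|n|\cdot|z|_{J_{\check v}}$ combines with the chain $d_{X_v}(1,z^n)\geq d_{L_v}(1,z^n)\geq |n|/\xi-\xi$, where the first inequality uses that $p_v$ is distance--non-increasing and the second is Lemma~\ref{lem:Generating_sets_that_makes_lines}\eqref{item:unbounded_Z}. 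This yields both the $(\xi,\xi)$--quasi-isometry property of $p_v|_{hZ_{\check v}}$ and the undistortion of $hZ_{\check v}$ in $X_v$.

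For item (3), set $Y:=\tau_e\circ\tau_{\bar e}^{-1}(gZ_{\check w})$ and $D:=d_{L_v}(p_v(x),p_v(Y))$. The strategy is two-stage. First, exploit the quasi-line geometry of $L_v$: the central element $z$ generating $Z_{\check v}$ acts on $L_v$ by a loxodromic isometry with uniform positive translation length (Lemma~\ref{lem:Generating_sets_that_makes_lines}\eqref{item:quasi-line}, \eqref{item:unbounded_Z}), and $p_v(Y)$ is bounded by item (1), so there exists $m\in\mathbb{Z}$ with $|m|\leq\xi D+\xi$ such that $p_v(xz^m)$ lies within a uniform constant of $p_v(Y)$ in $L_v$. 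By item (2), $d_{X_v}(x,xz^m)\leq \xi|m|+\xi$, contributing a cost linear in $D$. The second stage is to bound $d_{X_v}(xz^m,Y)$ by a uniform constant, which I expect to be the main technical challenge. The essential input here is Definition~\ref{defn:admissible}\eqref{item:finite_index}: $\langle Z_{\check v},C_{\check e}\rangle$ is finite index in the $\mathbb{Z}^2$ edge group $\tau_{\check e}(G_{\check e})$, so the edge-group coset containing $Y$ carries a coarse product structure in which items (1) and (2) --- the $Z_{\check v}$--direction being undistorted in $X_v$ and the $C_{\check e}$--direction having bounded $L_v$--diameter --- combine to translate the bounded $L_v$--proximity of $xz^m$ to $Y$ into bounded $X_v$--proximity; this uses critically the fact that the two generators of $\tau_{\check e}(G_{\check e})$ contribute independently to $X_v$--distance while only one of them contributes to $L_v$--distance.
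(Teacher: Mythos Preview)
Your treatment of items~(1) and~(2) is correct and is exactly the paper's argument: both reduce by $G_{\check v}$--equivariance to a single coset and then invoke the relevant clause of Lemma~\ref{lem:Generating_sets_that_makes_lines}.

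For item~(3) there is a genuine issue, but it is with the \emph{statement} rather than with your strategy. As written, the inequality is asserted for arbitrary $x\in X_v$, and this is false. Indeed, take any coset $h'Z_{\check v}$ that does not meet the edge--group coset $\tau_e(X_e)$ (such cosets exist because $C_{\check e}\cdot Z_{\check v}$ is virtually $\mathbb Z^2$, hence a proper subgroup of $G_{\check v}$). Since $p_v|_{h'Z_{\check v}}$ is coarsely onto $L_v$, there is $x\in h'Z_{\check v}$ with $p_v(x)$ within a fixed constant of $p_v(Y)$; but $d_{X_v}(x,Y)$ is at least $d_{X_v}(h'Z_{\check v},\tau_e(X_e))$, which can be made arbitrarily large. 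The paper's own proof silently restricts to $x\in\tau_e(X_e)$, and this is the only case ever used (in Claim~\ref{claim:sigma_L_product}).

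Your two--stage plan is the right one under that restriction, but the second stage as you wrote it has a gap: you need $xz^m$ to lie in the edge--group coset $\tau_e(X_e)$ in order to invoke the product structure there, and in general $Z_{\check v}\not\subseteq\tau_{\check e}(G_{\check e})$. The fix is to assume $x\in\tau_e(X_e)$ and to replace $z$ by a power generating $Z_{\check v}\cap\tau_{\check e}(G_{\check e})$ (finite index in $Z_{\check v}$); then $xz^m$ stays in $\tau_e(X_e)$ and your finite--index product argument---$p_v$ collapses the $C_{\check e}$--direction and is a quasi-isometry on the $Z_{\check v}$--direction---converts $L_v$--proximity to $X_v$--proximity as you claim. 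With that adjustment your argument is essentially the paper's: their auxiliary point $\bar x$ plays the role of your $xz^m$.
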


\begin{proof}
	Item~\eqref{item:bounded_Zs} is a verbatim translation of Lemma~\ref{lem:Generating_sets_that_makes_lines}.\eqref{item:bounded_C} in the setting of Bass--Serre spaces.  The bound is independent of $e,g,h$ since there are finitely many orbits of vertices and edges.
	
	For the proof of \eqref{item:Zs_QI_embed_in_Xv},  fix a representative $h$ of $hZ_{\check{v}}$. This determines a map $Z_{\check{v}} \to hZ_{\check{v}}$ defined as $z \mapsto hz$. Note that this maps is not canonical, as it depends on the choice of $h$, but this will not be a problem.  
	We consider three different metrics on the set $Z_{\check{v}}$: the intrinsic word metric $d_Z$ on $\mathrm{Cay}(Z_{\check{v}})$, the restriction of $(X_v, d_{X_v})$ using the inclusion $Z_{\check{v}} \to hZ_{\check{v}}\subseteq X_v$ and the restriction of $(L_v, d_{L_v})$ using the map $p_v$. In particular, by choosing an appropriate generating set on $\mathrm{Cay}(Z_{\check{v}})$, the maps $\mathrm{Cay}(Z_{\check{v}})\to (hZ_{\check{v}}, d_v) \to (hZ_{\check{v}}, d_L)$ are all distance non-increasing. By  Lemma~\ref{lem:Generating_sets_that_makes_lines}.\eqref{item:unbounded_Z}, the composition is a quasi-isometry, yielding that the $p_v\vert_{hZ_{\check{v}}} \colon (hZ_{\check{v}}, d_v) \to L_v$ is a quasi-isometry.
	
	For the proof of \eqref{item:distance_to_Zs}, we will denote $\tau_e \circ \tau_{\bar e}^{-1}(gZ_{\check{w}})$ by $C_e$. Let $x \in \tau_e(X_e)$ and $x' \in C_e$ so that $d_{X_v}(x,C_e) = d_{X_v}(x,x')$. Now, there exist $g \in G$ so that $x$ is contained in the coset $gZ_{\check{v}}$ in $X_v$. Because we have proved Item \eqref{item:Zs_QI_embed_in_Xv}, there is $\kappa\geq 1$, depending only on $\mc{G}$ so that the restriction of $p_v$ to $gZ_{\check{v}}$ is a $(\kappa,\kappa)$--quasi-isometry. In particular, there must be $\bar x \in C_e$ so that $d_{L_v}(p_v(\bar x),p_v(gZ_{\check{v}})) \leq \kappa$. Moreover, we can choose $\kappa$ so that $\diam(p_v(C_e))\leq \kappa$ as well. Using that $p_v \colon X_v \to L_v$ is distance non-increasing, we now have \[d_{L_v}(x,x') \leq d_{X_v}(x,x') \leq d_{X_v}(x,\bar x) \leq \kappa d_{L_v}(p_v(x),p_v(\bar x)) +\kappa \leq \kappa d_{L_v}(p_v(x),p_v(x')) + \kappa^2 +\kappa, \] which implies \[d_{L_v}(p_v(x),p_v(C_e)) \leq d_{X_v}(x,C_e) \leq \kappa d_{L_v}(p_v(x),p_v(C_e)) +\kappa^2 + \kappa.\] The result follows by taking $\xi = \kappa^2 + \kappa$.
\end{proof}

We remark that the statements of Proposition \ref{prop:geometry_of_Zs} are concerned only with the metrics of the vertex  spaces $X_v$  and not on the metric on all of $\BS$, where $X_v$ and $X_e$ maybe distorted. In the sequel, we will often use Proposition \ref{prop:geometry_of_Zs} to establish a uniform bound on distances in $X_v$ or $X_e$ and then use Lemma \ref{lem:uniform_distortion} to translate this into a uniform bound on distances in $\BS$.

\section{Defining a combinatorial HHS: a blow-up of the Bass--Serre tree}\label{sec:blow_up}
In this section, we describe how to construct the simplicial complex and graph that make a combinatorial HHS for an \squidable graph of group. We prove that this construction satisfies the requirements of Theorem \ref{thm:cHHS_implies_HHS} in Section \ref{sec:proof}. 

For the remainder of this section, let  $\mc{G} = (\Gamma, \{G_\mu\}, \{G_\alpha\}, \{\tau_\alpha\})$ be an \squidable  graph of groups (Definition \ref{defn:admissible}) and fix $G =  \pi_1 \mc{G}$. As in Section \ref{subsec:graphs_of_groups}, we fix generating sets $J_\mu$ and $J_\alpha$ for the vertex and edge groups of $\mc{G}$. Let $T$ denote the Bass--Serre tree of $\mc{G}$ and $\BS$ the Bass--Serre space from Definition \ref{defn:BS_Space}. For vertices $v$ and edges $e$ of $T$, $X_v$ and $X_e$ will denote the vertex and edge spaces of $\BS$ respectively. Recall that $T^{(0)}$ is the set $ \{gG_{\mu} : g \in G, \mu \in 
V(\mc{G})\}$ and that for each $v \in T^{(0)}$, the elements of $X^{(0)}_v$ are precisely the elements of the coset $g G_{\mu} = v$.  For an edge $e$ of $T$, the maps $\tau_e$ and $\tau_{\bar{e}}$ denote the maps from the edge space $X_e$ into the vertex spaces $X_{e^+}$ and $X_{e^-} = X_{\bar{e}^+}$ described in Definition \ref{defn:BS_Space}.

We also fix the generating sets $S_\mu$ from Lemma \ref{lem:Generating_sets_that_makes_lines} for the vertex groups $G_\mu$ that produce Cayley graphs that are quasi-lines.  Accordingly, for each vertex $v \in T^{(0)}$ we have the quasi-line $L_v$ from Definition \ref{defn:L_v}, which is the Cayley graph of the coset $gG_\mu = v$ with respect to the generating set $S_\mu$. As described in Definition \ref{defn:L_v}, there is a 1--Lipschitz  map $p_v \colon X_v \to L_v$. 

The simplicial complex for our combinatorial HHS will be the following complex $\ST$ that is a ``blow-up'' of the Bass--Serre tree $T$ to include the vertices of each vertex space $X_v$ at each vertex $v \in T$.

\begin{defn}\label{defn:ST}
	Let $Q =\bigsqcup_{v \in T^{(0)}} X_v^{(0)}$. Define the function  $\nu 
	\colon T^{(0)} \sqcup Q \to T^{(0)}$ as the identity on $T^{(0)}$ and as $\nu(s) = v$ if $s 
	\in X_v$.
	
	Let $\ST$ be the flag simplicial complex with vertex set $T^{(0)} \sqcup Q$ and the following two types of edges. First, each $s\in Q$ is connected to $\nu(s)$. Second, two vertices $s,t \in \ST$ are connected if $\nu(s), \nu(t)$ are adjacent in $T$.
	Observe that $\nu$ extends to a simplicial map $\ST \to T$ that we still denote $\nu$. 
\end{defn}

Having constructed our simplicial complex, we now need to define a graph $W$ whose vertices are the maximal simplices of $\ST$. We start be describing the maximal simplices of $\ST$.

\begin{lem}[Maximal simplicies in $\ST$]\label{lem:maximal_simplices}
	The maximal simplices of $\ST$ are exactly the simplices of the form $\{s, \nu(s), t, \nu(t)\}$, where $s,t\in \ST^{(0)} -T^{(0)}$ and $\nu(s), \nu(t)$ are adjacent in $T$. We denote such a simplex by $\Sigma(s,t)$.  
\end{lem}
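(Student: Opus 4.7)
The plan is to analyze an arbitrary simplex $\Sigma$ of $\ST$ via its image $\nu(\Sigma)\subseteq T^{(0)}$. I first observe that $\nu(\Sigma)$ is a clique in $T$: any two distinct elements $v,w\in\nu(\Sigma)$ are the images of distinct $s,t\in\Sigma$, whose edge in $\ST$ must come from the second construction in Definition~\ref{defn:ST} and therefore forces $v$ and $w$ to be adjacent in $T$. Since $T$ is a tree, cliques have size at most two, so $|\nu(\Sigma)|\leq 2$.

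Next, I would isolate as a key auxiliary observation that two distinct vertices of $Q$ lying over the same $v\in T^{(0)}$ are never adjacent in $\ST$: edges of the first type only join $Q$ to $T^{(0)}$, while edges of the second type require the two $\nu$--images to be distinct (and adjacent in $T$). Consequently any simplex of $\ST$ meets each fiber $X_v^{(0)}$ in at most one point.

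With these two facts in hand, I would split on $|\nu(\Sigma)|$. If $\nu(\Sigma)=\{v\}$, then $\Sigma\subseteq\{v\}\cup X_v^{(0)}$ and $|\Sigma|\leq 2$; using that $\Gamma$ has at least one edge (Definition~\ref{defn:admissible}.\eqref{item:edges_exist}) to pick a neighbor $w$ of $v$ in $T$ and any $t\in X_w^{(0)}$, I would verify directly from the edge definitions that $\Sigma\cup\{w,t\}$ is still a clique, so $\Sigma$ is not maximal. If $\nu(\Sigma)=\{v,w\}$ with $v,w$ adjacent in $T$, then by the auxiliary observation $\Sigma\subseteq\{v,s,w,t\}$ for some $s\in X_v^{(0)}$ and $t\in X_w^{(0)}$; a direct check against Definition~\ref{defn:ST} shows that all six pairs in $\{v,s,w,t\}$ are joined by an edge, so this four--element set is a $3$--simplex of $\ST$ equal to $\Sigma(s,t)$.

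To finish I would show $\Sigma(s,t)$ is genuinely maximal: any $r\in\ST^{(0)}$ adjacent to $v$, $w$, $s$, and $t$ must satisfy $\nu(r)\in\{v,w\}$, since a third vertex of $T$ adjacent to both $v$ and $w$ would give a triangle in the tree. Up to swapping $v$ and $w$, $\nu(r)=v$, so $r\in\{v\}\cup X_v^{(0)}$, and adjacency with $s$ then forces $r\in\{v,s\}$ via the auxiliary observation. Thus $r$ already lies in $\Sigma(s,t)$. I expect the only subtle point worth flagging is the auxiliary observation that no two distinct vertices of $Q$ over the same $v$ are adjacent in $\ST$; once it is isolated, everything else is a direct unpacking of the two edge types in Definition~\ref{defn:ST}.
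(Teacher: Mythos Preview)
Your proof is correct and follows essentially the same approach as the paper's: both arguments split on whether $\nu(\Sigma)$ is a single vertex or an edge of $T$, use that $T$ is triangle-free to rule out a third $\nu$--value, and rely on the fact that distinct elements of $Q$ over the same vertex of $T$ are never adjacent (you isolate this explicitly as an auxiliary observation, while the paper invokes it inline as a ``$3$--cycle'' contradiction). The only cosmetic difference is that the paper phrases the $|\nu(\Sigma)|=2$ case as $\Sigma=\Delta_s\star\Delta_t$ with maximality forcing each factor to be an edge, whereas you show $\Sigma\subseteq\{v,s,w,t\}$ and then use maximality of $\Sigma$ together with your step~5; these are equivalent.
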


\begin{proof}
	Consider a simplex $\Sigma=\{s,\nu(s),\nu(t),t\}$ of $\ST$, where $\nu(s),\nu(t)$ are adjacent in $T$.  Suppose 
	that $\Sigma$ is non-maximal.   There then exists a vertex $u$ of $\ST$ that is adjacent to each of 
	$s,\nu(s),t,\nu(t)$.  Since $\nu$ is simplicial, this means that $\nu(u)$ is equal to, or adjacent to, each of 
	$\nu(s),\nu(t)$.  Since $T$ is triangle-free and $\nu(s),\nu(t)$ are adjacent, $\nu(u)$ cannot be adjacent to 
	both $\nu(s)$ and $\nu(t)$, so, without loss of generality, $\nu(u)=\nu(s)$.  Since $u$ is different from $s$ 
	and $\nu(s)$, we therefore have that $\nu^{-1}(\nu(s))$ contains a $3$--cycle with vertices $s,u,\nu(s)$.  
	This contradicts the definition of $\ST$.  Thus $\Sigma$ is a maximal simplex.
	
	Conversely, let $\Sigma$ be a maximal simplex of $\ST$.  Since $\nu:\ST\to T$ is simplicial, $\nu(\Sigma)$ is 
	either a vertex of $T$ or an edge of $T$.  If $\nu(\Sigma)$ is a vertex, then  $T$ has some vertex $v$ adjacent to $\nu(\Sigma)$  as $T$ is a connected graph with at least two vertices.  But then $v\star\Sigma$ is a 
	simplex of $\ST$ properly containing $\Sigma$.  Hence $\nu(\Sigma)$ must be an edge 
	joining two vertices, $\nu(s)$ and $\nu(t)$, of $T$.  So, $\Sigma$ has the form $\Delta_s\star\Delta_t$, where 
	$\Delta_s$ is a simplex projecting to $\nu(s)$ and $\Delta_t$ projects to $\nu(t)$.  Maximality of $\Sigma$ 
	implies that $\Delta_s,\Delta_t$ are edges, as required.
\end{proof}

Our goal is to define the edges in $W$ so that $G$ has a metrically proper and cobounded action on $W$, and so that we can verify  the conditions of a combinatorial HHS. To accomplish  the former, we want to associate to each maximal simplex $\Sigma(s,t)$ a uniformly bounded diameter subset of $\BS$ and then declare two maximal simplices to be joined by any edge if their corresponding bounded diameter subsets are close in $\BS$. To facilitate this, we use the following ``coarse level sets'' of the map $p_v \colon X_v \to L_v$ from Definition \ref{defn:L_v}.

\begin{defn}[``Level surfaces'']\label{defn:level_surfaces}
	Let $v \in T^{(0)}$ and $s \in \BS^{(0)}_{v}$. For $r\ge0$, define $\sigma_r(s)$ to be the set $$\sigma_r(s):=\{x \in \BS_{v} : d_{L_{v}}(p_v(s),p_v(x)) \leq r\}.$$
\end{defn}

While we will not use this fact directly, it is helpful to think of the vertex spaces $X_v$ as having a product structure in which the subspaces parallel to one factor are the $\sigma_r(s)$ and the subspaces parallel to the other factors are cosets of the $gZ_\mu$. Thus, if one compares to the motivating case of a graph manifold, we can think of the $\sigma_r(s)$ as the ``level surfaces'' of the vertex spaces and the $gZ_\mu$ (which are quasi-isometric to the $L_v$)  can be thought of as ``lines''.

The intersection of these ``level surfaces'' gives us a bounded diameter subset of $X$ associated to a maximal simplex.

\begin{defn}[Coarse points of maximal simplices]\label{defn:P_sets}
	Let $\mc{N}_c(\cdot)$ denote the $c$--neighborhood of a set in $X$. Given $s,t \in \ST^{(0)} - T^{(0)}$ so that $\nu(s)$ and $\nu(t)$ are joined by an edge $e$ of $T$ with $e^+ = \nu(s)$, define $$ P_r(s,t) := \mc{N}_1(\sigma_{r}(s)) \cap \mc{N}_1( \sigma_r(t)).$$ Since $\sigma_r(s)$ and $\sigma_r(t)$ are in different vertex spaces ($X_v$ vs $X_w$), $P_r(s,t)$ is precisely the set of vertices $x\in X_e$ so that $\tau_e(x) \in \sigma_t(s)$ and $\tau_{\bar e}(x) \in \sigma_r(t)$.
\end{defn}

\begin{lem}\label{lem:R_for_sigmas_to_intersect}
	There exists $r_0>0$ such that for all $r\geq r_0$ there exists $\xi \geq 0$ so that the following holds. Let $s, t \in \ST^{(0)} - T^{(0)}$ be such that $\nu(s), \nu(t)$ are joined by an edge of $T$. Then
	\begin{itemize}
		\item  $P_{r}(s,t) $  is non-empty and has diameter at most $\xi$;
		\item the map $(s,t) \to P_r(s,t)$ is  a $(\xi,\xi)$--coarsely Lipschitz, $\xi$--coarse map from $L_{\nu(s)} \times L_{\nu(t)}$ to $\BS$.
	\end{itemize}
\end{lem}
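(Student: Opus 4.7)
The plan is to reduce the analysis of $P_r(s,t)$ to understanding the edge space $X_e$ as a product of two quasi-lines, where $e$ is the edge of $T$ joining $v = \nu(s) = e^+$ and $w = \nu(t) = e^-$. The cornerstone is to show that the map
\[
\Phi_e \colon X_e \to L_v \times L_w, \qquad x \mapsto \bigl(p_v \circ \tau_e(x),\ p_w \circ \tau_{\bar e}(x)\bigr),
\]
is a quasi-isometry with constants uniform in $e$. This will be the main obstacle, as it requires carefully combining the ``horizontal'' and ``vertical'' behaviour of $X_e \cong \mathbb{Z}^2$ with respect to the two $L$-factors.

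Coarse Lipschitzness of $\Phi_e$ is immediate from $p_v, p_w$ being $1$-Lipschitz on vertices together with the coarse Lipschitzness of $\tau_e, \tau_{\bar e}$ (Remark \ref{rmk:tau_coarsely_Lipschitz}). For the lower distance bound, I use that $X_e$ is a finite-generating-set Cayley graph of $G_{\check e} \cong \mathbb{Z}^2$ and that, by Definition \ref{defn:admissible}(4), the subgroup $A = \langle \tau_\alpha^{-1}(Z_{\check v}),\tau_{\bar\alpha}^{-1}(Z_{\check w})\rangle$ is finite-index in $G_{\check e}$. Proposition \ref{prop:geometry_of_Zs}(1) gives that $p_v \circ \tau_e$ sends the $\tau_{\bar\alpha}^{-1}(Z_{\check w})$ direction to a bounded set in $L_v$, while Proposition \ref{prop:geometry_of_Zs}(2) gives that it sends the $\tau_\alpha^{-1}(Z_{\check v})$ direction quasi-isometrically onto $L_v$, since $\tau_e$ maps this direction to a coset of $Z_{\check v} \cap \tau_\alpha(G_\alpha)$, which has finite index in $Z_{\check v}$ and hence projects quasi-isometrically to $L_v$. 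Symmetric statements hold on the $L_w$ side. Together these make $\Phi_e$ a quasi-isometry. Uniformity in $e$ follows from $G$-equivariance of $\Phi_e$ together with the fact that $G$ acts on $T$ with finitely many edge orbits.

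Given the uniform quasi-isometry $\Phi_e$, both assertions then follow. By Definition \ref{defn:P_sets},
\[
P_r(s,t) = \Phi_e^{-1}\bigl(B_r(p_v(s)) \times B_r(p_w(t))\bigr),
\]
where $B_r$ denotes the closed $r$-ball. Taking $r_0$ to exceed the coarse surjectivity constant of $\Phi_e$ guarantees $P_r(s,t)$ is nonempty, and the quasi-isometric embedding property of $\Phi_e$ bounds the $X_e$-diameter of $P_r(s,t)$. This in turn bounds its $\BS$-diameter, since the inclusion $X_e \hookrightarrow \BS$ is $1$-Lipschitz on vertices.

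For the coarse Lipschitz property of the map $(s,t) \mapsto P_r(s,t)$, I invoke Lemma \ref{lem:bounded_images_of_edges}, which reduces the check to pairs $(s,t), (s',t')$ adjacent in $L_v \times L_w$. Up to symmetry I may assume $t = t'$ and $s,s'$ are adjacent in $L_v$. Then $\sigma_{r-1}(s) \subseteq \sigma_r(s')$, so $P_{r-1}(s,t) \subseteq P_r(s,t) \cap P_r(s',t')$. For $r \geq r_0 + 1$ the left-hand side is nonempty by the previous paragraph, and applying the uniform diameter bound to $P_r(s,t)$ and $P_r(s',t')$ shows they lie within Hausdorff distance $\xi$ of each other in $\BS$, as desired.
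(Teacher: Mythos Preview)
Your proof is correct and follows essentially the same approach as the paper: the key step in both is establishing that $\Phi_e \colon X_e \to L_v \times L_w$ is a uniform quasi-isometry, after which nonemptiness, the diameter bound, and the coarse Lipschitz property of $(s,t) \mapsto P_r(s,t)$ all follow. The only minor differences are presentational---the paper proves $\Phi_e$ is a quasi-isometry by exhibiting an explicit quasi-inverse built from the finite-index inclusion $\langle \tau_\alpha^{-1}(Z_\mu), \tau_{\bar\alpha}^{-1}(Z_\omega)\rangle \hookrightarrow G_\alpha$, whereas you argue the two distance inequalities directly from Proposition~\ref{prop:geometry_of_Zs}; and the paper deduces the coarse Lipschitz property by noting $(s,t) \mapsto P_r(s,t)$ is a quasi-inverse to $\Phi_e$, while you use the containment $P_{r-1}(s,t) \subseteq P_r(s',t)$ together with Lemma~\ref{lem:bounded_images_of_edges}.
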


\begin{proof}
	Let $v = \nu(s)$ and $w=\nu(t)$, then let $e$ be the edge of $T$ from  $e^-=w$ to $e^+=v$.  
	The key tool for the proof is the following quasi-isometry from $X_e$ to $L_v \times L_w$.
	
	\begin{claim}\label{claim:Diagonal_map_is_a_QI}
		For each edge $e$ of $T$, the diagonal map $\Phi_e \colon X_e \to L_{e^+} \times L_{e^-}$ given by $$\Phi_e(x) = (p_{e^+} 
		\circ \tau_{e}(x), p_{e^{-}} \circ \tau_{\bar{e}}(x))$$ is a uniform quasi-isometry with $\Phi_e(g\cdot x) = g \cdot \Phi_e(x)$  for each $g \in \stab_G(e)$ and $x \in \BS_e$.
	\end{claim}
	
	\begin{proof} Let $e^+ =v, e^-=w$, then let $\mu = \check{v}$, $\omega = \check{w}$, and $\alpha = \check{e}$. Equip $G_\alpha$ with the metric coming from $\cay(G_\alpha,J_\alpha)$ and $gt_\alpha Z_\mu$, $gZ_\omega$ with the metrics as subsets of $X_v$. By Proposition \ref{prop:geometry_of_Zs}.\eqref{item:Zs_QI_embed_in_Xv}, this metric is quasi-isometric to any intrinsic metric on the cosets coming from a finite generating set of $Z_\mu$ and $Z_\omega$.
		
		Let $g \in G$ so that $w$ is the coset $g G_\omega$ and $v$ is the coset $gt_\alpha G_\mu$. If we let $z_\mu$ and $z_\omega$ be arbitrary elements of $Z_\mu$ and $Z_\omega$ respectively, define $\phi \colon gt_\alpha Z_\mu \times gZ_\omega \to G_\alpha$ by  $$(gt_\alpha z_\mu, g z_\omega) \mapsto \tau_{\alpha}^{-1}(z_\mu)\tau_{\bar \alpha}^{-1} (z_\omega).$$ Definition \ref{defn:admissible} says $G_\alpha \cong \mathbb{Z}^2$ and that $\langle \tau_{\alpha}^{-1}(Z_\mu), \tau_{\bar \alpha}^{-1}(Z_\omega) \rangle$ is a finite index subgroup of $G_\alpha$. Hence, $\phi$ is a quasi-isometry. 
		
		Now define a map $\theta \colon G_\alpha \to X_e$ by $\theta (a) = \tau_{\bar e}^{-1}(g \tau_{\bar \alpha}(a))$. The construction of the Bass--Serre space tells us $\theta$ gives an isometry  $\theta \colon \cay(G_\alpha,J_\alpha) \to X_e$. 
		Moreover, $\theta(a)$ also equals $\tau_{e}^{-1}( gt_\alpha \tau_{\alpha}(a)$ for each $a \in G_\alpha$. Thus, we have $$p_v \circ \tau_e( \theta(\phi(gt_\alpha z_\mu, g z_\omega))) = p_v( gt_\alpha z_\mu) \text{ and } p_w \circ \tau_{\bar e}( \theta(\phi(gt_\alpha z_\mu, g z_\omega))) = p_w( g z_\omega)$$ for each $z_\mu\in Z_\mu$ and $z_\omega \in Z_\omega$. 
		As  $p_v\vert_{gt_\alpha Z_\mu} $ and $p_w\vert_{g Z_\omega}$ are uniform quasi-isometries by Proposition \ref{prop:geometry_of_Zs}.\eqref{item:Zs_QI_embed_in_Xv}, $p_v \times p_w \colon gt_\alpha Z \times gZ_\omega \to L_v \times L_w$ is a quasi-isometry. Hence, $$(p_v \times p_w) \circ (\tau_e \times \tau_{\bar e}) = (p_v \times p_w) \circ \phi^{-1} \circ \theta^{-1} $$ is a quasi-isometry (here $\phi^{-1}$ is any quasi-inverse of $\phi$ that inverts $\phi$ on its image).
		
		The constants of all these quasi-isometries can be chosen independent of $\alpha$ because $\mc{G}$ has finitely many edges.\end{proof}

	Let $\Phi_e \colon \BS_e \to L_v \times L_w$ be the quasi-isometry
	$$x \to (p_{v}(\tau_{\bar{e}}(x)), p_{w}(\tau_{e}(x)))$$ from Claim \ref{claim:Diagonal_map_is_a_QI}. Because $\Phi_e$ is coarsely onto, there exists $x \in \BS_e$ and $r_0 >0$ so that $p_v \circ \tau_{e}(x)$ is within $R_0$ of $p_v(s)$ in $L_v$ and $p_w \circ \tau_{\bar e}(x)$ is within $r_0$ of $p_w(t)$ in $L_w$. Thus, $x \in\mc{N}_1(\sigma_{r}(s)) \cap \mc{N}_1( \sigma_r(t)) = P_r(s,t)$ is non-empty for each $r \geq r_0$.
	
	When $P_r(s,t)$ is non-empty, then $\Phi_e(P_r(s,t)) \subseteq p_v(\sigma_r(s)) \times p_w(\sigma_r(t))$, which is a bounded diameter subset of $L_v \times L_w$. Since $\Phi_e$ is a quasi-isometry and the inclusion of $\BS_e$ into $\BS$ is 1-Lipschitz, $P_r(s,t)$ is then a bounded diameter subset of both $\BS_e$ and $\BS$.
	
	Finally,  the map $\Psi_e \colon  L_v\times L_w \to \BS_e$ given by $(s,t) \to P_r(s,t)$  is a quasi-inverse of $\Phi_e$. Since the inclusion of $\BS_e$ into $\BS$ is 1-Lipschitz, the extension $\Psi_e$ into $\BS$ will be coarsely Lipschitz (and in fact uniformly so since there are only finitely many orbits of edges).	
\end{proof}

We can now define the edges in our graph $W$, relying on the ``level surfaces'' $\sigma_r(s)$ from Definition \ref{defn:level_surfaces} and coarse points $P_r(s,t)$ that arise as their intersections as in Definition \ref{defn:P_sets}.

\begin{defn}\label{defn:W} 
	
	($W$--edges.)
	For $r, R>0$, let $W=W_{r, R}$ be the graph  defined as follows. The vertices of $W$ are the maximal simplices of $\ST$. Two simplices $ \Sigma(s,t)$ and $\Sigma(s', t')$ are adjacent if and only if one of the following holds.
	\begin{itemize}
		\item $\nu(s) = \nu(s')$ and  $d_{\BS}(P_{r}(s,t),P_{r}(s',t')) \leq R$. 
		\item $s = s'$ and $d_{\BS}(\sigma_{r}(t), \sigma_{r}(t') )\leq R +2$.
	\end{itemize}
\end{defn}

\begin{remark}
	In both cases, being joined by an edge of $W$ implies that the two maximal simplices share a common vertex of $T$.
\end{remark}

The first type of edge of $W$ is needed to assure that  $W$ is quasi-isometric to the Bass--Serre space $\BS$.  The second type is needed to arrange a fine combinatorial constraint in the definition of combinatorial HHS. To prove that $G$ acts metrically properly on $W$ we start by showing the second type of edges gives a similar bound to the first type.

\begin{prop}
	\label{prop:P_vs_sigma}
	There exists $r_1 \geq0$ so that for each $r \geq r_1$, there exists a monotone diverging function $\Phi \colon [0,\infty) \to [0,\infty)$ so that the following holds. Consider two vertices $v_1,v_2$ of the Bass--Serre tree $T$ at distance $2$ from each other, with $w$ being the vertex at distance $1$ from both. Suppose that $t_1,t_2,s\in \ST^{(0)} - T^{(0)}$ are such that $\nu(t_i)=v_i$ and $\nu(s)=w$. Then
	$$d_{\BS}(P_{r}(s,t_1),P_{r}(s,t_2))\leq \Phi(d_{\BS}(\sigma_{r}(t_1),\sigma_{r}(t_2))).$$
\end{prop}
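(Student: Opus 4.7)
The plan is to use the central action of $Z_{\check w}$ to transport a $\BS$-geodesic between $\sigma_r(t_1)$ and $\sigma_r(t_2)$ to one connecting points near $P_r(s,t_1)$ and $P_r(s,t_2)$. Let $D := d_\BS(\sigma_r(t_1),\sigma_r(t_2))$ and let $e_i$ be the edge of $T$ between $v_i$ and $w$, with $e_i^+ = w$. Fix a $\BS$-geodesic $\gamma$ of length $D$ from $\sigma_r(t_1)$ to $\sigma_r(t_2)$. The natural projection to $T$ sends $\gamma$ to a path from $v_1$ to $v_2$ which must pass through $w$, forcing $\gamma$ to traverse both $X_{e_1}$ and $X_{e_2}$. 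Pick $z_i \in \gamma \cap X_{e_i}$ and set $a_i := \tau_{e_i}(z_i) \in X_w$. Then $d_\BS(z_1,z_2) \le D$ and $d_\BS(z_i,\sigma_r(t_i)) \le D$; invoking Lemma \ref{lem:uniform_distortion} together with the $1$-Lipschitz map $p_{v_i}$ then yields $d_{X_w}(a_1,a_2) \le h(D+2)$ and $d_{L_{v_i}}(p_{v_i}(\tau_{\bar e_i}(z_i)), p_{v_i}(t_i)) \le h(D+1) + r$.

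The crucial step is to shift $z_1$ and $z_2$ simultaneously by a single central element. By Definition \ref{defn:admissible}.\eqref{item:finite_index}, $Z_{\check w}$ has a finite-index subgroup $Z_{00}$ contained in both edge stabilisers $\stab_G(e_1)$ and $\stab_G(e_2)$; its elements act isometrically on $\BS$ while preserving each of $X_w$, $X_{e_i}$, and $X_{v_i}$. By Lemma \ref{lem:Generating_sets_that_makes_lines}.\eqref{item:unbounded_Z}, $Z_{00}$ acts on $L_w$ as a loxodromic with bounded fundamental domain $K_0$, so one may pick $z \in Z_{00}$ with $d_{L_w}(p_w(z\cdot a_1), p_w(s)) \le K_0$; centrality then forces $d_{L_w}(p_w(z\cdot a_2), p_w(s)) \le h(D+2) + K_0$. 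The key payoff is that, by Lemma \ref{lem:Generating_sets_that_makes_lines}.\eqref{item:bounded_C}, $z$ acts on each $L_{v_i}$ with displacement bounded by a uniform constant $C_0$, so the $L_{v_i}$-coordinate of $z\cdot z_i$ differs from that of $z_i$ by at most $C_0$ in $L_{v_i}$.

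Combining these bounds, the image of $z\cdot z_i$ under the quasi-isometry $\Phi_{e_i}\colon X_{e_i} \to L_w \times L_{v_i}$ from Claim \ref{claim:Diagonal_map_is_a_QI} lies within a function of $D$ of the point $(p_w(s), p_{v_i}(t_i))$. Since $\Phi_{e_i}(P_r(s,t_i))$ is essentially the radius-$r$ box around this centre (by Definition \ref{defn:P_sets} and Lemma \ref{lem:R_for_sigmas_to_intersect}), pulling back via $\Phi_{e_i}$ gives $d_\BS(z\cdot z_i, P_r(s,t_i)) \le F(D)$ for some function $F$ depending only on $D$. Since $z$ acts isometrically on $\BS$, $d_\BS(z\cdot z_1, z\cdot z_2) = d_\BS(z_1,z_2) \le D$, so the triangle inequality delivers $d_\BS(P_r(s,t_1), P_r(s,t_2)) \le 2F(D) + D$, which serves as $\Phi(D)$. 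I expect the main obstacle to be this shift step: producing a single central element that loxodromically translates the $L_w$-coordinates of $a_1$ and $a_2$ close to $p_w(s)$ without substantially perturbing either $L_{v_i}$-coordinate. This requires marrying the loxodromic action on $L_w$ with the elliptic action on each adjacent $L_{v_i}$ provided by the quasimorphism construction of Lemma \ref{lem:Generating_sets_that_makes_lines}, together with the finite-index compatibility from Definition \ref{defn:admissible}.\eqref{item:finite_index}.
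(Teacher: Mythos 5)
Your proof is correct in its essentials but takes a genuinely different route from the paper. The paper's argument stays ``inside the level sets'': it slides the closest points $x_i\in\sigma_r(t_i)$ along $\sigma_r(t_i)$ into the edge spaces, identifies $\sigma_r(t_i)\cap\tau_{e_i}(X_{e_i})$ coarsely with a coset $g_iZ_{\check w}$ pushed across the edge (Claim~\ref{claim:sigma_L_product}.\eqref{item:vertical}), and then needs a separate estimate (Claim~\ref{claim:sigma_L_product}.\eqref{item:line_cap_surface}) to show that moving each of these cosets down to $\sigma_r(s)$ does not increase distances too much. Your argument replaces that last step by a single isometry: translating both crossing points $z_1,z_2$ by one central element $z$ automatically preserves $d_{\BS}(z_1,z_2)$, so you never need the analogue of item~\eqref{item:line_cap_surface}. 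What you pay for instead is the existence of $z$ itself, which rests on the same two facts the paper isolates in Proposition~\ref{prop:geometry_of_Zs}: cosets of the center surject quasi-isometrically onto $L_w$ (to hit the right height near $p_w(s)$), and the adjacent centers have bounded image in $L_{v_i}$ (so the translation does not disturb the other coordinate). Your observation that a finite-index subgroup of the center of $\stab_G(w)$ lies in both edge stabilisers is correct; it is forced by Definition~\ref{defn:admissible}.\eqref{item:finite_index} (both $\tau_{\alpha}^{-1}(Z_\mu)$ and $\tau_{\bar\alpha}^{-1}(Z_\omega)$ must be infinite cyclic for their span to be finite index in $\mathbb Z^2$), and is the same fact used in Lemma~\ref{lem:F_mu_are_relatively_hyp}.

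One justification is imprecise, though the instance you actually need is true. Lemma~\ref{lem:Generating_sets_that_makes_lines}.\eqref{item:bounded_C} says the subgroup $C_\alpha$ is bounded in $\cay(G_\mu,S_\mu)$, i.e.\ $|c|_{S_\mu}\leq C_0$ for $c\in C_\alpha$; it does not directly give that left multiplication by $z$ has uniformly bounded displacement everywhere on $L_{v_i}$, since $d_{L_{v_i}}(x,zx)=|x^{-1}cx|_{S_\mu}$ and $S_\mu$ is not conjugation-invariant. You only apply the claim at $z_i\in X_{e_i}$, where it does hold: $\tau_{\bar e_i}(z_i)$ and $\tau_{\bar e_i}(zz_i)$ lie in the same coset of the (abelian) edge group, so the relevant conjugate $x^{-1}cx$ equals $c$; equivalently, this is exactly Proposition~\ref{prop:geometry_of_Zs}.\eqref{item:bounded_Zs}, which you should cite here instead. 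With that repair, the rest goes through: $\Phi_{e_i}(zz_i)$ lands within a controlled distance of $(p_w(s),p_{v_i}(t_i))$, the non-emptiness of $P_r(s,t_i)$ from Lemma~\ref{lem:R_for_sigmas_to_intersect} (which fixes your $r_1\geq r_0$) places $\Phi_{e_i}(P_r(s,t_i))$ within $2r$ of that centre, and the quasi-isometry of Claim~\ref{claim:Diagonal_map_is_a_QI} pulls this back to $\BS$.
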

\begin{proof}
	
	The bulk of the technical work of the proposition is contained in the following claim.
	
	\begin{claim}\label{claim:sigma_L_product}
		There exist $r_1 \geq 0$ so that for every $r \geq r_1$ there is a constant $c \ge 0$ and a monotone diverging function $\Phi' \colon [0,\infty) \to [0,\infty)$ so that the following holds. Let $s,t \in \ST^{(0)}- T^{(0)}$ so that $v = \nu(t)$ and $w =\nu(s)$ are joined by an edge $e$ of $T$ with $e^+=v$ and $e^- = w$. Let $\mu = \check{v}$ and $\omega = \check{w}$ .
		\begin{enumerate}
			\item \label{item:Z_intersects_sigma} If $g \in G$ is so that $gZ_\mu \subseteq X_v$, then  $\sigma_r(t) \cap gZ_\mu \neq \emptyset$. Equivalently, if $gZ_\omega \subseteq X_w$, then $\sigma_r(s) \cap gZ_\omega \neq \emptyset$.
			\item \label{item:vertical} There exist $g\in G$ (depending on $t$), so that $gZ_\omega \subseteq \tau_{\bar e}(X_e)$ and 
			$$ \tau_{e}(\tau_{\bar e}^{-1}( g Z_{\omega}))\subseteq \sigma_{r}(t )\cap \tau_e(X_e) \subseteq \mc{N}_c(\tau_{e}(\tau_{\bar e}^{-1}( g Z_{\omega})).$$ 
			\item \label{item:close_to_edge} For each $x\in \sigma_{r}(t)$ there exists $x'\in \tau_e(X_e)\cap \sigma_{r}(t)$ with $d_{\BS}(x,x')\leq \Phi'(d_X(x,\tau_e(X_e)))$.
			\item \label{item:line_cap_surface} If $g_1,g_2 \in G$ so that $g_iZ_\omega\subseteq X_w$ for $i=1,2$, then
			$$d_{\BS}(g_1 Z_\omega \cap \sigma_{r}(s), g_2 Z_\omega\cap \sigma_{r}(s))\leq \Phi'( d_{\BS}(g_1 Z_\omega, g_2 Z_\omega)).$$
		\end{enumerate}
	\end{claim}
	
	\begin{proof}
		\textbf{Proof of \eqref{item:Z_intersects_sigma}:} By Proposition \ref{prop:geometry_of_Zs}.\eqref{item:Zs_QI_embed_in_Xv}, the restriction of $p_v$ to any coset $gZ_\mu \subseteq X_v$ is a uniform quasi-isometry. In particular, $p_v(gZ_\mu)$ uniformly coarsely covers $L_v$. Hence, there is some $r_1 \geq0$ so that for all $r \geq r_1$, $p_v(\sigma_r(t)) \cap p_v(gZ_\mu) \neq \emptyset$, which implies $\sigma_r(t) \cap gZ_\mu \neq \emptyset$.
		
		\textbf{Proof of \eqref{item:vertical}:} The set of cosets  $gZ_\omega$ so that $gZ_\omega \subseteq \tau_{\bar e}(X_e)$ partition $\tau_{\bar e}(X_e)$. Since $\tau_{\bar e}$ is injective, the images of these cosets under $\tau_{\bar e}^{-1}$ will partition $X_e$.  Since $p_v \circ \tau_e(X_e)$ coarsely covers $L_v$ (Proposition \ref{prop:geometry_of_Zs}.\eqref{item:Zs_QI_embed_in_Xv}), this partition implies there must be an $r'_1 \geq 0$ and a coset $gZ_\omega \subset \tau_{\bar e}(X_e)$ so that $p_v(\sigma_{r_1'}(t)) \cap p_v(\tau_e\tau_{\bar e}(gZ_\omega)) \neq \emptyset$.
		By Proposition \ref{prop:geometry_of_Zs}.\eqref{item:bounded_Zs}, the diameter of $p_v(\tau_e\tau_{\bar e}(gZ_\omega))$ is uniformly bounded. Hence  there is some $r_1 \geq r'_1$,  so that whenever $r \geq r_1$,  we have $$\tau_e\tau_{\bar{e}}^{-1}(gZ_\omega)\subseteq \sigma_{r}(t) \cap \tau_e(X_e).$$
		
		Now consider $x\in \sigma_{r}(t) \cap \tau_e(X_e)$. By construction, $d_{L_v}(p_v(x), p_v(\tau_e\tau_{\bar{e}}^{-1}(gZ_\omega)))\leq 2r$, so the fact that $x$ is uniformly close in $X_v$ to $\tau_{e}(\tau_{\bar e}^{-1}( g Z_{\mu}))$ follows from Proposition \ref{prop:geometry_of_Zs}.\eqref{item:distance_to_Zs}. Since the inclusion $X_v \to X$ is distance non-increasing, there is $c \geq0$ depending on  $\mc{G}$ and $r$ so that $d_X(x,\tau_{e}(\tau_{\bar e}^{-1}( g Z_{\mu})) \leq c$.
		
		\textbf{Proof of \eqref{item:close_to_edge}:} Let $r_1$ be the lower bound from the proof of Item \eqref{item:Z_intersects_sigma} so that for all $r \geq r_1$, $\sigma_r(t) \cap gZ_\mu \neq \emptyset$ when $gZ_\mu \subseteq X_v$. Fix $x \in \sigma_r(t)$, and let $\bar{x} \in \tau_e(X_e)$ be a point realizing $d_{X}(p, \tau_e(X_e))$. There exists some coset $gZ_\mu\in \tau_e(X_e)$ such that $\bar{x} \in gZ_\mu$ (because the cosets partition $X_v$). Let $x'$ be a point of the intersection $\sigma_{r}(t) \cap gZ_\mu$. As $x, x'\in \sigma_{r}(t)$, we have $$\vert d_{L_v}( p_v(\bar{x}),p_v(x)) - d_{L_v}(p_v(\bar{x}),p_v( x'))\vert \leq r.$$ By Proposition \ref{prop:geometry_of_Zs}.\eqref{item:Zs_QI_embed_in_Xv}, the map $p_v\colon gZ_\mu \to L_v$ is a $(\kappa,\kappa)$--quasi-isometry for some $\kappa \geq 1$ determined by $\mc{G}$. As $\bar{x}$ and $x'$ both belong to $gZ_\mu$, we have 
		\begin{align*}
			d_X(\bar{x}, x') \leq d_{X_v}(\bar{x},x') \leq& \kappa d_{L_v}(p_v(\bar{x}),p_v(x')) + \kappa \\
			\leq& \kappa d_{L_v}(p_v(\bar{x}),p_v(x)) + \kappa r+ \kappa\\
			\leq& \kappa^2 d_{X_v}(\bar{x},x) + \kappa^3 + \kappa r+ \kappa \\
		\end{align*}
		
		By applying Lemma \ref{lem:uniform_distortion}, the above bound on $d_X(\bar{x},x')$ in terms of $d_{X_v}(\bar{x},x)$ produces a diverging monotone function $\Phi' \colon [0,\infty) \to [0,\infty)$ so that $d_X(\bar{x},x') \leq \Phi'(d_X(\bar{x},x))$. The triangle inequality now  yields
		\[d_X(x, x') \leq d_X(x,\bar{x}) +d_X(\bar{x},x') \leq d_X(x,\bar{x}) + \Phi'(d_X(x,\bar{x})).\] Since $d_X(x,\bar{x}) = d_X(x,\tau_e(X_e))$, this completes the proof of \eqref{item:close_to_edge}.
		
		\textbf{Proof of \eqref{item:line_cap_surface}:} Let $r_1$ be the lower bound from the proof of Item \eqref{item:Z_intersects_sigma} so that for all $r \geq r_1$, $\sigma_r(s) \cap gZ_\omega \neq \emptyset$ when $gZ_\omega \subseteq X_w$. Given $g_1Z_\omega$ and $g_2Z_\omega$ in $X_w$, let $x_i \in g_i Z_\omega$ so that $d_X(x_1,x_2) = d_X(g_1 Z_\omega,g_2 Z_\omega)$. Let $z_1,z_2 $ be the elements of $Z_\omega$ so that $x_i = g_i z_i$ for $i= 1,2$.
		
		We can assume that $x_1 \in \sigma_r(s) \cap g_1 Z_\omega$ by the following argument. Suppose $z$ is the element of $Z_\omega$ so that $g_1z$ is a point in $\sigma_r(s) \cap g_1 Z_\omega$. Because $Z_\omega$ is central in $G_\omega$, we have \[z z_1^{-1} x_1 = z z_1^{-1} g_1 z_1 =  g_1z \in  \sigma_r(s) \cap g_1 Z_\omega \] and \[ z z_1^{-1} x_2 = z z^{-1}_1 g_2 z_2 = g_2 z z_{2}^{-1} z_1 \in g_2 Z_\omega. \] Hence $z z_1^{-1} x_1 \in \sigma_r(s)$ and $z z_1^{-1} x_2$ are points in $g_1 Z_\omega$ and $g_2 Z_\omega$ that realise  $d_X(g_1 Z_\omega,g_2 Z_\omega)$.
		
		We can now proceed by a very similar argument as Item \eqref{item:close_to_edge} using $L_w$ instead of $L_v$. Let $y_2$ be a vertex in $\sigma_r(s) \cap g_2 Z_\omega$. After replacing $gZ_\mu$ with $g_2Z_\omega$ and $L_v$ with $L_w$ in the proof of Item \eqref{item:close_to_edge}, we can repeat the same calculations with $x_1 = x$, $x_2 = \bar{x}$, and $y_2 = x'$, to produce 
		\[ d_X(x_1,y_2)  \leq  d_X(x_1, x_2) + \Phi'(d_X(x_1, x_2)).\] Since  $x_1 \in \sigma_r(s) \cap g_1 Z_\omega$, $y_2 \in \sigma_r(s) \cap g_2 Z_\omega$,  and $d_X(x_1, x_2) = d_X(g_1 Z_\omega,g_2 Z_\omega)$, this completes the proof of \eqref{item:line_cap_surface} with the same function $\Phi'$ as in the proof of \eqref{item:close_to_edge}.
	\end{proof}

	We now use Claim \ref{claim:sigma_L_product} to prove Proposition \ref{prop:P_vs_sigma}.
	Let $t_1, t_1, s\in \ST^{(0)}- T^{(0)}$ be such that $\nu(t_i) = v_i$, $\nu(s) = w$ and $d_T(v_1, v_2) = 2$ with $w$ the only vertex at distance one from both. Let $e_i$ be the edge of $T$ such that $e_i^{+} = v_i$, $e_i^{-} = w$.  Let $r \geq r_1$ where $r_1 \geq 0$ is the lower bound on $r$ from Claim \ref{claim:sigma_L_product}. Consider points $x_i\in\sigma_{r}(t_i)$ so that
	$$d_X(x_1,x_2)=d_X(\sigma_{r}(t_1),\sigma_{r}(t_2)):=d.$$
	We have to show that the $P_{r}(s,t_1)$ and $P_r(s,t_1)$ are at most some function of $d$ apart in $X$. Because the edge spaces separate $X$, we have $d_X(x_i,\tau_{e_i}(X_{e_i}))\leq d$. By Claim \ref{claim:sigma_L_product}.\eqref{item:close_to_edge}, there are points $x'_i\in \tau_{e_i}(X_{e_i}) \cap \sigma_{r}(t_i)$ such that $d_X(x'_1,x'_2)\leq d+ 2\Phi'(d)$. Setting $\omega = \check{w}$,  Claim \ref{claim:sigma_L_product}.\eqref{item:vertical} gives us $c \geq 0$ and  $g_i \in G$ so that each $g_iZ_\omega\subseteq X_w$ and
	$$ \tau_{e}(\tau_{\bar e}^{-1}( g_iZ_{\omega}))\subseteq \sigma_{r}(t_i )\cap \tau_e(X_e) \subseteq \mc{N}_c(\tau_{e}(\tau_{\bar e}^{-1}( g_i Z_{\omega})).$$
	Since the map $\tau_{e} \circ \tau_{\bar e}^{-1}$ moves points distance at most 2, we have 
	$$d_X(g_1 Z_\omega, g_2 Z_\omega) \leq d_X(x'_1,x'_2)+2(c+2)\leq d+ 2\Phi'(d)+2c+4.$$
	Applying Claim \ref{claim:sigma_L_product}.\eqref{item:line_cap_surface}, we have points $x''_i\in g_iZ_\mu\cap \sigma_{r}(s)$ with
	$$d_X(x''_1,x''_2)\leq \Phi'( d_X(g_1 Z_\mu, g_2 Z_\mu)) \leq \Phi'(d+ 2\Phi'(d)+2C+4).$$
	
	The points $x''_i$ are not quite in  $P_{r}(s,t_i)$, but because  $\tau_{e}(\tau_{\bar e}^{-1}( g_iZ_{\omega}))\subseteq \sigma_{r}(t_i )\cap \tau_e(X_e) $, we have 
	$$\tau_{\bar{e_i}}^{-1}(x''_i)\in \tau_{e_i}^{-1}(\sigma_{r}(t_i)).$$
	Since $x''_i \in \sigma_r(s) \cap g_iZ_\omega$, we also have $$\tau_{\bar{e_i}}^{-1}(x''_i)\in \tau_{\bar{e_i}}^{-1}(\sigma_{r}(s)).$$ Hence  $$\tau_{\bar{e_i}}^{-1}(x''_i) \in P_r(s,t_i),$$ which implies 
	$$d_X(P_{r}(s,t_1),P_{r}(s,t_2))\leq \Phi'(d+ 2\Phi'(d)+2C+4)+2,$$
	as desired for Proposition \ref{prop:P_vs_sigma}.
\end{proof}

\begin{lem}\label{lem:W_is_a_G_space}
	Let $r_0$ and $r_1$ be as in Lemma \ref{lem:R_for_sigmas_to_intersect} and Proposition \ref{prop:P_vs_sigma} respectively. For all $r > \max\{r_0,r_1\}$ there is $R_0 = R_0(r) \geq 0$ so that 
	\begin{itemize}
		\item for all $R>R_0$ the graph $W_{r,R}$ is connected;
		\item the $G$--action on $W_{r,R}$ by $g\cdot \Sigma(s,t)= \Sigma(gs,gt)$ is metrically proper and cobounded.
	\end{itemize}
\end{lem}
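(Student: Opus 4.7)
The plan is to show that, for sufficiently large $R$, the coarse map $\Psi\colon W^{(0)} \to \BS$ sending $\Sigma(s,t) \mapsto P_r(s,t)$ is a $G$--equivariant quasi-isometry. Since $G$ acts metrically properly and cocompactly on the locally finite Bass--Serre space $\BS$, the Milnor--Schwarz lemma provides an equivariant quasi-isometry between $\BS$ and any Cayley graph of $G$; transferring these properties through $\Psi$ will yield all three conclusions of the lemma.

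Coarse Lipschitzness of $\Psi$ is direct from the definitions. For a type-1 $W$--edge, the bound $d_{\BS}(P_r(\Sigma_1),P_r(\Sigma_2))\leq R$ is built in. For a type-2 edge, where $s=s'$, Proposition~\ref{prop:P_vs_sigma} converts the hypothesis $d_{\BS}(\sigma_r(t),\sigma_r(t'))\leq R+2$ into a bound of the form $\Phi(R+2)$ on $d_{\BS}(P_r(\Sigma_1),P_r(\Sigma_2))$. Lemma~\ref{lem:R_for_sigmas_to_intersect} also ensures the fibres of $\Psi$ are uniformly bounded. Coarse surjectivity is immediate: every vertex of $\BS$ either is a middle vertex $x \in X_e$, in which case $x \in P_r(\tau_e(x),\tau_{\bar e}(x))$, or lies in some $X_v$, in which case it is adjacent in $\BS$ to a middle vertex and hence within distance $2$ of a $P_r$--set.

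The main step, and the principal obstacle, is verifying that $\Psi$ is a quasi-isometric embedding: $d_{\BS}(P_r(\Sigma_1),P_r(\Sigma_2))\leq D$ must imply $d_W(\Sigma_1,\Sigma_2)\leq f(D)$. I would take an $\BS$--geodesic $\gamma$ between representatives of the two $P_r$--sets and assign to each vertex $x$ of $\gamma$ a maximal simplex $\Sigma(x)$: when $x\in X_e$, take $\Sigma(\tau_e(x),\tau_{\bar e}(x))$, and when $x\in X_v$, take the simplex associated to an adjacent middle vertex chosen consistently with the next edge space visited by $\gamma$. The key work is showing that consecutive $\Sigma(x_i),\Sigma(x_{i+1})$ are $W$--adjacent once $R_0$ is chosen large enough. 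When consecutive vertices share a single edge space, the coarse-Lipschitz statement of Lemma~\ref{lem:R_for_sigmas_to_intersect} in the $L_v\times L_w$--coordinates yields a type-1 edge. When both vertices lie in the same vertex space, a direct triangle-inequality estimate on the $P_r$--sets (together with Lemma~\ref{lem:R_for_sigmas_to_intersect}) again gives a type-1 edge. When $\gamma$ transitions between two edge spaces through a common vertex-space vertex, a type-2 edge bridges the change in $T$--direction using Proposition~\ref{prop:P_vs_sigma}. Setting $R_0$ above all these local constants---together with the distortion function $h$ of Lemma~\ref{lem:uniform_distortion} controlling how $X_v$ and $X_e$ sit inside $\BS$---guarantees each unit step of $\gamma$ costs at most one $W$--edge, giving $d_W(\Sigma_1,\Sigma_2)\leq C(D+1)$ as required.

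Once $\Psi$ is an equivariant quasi-isometry, the three conclusions follow cleanly: metric properness on $W$ descends from metric properness on $\BS$ via coarse Lipschitzness, coboundedness on $W$ follows from $G$--cocompactness on $\BS$ together with coarse surjectivity, and connectedness of $W$ follows from connectedness of $\BS$ by running the path-discretization construction used in the embedding argument between any two $W$--vertices.
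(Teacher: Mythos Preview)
Your approach is correct and genuinely different from the paper's. The paper proves the three conclusions separately: connectedness by showing that simplices over the same (or adjacent) $T$--edge can be joined via the finite generating sets of the edge and vertex stabilisers; properness by noting (via Proposition~\ref{prop:P_vs_sigma} for type-2 edges) that $W$--adjacent simplices have uniformly close $P_r$--sets, so bounded sets in $W$ map to bounded sets in $\BS$; and coboundedness by transitivity of $\stab_G(e)$ on $X_e^{(0)}$. You instead prove the single stronger statement that $\Psi\colon\Sigma(s,t)\mapsto P_r(s,t)$ is a $G$--equivariant quasi-isometry $W\to \BS$, and read off all three conclusions from the known proper cocompact action on $\BS$. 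Your argument is more geometric (discretising an $\BS$--geodesic), the paper's more algebraic (generators of stabilisers); yours has the bonus of making the quasi-isometry $W\simeq \BS$ explicit, while the paper's makes the threshold $R_0$ more transparent in terms of the chosen generating sets $J_\mu,J_\alpha$.

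Two small remarks on your embedding step. First, the citations are heavier than needed: once you observe that every vertex $z$ of $\BS$ is within distance $1$ of a middle vertex $m_z$ (this holds because, for each edge $\alpha$ at $\mu$, the cosets of $\tau_\alpha(G_\alpha)$ partition $G_\mu$, so $\bigcup_e \tau_e(X_e)=X_v^{(0)}$), and that $m_z\in P_r(\Sigma(z))$, then for adjacent $z,z'$ one gets $d_{\BS}(P_r(\Sigma(z)),P_r(\Sigma(z')))\leq d_{\BS}(m_z,m_{z'})\leq 3$, and since $\Sigma(z),\Sigma(z')$ always share a $T$--vertex this is a type-1 edge for $R\geq 3$. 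Neither Lemma~\ref{lem:R_for_sigmas_to_intersect} nor Proposition~\ref{prop:P_vs_sigma} is needed here; Proposition~\ref{prop:P_vs_sigma} is only used (as you say) in the Lipschitz direction to control type-2 edges. Second, you should state the coset-covering fact above explicitly, since without it the phrase ``an adjacent middle vertex'' is not obviously well-defined for an arbitrary $x\in X_v$.
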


\begin{proof}
	Fix $r \geq \max\{r_0,r_1\}$ and let $W = W_{r,R}$ for a choice of $R$ decided below.
	
	\textbf{$W$ is connected:}  Because the Bass--Serre tree $T$ is connected, given any two maximal simplices $\Sigma(s,t), \Sigma(s',t') $ of $\ST$, we can find a sequence of maximal simplices $\Sigma(s_i,t_i)$ so that $\nu(\Sigma(s_i,t_i))$ produces a path in $T$ from $\nu(\Sigma(s,t))$ to $\nu(\Sigma(s',t'))$. Hence, it suffices to prove that two vertices $\Sigma(s,t), \Sigma(s',t') \in W$ with $\nu(s) = \nu(s')$ can connected by a path in $W$. 
	
	First assume $\nu(s) = \nu(s')$ and $\nu(t) = \nu(t')$. Then $P_{r}(s,t)$ and $P_{r}(s',t')$ are both subsets of $X_e$.  Let $e$ be the edge of $T$ between $\nu(s)$ and $\nu(t)$ and let $h$ be an element of $G$ so that  $\stab_G(e) = h G_{\check{e}} h^{-1}$. Because $\stab_G(e)$ acts transitively on the vertices of $\BS_e$, there is $k \in \stab_G(e)$ so that $P_{r}(ks,kt) \cap P_{r}(s',t') \neq \emptyset$ and hence $\Sigma(ks,kt)$ is joined by a $W$--edge to $\Sigma(s',t')$. Because $\stab_G(e)$ is generated by the finite set $hJ_{\check{e}}h^{-1}$,  $\Sigma(s,t)$ will be connected to $\Sigma(ks,kt)$---and hence $\Sigma(s',t')$---if $\Sigma(gs,gt)$ is connected to $\Sigma(s,t)$ by a $W$--edge for each $g \in hJ_{\check{e}}h^{-1}$. There exists $R_1 \ge r$ depending only on $J_{\check{e}}$  so that $d_{\BS_e}(P_{r}(s,t),  P_{r}(gs,gt)) \leq R_1$ for all $g \in hJ_{\check{e}}h^{-1}$. Thus,  $\Sigma(gs,gt)$ is connected to $\Sigma(s,t)$ by a $W$--edge for each $g \in hJ_{\check{e}}h^{-1}$ provided $R \geq R_1$. 
	
	Now assume $\nu(s) = \nu(s')$, but $\nu(t) \neq \nu(t')$. Let $e_1$ be the edge of $T$ from $\nu(t)$ to $\nu(s)$ and $e_2$ be the edge of $T$ from $\nu(t')$ to $\nu(s') = \nu(s)$. Let $v = \nu(s)$ and $h$ be an element of $G$ so that $\stab_G(v) = h G_{\check{v}} h^{-1}$. Because $\stab_G(v)$ acts transitively on the vertices of $\BS_{v}$, there is $k \in \stab_G(v)$ so that $k \cdot \tau_{e_1}(P_{r}(s,t)) \cap \tau_{e_2}(P_{r}(s',t')) \neq \emptyset$. Thus, $d_{\BS}(P_{r}(ks,kt), P_{r}(s',t')) \leq 2$. Hence, if $R \geq 2$, then $\Sigma(ks,kt)$ is joined by a $W$--edge to $\Sigma(s',t')$. 
	Because $\stab_G(v)$ is generated by the finite set $hJ_{\check{v}}h^{-1}$, $\Sigma(s,t)$ will be connected to $\Sigma(ks,kt)$ if $\Sigma(gs,gt)$ is connected to $\Sigma(s,t)$ by a $W$--edge for each $g \in hJ_{\check{v}}h^{-1}$. 
	There exist $R_2 \geq r$ depending only on  $J_{\check{v}}$ so that $d_{\BS}(\tau_{e_1}(P_{r}(s,t)), g \tau_{e_1}(P_{r}(s,t)))\leq R_2$ for all $g \in hJ_{\check{v}}h^{-1}$. 
	Thus, $d_{\BS}(P_{r}(gs,gt),P_{r}(s,t)) \leq R_2+2$ for all $g \in hJ_{\check{v}}h^{-1}$ and hence  $\Sigma(gs,gt)$ is connected to $\Sigma(s,t)$ be a $W$--edge for each $g \in hJ_{\check{v}}h^{-1}$ whenever $R \geq R_2+2$.
	
	Because $R_1$ and $R_2$ depend only on the choice of finite generating set for the vertex and edge groups of $\mc{G}$, they can be chosen to be uniform for each vertex and edge of $\mc{G}$. Thus,  $W$ is connected whenever $R \geq R_0 = R_1 +R_2 +2$.

	\textbf{$G$ acts properly:} Let $K_W$ be a bounded subset of $W$ and let $K_X$ be the subset of $\BS$ that is the union $ \bigcup_{\Sigma(s,t) \in K_W}P_{r}(s,t)$. We note that when we have $W$--adjacent maximal simplices $\Sigma(s,t),\Sigma(s',t')$ then $d_X(P_{r}(s,t),P_{r}(s',t'))$ is uniformly bounded. Indeed, if the $W$--edge is as in the first bullet of Definition \ref{defn:W}, this is clear, and otherwise this follows from Proposition \ref{prop:P_vs_sigma}. Therefore, $K_X$ is a bounded subset of $X$. Since the action of $G$ on $\BS$ is metrically proper, the set $\{ g \in G : K_X \cap gK_X \neq  \emptyset\}$ is finite.  Now, $$\{ g \in G : K_W \cap gK_W \neq  \emptyset\} \subseteq \{ g \in G : K_X \cap gK_X \neq  \emptyset\}$$ because whenever $\Sigma(s,t)$ and $g\Sigma(s,t)$ are both in $K_W$,  $P_r(s,t)$ and $gP_r(s,t)$ are both contained in $K_X$. As the latter set is finite, the claim follows.

	\textbf{$G$ acts coboundedly:} Since $W$ is connected and $G$ acts cofinitely on the edges of $T$, it suffices to prove that for any edge $e$ of $T$, any two maximal simplices of $\ST$ that contain the edge $e$ have $\stab_G(e)$ translates that are $W$--adjacent.  Let $\Sigma(s,t)$ and $\Sigma(s',t')$ be two maximal simplices that contain the edge $e$. Since $\Stab_G(e)$ acts transitively on the vertices of $\BS_e$, there exists $g \in \Stab_G(e)$ so that $P_{r}(gs,gt) \cap P_{r}(s',t') \neq \emptyset$. Thus, $d_{\BS}(P_{r}(gs,gt),P_{r}(s',t')) \leq R$ and $\Sigma(gs,gt)$ is $W$--adjacent to $\Sigma(s',t')$ as desired.
\end{proof}

\section{Verification of combinatorial HHS axioms}\label{sec:proof}
We now verify that the pair $(\ST,W)$ from Section \ref{sec:blow_up} is a combinatorial HHS. For our \squidable graph of groups $\mc{G}$, we fix the same notation as the beginning of Section \ref{sec:blow_up} and let $\ST$ be the simplicial complex from Definition \ref{defn:ST} for $\mc{G}$. We continue to use $\Sigma(s,t)$ to denote the maximal simplex of $\ST$ determined by $s,t \in \ST^{(0)} - T^{(0)}$ (see Lemma \ref{lem:maximal_simplices}) and let  $\sigma_r(s)$ and $P_r(s,t)$ be the sets from Definition \ref{defn:level_surfaces} and \ref{defn:P_sets} respectively.

Fix $r \ge 0$ and $R\geq 0$ large enough that Lemma \ref{lem:W_is_a_G_space} ensures the graph $W_{r,R}$ is connected and has a metrically proper and cobounded action of $G$. Moreover, choose $R$ to be larger than $2\xi$ where $\xi = \xi(r)$ is the constant from  Lemma~ \ref{lem:R_for_sigmas_to_intersect}. With these values of $r$ and $R$ fixed, we let $W = W_{r,R}$.

Our proof that $(\ST,W)$ is a combinatorial HHS is spread over three subsections. Section \ref{subsec:combinatorial_conditions} contains a description of the links of the non-maximal simplices of $\ST$ and verifies parts \eqref{CHHS:finite_complexity}, \eqref{CHHS:comb_nesting_condition}, and \eqref{CHHS:C=C_0_condition} of the definition of a combinatorial HHS (Definition \ref{defn:CHHS}). This section also includes a proof that the action of $G$ on $\ST$ has finitely many orbits of links of simplices. Section \ref{subsec:hyperbolicity} proves the augmented links $\mc{C}(\Delta)$ for simplices are hyperbolic, while Section \ref{subsec:QI_embedding} prove that they quasi-isometrically embed in the space $Y_\Delta$. These are condition \eqref{CHHS:hyp_X} and \eqref{CHHS:geom_link_condition} of Definition \ref{defn:CHHS}.

\subsection{Simplices, links, and the combinatorial conditions}\label{subsec:combinatorial_conditions}
We now describe the combinatorics of simplices and their links in $\ST$ and then verify three of the conditions for $(\ST, W)$ to be a combinatorial HHS. In what follows, $\lk(\cdot)$ denotes the link in $\ST$, while $\lk_T(\cdot)$ denotes the link  in $|T|$, the unoriented graph obtained from $T$ by replacing each pair of oriented edges with an unoriented edge. Similarly, we use $d_T(\cdot,\cdot)$ to denote the distance in $|T|$ between two vertices of $T$.

A basic consequence of the description of  maximal simplices of $\ST$ (Lemma \ref{lem:maximal_simplices}), is that non-empty, non-maximal simplices come in one of the following types.

\begin{cor}\label{cor:simplex_type}
	Every non-maximal, non-empty simplex $\Delta$ of $\ST$ is one of the following 8 types
	\begin{enumerate}[{Type} 1:]
		\item $\Delta = \{v\}$ for some $v \in T^{(0)}$ \label{simplex:T_vertex}
		\item $\Delta = \{s\}$ for some $s \in \ST^{(0)} - T^{(0)}$ \label{simplex:Squid_vertex}
		\item $\Delta = \{v,w\}$ for some $v,w \in T^{(0)}$ \label{simplex:T_edge}
		\item $\Delta = \{s,t\}$ for some $s,t \in \ST^{(0)} - T^{(0)}$ \label{simplex:Squid_edge}
		\item $\Delta = \{s,v\}$ for some $v \in T^{(0)}$ and $s \in \ST^{(0)} - T^{(0)}$ with $\nu(s) \neq v$ \label{simplex:cross_edge}
		\item $\Delta = \{s,\nu(s), t\}$ for some  $s,t \in \ST^{(0)} - T^{(0)}$ \label{simplex:upper_triangle}
		\item $\Delta = \{s,\nu(s)\}$ for some  $s,t \in \ST^{(0)} - T^{(0)}$ \label{simplex:level_surface_type}
		\item $\Delta = \{ s,\nu(s), v\}$ for some $v \in T^{(0)}$ and $s \in \ST^{(0)} - T^{(0)}$  with $\nu(s) \neq v$. \label{simplex:quasi-line_type}
	\end{enumerate} 
\end{cor}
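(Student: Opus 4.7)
My plan is to prove this by systematic case enumeration, organized by the cardinality of $\Delta$. Since $\ST$ is flag and Lemma \ref{lem:maximal_simplices} says every maximal simplex has exactly $4$ vertices, a non-empty, non-maximal simplex must have $1$, $2$, or $3$ vertices. The edge description in Definition~\ref{defn:ST} is the only combinatorial input needed, together with the fact that $|T|$ is a tree (so contains no triangle).

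For the size-$1$ case, a vertex of $\ST$ either lies in $T^{(0)}$ (Type~\ref{simplex:T_vertex}) or in $Q = \ST^{(0)} - T^{(0)}$ (Type~\ref{simplex:Squid_vertex}), giving the first two types immediately. For the size-$2$ case I would split on how many of the two vertices lie in $T^{(0)}$. If both lie in $T^{(0)}$ they must be $T$-adjacent, giving Type~\ref{simplex:T_edge}. If exactly one vertex $v$ lies in $T^{(0)}$ and the other is $s\in Q$, then either $v = \nu(s)$ (Type~\ref{simplex:level_surface_type}) or $v\neq \nu(s)$ and $v$ is $T$-adjacent to $\nu(s)$ (Type~\ref{simplex:cross_edge}); note that these correspond exactly to the two kinds of edges in Definition~\ref{defn:ST}. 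If both vertices $s,t$ lie in $Q$, then since they are not connected to a common $T$-vertex via the first rule, they must satisfy $\nu(s), \nu(t)$ adjacent in $T$ (which in particular forces $\nu(s)\ne\nu(t)$), giving Type~\ref{simplex:Squid_edge}.

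For the size-$3$ case (triangles) I would again split by how many vertices lie in $T^{(0)}$. The all-in-$T^{(0)}$ case is impossible, since it would produce a triangle in $|T|$. If exactly two vertices $v, w$ lie in $T^{(0)}$ and the third is $s\in Q$, then $v, w$ are $T$-adjacent and $s$ is joined to each of $v, w$; it cannot happen that $\nu(s)$ is adjacent to both $v$ and $w$ distinct from $\nu(s)$ (triangle in $T$), so one of $v, w$ must equal $\nu(s)$; this is Type~\ref{simplex:quasi-line_type}. If exactly one vertex $v$ lies in $T^{(0)}$ and the other two $s, t$ lie in $Q$, then $\nu(s), \nu(t)$ are $T$-adjacent, and $v$ must equal one of $\nu(s), \nu(t)$ (else $v$ is adjacent to both $\nu(s)$ and $\nu(t)$, again producing a $T$-triangle); up to swapping $s$ and $t$, this is Type~\ref{simplex:upper_triangle}. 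The all-in-$Q$ case is impossible, since it would force $\nu(s), \nu(t), \nu(u)$ pairwise $T$-adjacent.

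Finally I would verify that each of Types~\ref{simplex:T_vertex}--\ref{simplex:quasi-line_type} is non-maximal, which is immediate since each contains at most $3$ vertices and can be extended to a size-$4$ simplex of the form $\Sigma(s,t)$ from Lemma~\ref{lem:maximal_simplices} by choosing suitable $s$ or $t$ in non-empty vertex spaces above the relevant $T$-vertices. The whole argument is purely combinatorial bookkeeping, and the only obstacle worth flagging is making sure the enumeration is exhaustive and that the ``tree has no triangles'' observation is invoked in each place where two potential $Q$-vertices would otherwise force impossible adjacencies in $T$.
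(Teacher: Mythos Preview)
Your argument is correct, but it takes a more hands-on route than the paper. The paper's proof is a single sentence: since every non-maximal simplex can be completed to a maximal one, and Lemma~\ref{lem:maximal_simplices} says maximal simplices are exactly $\Sigma(s,t)=\{s,\nu(s),t,\nu(t)\}$, one simply lists the proper non-empty subsets of such a $4$--element set and observes that each falls into one of the eight types. Your proof instead builds the classification from scratch, splitting by cardinality and by how many vertices lie in $T^{(0)}$, and repeatedly invoking ``$T$ has no triangles'' to rule out impossible configurations. Both arguments are valid; the paper's is quicker because it offloads all the combinatorial work to the already-proved Lemma~\ref{lem:maximal_simplices}, while yours is more self-contained and makes explicit exactly where the tree hypothesis is used. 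One small point: your opening reduction ``a non-empty, non-maximal simplex must have $1$, $2$, or $3$ vertices'' is really the same step the paper uses (every simplex sits inside some maximal one, hence has at most $4$ vertices), so you are implicitly invoking Lemma~\ref{lem:maximal_simplices} there anyway.
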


\begin{proof}
	Since every non-maximal simplex can be completed to a maximal simplex by adding vertices, the above list is a consequence of Lemma \ref{lem:maximal_simplices}
\end{proof}

By examining each type of simplex, we also obtain a description of the links of each type of simplex. Figure~\ref{fig:link_types} contains a schematic of each type of simplex along with its link and will be a useful reference through this section.

\begin{figure}[ht]
	\centering
	\begin{tikzpicture}
		\node[anchor=south west,inner sep=0] (image) at (0,0) {\includegraphics[width=\textwidth]{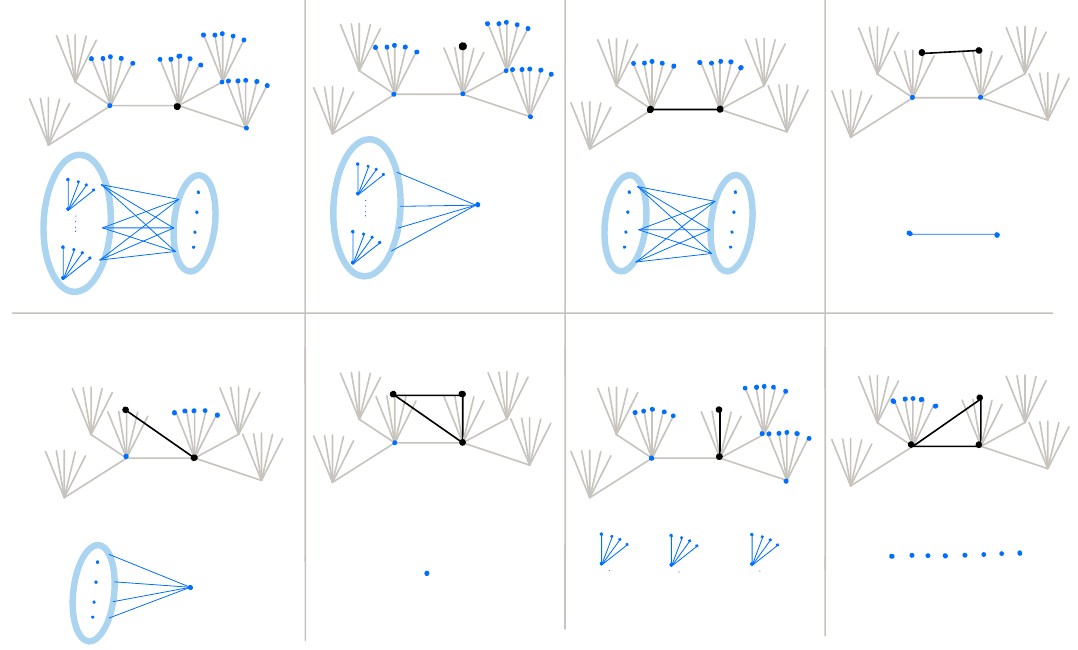}};
		\begin{scope}[x={(image.south east)},y={(image.north west)}]
			\node at (0.03, 0.98) {1};
			\node at (0.3, 0.98) {2};
			\node at (0.54, 0.98) {3};
			\node at (0.78, 0.98) {4};
			\node at (0.03, 0.48) {5};
			\node at (0.3, 0.49) {6};
			\node at (0.54, 0.49) {7};
			\node at (0.78, 0.49) {8};
		\end{scope}
	\end{tikzpicture}
	\caption{\label{fig:link_types} A schematic of each type of simplex and its link. The simplex is drawn in black with the vertices of the link highlighted in blue. Below, a schematic of the link is drawn in blue. To avoid clutter, most edges between vertices $s,t$ with $\nu(s) \neq \nu(t)$ are missing, as can be seen in the links of Type 1 and  Type 2.} \label{Figure:Failure_convexity}
\end{figure}

\begin{lem}\label{lem:description_of_links}
	Let $\Delta$ be a non-maximal, non-empty simplex of $\ST$. The link of $\Delta$ is determined by the type of $\Delta$ as follows, where $v,w\in T^{(0)}$ and $s,t \in \ST^{(0)} - T^{(0)}$:
	
	\begin{enumerate}[{Type} 1:]
		\item if $\Delta = \{v\}$, then	$\lk(\Delta)$ is the join of $\{s \in \ST^{(0)} - T^{(0)}: \nu(s) = v\}$ with the span of  $\{ t \in \ST^{(0)}: \nu(t) \in \lk_T(v)\}$.
		\item if $\Delta = \{s\}$, then $\lk(\Delta)$ is the join of $\{\nu(s)\}$ and the span of $$\{ t \in \ST^{(0)}: \nu(t) \in \lk_T(\nu(s))\}.$$
		\item if $\Delta = \{v,w\}$, then $\lk(\Delta)$ is the join of $\{s \in \ST^{(0)} - T^{(0)}: \nu(s) = v\}$ with $\{ t \in \ST^{(0)} - T^{(0)}: \nu(t) = w \}$.
		\item if $\Delta = \{s,t\}$, then  $\lk(\Delta)$ is the edge between $\nu(s)$ and $\nu(t)$.
		\item if $\Delta = \{s,w\}$, then $\lk(\Delta)$ is the join of $\{\nu(s)\}$ and $\{ t \in \ST^{(0)} - T^{(0)}: \nu(t) = w \}$.
		\item if $\Delta = \{s,\nu(s), t\}$, then $\lk(\Delta)$ is the vertex $\nu(t)$.
		\item if $\Delta = \{s,\nu(s)\}$, then $\lk(\Delta)$ is spanned by $\{ t \in \ST^{(0)}: \nu(t) \in \lk_T(v)\}$.
		\item if $\Delta =   \{ s,\nu(s), v\}$, then $\lk(\Delta)$ is $X_v^{(0)} = 
  \{ t \in \ST^{(0)} - T^{(0)}: \nu(t) = v\}$.
	\end{enumerate}
	In particular, if $\Delta$ is not of Type \ref{simplex:level_surface_type} or Type \ref{simplex:quasi-line_type}, then $\mc{C}(\Delta)$ has diameter at most $3$, by virtue of being a single vertex or a non-trivial join with some added edges.
\end{lem}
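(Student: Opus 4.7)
The plan is a straightforward case analysis through the eight types of non-empty non-maximal simplices enumerated in Corollary~\ref{cor:simplex_type}. The only tools needed are the two edge rules from Definition~\ref{defn:ST} and two elementary observations that follow from them: (i) if $s,s' \in \ST^{(0)} - T^{(0)}$ with $\nu(s)=\nu(s')$ and $s \ne s'$, then $s$ and $s'$ are \emph{not} joined by an edge of $\ST$ (neither of the two edge-creation rules applies, since $\nu$-fibres are discrete); and (ii) since $T$ is a tree, no vertex of $T$ is adjacent to two distinct vertices that are themselves adjacent in $T$.  I will use (i) and (ii) repeatedly to rule out extraneous candidates in the link.

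For each of the eight types I verify the description of $\lk(\Delta)$ in two steps: first, each vertex in the claimed set is adjacent to every vertex of $\Delta$ (directly from the two edge rules); second, any other candidate vertex $u$ fails adjacency to some vertex of $\Delta$.  The characteristic arguments are as follows.  For Type~\ref{simplex:T_vertex} and Type~\ref{simplex:Squid_vertex}, adjacency to a single vertex is read off the edge rules.  For Type~\ref{simplex:T_edge} and Type~\ref{simplex:cross_edge} (and similarly for Type~\ref{simplex:quasi-line_type}), observation (ii) forces $\nu(u)$ to equal $v$ or $w$, so $u$ lies in $\nu^{-1}(v)\cup \nu^{-1}(w)$ (together with $v,w$ themselves for Type~\ref{simplex:T_edge}, or $\nu(s)$ for Type~\ref{simplex:cross_edge}).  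For Type~\ref{simplex:Squid_edge}, any vertex adjacent to both $s$ and $t$ with $\nu(u)\notin\{\nu(s),\nu(t)\}$ would need $\nu(u)$ adjacent to both $\nu(s)$ and $\nu(t)$ in $T$, contradicting (ii); and observation (i) rules out $u\in\nu^{-1}(\nu(s))\setminus\{\nu(s)\}$ and similarly for $t$.  For Type~\ref{simplex:upper_triangle} and Type~\ref{simplex:level_surface_type}, adjacency to $\nu(s)$ together with observation (i) forces $\nu(u)\in\lk_T(\nu(s))$, and one then either combines with adjacency to $t$ (using (ii)) to pin $u=\nu(t)$ in Type~\ref{simplex:upper_triangle}, or is left with the entire saturation in Type~\ref{simplex:level_surface_type}.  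In each case where the description asserts a join, joinedness is immediate from the adjacency-in-$T$ edge rule.

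For the final ``in particular'' clause, observe that in Types~\ref{simplex:T_vertex},~\ref{simplex:Squid_vertex},~\ref{simplex:T_edge}, and~\ref{simplex:cross_edge} the link is a non-trivial join of non-empty subcomplexes, hence its $Y$-graph $1$-skeleton has diameter at most $2$; Type~\ref{simplex:Squid_edge} is a single edge and Type~\ref{simplex:upper_triangle} is a single vertex.  Since $\mc{C}(\Delta)$ is obtained from this induced $Y$-subgraph by possibly adding $W$-edges (which can only decrease distances), the diameter of $\mc{C}(\Delta)$ is bounded by the same quantity, comfortably below~$3$.  I do not anticipate any real obstacle: the entire lemma is essentially a bookkeeping exercise, and the only subtlety worth flagging is the repeated use of observation (i) to prevent two vertices of a common $\nu$-fibre from being erroneously declared adjacent.
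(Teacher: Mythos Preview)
Your proposal is correct and takes the same approach as the paper, which simply records that ``all cases are a straightforward exercise using the definitions of edges of $\ST$'' and that the bound on $\diam(\mc{C}(\Delta))$ follows because $\mc{C}(\Delta)$ is obtained from $\lk(\Delta)$ by adding edges. Your observations (i) and (ii) are exactly the two combinatorial facts doing the work, and your case-by-case outline is more detailed than what the paper provides; the only cosmetic slip is the phrase ``its $Y$-graph $1$-skeleton'', where you presumably mean simply the $1$-skeleton of $\lk(\Delta)$ with its intrinsic path metric.
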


\begin{proof}
	All cases  are a straightforward exercise using the definitions of edges of $\ST$. The ``in particular'' clause follows as $\mc{C}(\Delta)$ is obtained from adding edges to $\lk(\Delta)$.
\end{proof}

When the link of a simplex is not a non-trivial join, we will need to understand its saturation (Definition \ref{defn:saturation}) in order to understand the space $Y_\Delta$.

\begin{lem}\label{lem:descriptions_of_saturations} Let $\Delta$ be a non-empty, non-maximal simplex of $\ST$. 
	\begin{enumerate}
		\item If $ \Delta = \{s,\nu(s)\}$ is a simplex of Type \ref{simplex:level_surface_type}, then $$\Sat(\Delta)=\{\nu(s)\}\cup \{s' \in \ST^{(0)} - T^{(0)}: \nu(s') = \nu(s)\}.$$	
		\item  If $\Delta =  \{ s,\nu(s), v\}$ is a simplex of Type \ref{simplex:quasi-line_type}, then 
		$$\Sat(\Delta)=\{u\in T^{(0)}: d_T(v,u)\leq 1\}\cup \{t\in \ST^{(0)}- T^{(0)}: d_T(v,\nu(t))=1\}.$$
	\end{enumerate}
\end{lem}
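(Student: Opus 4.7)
My plan is to unwind the definition $\Sat(\Delta) = \bigcup_{\Delta' \sim \Delta} \Delta'^{(0)}$ and, in each case, enumerate all simplices $\Delta'$ of $\ST$ with $\lk(\Delta') = \lk(\Delta)$, using Lemma~\ref{lem:description_of_links} together with the adjacency rules of $\ST$ from Definition~\ref{defn:ST}. The key geometric input is the tree-rigidity of $T$: distinct vertices $s, s' \in \ST^{(0)} - T^{(0)}$ with $\nu(s) = \nu(s')$ are never adjacent in $\ST$, and for any $v \in T^{(0)}$ the vertices of $\lk_T(v)$ are pairwise non-adjacent in $T$ (since $T$ is triangle-free).

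For the Type~7 simplex $\Delta = \{s, \nu(s)\}$, write $L := \lk(\Delta) = \{t \in \ST^{(0)} : \nu(t) \in \lk_T(\nu(s))\}$. The claim reduces to showing $\Delta' \sim \Delta$ iff $\Delta' = \{s', \nu(s)\}$ for some $s' \in \ST^{(0)} - T^{(0)}$ with $\nu(s') = \nu(s)$. The requirement $L \subseteq \lk(\Delta')$, together with the adjacency description, restricts the vertex set of $\Delta'$ to $\{\nu(s)\} \cup \{s' : \nu(s') = \nu(s)\}$; $\nu(s)$ must appear in $\Delta'$, because otherwise $\Delta'$ would be a single vertex $s'$ (distinct vertices over $\nu(s)$ being non-adjacent) whose link strictly contains $L$, and some $s'$ with $\nu(s') = \nu(s)$ must also appear, because $\lk(\{\nu(s)\})$ itself strictly contains $L$. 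Since distinct such $s'$'s are non-adjacent in $\ST$, $\Delta'$ has no further vertices, and the union of the vertex sets of all such $\Delta'$ yields the formula.

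For the Type~8 simplex $\Delta = \{s, \nu(s), v\}$ (with $\nu(s) \in \lk_T(v)$), write $L := \lk(\Delta) = \{t \in \ST^{(0)} - T^{(0)} : \nu(t) = v\}$. The claim reduces to showing $\Delta' \sim \Delta$ iff $\Delta' = \{s', \nu(s'), v\}$ for some $s' \in \ST^{(0)} - T^{(0)}$ with $\nu(s') \in \lk_T(v)$. The inclusion $L \subseteq \lk(\Delta')$ restricts the vertices of $\Delta'$ to $\{v\} \cup \lk_T(v) \cup \{s' : \nu(s') \in \lk_T(v)\}$; vertices $s'$ with $\nu(s') = v$ are excluded by the non-adjacency of such pairs. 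The reverse inclusion $\lk(\Delta') \subseteq L$ then forces $v \in \Delta'$ (otherwise $v$ is adjacent to every admissible vertex and lies in $\lk(\Delta')$), and by tree-rigidity the only way to simultaneously exclude every remaining $u \in \lk_T(v)$ and every remaining $s'$ with $\nu(s') \in \lk_T(v)$ from $\lk(\Delta')$ is to add a vertex $s' \in \ST^{(0)} - T^{(0)}$ with $\nu(s') \in \lk_T(v)$ together with $\nu(s')$ itself; configurations missing one of these two, or containing two distinct elements of $\lk_T(v)$, either fail to form a simplex or produce a link different from $L$. Collecting the vertex sets over all valid $\Delta'$ gives the stated saturation.

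The main delicate step is the Type~8 case: specifically, confirming that the link of each candidate $\Delta'$ equals $L$ \emph{exactly}. The triangle-freeness of $T$ does the essential work here -- it guarantees that including both $s'$ and $\nu(s')$ in $\Delta'$ excludes every competing neighbour $u$ of $v$ and every lift thereof from $\lk(\Delta')$, while leaving the full fibre $\{t : \nu(t) = v\} = L$ intact. Once this is clarified, the rest is routine bookkeeping.
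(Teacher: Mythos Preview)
Your proof is correct and follows essentially the same strategy as the paper: in each case you identify all simplices $\Delta'$ with $\lk(\Delta') = \lk(\Delta)$ by first using the inclusion $\lk(\Delta) \subseteq \lk(\Delta')$ to restrict the possible vertices of $\Delta'$, and then using $\lk(\Delta') \subseteq \lk(\Delta)$ together with the non-adjacency of distinct fibres and the triangle-freeness of $T$ to pin down $\Delta'$ exactly. The paper argues the two inclusions in the same way, with slightly less explicit casework in the Type~\ref{simplex:quasi-line_type} case; your treatment is in fact a touch more careful there, as you explicitly rule out the case $\nu(s') = v$ for non-tree vertices.
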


\begin{proof}
	\textbf{Case 1:}  $ \Delta = \{s,\nu(s)\}$  is a simplex of Type \ref{simplex:level_surface_type}. First, suppose that $s'$ is a vertex with $\nu(s)=\nu(s')$ and $s'\neq \nu(s)$, so $\Delta'=\{s',\nu(s)\}$ is 
	a simplex of $\ST$.  If $u \in \ST$ is a vertex adjacent to both $s'$ and $\nu(s') =\nu(s)$, then $\nu(u)$ is adjacent to 
	$\nu(s)$, which makes $u$  adjacent to $s$. Hence $u\in\link(\Delta)$, and we have $\link(\Delta')\subseteq\link(\Delta)$.  
	By a symmetrical argument, $\link(\Delta')=\link(\Delta)$. Thus, every simplex of the form $\{s',\nu(s)\}$ with $\nu(s)=\nu(s')$ and $\nu(s)\neq s'$ has the same link as 
	$\Delta$.  In particular, every such $s'$ is in $\Sat(\Delta)$ and $\nu(s)$ is in $\Sat(\Delta)$.
	
	Conversely, suppose that $\Delta'$ is a simplex with $\link(\Delta')=\link(\Delta)$.  Then 
	$\nu(\Delta')=\nu(s)$ as $\lk_T(\nu(\Delta)) = \lk_T(\nu(s))$.  Now, $\Delta'$ cannot be $\nu(s)\in \ST$, because then its link would contain vertices 
	$s'$ with $\nu(s)=\nu(s')$, which are not in $\link(\Delta)$.  On the other hand, if  $\Delta'=s'$ for some $s' \in \ST$ with $\nu(s') = \nu(s)$, then $\link(\Delta')$ would contain $\nu(s)$, which is not in $\link(\Delta)$.  So $\Delta'$ must be equal to $\{s',\nu(s)\}$ for 
	some $s'\neq \nu(s)$ with $\nu(s)=\nu(s')$.  By definition, $\Sat(\Delta)$ is the union of these 
	$\{s',\nu(s)\}$, which completes the proof  that  \[\Sat(\Delta) = \bigcup_{\nu(s')=\nu(s)}\{s',\nu(s)\}.\]

	\textbf{Case 2:} $\Delta =  \{ s,\nu(s), v\}$ is a simplex of Type \ref{simplex:quasi-line_type}. Let $u\in \Sat(\Delta)$.  If $u\in T^{(0)}$, then $u$ is 
	adjacent to or equal to $v$ in $\ST$ and hence in $T$.  If $u\in \ST^{(0)} - T^{(0)}$, then $\nu(u)$ is adjacent to or equal to $v$.  Conversely, 
	suppose that $u\in T^{(0)}$ and $d_T(u,v)=1$.  Choose any vertex $t\in\nu^{-1}(u)$ with $t \neq u$.  Then $\{u,t,v\}$ is a simplex 
	with link $\lk(\Delta)$.  Next, suppose that $u\in \ST^{(0)} - T^{(0)}$ and $d_T(v,\nu(u))\leq 1$.  Then 
	$\Delta'=\{u,\nu(u),v\}$ is a simplex with  $\lk(\Delta') = \lk(\Delta)$. Together, these show the $$\Sat(\Delta)=\{u\in T^{(0)}: d_T(v,u)\leq 1\}\cup \{t\in \ST^{(0)}- T^{(0)}: d_T(v,\nu(t))=1\}.$$
\end{proof}

We now verify  conditions \eqref{CHHS:finite_complexity}, \eqref{CHHS:comb_nesting_condition}, \eqref{CHHS:C=C_0_condition}  from Definition \ref{defn:CHHS} for $(\ST,W)$ to be a combinatorial HHS.

\begin{lem}\label{lem:finite_complexity}
	If $\Delta_1,\dots,\Delta_n$ are simplices of $\ST$ such that $\lk(\Delta_1) \subsetneq \dots \subsetneq \lk(\Delta_n)$, then $n \leq 5$.
\end{lem}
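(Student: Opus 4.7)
The plan is to bound the chain length by a projection argument along the simplicial map $\nu\colon\ST\to T$.

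First, I would use Corollary~\ref{cor:simplex_type} and Lemma~\ref{lem:description_of_links} to enumerate the possible shapes of $\nu(\lk(\Delta))\subseteq T^{(0)}$ as $\Delta$ ranges over simplices of $\ST$: the empty set (when $\Delta$ is a maximal simplex, since then $\lk(\Delta)=\emptyset$); a single $T$-vertex (for Types~6 and~8); an edge $\{v,w\}$ of $T$ (for Types~3,~4,~5); $\lk_T(v)$ (for Type~7); $\{v\}\cup\lk_T(v)$ (for Types~1 and~2); and all of $T^{(0)}$ (when $\Delta=\emptyset$). Since Definition~\ref{defn:admissible} forces edge groups to have infinite index in vertex groups, every vertex of the Bass--Serre tree has infinite valence, so $\lk_T(v)$ is infinite and $\{v,w\}\subsetneq\{v\}\cup\lk_T(v)\subsetneq T^{(0)}$ for every edge $\{v,w\}$ of $T$.

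Next, I would show that $\lk(\Delta_i)\subsetneq \lk(\Delta_{i+1})$ forces the strict inclusion $\nu(\lk(\Delta_i))\subsetneq \nu(\lk(\Delta_{i+1}))$ of projections. For this it suffices to check that, for each of the projection shapes above, any two distinct links sharing that projection are incomparable. The cases are: for the projection $\{v\}\cup\lk_T(v)$, the Type~1 link contains all shaft vertices above $v$ but not $v$ itself, while the Type~2 link contains $v$ but none of those shafts; for the projection $\{v,w\}$, the Type~3, Type~4, and Type~5 links each contain vertices the others omit (respectively the shafts above $v$ and $w$ only; the two $T$-vertices $v,w$ only; and a mixture of one $T$-vertex with shafts above the opposite vertex); and for a single-vertex projection $\{v\}$, the Type~6 link is the singleton $\{v\}$ while the Type~8 link is the infinite set of shaft vertices above $v$, so the two are disjoint.

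Finally, I would bound the length of a strict-inclusion chain among the possible projections. The key observation is that $\lk_T(v)$ is incomparable with $\{v,w\}$ (since $v\in\{v,w\}$ but $v\notin\lk_T(v)$, while $\lk_T(v)\not\subseteq\{v,w\}$ as $|\lk_T(v)|\ge 2$), so they cannot both appear in a strict chain. The longest chain is therefore
\[
\emptyset \ \subsetneq\ \{v\} \ \subsetneq\ \{v,w\} \ \subsetneq\ \{v\}\cup\lk_T(v) \ \subsetneq\ T^{(0)},
\]
of length $5$, giving $n\leq 5$. The main obstacle I expect is the pairwise incomparability analysis in the second step, which requires careful bookkeeping of exactly which $T$-vertices and shaft vertices lie in each of the link types listed in Lemma~\ref{lem:description_of_links}; in particular, one must be vigilant that Types~1 and~2 (and likewise Types~3,~4,~5) share the same projection but remain incomparable because of their differing $T$-vertex-versus-shaft content.
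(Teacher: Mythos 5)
Your argument is correct, but it takes a genuinely different route from the paper's. The paper's proof is a one-line consequence of the case analysis in Lemma~\ref{lem:description_of_links}: inspecting the eight link types shows that $\lk(\Delta)\subsetneq\lk(\Delta')$ forces $\Delta$ to have strictly more vertices than $\Delta'$, and since non-empty non-maximal simplices of $\ST$ have between $1$ and $3$ vertices (with the empty simplex and the $4$-vertex maximal simplices at the ends), a strictly nested chain of links has length at most $5$. You instead push the chain forward along $\nu\colon\ST\to T$: you verify that two distinct links with the same $\nu$-image are incomparable, so strict inclusion of links forces strict inclusion of $\nu$-images, and you then bound chains among the possible image shapes $\emptyset,\ \{v\},\ \{v,w\},\ \lk_T(v),\ \{v\}\cup\lk_T(v),\ T^{(0)}$ using triangle-freeness and infinite valence of the Bass--Serre tree. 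Both routes rest on the same enumeration of links, so neither saves work in substance; yours has the virtue of explaining the bound $5$ as the combinatorial depth of the blown-up tree, while the paper's vertex-counting observation is the quicker bookkeeping. To make your final step fully airtight you should also record that two distinct images of the same shape are never strictly nested (two singletons or two edges for cardinality reasons; $\lk_T(v)\subseteq\lk_T(w)$ or $\{v\}\cup\lk_T(v)\subseteq\{w\}\cup\lk_T(w)$ for $v\neq w$ would force a cycle in $T$ once every vertex has valence at least $2$), so that a length-$6$ chain would have to use all six shapes and in particular contain both an edge and some $\lk_T(v)$, which your incomparability observation rules out.
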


\begin{proof}
	Corollary~\ref{cor:simplex_type} lists all types of non-maximal, non-empty simplices of $\ST$. Examining the links for each of these different types of simplices (Lemma \ref{lem:description_of_links}) shows that if $\lk(\Delta) \subsetneq \lk(\Delta')$, then $\Delta$ must have strictly more vertices that $\Delta'$. Thus, any chain of strictly nested links of simplices must have length at most $5$ (recall, $\lk(\emptyset) = \ST$ by definition).
\end{proof}

\begin{lem}\label{lem:C=C0_for_W}
	Let $\Delta$ be a simplex of $\ST$ and $x,y \in \lk(\Delta)^{(0)}$ be vertices that are not adjacent in $\ST$, but are adjacent in $\ST^{+W}$. Then there exist two maximal simplices $\Sigma_x, \Sigma_y \subseteq \st(\Delta)$ that respectively contain $x$ and $y$ such that $\Sigma_x$ and $\Sigma_y$ are adjacent in $W$.
\end{lem}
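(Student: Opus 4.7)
The plan is to use the assumed $W$-edge between $x$ and $y$ to extract concrete $W$-adjacent maximal simplices, and then to modify them minimally so that each contains $\Delta$.  Since $x$ and $y$ are adjacent in $\ST^{+W}$ but not in $\ST$, the edge between them must be a $W$-edge, so there exist maximal simplices $\Sigma = \Sigma(s_1, t_1)$ with $x \in \Sigma$ and $\Sigma' = \Sigma(s_2, t_2)$ with $y \in \Sigma'$ that are $W$-adjacent.  By Definition~\ref{defn:W}, after possibly relabeling, $\Sigma$ and $\Sigma'$ share a common $T$-vertex $v = \nu(s_1) = \nu(s_2)$, and either (A) $d_X(P_r(s_1, t_1), P_r(s_2, t_2)) \le R$ or (B) in addition $s_1 = s_2$ and $d_X(\sigma_r(t_1), \sigma_r(t_2)) \le R+2$.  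Because $x, y$ are not $\ST$-adjacent, they cannot both lie in a single maximal simplex, so $x \notin \Sigma'$ and $y \notin \Sigma$.

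Next I would analyse the possible positions of $x$ in $\Sigma$ and of $y$ in $\Sigma'$.  The non-$\ST$-adjacency of $x, y$ forces either $\nu(x) = \nu(y)$ with both vertices in $Q$, or $d_T(\nu(x), \nu(y)) \ge 2$.  Combined with $x \in \Sigma$ and $y \in \Sigma'$, this leaves only three cases: (I) $x = s_1$ and $y = s_2$; (II) $x = t_1$ and $y = t_2$ with $\nu(t_1) = \nu(t_2)$; or (III) $x \in \{t_1, \nu(t_1)\}$ and $y \in \{t_2, \nu(t_2)\}$ with $\nu(t_1) \neq \nu(t_2)$, both adjacent to $v$, so that $d_T(\nu(t_1), \nu(t_2)) = 2$ via $v$.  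Using $x, y \in \lk(\Delta)$ and Lemma~\ref{lem:description_of_links}, I would then determine in each case which $\Delta$ are possible; for instance, in Case~(III) the only common neighbour in $T$ of $\nu(t_1)$ and $\nu(t_2)$ is $v$, which forces every vertex of $\Delta$ to lie in the fibre over $v$, so $\Delta$ is one of $\emptyset$, $\{v\}$, $\{s\}$ with $\nu(s) = v$, or $\{s, v\}$.

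For each resulting configuration I would build $\Sigma_x$ and $\Sigma_y$ according to two principles.  First, if $\Delta$ contains a $Q$-vertex $s$ in a given fibre, then the $Q$-vertex of $\Sigma_x$ over that fibre is forced to equal $s$; otherwise it is unconstrained and can be chosen to coincide in $\Sigma_x$ and $\Sigma_y$.  Second, the remaining $Q$-vertex of $\Sigma_x$ is taken to be $t_1$ (or a $Q$-vertex with the same $T$-image $\nu(x)$), and similarly for $\Sigma_y$ with $t_2$.  In this way $\Sigma_x$ and $\Sigma_y$ always share a common $Q$-vertex, so their $W$-adjacency is governed by the second bullet of Definition~\ref{defn:W}, reducing to the estimate $d_X(\sigma_r(t_1), \sigma_r(t_2)) \le R+2$.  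Using $P_r(s, t) \subseteq \mc{N}_1(\sigma_r(s)) \cap \mc{N}_1(\sigma_r(t))$ and the triangle inequality, this follows from either (A) or (B) of the original $W$-adjacency at the cost of an additive constant absorbed by our choice of $R$.

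The main obstacle is the combinatorial bookkeeping: in each subcase one must verify that the maximal simplex produced genuinely contains $\Delta \star \{x\}$ (respectively $\Delta \star \{y\}$), and in cases where the $T$-vertices of $\Sigma_x, \Sigma_y$ differ from those of $\Sigma, \Sigma'$ (for example, Case~(II) with $\Delta$ containing a neighbour of $w = \nu(t_1) = \nu(t_2)$ different from $v$) one must use the freedom to pick the new $Q$-vertex over the new $T$-vertex identically in $\Sigma_x$ and $\Sigma_y$.  A little extra care is needed in Case~(III) when $x$ or $y$ is a $T$-vertex rather than a $Q$-vertex: one then takes the remaining $Q$-vertex of $\Sigma_x$ to be $t_1$ itself, which automatically places $x = \nu(t_1)$ into $\Sigma_x$.
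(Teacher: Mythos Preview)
Your proposal is correct and follows essentially the same approach as the paper.  The paper compresses your three cases into two by first observing that non-$\ST$-adjacency forces $x$ and $y$ to lie on the same ``side'' of their respective maximal simplices (both in $\{s_i,\nu(s_i)\}$ or both in $\{t_i,\nu(t_i)\}$), and then invoking the $s\leftrightarrow t$ symmetry of $P_r$ to assume without loss of generality that both lie on the $t$-side; after that the construction of $\Sigma_x=\Sigma(s,t_x)$ and $\Sigma_y=\Sigma(s,t_y)$ sharing a common $Q$-vertex $s$, and the verification of the second bullet of Definition~\ref{defn:W} via $P_r(s,t)\subseteq\mc N_1(\sigma_r(t))$, is identical to yours.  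Two small points: in your Case~(I) the relevant estimate is $d_X(\sigma_r(s_1),\sigma_r(s_2))\le R+2$ rather than the $t$-version, and no extra additive constant beyond the $+2$ already present in Definition~\ref{defn:W} is needed, so nothing has to be ``absorbed by our choice of $R$''.
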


\begin{proof}
	Let $\Delta$ be a simplex of $\ST$ and $x,y \in \lk(\Delta)^{(0)}$  that are not adjacent in $\ST$, but are adjacent in $\ST^{+W}$. Let $s_x,t_x$ and $s_y,t_y$ be the elements of $\ST - T$ so that $x \in \Sigma(s_x,t_x)$, $y \in \Sigma(s_y,t_y)$ and $\Sigma(s_x,t_x)$ is  $W$--adjacent to $\Sigma(s_y,t_y)$. Without loss of generality, assume $x$ and $y$ are respectively contained in the edges $\{t_x,\nu(t_x)\}$ and $\{t_y,\nu(t_y)\}$. It suffices to find $s \in \ST-T$ so that $\Delta \subseteq \st(s)$ and the simplices $\Sigma(s,t_x)$ and $\Sigma(s,t_y)$ are $W$--adjacent.
	
	First assume that $\nu(x) \neq \nu(y)$.  Since $x$ and $y$ are not joined by an $\ST$--edge, $\nu(x)$ cannot be joined to $\nu(y)$ by an edge in $T$. Thus, there must exist $s \in \ST^{(0)}-T^{(0)}$ so that  $\Delta$ is contained in the edge $\{s,\nu(s)\}$ and $\nu(x),\nu(y) \subseteq \lk(\nu(s))$.  The simplices $\Sigma(s,t_x)$ and $\Sigma(s,t_y)$ are therefore $W$--adjacent, since $\Sigma(s_x,t_x)$ and  $\Sigma(s_y,t_y)$ being $W$--adjacent implies that $d_\BS(\sigma_{r}(t_x),\sigma_{r}(t_y)) \leq R+2$ in both cases of edges in $W$.
	
	Now assume that $\nu(x) = \nu(y)$. Since $x$ and $y$ are not joined by an $\ST$--edge, both $x$ and $y$ must be elements of $\ST^{(0)}-T^{(0)}$. This implies that $\Delta$ is contained in a $2$--simplex of the form $\{s,\nu(s),\nu(x)\}$ where $s \in \ST-T$ with $\nu(s) \subseteq \lk(\nu(x))$. Since $x \neq y$ and $\Sigma(s_x,t_x)$ is $W$--adjacent to $\Sigma(s_y,t_y)$, we must have $t_x = x$, $t_y = y$, and $d_\BS(\sigma_{r}(x),\sigma_{r}(y)) \leq R + 2$ in both case of edges in $W$. Thus, the simplices $\Sigma(s,x)$ and $\Sigma(s,y)$ are connected by an edge in $W$.
\end{proof}

\begin{lem}\label{lem:pseduo_wedges}
	For any non-maximal simplices $\Delta$ and $\Omega$ of $\ST$ there exists a (possibly empty) simplex $\Pi$ of $\lk(\Delta)$ such that $\lk(\Delta \star \Pi) \subseteq \lk(\Omega)$ and for all non-maximal simplices $\Lambda$ of $\ST$ so that $\lk(\Lambda) \subseteq \lk(\Delta) \cap \lk(\Omega)$ either 
	\begin{enumerate}
		\item $\lk(\Lambda)$ is a non-trivial join or a vertex; or
		\item $\lk(\Lambda) \subseteq \lk(\Delta \star \Pi)$.
	\end{enumerate}
\end{lem}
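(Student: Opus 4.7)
First I would dispose of the trivial cases: if $\Omega=\emptyset$ take $\Pi=\emptyset$, and if $\Delta=\emptyset$ take $\Pi=\Omega$. So assume both are non-empty. The plan is then to identify which simplices $\Lambda$ of $\ST$ can cause obstructions, and to construct $\Pi$ accordingly. By Lemma~\ref{lem:description_of_links}, the only non-maximal simplices whose links fail to be non-trivial joins or single vertices are those of Type~7, $\Lambda=\{s,\nu(s)\}$, with link $\nu^{-1}(\lk_T(\nu(s)))$---a disjoint union of $|\lk_T(\nu(s))|$ stars $K_{1,\infty}$. Since every vertex of the Bass--Serre tree $T$ has infinite degree (the edge groups $\mathbb{Z}^2$ inject with infinite index into the vertex groups by Definition~\ref{defn:admissible}), this link is always disconnected, hence neither a non-trivial join nor a vertex. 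In contrast, links of Type-8 simplices are stars $\{v\}\star X_v^{(0)}$, hence non-trivial joins.

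Next I would establish the key combinatorial observation: given a Type-7 simplex $\Lambda=\{s,\nu(s)\}$ with $v_0=\nu(s)$, the condition $\lk(\Lambda)\subseteq \lk(\Delta)\cap \lk(\Omega)$ forces $\Delta\cup\Omega\subseteq \nu^{-1}(v_0)$. Indeed, each $u\in\lk_T(v_0)$ must be $\ST$-adjacent to every vertex $d$ of $\Delta\cup\Omega$, which forces $\nu(d)\in\{u\}\cup\lk_T(u)$; intersecting over all $u\in\lk_T(v_0)$, and using that in the tree $T$ two distinct neighbours of $v_0$ share only $v_0$ as a common vertex at $T$-distance $\le 1$, one obtains $\nu(d)=v_0$. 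Consequently, $v_0$ is uniquely determined by the pair $(\Delta,\Omega)$, and Type 7 only poses an obstruction when both $\Delta$ and $\Omega$ sit inside the star $\nu^{-1}(v_0)=\{v_0\}\cup X_{v_0}^{(0)}$, forcing each of $\Delta$ and $\Omega$ to be of Type 1, 2, or 7 based at $v_0$.

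With this in hand I would choose $\Pi$ as follows. If no such $v_0$ exists, I would pick $\Pi\subseteq\lk(\Delta)$ so that $\Delta\star\Pi$ is a maximal simplex of $\ST$ (always possible by Lemma~\ref{lem:maximal_simplices} and a type-by-type inspection of Corollary~\ref{cor:simplex_type}); then $\lk(\Delta\star\Pi)=\emptyset\subseteq\lk(\Omega)$ and the second condition is automatic. If $v_0$ does exist I would choose $\Pi\subseteq\nu^{-1}(v_0)\cap\lk(\Delta)$ so that $\Delta\star\Pi$ is a Type-7 simplex $\{s_0,v_0\}$ based at $v_0$: explicitly $\Pi=\{s_0\}$ for some $s_0\in X_{v_0}^{(0)}$ if $\Delta=\{v_0\}$, $\Pi=\{v_0\}$ if $\Delta=\{s_0\}$ with $\nu(s_0)=v_0$, and $\Pi=\emptyset$ if $\Delta=\{s_0,v_0\}$. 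Then $\lk(\Delta\star\Pi)=\nu^{-1}(\lk_T(v_0))$, and Lemma~\ref{lem:description_of_links} directly gives $\lk(\Delta\star\Pi)\subseteq\lk(\Omega)$ in each of the three possibilities for $\Omega$; moreover, every Type-7 $\Lambda$ based at $v_0$ has exactly the same link $\nu^{-1}(\lk_T(v_0))=\lk(\Delta\star\Pi)$, so the second condition is satisfied. The main obstacle is the combinatorial observation of the second paragraph; once it is in place, the construction of $\Pi$ and the verifications are short.
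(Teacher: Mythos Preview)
There is a genuine gap: you have misidentified the link of a Type~\ref{simplex:quasi-line_type} simplex. For $\Delta=\{s,\nu(s),v\}$ the vertex $v$ lies in $\Delta$, so it cannot belong to $\lk(\Delta)$; the link is therefore just $X_v^{(0)}$, an infinite discrete set, which is neither a non-trivial join nor a single vertex. (The wording of Lemma~\ref{lem:description_of_links} for Type~\ref{simplex:quasi-line_type} is slightly loose on this point, but the ``in particular'' clause there, together with Proposition~\ref{prop:Hyperbolicity_of_links}\eqref{item:2}, confirms that Type~\ref{simplex:quasi-line_type} links are unbounded.) The paper's own proof explicitly lists both Type~\ref{simplex:level_surface_type} and Type~\ref{simplex:quasi-line_type} as the cases where $\lk(\Lambda)$ can fail to be a join or a vertex.

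This omission breaks your argument. Take $\Delta=\{v\}$ (Type~\ref{simplex:T_vertex}) and $\Omega=\{s',\nu(s'),v\}$ (Type~\ref{simplex:quasi-line_type}) with $\nu(s')\in\lk_T(v)$. Then $\lk(\Delta)\cap\lk(\Omega)=X_v^{(0)}$. Since $\Omega$ contains the vertex $\nu(s')\neq v$, there is no $v_0$ with $\Delta,\Omega\subseteq\nu^{-1}(v_0)$, so under your scheme you would choose $\Pi$ making $\Delta\star\Pi$ maximal, hence $\lk(\Delta\star\Pi)=\emptyset$. But $\Lambda=\Omega$ itself satisfies $\lk(\Lambda)=X_v^{(0)}\subseteq\lk(\Delta)\cap\lk(\Omega)$, and $X_v^{(0)}$ is neither a join, nor a vertex, nor contained in $\emptyset$. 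The correct choice here is $\Pi=\{s'',w\}$ for some $w\in\lk_T(v)$ and $s''\in X_w^{(0)}$, making $\Delta\star\Pi$ of Type~\ref{simplex:quasi-line_type} with link $X_v^{(0)}$. In general one must decide among ``maximal'', ``Type~\ref{simplex:level_surface_type}'', and ``Type~\ref{simplex:quasi-line_type}'' outcomes for $\Delta\star\Pi$, and your two-case split does not cover this. The paper organises the full case analysis via an auxiliary combinatorial condition (``property~$P$'') that all links satisfy, that is closed under intersection, and that is inherited by the union of Type~\ref{simplex:level_surface_type} and Type~\ref{simplex:quasi-line_type} sublinks; it then checks type by type that any such union inside $\lk(\Delta)$ arises as $\lk(\Delta\star\Pi)$.
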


\begin{proof}
	First of all, we will implicitly assume throughout the proof that the link of the empty simplex is not contained in $\lk(\Delta)\cap \lk(\Omega)$, for otherwise we have $\Delta=\Omega=\emptyset$, and we can take $\Pi$ to be empty as well.
	
	Let $\Delta$ and $\Omega$ be as in the statement, and let $U$ denote the union of all $\link(\Lambda)\subseteq \lk(\Delta) \cap \lk(\Omega)$ such that $\link(\Lambda)$ is neither a non-trivial join or a single vertex. It suffices to show $U=\link (\Delta \star \Pi)$ for some simplex $\Pi$. Note that if $\Lambda$ is a non-empty simplex of $\ST$ so that $\lk(\Lambda)$ is not a single vertex nor a non-trivial join, then $\Lambda$ is either a Type \ref{simplex:level_surface_type} or Type \ref{simplex:quasi-line_type} simplex.
	
	We say that a subgraph $\mathcal X$ of $\ST$ satisfies property $P$ if the following holds. For all vertices $v$ of $T$, if there exist two vertices $x,y\in \mathcal X$ with $\nu(x)$, $\nu(y)$ at distance 1 from $v$ in $T$, then we have that $\mathcal X$ contains the entire Type \ref{simplex:level_surface_type} link of a simplex $\{s,\nu(s)=v\}$. 
	
	We make two preliminary observations about this property. First, if two subgraphs satisfy property $P$, then their intersection does as well. Secondly, given a subgraph $\mathcal X$ satisfying property $P$, the (possibly empty) union of all links of Type \ref{simplex:level_surface_type} or Type \ref{simplex:quasi-line_type} contained in $\mathcal X$ satisfies property $P$.
	
	By inspection of the list of possible links of a non-empty simplex (Lemma \ref{lem:description_of_links} and Figure \ref{fig:link_types}), we can check that links satisfy property $P$. In view of the observations above, given simplices $\Delta$ and $\Omega$, the subgraph $U$ of $\link(\Delta)$ considered above also satisfies property $P$.
	
	To conclude the proof, we go through the list of possible links one more time and we check that, given any simplex $\Delta$ and any union $U$ of links of Type \ref{simplex:level_surface_type} or Type \ref{simplex:quasi-line_type} contained in $\link(\Delta)$ satisfying property $P$, we have $U=\link(\Delta\star\Pi)$.
	(Note that $U=\link(\Delta\star\Pi)$ is equivalent to $U$ being a link as a subgraph of $\link(\Delta)$, and note also that if $U$ is empty then it suffices to take $\Pi$ to be a maximal simplex in $\link(\Delta)$.)
\end{proof}

We conclude this subsection by verifying that the action of $G$ on $\ST$ has finitely many orbits of link of simplices.
\begin{lem}\label{lem:ST_cocompact}
	The action of $G$ on $\ST$ has finitely many orbits of links of simplices.
\end{lem}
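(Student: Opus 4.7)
The plan is to reduce the statement to the fact that $G=\pi_1\mc G$ acts on the Bass--Serre tree $T$ with finitely many orbits of vertices and of edges (which holds because $\mc{G}$ is a finite graph of groups). The reduction proceeds via Corollary~\ref{cor:simplex_type} and Lemma~\ref{lem:description_of_links}.

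First I would observe that non-maximal simplices of $\ST$ fall into the $8$ types listed in Corollary~\ref{cor:simplex_type}, plus the empty simplex (for which $\lk(\emptyset)=\ST$ is a single orbit). Since $G$ acts by simplicial automorphisms of $\ST$ preserving the map $\nu\colon\ST\to T$, and hence carries each simplex type to itself, it suffices to show that for each type, the $G$-action produces finitely many orbits of links.

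The key observation is that, examining each of the eight cases of Lemma~\ref{lem:description_of_links}, the link of a simplex $\Delta$ is \emph{completely determined} by the image $\nu(\Delta)\subset T$, i.e.\ by either the underlying vertex of $T$ (for Types \ref{simplex:T_vertex}, \ref{simplex:Squid_vertex}, \ref{simplex:upper_triangle}, \ref{simplex:level_surface_type}, \ref{simplex:quasi-line_type}) or the underlying edge of $T$ (for Types \ref{simplex:T_edge}, \ref{simplex:Squid_edge}, \ref{simplex:cross_edge}), since in each case the description of the link only references $\nu(\Delta)$, the star of $\nu(\Delta)$ in $T$, and the fibres of $\nu$ over those vertices. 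Concretely, if $\Delta_1,\Delta_2$ are two simplices of the same type and $g\in G$ satisfies $g\cdot\nu(\Delta_1)=\nu(\Delta_2)$ in $T$, then by equivariance of $\nu$ and the explicit formulae in Lemma~\ref{lem:description_of_links}, we get $g\cdot\lk(\Delta_1)=\lk(\Delta_2)$.

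Thus the number of $G$-orbits of links of simplices of a given type is bounded by the number of $G$-orbits of either $T^{(0)}$ or of unoriented edges of $T$, both of which are finite since $\mc G$ is a finite graph of groups. Summing over the finitely many simplex types (and including the empty simplex) yields finitely many orbits in total. The argument is essentially a bookkeeping exercise; I expect no serious obstacle, only the need to check the eight cases of Lemma~\ref{lem:description_of_links} confirm the ``link depends only on $\nu(\Delta)$'' claim.
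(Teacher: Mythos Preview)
Your approach is essentially the same as the paper's: both reduce to the finiteness of $G$--orbits of vertices and edges in $T$ by observing, via Lemma~\ref{lem:description_of_links}, that the link of a simplex of each type is determined by data in $T$. One small imprecision: for Type~\ref{simplex:cross_edge} (and similarly when $\nu(\Delta)$ is an edge in Types~\ref{simplex:upper_triangle} and~\ref{simplex:quasi-line_type}), the link is determined by the \emph{oriented} edge, not the unoriented one---e.g.\ for $\Delta=\{s,w\}$ the link distinguishes the endpoint $\nu(s)$ from the endpoint $w$, so an element $g$ with $g\cdot\{\nu(s_1),w_1\}=\{\nu(s_2),w_2\}$ but $g\cdot\nu(s_1)=w_2$ does \emph{not} carry $\lk(\Delta_1)$ to $\lk(\Delta_2)$. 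The paper handles this by explicitly using oriented edges for Type~\ref{simplex:cross_edge}; since $G$ also has finitely many orbits of oriented edges of $T$, your argument goes through with this adjustment.
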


\begin{proof}
	Let $\Delta$ be a simplex of $\ST$.  If $\Delta$ is maximal, then $\lk(\Delta)=\emptyset$, and if $\Delta=\emptyset$, then $\lk(\Delta)=\ST$, and we are done.
	
	If $\Delta$ is spanned entirely by vertices of $T$ (Type \ref{simplex:T_vertex} or  Type \ref{simplex:T_edge}), then $\Delta$---and hence $\lk(\Delta)$---belongs to one of finitely many $G$--orbits.  Similarly,  because the $G$--stabiliser of a vertex $v \in T^{(0)}$ acts cofinitely on the set $X_v^{(0)} = \{s \in \ST^{(0)} - T^{(0)} : \nu(s) = v\}$, there are finitely many  $G$--orbits of vertices of $\ST$ (Type \ref{simplex:Squid_vertex} simplices) and   simplices of Type \ref{simplex:level_surface_type}, i.e., $\Delta = \{s,\nu(s)\}$ for $s \in \ST^{(0)}- T^{(0)}$. 	Hence, there finitely many $G$--orbits of these types of simplices and their links.

	If $\Delta$ is of Type \ref{simplex:Squid_edge} or Type \ref{simplex:upper_triangle}, then $\lk(\Delta)$ is either an unoriented edge or a vertex of $T$ (Lemma \ref{lem:description_of_links}), of which there are finitely many $G$--orbits of both.
	
	If  $\Delta =\{s,\nu(s),v\}$ is a simplex of Type \ref{simplex:quasi-line_type}, then $\lk(\Delta)$ is $$X_v^{(0)} = \{t \in \ST^{(0)} - T^{(0)} : \nu(t) = v\}.$$ There are only finitely many $G$--orbits of these sets as there are finitely many $G$--orbits of vertices in $T$.
	
	Finally, let $\Delta_1 = \{s_1,v_1\}$ and $\Delta_2= \{s_2,v_2\}$ be two simplices of  Type \ref{simplex:cross_edge}. For each $\Delta_i$, there is an oriented edge $e_i$ of $T$ from $\nu(s_i)$ to $v_i$. If $g \in G$ so that $g e_1 = e_2$, then $\lk(g \Delta_1) = \lk(\Delta_2)$ (even though $g\Delta_1$ might not equal $\Delta_2$). As there are finitely many $G$--orbits of edges of $T$, this shows there are only finitely many $G$--orbits of links of Type \ref{simplex:cross_edge} simplices.

	Examining the lists of types of simplices in Corollary~\ref{cor:simplex_type}, we see that the preceding discussion exhausts all the possibilities.
\end{proof}

\subsection{Hyperbolicity of non-join links}\label{subsec:hyperbolicity}
Recall from Section \ref{subsec:HHS} that $\mc{C}(\Delta)$ is the graph obtained from $\lk(\Delta)$ by adding an edge between every pair of of vertices $x,y$ for which there exists maximal simplices $\Sigma_x$, $\Sigma_y$  that are joined by an edge of $W$ and contain $x$ and $y$ respectively. In this section, we verify that each $\mc{C}(\Delta)$ is hyperbolic, which is condition \eqref{CHHS:hyp_X} of Definition \ref{defn:CHHS} for $(\ST,W)$ to be a combinatorial HHS. Since there are only finitely many $G$--orbits of the $\mc{C}(\Delta)$ by Lemma \ref{lem:ST_cocompact}, the hyperbolicity constant will automatically be uniform over all simplices of $\ST$ (although this fact is independently explicit in our proof).  We only need to verify $\mc{C}(\Delta)$ is hyperbolic  for the empty simplex and simplices of Type \ref{simplex:level_surface_type} and \ref{simplex:quasi-line_type} as Lemma \ref{lem:description_of_links} showed $\mc{C}(\Delta)$ has diameter 2 in all other cases.

\begin{prop}[Unbounded augmented links are hyperbolic]\label{prop:Hyperbolicity_of_links}
	Let $\Delta$ be a simplex of $\ST$.
	\begin{enumerate}
		\item If $\Delta = \emptyset$, then $\ST^{+W} =  \mc{C}(\emptyset)$ is ($G$--equivariantly) quasi-isometric to $|T|$. Hence $\ST^{+W}$ is a quasi-tree.\label{item:1}
		\item If $\Delta = \{s,\nu(s), v\}$ is a simplex of Type \ref{simplex:quasi-line_type}, then the identity map on vertices gives a uniform quasi-isometry from $\mc{C}(\Delta)$ to the quasi-line $L_v$.  \label{item:2}
		\item If $\Delta = \{s, \nu(s)\}$ is a simplex of Type \ref{simplex:level_surface_type}, then $\mc{C}(\Delta)$ is uniformly 
		hyperbolic.  Moreover, if every vertex group in $\mathcal G$ is virtually free, then $\mc{C}(\Delta)$ is 
		quasi-isometric to a tree.   \label{item:3} 
	\end{enumerate}
\end{prop}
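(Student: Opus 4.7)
The plan is to handle the three items separately, leveraging the structure of the $W$-edges and the geometry established in Sections \ref{subsec:graphs_of_groups} and \ref{sec:quasilines}.

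For Item \eqref{item:1}, the key observation is that every edge of $\ST^{+W}$, whether of $\ST$-type or $W$-type, connects vertices whose $\nu$-images are at $T$-distance at most one: for $\ST$-edges this is immediate from Definition \ref{defn:ST}, and for $W$-edges this is the remark recorded directly after Definition \ref{defn:W}. Hence $\nu \colon \ST^{+W} \to |T|$ is a $G$-equivariant, coarsely Lipschitz map (via Lemma \ref{lem:bounded_images_of_edges}). Since $\nu^{-1}(v)$ is a star centred at $v$ in $\ST^{+W}$, it has uniformly bounded diameter, and the natural inclusion $|T| \hookrightarrow \ST^{+W}$ provides a coarse section of $\nu$ which is $G$-equivariant and is the identity on $T^{(0)}$. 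Applying Lemma \ref{lem:coarse_retract} then yields that $\nu$ is a $G$-equivariant quasi-isometry, so $\ST^{+W}$ is a quasi-tree.

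For Item \eqref{item:2}, the vertex set of $\lk(\Delta)$ coincides with $L_v^{(0)}$, and any two such vertices are $\ST$-non-adjacent, so every edge of $\mc{C}(\Delta)$ arises from a $W$-edge in $Y^{+W}$. For the bound $d_{L_v}(t,t') \lesssim d_{\mc{C}(\Delta)}(t,t')$, one inspects each of the two cases in Definition \ref{defn:W}: both types yield uniformly bounded $\BS$-distance between $\sigma_r$-sets in $X_v$ or $P_r$-sets in some $X_e$. Translating these via the $1$-Lipschitz projection $p_v$, the undistortion of vertex and edge spaces given by Lemma \ref{lem:uniform_distortion}, and Claim \ref{claim:Diagonal_map_is_a_QI} from inside the proof of Lemma \ref{lem:R_for_sigmas_to_intersect}, produces a uniform bound on $d_{L_v}(t,t')$. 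Conversely, for $t,t' \in L_v^{(0)}$ with $d_{L_v}(t,t') \leq 1$, fix any $s^* \in \ST^{(0)} - T^{(0)}$ with $\nu(s^*) \in \lk_T(v)$; then $t' \in \sigma_r(t)$ for $r \geq 1$, so $\Sigma(s^*,t)$ and $\Sigma(s^*,t')$ are $W$-adjacent via the second case of Definition \ref{defn:W}, yielding an edge in $\mc{C}(\Delta)$. Extending via Lemma \ref{lem:bounded_images_of_edges} completes the argument that the identity is a quasi-isometry.

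For Item \eqref{item:3}, the vertices of $\lk(\Delta)$ organise into blocks indexed by $w \in \lk_T(v)$: each block consists of $w$ together with $\{t \in \ST^{(0)} - T^{(0)} : \nu(t) = w\}$, and within a block every vertex is $\ST$-adjacent to the central vertex $w$. Writing $v = gG_\mu$ with $\mu = \check{v}$ and $s = g s_0$ for some $s_0 \in G_\mu$, the plan is to construct a $\stab_G(\Delta)$-equivariant coarse quasi-isometry from $\mc{C}(\Delta)$ to $\cay(G_\mu, J_\mu \cup Z_\mu \cup \mc{E}_\mu)$: each neighbour $w$ of $v$ in $T$ corresponds to a coset of $\tau_{e_w}(G_{\check{e}_w})$ in $G_\mu$, to which one sends $w$, and each $t \in X_w$ is mapped to $\tau_{e_w} \circ \tau_{\bar{e}_w}^{-1}(t) \in X_v$ (identified with an element of $G_\mu$). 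Under this map, $\ST$-edges within a block correspond to motion within a coset of an edge subgroup (bounded in the cone-off); Type-2 $W$-edges correspond to $Z_\mu$-movement in $X_v$ by Proposition \ref{prop:geometry_of_Zs}; and Type-1 $W$-edges correspond to movement between adjacent edge cosets, using Claim \ref{claim:Diagonal_map_is_a_QI}. By Lemma \ref{lem:cone-off_edge_groups}, $\cay(G_\mu, J_\mu \cup Z_\mu \cup \mc{E}_\mu)$ is quasi-isometric to $\cay(F_\mu, \pi_\mu(J_\mu \cup \mc{E}_\mu))$, which is hyperbolic, and moreover a quasi-tree whenever $F_\mu$ is virtually free.

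The main obstacle will be the quasi-isometric embedding direction of Item \eqref{item:3}: one must show that short words in $J_\mu \cup Z_\mu \cup \mc{E}_\mu$ lift back to short $\mc{C}(\Delta)$-paths, which requires producing the appropriate $W$-adjacencies between maximal simplices containing the relevant link vertices, generator by generator. This will rely on the coarse Lipschitz properties of $(s,t) \mapsto P_r(s,t)$ from Lemma \ref{lem:R_for_sigmas_to_intersect} together with Proposition \ref{prop:P_vs_sigma} to convert $\sigma_r$-closeness bounds into $P_r$-closeness bounds.
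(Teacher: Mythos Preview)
Your treatment of Items \eqref{item:1} and \eqref{item:2} is essentially the paper's approach, with one slip: for a $W$--edge, the $\nu$--images of the endpoints can be at $T$--distance $2$, not $1$.  The remark after Definition \ref{defn:W} says the two maximal \emph{simplices} share a vertex of $T$; since each projects to an edge of $T$, the individual vertices can lie at distance up to $2$.  This does not affect the argument---$\nu$ is still coarsely Lipschitz---but the claim as stated is false.

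For Item \eqref{item:3} your route diverges from the paper's.  The paper defines an auxiliary graph $Y$ on the vertex set $\lk_T(v)$, shows via $\nu$ that $\mc{C}(\Delta)\to Y$ is a quasi-isometry, proves $G_\mu$ acts cocompactly on $Y$, and then invokes the general fact (Charney--Crisp, \cite[Theorem 5.1]{charney_crisp_rel_hyp}) that a connected cocompact $G_\mu$--graph is quasi-isometric to $\cay(G_\mu, J_\mu\cup\{\text{vertex stabilisers}\})$.  Since vertex stabilisers in $Y$ are exactly the edge-group images, Lemma \ref{lem:cone-off_edge_groups} finishes.  This buys the inverse Lipschitz bound for free, avoiding the generator-by-generator lifting you flag as the main obstacle.

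Your explicit approach is plausible but has a genuine gap: the map you propose, $t\mapsto \tau_{e_w}\circ\tau_{\bar e_w}^{-1}(t)$, is only defined on $\tau_{\bar e_w}(X_{e_w})\subsetneq X_w$, not on all vertices $t$ with $\nu(t)=w$.  You can repair this by sending every vertex in the $w$--block to the corresponding edge-group coset (which has diameter $1$ in the cone-off); but once you do that, your map factors through $\nu$ and you are essentially rebuilding the paper's graph $Y$ by hand.  At that point the Charney--Crisp shortcut is cleaner than verifying the lifting directly.
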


\begin{proof}
	We prove the three  case separately.
	
	\textbf{Proof of~\eqref{item:1}.}  The inclusion $|T|\to \ST^{+W}$ is simplicial and hence Lipschitz, thus it suffices to find a coarsely Lipschitz quasi-inverse for the inclusion. This quasi-inverse is provided by the  map $\nu:\ST^{(0)}\to T^{(0)}$, where $\ST^{(0)}$ is equipped with 
	the metric inherited from $\ST^{+W}$.  	To show that the map $\nu$ is  coarsely Lipschitz it suffices to prove that $d_T(\nu(x),\nu(y))$ is uniformly bounded whenever $x,y\in \ST^{(0)}$ are joined by an edge of $\ST^{+W}$.

	If $x$ and $y$ are joined by an edge of $\ST$ 
	then $\nu(x)$ and $\nu(y)$ are equal or joined by an edge of $\ST$ as well, hence $d_T(\nu(x),\nu(y)) =1$.
	Now assume $x,y$ are joined by a $W$--edge.   This means that $x,y$ respectively belong to maximal 
	simplices $\Sigma(s,t)$ and $\Sigma(s',t')$ that are adjacent in $W$. The definition of edges of $W$ (Definition \ref{defn:W}), allows us to assume that $\nu(s) = \nu(s')$ without loss of generality.
	Hence $\nu(x)$ and $\nu(y)$ are both either equal to $\nu(s)$  or joined by an edge of $T$ to $\nu(s)$. Hence $d_T(\nu(x),\nu(y)) \leq 2$ as desired. 
	
	Since $\nu$ and the inclusion are $G$--equivariant, the quasi-isometry is also $G$--equivariant. This completes the proof of~\eqref{item:1}.
	
	\textbf{Proof of~\eqref{item:2}.} Let $\Delta =\{s,\nu(s), v\}$ be a Type \ref{simplex:quasi-line_type} simplex, and let $\mu = \check{v}$. By Lemma \ref{lem:description_of_links}, the vertex set of $\mc{C}(\Delta)$ is exactly   $\lk(\Delta)$, which is the set of vertices $$X_v^{(0)} =  \{t\in \ST^{(0)} -T^{(0)} : \nu(t)=v\}.$$    Recall that $L_v$ is a copy of the vertex space $X_v$ with extra edges between vertices that make $L_v$ a quasi-line. Let $I\colon \mc{C}(\Delta)^{(0)}\to L_v^{(0)}$ be the identity on the vertex set. 
	
	We first show that $I^{-1}$ sends edges of $L_v$ to edges of $\mc{C}(\Delta)$.
	
	\begin{claim}\label{claim:L_v_edges_are_C(Delta)_edge}
		If $t_1,t_2 \in L_v^{(0)}$ are joined by an edge of $L_v$, then $I^{-1}(t_1)$ and $I^{-1}(t_2)$ are joined by an edge of $\mc{C}(\Delta)$. In particular, $\mc{C}(\Delta)$ is connected and $I^{-1}$ is 1-Lipschitz.
	\end{claim}
	
	\begin{proof}
		Let $t_1,t_2\in L_v^{(0)}$ be joined by an edge of $L_v$.  By Lemma~\ref{lem:R_for_sigmas_to_intersect}, the there is a $(\xi,\xi)$--coarsely Lipschitz $\xi$--coarse  map $L_{\nu(s)}\times L_v\to X$ that sends $(s,t_1)$ to $P_{r}(s,t_1)$ and $(s,t_2)$ to $P_{r}(s,t_2)$. 
		Because we chose $R$ to be greater that $2\xi$, this implies $d_{X}(P_{r}(s,t_1),P_{r}(s,t_2))\leq R$. Thus, the maximal simplices $\{s,\nu(s),t_1,v\}$ and 
		$\{s,\nu(s),t_2,v\}$ are joined by an edge in $W$, which implies $t_1,t_2$ are joined by an edge in $\mc{C}(\Delta)$.
	\end{proof}
	
	We now prove that $I$ is also coarsely Lipschitz. This will complete the proof of~\eqref{item:2}.
	
	\begin{claim}\label{claim:I_coarse_lip}
		The map $I$ is coarsely Lipschitz.
	\end{claim}
	
	\begin{proof}
		It suffices to show that  whenever $t_1,t_2\in \mc{C}(\Delta)^{(0)}$ are adjacent in $\mc{C}(\Delta)$, that $I(t_1)$ and $I(t_2)$ are uniformly close in $L_v$. Since $\lk(\Delta)$ contains no edges in this case, the only way $t_1,t_2$ can being joined by an edge in $\mc{C}(\Delta)$ is for them to be joined by a $W$--edge. Since $t_1,t_2$ both belong to $\lk(\Delta)$,  Lemma~\ref{lem:C=C0_for_W} provides maximal simplices 
		$\Sigma_{t_1}=\{s,\nu(s),v,t_1\}$ and $\Sigma_{t_2}=\{s,\nu(s),v,t_2\}$ that are joined by an edge in $W$. By the definition of the edges of $W$, we have $d_{X}(\sigma_{r}(t_1),\sigma_{r}(t_2))\leq R+2$.  Using Lemma \ref{lem:uniform_distortion}, there is then a constant $\kappa \geq 1$ (determined by $r$ and $\mc{G}$) so that $d_{X_v}(\sigma_{r}(t_1),\sigma_{r}(t_2))\leq \kappa$. As the map $p_v\colon X_v \to L_v$ is distance non-increasing, $I(t_i) \in p_v(\sigma_{r}(t_i))$, and $\diam(p_v(\sigma_{r}(t_i)))\leq 2r$,  we have $$ d_{X_v}(\sigma_{r}(t_1),\sigma_{r}(t_2))\leq \kappa \implies d_{L_v}(I(t_1),I(t_2)) \leq \kappa +4r. \qedhere$$
	\end{proof}

	\textbf{Proof of~\eqref{item:3}.}  Let $\Delta=\{s,\nu(s)\}$  be a simplex of Type \ref{simplex:level_surface_type}, and 	let $v=\nu(s)$. By multiplying by an element of $G$,  we can assume that $\stab_G(v) = G_\mu$ for some vertex $\mu \in \mc{G}$.

	Let $Y$ be the graph obtained from $\link_T(v)$ by joining distinct $x,y\in\link_T(v)^{(0)}$ by an edge if and only 
	if there exist $x',y'\in\link(\Delta)^{+W}$ with $\nu(x')=x,\nu(y')=y$, and $x',y'$ adjacent in 
	$\mc{C}(\Delta)$.  Note that $G_\mu$ acts on $Y$, and $\nu \colon \ST^{(0)} \to T^{(0)}$ induces a $G_\mu$--equivariant simplicial map 
	$\eta\colon \mc{C}(\Delta) \to Y$.
	
	We first show $Y$ is connected and quasi-isometric to $\mc{C}(\Delta)$. 
	
	\begin{claim}\label{claim:connected_link}
		The graphs $\mc{C}(\Delta)$ and $Y$ are connected.
	\end{claim}
	
	\begin{proof}
		Because there is a simplicial surjection $\eta \colon \mc{C}(\Delta) \to Y$, connectedness of $\mc{C}(\Delta)$ will imply connectedness of $Y$.  
		
		Let $x,y\in\mc{C}(\Delta)^{(0)}$ and let $\Sigma_x,\Sigma_y$ be maximal simplices of $\ST$ containing $x$ and $y$ 
		respectively. Since $x,y\in\link(\Delta)$, we can use Lemma~\ref{lem:C=C0_for_W} to assume that $\Sigma_x,\Sigma_y$ are maximal simplices of the 
		star of $\Delta$. By Lemma~\ref{lem:W_is_a_G_space}, there is a path $\Sigma_x=\Sigma_0,\Sigma_1,\ldots,\Sigma_n=\Sigma_y$ in 
		$W$, where each $\Sigma_i$ is a maximal simplex of $\ST$ and  $\Sigma_i,\Sigma_{i+1}$ are joined be an edge of $W$ for $0\leq i\leq n-1$. We now argue by induction on $n$ that $x$ can be joined to $y$ by a path in $\mc{C}(\Delta)$.  
		
		If $n=0$, then both $x,y$ are contained in $\Sigma_x$ and hence are either equal or are adjacent in $\ST$ and 
		therefore in $\ST^{+W}$.  Since $x,y\in\link(\Delta)$, either $x=y$ or $x,y$ are adjacent in 
		$\mc{C}(\Delta)$ by Lemma \ref{lem:C=C0_for_W}.
		
		If $n=1$, then $\Sigma_0$ is joined by an edge of $W$ to $\Sigma_1$, hence $x,y\in\link(\Delta)$ are either equal or adjacent in $\ST^{+W}$. Thus, by Lemma \ref{lem:C=C0_for_W}, $x,y$ are either equal or adjacent in $\mc{C}(\Delta)$.
		
		Suppose $n>1$.	  Since $\Sigma_0$ and 
		$\Sigma_n$ are simplices of the star of $\Delta$, the edges $\nu(\Sigma_0)$ and $\nu(\Sigma_n)$ contain $v$.  Let $v_{-1}$ be the vertex of 
		$\nu(\Sigma_0)$ different from $v$, and let $v_{n+1}$ be the vertex of $\nu(\Sigma_n)$ different from $v$. Since $x,y \in \lk(\Delta)$, we must have $\nu(x)=v_{-1}$ and $\nu(y) = v_{n+1}$.
		If $\nu(x) = v_{-1} = v_{n+1} = \nu(y)$, then $x,y$ are in the link of the Type \ref{simplex:quasi-line_type} simplex $\Delta' =\{v_{-1}, s, \nu(s)\}$. Claim \ref{claim:L_v_edges_are_C(Delta)_edge} therefore implies $x$ is connected to $y$ as $\mc{C}(\Delta')$ has an injective simplicial inclusion into $\mc{C}(\Delta)$.

		Now suppose $\nu(x) \neq \nu(y)$. This implies  
		$v_{-1}$ and 
		$v_{n+1}$ must lie in different components of $T-\{v\}$. The definition of edges in $W$ (Definition~\ref{defn:W}) ensures that the edges $\nu(\Sigma_i)$ and $\nu(\Sigma_{i+1})$ 
		share a vertex $v_i$ for each all $i \in \{0,\dots, n-1\}$. The sequence		
		$v_{-1},v_0,\ldots,v_{n},v_{n+1}$ is then a sequence of vertices of $T$ where consecutive vertices are either equal or adjacent in $T$. 	Because $v_{-1}$ and 
		$v_{n+1}$ are in different components of $T-\{v\}$,  there exists $i\in\{1,\ldots,n-1\}$ such that 
		$\nu(\Sigma_{i})$ contains $v$.  Choose $z\in \Sigma_i^{(0)}$ such that $\nu(z)\in\nu(\Sigma_i)-\{v\}$.  Then 
		$z\in\link(\Delta)$, and $z$ is contained in the maximal simplex $\Sigma_i$.  The sequence 
		$\Sigma_0,\ldots,\Sigma_i$  is a path in $W$ with $i<n$, and has 
		$x\in\Sigma_0,z\in\Sigma_i$.  So, by induction, $x$ can be joined to $z$ by a path in $\mc{C}(\Delta)$.  
		Similarly, considering $\Sigma_i,\ldots,\Sigma_n$ shows that $z$ can be joined by a path in 
		$\mc{C}(\Delta)$ to $y$.  So $x,y$ are connected in $\mc{C}(\Delta)$, as required.
	\end{proof}
	
	We now prove that $Y$ is quasi-isometric to $\mc{C}(\Delta)$.
	
	\begin{claim}\label{claim:Z_qi}
		The map $\eta\colon \mc{C}(\Delta)\to Y$ induced by $\nu$ is a quasi-isometry with constants independent of 
		$\Delta$.
	\end{claim}
	
	\begin{proof}
		As mentioned above, $\eta$ is simplicial and hence 1-Lipschitz.  Consider the composition of inclusions $$Y^{(0)}\hookrightarrow 
		T^{(0)}\hookrightarrow \ST^{(0)}.$$  The image of this map is in $\link(\Delta)$, and the map is a quasi-inverse 
		for $\eta$.  Now, if $x,y\in Y^{(0)}$ are $Y$--adjacent, then let $ \Sigma(s,x')$ and $\Sigma(t,y')$
		be $W$--adjacent simplices where $\nu(x') = x$, $\nu(y') = y$, and $\nu(s) = \nu(t) = v$.  Then $\nu(x')=x$ and $\nu(y')=y$ are adjacent in $\mc{C}(\Delta)$, so 
		the map $Y\to \mc{C}(\Delta) \subseteq \ST$ induced by the above inclusions is uniformly coarsely Lipschitz.  Thus 
		$\eta$ is a quasi-isometry.
	\end{proof}
	
	In view of Claim~\ref{claim:Z_qi}, it suffices to prove that $Y$ is $\delta$--hyperbolic (and that $Y$ is uniformly quasi-isometric to a tree when the vertex groups are 
	$\mathbb Z$--by-virtually free). For this we use the action of $G_\mu$ on $Y$.

	\begin{claim}\label{claim:Z_cocompact}
		The action of $G_\mu$ on $Y$ is cocompact. 
	\end{claim}
	
	\begin{proof}
		Because  $G_\mu$ acts on $Y$ with finitely many orbits of vertices, it suffices to prove that for each vertex $u\in Y$, there are finitely many $\stab_{G_\mu}(u)$--orbits of edges of $Y$ incident to $u$. Note, $\stab_{G_\mu}(u) = \stab_G(e_u)$ where $e_u$ is the (oriented) edge of $T$ from $u$ to $v$.
		
		Let $y$ be an element of $Y$ that is joined by an edge of $Y$ to $u$.  Let $e_u$ and $e_y$ be the (oriented) edges of $T$ from $u$ or $y$ to $v$ respectively. By construction of $Y$, each of  $u$ and $y$ are  contained in a maximal simplex of $\ST$ that contains $v$ and are adjacent in $W$. The definition of edges in $W$ then requires that the edge space $\BS_{e_y}$ must intersect the $(R+2)$--neighborhood of $\BS_{e_u}$ inside the Bass--Serre space $\BS$.  Hence, each vertex $y$ of $Y$ that is adjacent to $u$ in $Y$ has a corresponding edge spaces $X_{e_y}$ of $\BS$ that is within $R+2$ of $\BS_{e_u}$. We will argue that there are only a finite number of $\stab_{G_\mu}(u)$--orbits of such edge spaces, which implies there is a  finite number of $\stab_{G_\mu}(u)$--orbits of vertices of $Y$ adjacent to $u$.
		
		Because $\stab_{G_\mu}(u) = \stab_{G}(e_u)$ acts cocompactly on $\BS_{e_u}$, it also acts cocompactly on the $(R+2)$--neighborhood of $\BS_{e_u}$.  Since two edges spaces intersect if and only if they are equal, there can only be a finite number of $\stab_{G_\mu}(u)$--orbits of vertex spaces of $\BS$ that intersect the $(R+2)$--neighborhood of $\BS_{e_u}$ as desired. 
	\end{proof}
	
	Claims~\ref{claim:connected_link} and \ref{claim:Z_cocompact} show that $Y$ is a connected graph (hence a length space) with a cocompact action 
	by $G_\mu$.   Let $\{y_i\}$ be a finite set of 
	vertices of $Y$ containing exactly one element of each $G_\mu$--orbit, and let $\mathcal H$ be the collection of 
	stabilisers in $G_\mu$ of the vertices $y_i$. Given our fixed finite generating set $J_\mu$ of $G_\mu$, 
	Theorem~5.1 of \cite{charney_crisp_rel_hyp}
	implies that any orbit map $G_\mu\to Y$ induces a quasi-isometry $\Gamma\to Y$ (with constants just depending on 
	$J_\mu$), where $\Gamma$ is the Cayley graph of $G_\mu$ with respect to the infinite set $J_\mu \cup \{H\}_{H \in \mc{H}}$.
	If $y$ is a vertex of $Y$, then the stabiliser in 
	$G_\mu$ of $y$ is exactly the stabiliser of some edge $e$ of $T$ with $e^+ = v$. Hence, each $H \in \mc{H}$ is conjugate to the image of some edge group $\tau_\alpha(G_\alpha)$ where $\alpha^+ = \mu$. Thus $\Gamma$ is quasi-isometric to the Cayley graph of $G_\mu$ with respect to the generating set $J_\mu \cup \{\tau_{\alpha}(G_\alpha) : \alpha^+ = \mu\}$. By Lemma~\ref{lem:cone-off_edge_groups} the later is always hyperbolic and is a quasi-tree when $F_\mu$ is virtually free. As $\Gamma$ is quasi-isometric to $Y$, this completes the proof of  \eqref{item:3}.
\end{proof}

\subsection{Quasi-isometric embedding of augmented links}\label{subsec:QI_embedding}
The goal of this section is to check condition \eqref{CHHS:geom_link_condition} of Definition \ref{defn:CHHS}, that is, that each augmented link $\mc C(\Delta)$ is quasi-isometrically embedded in the corresponding space $Y_\Delta$  from Definition \ref{defn:Y_Delta}.  Because there are finitely many orbits of links of simplices by Lemma~\ref{lem:ST_cocompact}, we will be able to choose the quasi-isometry constants uniformly over all simplices $\Delta$.

Because the quasi-isometric embedding condition automatically holds when $\mc C (\Delta)$ is bounded, we only have to check simplices of Type \ref{simplex:level_surface_type} and \ref{simplex:quasi-line_type}.

\subsubsection{Type \ref{simplex:level_surface_type} links}

\begin{lem}
	\label{lem:tentancles_qi}
	There exists $\kappa \geq 1$ so that if $\Delta = \{s,\nu(s)\} \subset \ST$ is a simplex of Type \ref{simplex:level_surface_type}, then $\mc C(\Delta)$ is $(\kappa,\kappa)$--quasi-isometrically embedded in $Y_\Delta$.
\end{lem}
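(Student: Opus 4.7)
My plan is to construct an explicit coarse retraction $\pi \colon Y_\Delta^{(0)} \to \mc{C}(\Delta)^{(0)}$ and then invoke Lemma~\ref{lem:coarse_retract}. Write $\Delta = \{s,v\}$ with $v = \nu(s)$. By Lemma~\ref{lem:description_of_links}, $\lk(\Delta)$ is spanned by $\{t \in \ST^{(0)} : \nu(t) \in \lk_T(v)\}$, and by Lemma~\ref{lem:descriptions_of_saturations}, $\Sat(\Delta) = \{v\} \cup \{s' \in \ST^{(0)} - T^{(0)} : \nu(s') = v\}$. In particular, every vertex $x$ of $Y_\Delta$ satisfies $\nu(x) \neq v$, so the $T$-geodesic from $\nu(x)$ to $v$ is well-defined and crosses $\lk_T(v)$ exactly once; let $\pi(x)$ be that crossing vertex, so $\pi(x) = \nu(x)$ when $\nu(x) \in \lk_T(v)$. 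Since $\lk_T(v) \subseteq \lk(\Delta)$ and $\lk_T(v) \cap \Sat(\Delta) = \emptyset$, $\pi$ takes values in $\mc{C}(\Delta)^{(0)}$, and for every $q \in \mc{C}(\Delta)^{(0)}$ one has $d_{\mc{C}(\Delta)}(q,\pi(q)) \leq 1$ (with equality exactly when $q$ is a squid).

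The central task is to show $\pi$ sends every edge of $Y_\Delta$ to a pair of vertices at $\mc{C}(\Delta)$-distance at most $1$; combined with Lemma~\ref{lem:bounded_images_of_edges} and Lemma~\ref{lem:coarse_retract}, this will yield the desired quasi-isometric embedding. For an $\ST$-edge between $x, y \in Y_\Delta$, the vertices $\nu(x), \nu(y)$ are either equal or $T$-adjacent; since both differ from $v$, closest-point projection to the subtree $\{v\} \cup \lk_T(v)$ always sends them to the same $\lk_T(v)$-vertex, so $\pi(x) = \pi(y)$. For a $W$-edge between $x$ and $y$, I will pick $W$-adjacent maximal simplices $\Sigma_1 \ni x$ and $\Sigma_2 \ni y$; by Definition~\ref{defn:W}, $\Sigma_1$ and $\Sigma_2$ share a common $T$-vertex $u$, and I will split on $d_T(u,v)$.

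If $d_T(u,v) \geq 2$, the $T$-geodesic from any vertex of $\Sigma_1 \cup \Sigma_2$ to $v$ passes through the same $\lk_T(v)$-vertex, so $\pi(x) = \pi(y)$. If $u \in \lk_T(v)$, then the other $T$-vertex of each $\Sigma_i$ is either $v$ itself (in which case the squid over $v$ is excluded from $Y_\Delta$ by $\Sat(\Delta)$) or sits at $T$-distance two from $v$ through $u$; in either scenario every vertex of $\Sigma_i$ that survives into $Y_\Delta$ projects to $u$, so again $\pi(x) = \pi(y)$. The subtle case is $u = v$: here each $\Sigma_i$ has one squid sitting over $v$ (which, along with $v$, is excluded from $Y_\Delta$) and one squid over some $w_i \in \lk_T(v)$, so the surviving vertices of $\Sigma_i$ in $Y_\Delta$ all project to $w_i$, giving $\pi(x) = w_1$ and $\pi(y) = w_2$. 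Both $w_1$ and $w_2$ lie in $\lk(\Delta)$ and, because $\Sigma_1$ and $\Sigma_2$ are $W$-adjacent, they are joined by a $W$-edge of $\ST^{+W}$; since $\mc{C}(\Delta)$ is the induced subgraph of $Y_\Delta$ on $\lk(\Delta)$, this directly gives $d_{\mc{C}(\Delta)}(\pi(x), \pi(y)) \leq 1$. I expect the $u = v$ subcase to be the main obstacle, as it is the only configuration producing genuinely distinct projection values and requiring the observation that $\mc{C}(\Delta)$ inherits $W$-edges from $\ST^{+W}$ rather than needing a new $W$-adjacency inside $\st(\Delta)$; uniformity of the constant $\kappa$ across all Type~\ref{simplex:level_surface_type} simplices is then automatic from Lemma~\ref{lem:ST_cocompact}.
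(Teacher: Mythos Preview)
Your proposal is correct and takes essentially the same approach as the paper: both define the identical retraction (closest-point projection in $T$ onto $\lk_T(v)$) and verify it is coarsely Lipschitz by checking edges, with the only difference being organizational—you case on $d_T(u,v)$ for the shared $T$-vertex $u$ of the witnessing maximal simplices, while the paper cases directly on $d_T(\nu(y_1),\nu(y_2))$. In the key nontrivial case ($u=v$ for you, $z=\nu(s)$ for the paper) both arguments reduce to the same observation that the relevant vertices lie in $\lk(\Delta)$ and inherit a $\mc{C}(\Delta)$-edge from $\ST^{+W}$; your remark that this follows directly from $\mc{C}(\Delta)$ being the induced subgraph is in fact cleaner than the paper's somewhat opaque appeal to Lemma~\ref{lem:C=C0_for_W} at that step.
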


\begin{proof}
	By Lemma \ref{lem:coarse_retract}, it suffices to define a coarsely Lipschitz coarse retraction $\rho\colon Y_\Delta\to\mathcal C(\Delta)$, with constants independent of $\Delta$. We define $\rho$ on the vertex set as follows: for $y\in Y_\Delta^{(0)}$, we let $\rho(y)$ be the unique vertex of $T$ (regarded as a vertex of $\mathcal S(T)$) at distance $1$ from $\nu(s)$ and on the geodesic in $T$ from $\nu(y)$ to $\nu(s)$. This is well-defined because $T$ is a tree and $y\neq\nu(s)$ if $y\in Y_\Delta^{(0)} = \ST^{(0)} - \Sat(\Delta)$. Moreover, because $\rho$ coincides with $\nu$ on the vertices of $\mc{C}(\Delta)$, the distance between $\rho(y)$ and $y$ is at most 1 for vertices $y \in \mc{C}(\Delta)$. Hence, $\mathcal C(\Delta)$ will be a coarse retract if $\rho$ is coarsely Lipschitz.
	
	If we can uniformly bound  $d_{\mathcal C(\Delta)}(\rho(y_1),\rho(y_2))$ whenever $y_1, y_2$ are joined by a edge of $Y_\Delta$, then $\rho$  can be extended to a coarsely Lipschitz map $Y_\Delta\to\mathcal C(\Delta)$. We will obtain $d_{\mathcal C(\Delta)}(\rho(y_1),\rho(y_2)) \leq 3$ for such $y_1,y_2$.
	
	If $y_1,y_2$ are joined by an  $\ST$--edge, then  $\nu(y_1)$ and $\nu (y_2)$ are  either equal or  joined by an edge of $T$.  Since $\nu(y_i) \neq \nu(s)$ for each $i =1,2$,  this implies $\rho(y_1) = \rho(y_2)$ because $T$ is a tree.  Hence we have $d_{\mathcal C(\Delta)}(\rho(y_1),\rho(y_2)) =0$. If instead $y_1,y_2$ are joined by a $W$--edge, then  $\nu(y_1)$ and $\nu(y_2)$ lie at distance at most $2$ in $T$ by Definition \ref{defn:W}. If $\nu(y_1)$ and $\nu(y_2)$ are  at most 1 apart in $T$, then $\rho(y_1) = \rho(y_2)$ as in the previous case. If the $\nu(y_1)$ and $\nu(y_1)$ are exact $2$ apart in $T$, then there exists a unique vertex $z \in T$ at distance 1 from both $\nu(y_1)$ and $\nu(y_2)$. If $z \neq \nu(s)$, then $\nu(y_1)$ and $\nu(y_1)$ are in the same component of $T - \nu(s)$, which implies  $\rho(y_1) = \rho(y_2)$. If $z = \nu(s)$, then $y_1$ and $y_2$ are in $\lk(\Delta)$. By Lemma \ref{lem:C=C0_for_W}, this implies $y_1$ and $y_2$ are joined by an edge in $Y_\Delta$. Because $\rho(y_i) = \nu(y_i)$, we have $d_{\mathcal C(\Delta)}(\rho(y_1),\rho(y_2)) \leq  d_{\mathcal C(\Delta)}(\rho(y_1),y_1)  + d_{\mathcal C(\Delta)}(y_1,y_2)  +d_{\mathcal C(\Delta)}(y_2,\rho(y_2)) \leq  3$.
\end{proof}

\subsubsection{Type \ref{simplex:quasi-line_type} links}
We now consider simplices of the form  $\Delta = \{s,\nu(s), v\}$, where $s\in \ST^{(0)} - T^{(0)}$ and $v \in T^{(0)}-\{\nu(s)\}$. 
\begin{lem}\label{lem:quasilines_qi}
	There exists $\kappa \geq 1$ with the following property.  Let $\Delta = \{s,\nu(s), v\}$ be a Type \ref{simplex:quasi-line_type}  simplex of $\ST$. The inclusion of $\mathcal C (\Delta)$ into  $Y_\Delta$ is a $(\kappa,\kappa)$--quasi-isometric embedding.
\end{lem}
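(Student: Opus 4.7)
By Lemma~\ref{lem:coarse_retract}, it suffices to construct a uniformly coarsely Lipschitz coarse retraction $\rho \colon Y_\Delta \to \mathcal C(\Delta)$. By Lemma~\ref{lem:descriptions_of_saturations}, $Y_\Delta^{(0)}$ splits as the vertices of $\mathcal C(\Delta) = X_v^{(0)} - \{v\}$ (the squid-vertices $y$ with $\nu(y) = v$) together with the vertices $y$ (of either kind) with $d_T(\nu(y), v) \geq 2$. Using the uniform quasi-isometry $\mathcal C(\Delta) \to L_v$ from Proposition~\ref{prop:Hyperbolicity_of_links}\eqref{item:2}, it is equivalent to build a coarsely Lipschitz coarse map $Y_\Delta^{(0)} \to L_v^{(0)}$ that restricts to $p_v$ on $X_v^{(0)}$.

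The plan is to define $\rho$ by transporting the level-set data of $y$ through the tree structure to a bounded region of $L_v$. For $y$ with $\nu(y) = v$ we set $\rho(y) = p_v(y)$. For $y$ with $d_T(\nu(y), v) \geq 2$, let $e$ be the final edge of the $T$-geodesic from $\nu(y)$ to $v$, with $e^+ = v$ and $e^- = w$. Using the product decomposition $X_u \cong L_u \times F_u$ coming from the quasi-lines built in Lemma~\ref{lem:Generating_sets_that_makes_lines} and the structure of level sets from Claim~\ref{claim:sigma_L_product}, one iteratively propagates the level set of $y$ through the successive edge spaces along the $T$-geodesic, landing in a coset $gZ_{\check w}\subset \tau_{\bar e}(X_e)\subset X_w$; by Proposition~\ref{prop:geometry_of_Zs}\eqref{item:bounded_Zs}, its image $p_v(\tau_e\tau_{\bar e}^{-1}(gZ_{\check w}))\subset L_v$ has uniformly bounded diameter, and one picks $\rho(y)$ to be a vertex therein. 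The boundedness at each step relies on the fact that crossing an edge of $T$ corresponds (via $\Phi$) to swapping one quasi-line direction for another, with the ``transverse'' central direction collapsing by Proposition~\ref{prop:geometry_of_Zs}\eqref{item:bounded_Zs}.

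To verify $\rho$ is coarsely Lipschitz we perform a case analysis on edges of $Y_\Delta$. Combining Definition~\ref{defn:W}, the edges of $\ST$, and Lemma~\ref{lem:descriptions_of_saturations}, every edge of $Y_\Delta$ falls (up to swapping $y_1,y_2$) into one of: \textbf{(a)} both $\nu(y_i) = v$; \textbf{(b)} both $\nu(y_i) \neq v$ and lie in the same connected component of $T - \{v\}$; or \textbf{(c)} the \emph{mixed case} where $y_1 \in X_v^{(0)}$ and $y_2$ satisfies $d_T(\nu(y_2), v) = 2$, joined by a $W$-edge coming from $\Sigma(s_i, t_i)$ with $\nu(s_1)=\nu(s_2)=z$ a neighbor of $v$ and $\nu(t_1) = v$. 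Case (a) follows from the $W$-edge bound $d_X(\sigma_r(\cdot),\sigma_r(\cdot))\leq R+2$ (or the $P_r$--variant) in Definition~\ref{defn:W}, together with Lemma~\ref{lem:uniform_distortion} and the distance-non-increasing map $p_v$. Case (b) follows from the coarse Lipschitz behavior of each single step of the level-set transport across adjacent edges and vertex spaces, since there both $\nu(y_i)$ use the same terminal edge $e$ into $X_v$.

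The main obstacle is case (c). Here $\rho(y_1) = p_v(t_1)$ while $\rho(y_2)$ is computed by transporting $y_2$'s level set through $X_{e_2}$, $X_z$, and $X_{e_1}$ (where $e_1=(z,v),\ e_2=(z,\nu(t_2))$) to a bounded region of $L_v$. The $W$-edge condition $d_X(P_r(s_1, t_1), P_r(s_2, t_2)) \leq R$ transports via $\tau_{\bar e_1}$ and $\tau_{e_2}$ to a uniform $X_z$-bound between $\tau_{\bar e_1}(P_r(s_1,t_1)) \subset \sigma_r(s_1)$ and $\tau_{e_2}(P_r(s_2,t_2)) \subset \sigma_r(s_2)$. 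Invoking Proposition~\ref{prop:geometry_of_Zs}\eqref{item:distance_to_Zs} to read this bound in $L_z$, then tracking the coset $gZ_{\check z}$ produced by the iterated transport defining $\rho(y_2)$, one shows that $gZ_{\check z}$ is uniformly close in $X_z$ to $\sigma_r(t_1) \cap \tau_{\bar e_1}(X_{e_1})$; by Proposition~\ref{prop:geometry_of_Zs}\eqref{item:bounded_Zs} the image $p_v(\tau_{e_1}\tau_{\bar e_1}^{-1}(gZ_{\check z}))$ is then within uniform $L_v$-distance of $p_v(t_1) = p_v(y_1)$, giving the desired bound on $d_{L_v}(\rho(y_1), \rho(y_2))$ and completing the proof.
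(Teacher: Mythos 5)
Your overall strategy---reducing via Lemma~\ref{lem:coarse_retract} and the quasi-isometry $\mc{C}(\Delta)\to L_v$ of Proposition~\ref{prop:Hyperbolicity_of_links}.\eqref{item:2} to the construction of a uniformly coarsely Lipschitz coarse map $Y_\Delta\to L_v$ restricting to $p_v$ on $X_v^{(0)}$, followed by a case analysis on edges of $Y_\Delta$---is the same as the paper's, and your cases (a) and (c) are in the right spirit (case (c) is essentially Claim~\ref{claim:beta_close_to_sigma}). The gap is in the definition of $\rho$ on vertices $y$ with $d_T(\nu(y),v)\geq 2$. Claim~\ref{claim:sigma_L_product}.\eqref{item:vertical} converts a \emph{level set} in $X_{e^+}$ into a central coset $gZ_{\check{e}^-}$ lying inside the edge space; but that coset, viewed in $X_{e^-}$, is a ``vertical'' quasi-line (it maps quasi-isometrically onto $L_{e^-}$ by Proposition~\ref{prop:geometry_of_Zs}.\eqref{item:Zs_QI_embed_in_Xv}), not a level set, and by the non-commensurability condition it is far from every other edge space of $X_{e^-}$. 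So the claim cannot be applied a second time, and ``iteratively propagating the level set through the successive edge spaces'' is not defined by the tools you cite. The missing ingredient is a way to pass from one edge space of a vertex space to a \emph{different} one: the paper cones off the center to get the relatively hyperbolic graph $H_u$ (Lemma~\ref{lem:cone-off_edge_groups}) and uses the uniformly bounded closest-point projection $\beta_e^f=\mf{p}_{\bar e}(\ell_f)$ between the two peripheral subsets. Because the projection of one whole peripheral onto another is uniformly bounded, the paper can then define $\rho_v(w)$ to depend \emph{only} on the vertex $\bar w$ at distance $2$ from $v$ on the geodesic toward $w$.

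That truncation is exactly what your case (b) needs and lacks. For two $W$-adjacent vertices $y_1,y_2$ deep in the same component of $T-\{v\}$, your transports agree only up to a bounded error near the top of the geodesic, and you then compose an \emph{unbounded} number of single-step coarsely Lipschitz maps; the resulting bound degenerates as $d_T(\nu(y_i),v)\to\infty$, so ``the coarse Lipschitz behavior of each single step'' does not give a uniform constant. To repair the argument you must either prove a stabilization statement (after one peripheral projection in $H_u$ the transported set is uniformly close to the canonical bounded set $\mf{p}_{\bar e}(\ell_f)$, independent of the input, so only the final two edges of the geodesic matter) or simply define $\rho$ by first replacing $\nu(y)$ with the distance-$2$ truncation $\bar w$, as the paper does; with that change, Lemma~\ref{lem:2-away} handles case (b) and the rest of your outline goes through.
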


By Proposition \ref{prop:Hyperbolicity_of_links}.\eqref{item:1}, the Type \ref{simplex:quasi-line_type} simplices are the simplices whose augmented links, $\mc{C}(\Delta)$, are quasi-isometric to the quasi-lines $L_v$.  As in the previous case, we will show quasi-isometric embedding by providing a coarse retraction. However, since the identity map on vertices gives a quasi-isometry $L_v \to \mc{C}(\Delta)$, it suffices to build a coarsely Lipschitz coarse map $\eta \colon Y_\Delta \to L_v$, that is the the identity on the vertices $L_v^{(0)} = X_v^{(0)} \subseteq Y_\Delta^{(0)}$.

To define this map, we need to assign to each vertex space $X_u$ of $X$ a projection onto a hyperbolic space. Given $u \in T^{(0)}$, let $\vartheta = \check{u}$ and choose a coset representative $g$ for $gG_{\vartheta}$; recall the vertices of $X_u$ are the elements of $gG_{\vartheta}$.  We now define a graph $H_u$ as follows: the vertices of $H_u$ are the elements of $gG_{\vartheta}$ and there is an edge between two elements $x,y$ if $x^{-1}y \in J_\vartheta \cup Z_\vartheta$, where $J_\vartheta$ is our fixed finite generating set for $G_\vartheta$ and $Z_\vartheta$ is the center of $G_\vartheta$. Since $H_u$ is a copy of $X_u$ with extra edges attached, there is a simplicial inclusion $\iota_u \colon X_u \to H_u$.   By construction,  multiplying every vertex of $H_u$ by $g^{-1}$ produces an  isometry to the Cayley graph of $G_\vartheta$ with respect to the generating set $J_\vartheta \cup Z_\vartheta$. Thus, Lemma~\ref{lem:cone-off_edge_groups} implies that $H_u$ is a hyperbolic graph.

Lemma~\ref{lem:cone-off_edge_groups} also shows that $H_u$ is hyperbolic relative to the collection $$\{ \iota_u(\tau_e(X_e)): e \text{ an edge of } T \text{ with } e^+ = u\}.$$
For an edge $e$ with  $e^+ = u$, define $\ell_e := \iota_u \circ \tau_e(X_e)$. As a peripheral subset in a relatively hyperbolic space, each $\ell_e$ has a coarse closest point projection  $ \mf{p}_{e} \colon H_u \to \ell_{e}$; see, e.g., \cite{S:proj}. This map is coarsely Lipschitz with constants independent of $e$ or $u$.

The key property about the $\ell_e$ that we shall need is that they have a coarsely Lipschitz map onto $L_{e^-}$. One can show that this is in fact a quasi-isometry, but it will not be needed in the proof.

\begin{lem}\label{lem:psi_coarsely_lipschitz}
	Let $v,u \in T^{(0)}$ and $e$ be an edge of $T$ with $e^+ = v$ and $e^- =u$. Let $\psi_e \colon \ell_{\bar e} \to L_v$ be the map given by restricting $p_v \circ \tau_e \circ \tau_{\bar e}^{-1} \circ \iota_u^{-1}$ to $\ell_{\bar e}$. Equipping $\ell_{\bar e}$ with the induced metric from $H_u$, the map $\psi_e\colon \ell_{\bar e} \to L_v$ is coarsely Lipschitz with constants determined by $\mc{G}$.
\end{lem}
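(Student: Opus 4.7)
The plan is to identify the induced metric on $\ell_{\bar e}$ with a specific intrinsic Cayley metric on the edge group $G_\alpha$ (where $\alpha = \check e$), and then to apply Lemma~\ref{lem:bounded_images_of_edges} to verify coarse Lipschitz-ness one generator at a time. Write $\omega = \check u$, $\mu = \check v$, and fix $g \in G$ so that, in the Bass--Serre construction of Definition~\ref{defn:BS_Space}, $\ell_{\bar e} = g\tau_{\bar\alpha}(G_\alpha) \subset X_u$, $\tau_{\bar e}(a) = g\tau_{\bar\alpha}(a)$, and $\tau_e(a) = gt_\alpha \tau_\alpha(a)$ for each $a \in G_\alpha$. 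Let $H_\alpha := \cay(G_\alpha, J_\alpha \cup \tau_{\bar\alpha}^{-1}(Z_\omega))$. I view $H_\alpha$ as parametrising $\ell_{\bar e}$ via $a \mapsto g\tau_{\bar\alpha}(a)$, with ``horizontal'' $J_\alpha$--edges and ``vertical'' $\tau_{\bar\alpha}^{-1}(Z_\omega)$--edges.

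The main obstacle will be to show that the bijection $H_\alpha \to \ell_{\bar e}$ just described is a quasi-isometry when $\ell_{\bar e}$ carries the induced $H_u$--metric, with constants depending only on $\mc G$. One direction is easy, since every generator in $J_\alpha \cup \tau_{\bar\alpha}^{-1}(Z_\omega)$ maps via $\tau_{\bar\alpha}$ into $J_\omega \cup Z_\omega$, the generating set of $H_u$. For the reverse ``undistortion'' direction, my plan is to chain three quasi-isometries: (i)~Lemma~\ref{lem:cone-off_edge_groups} gives that $\pi_\omega$ induces a quasi-isometry $H_u \to \cay(F_\omega, \pi_\omega(J_\omega))$; (ii)~by Lemma~\ref{lem:F_mu_are_relatively_hyp}, $\pi_\omega(\tau_{\bar\alpha}(G_\alpha))$ is a peripheral subgroup in the relatively hyperbolic structure on $F_\omega$, so by the standard fact that peripherals are quasi-isometrically embedded its induced $F_\omega$--metric is uniformly quasi-isometric to its intrinsic Cayley metric as a virtually cyclic group; (iii)~the composition $\pi_\omega \circ \tau_{\bar\alpha}\colon G_\alpha \to F_\omega$ has kernel $\tau_{\bar\alpha}^{-1}(Z_\omega)$ and image $\pi_\omega(\tau_{\bar\alpha}(G_\alpha))$, so Lemma~\ref{lem:quotient_VS_cone-off} identifies $H_\alpha$ with the Cayley graph of the quotient $G_\alpha/\tau_{\bar\alpha}^{-1}(Z_\omega)$ up to quasi-isometry. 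Chaining these produces the desired quasi-isometry.

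Given this identification, the remaining task is to check via Lemma~\ref{lem:bounded_images_of_edges} that the composite $\psi_e \circ \tau_{\bar\alpha}\colon H_\alpha \to L_v$, namely $a \mapsto p_v(gt_\alpha \tau_\alpha(a))$, sends each generator to a uniformly bounded move in $L_v$. For an edge with $a^{-1}b \in J_\alpha$, the ratio $\tau_\alpha(a)^{-1}\tau_\alpha(b) = \tau_\alpha(a^{-1}b)$ lies in $\tau_\alpha(J_\alpha) \subseteq J_\mu \subseteq S_\mu$, so $gt_\alpha \tau_\alpha(a)$ and $gt_\alpha \tau_\alpha(b)$ are $L_v$--adjacent, and the images under the $1$--Lipschitz map $p_v$ move by at most one. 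For an edge with $a^{-1}b \in \tau_{\bar\alpha}^{-1}(Z_\omega)$, the same ratio lies in $C_\alpha$; since $G_\mu$ acts by isometries on $L_v$ and $1 \in C_\alpha$, Lemma~\ref{lem:Generating_sets_that_makes_lines}.\eqref{item:bounded_C} bounds the move by $\diam_{L_v}(C_\alpha)$, which is uniform over $\mc G$. Combining these two edge bounds with the quasi-isometry above yields the coarse Lipschitz property claimed.
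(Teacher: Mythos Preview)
Your proof is correct. Both your approach and the paper's reduce to the same two edge checks—bounding $\psi_e$ on ``horizontal'' $J_\alpha$--type edges (via $\tau_\alpha(J_\alpha)\subseteq J_\mu\subseteq S_\mu$) and on ``vertical'' $Z_\omega$--type edges (via the boundedness of $C_\alpha$ in $\cay(G_\mu,S_\mu)$, which is Lemma~\ref{lem:Generating_sets_that_makes_lines}\eqref{item:bounded_C}, equivalently Proposition~\ref{prop:geometry_of_Zs}\eqref{item:bounded_Zs}). The difference lies in how you arrive at that reduction. You introduce the auxiliary Cayley graph $H_\alpha=\cay(G_\alpha,J_\alpha\cup\tau_{\bar\alpha}^{-1}(Z_\omega))$ and argue, via Lemma~\ref{lem:quotient_VS_cone-off}, Lemma~\ref{lem:cone-off_edge_groups}, and the undistortion of peripheral subgroups in relatively hyperbolic groups (Lemma~\ref{lem:F_mu_are_relatively_hyp}), that $H_\alpha$ is uniformly quasi-isometric to $\ell_{\bar e}$ equipped with the subspace metric from $H_u$; you then check generators of $H_\alpha$. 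The paper instead works directly with $\ell_{\bar e}$ as a connected subgraph of $H_u$ and checks all $H_u$--edges joining vertices of $\ell_{\bar e}$. Your route makes explicit why checking on edges suffices for the \emph{subspace} metric from $H_u$ (it is precisely the peripheral-undistortion step), while the paper's more direct argument leaves this comparison implicit. What you gain is clarity about the metric being used, at the cost of a longer setup; the paper's version is shorter but relies on the same standard fact without isolating it.
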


\begin{proof}
	Let $\vartheta = \check{u}$ and $\alpha = \check{e}$. Recall $J_\vartheta$ and $J_\alpha$ are our fixed generating sets for the vertex groups $G_\vartheta$ and the edge group $G_\alpha$.
	
	Let $g \in G$ so that the vertices of $X_u$ (and $H_u$) are the elements of $gG_\vartheta$. Since $\iota_u \circ \tau_{\bar e}\colon  X_e \to H_u$ is a simplicial map, $\ell_{\bar e} = \iota_u \circ \tau_{\bar e}(X_e)$ is a connected subgraph of $H_u$. Hence it suffices to verify that whenever $x,y \in \ell_{\bar{e}}$ differ by an edge of $H_u$, that $d_{L_v}(\psi_e(x),\psi_e(y))$ is uniformly bounded. Let $x,y$ be vertices of $\ell_{\bar e}$ that differ by an edge of $H_u$. Hence $x^{-1}y$ is either an element of $J_\vartheta$ or of $Z_\vartheta$.
	
	If $x^{-1}y \in J_\vartheta$, then $x,y$ are elements of $\tau_{\bar e}(X_e)$ that are joined by an edge of $X_u$. Hence $x^{-1}y \in J_\vartheta \cap \tau_{\bar \alpha}(G_\alpha)$. Since there is a uniform bound on the number of elements of $\tau_{\bar \alpha}(J_\alpha)$ that are needed to write any element of $J_\vartheta \cap \tau_{\bar \alpha}(G_\alpha)$, there is  a uniform bound on the distance between $\tau_{\bar e}^{-1} \circ \iota_u^{-1}(x)$ and  $\tau_{\bar e}^{-1} \circ \iota_u^{-1}(y)$ in $X_e$ (independence of $\alpha$ and $\vartheta$ comes from considering the finitely many vertices and edges of $\mc{G}$). Since  $p_v$ and $\tau_e$ are distance non-increasing from $X_v$ and $X_e$ respectively, this shows $d_{L_v}(\psi_e(x),\psi_e(y))$ is uniformly bounded.
	
	If instead $x^{-1}y \in Z_\vartheta$, then $x,y$ are elements of the same coset $gZ_\vartheta$ and  $gZ_\vartheta \subseteq \tau_{\bar e}(X_e)$. Proposition \ref{prop:geometry_of_Zs}.\eqref{item:bounded_Zs} provides a uniform bound on the diameter of $p_v \circ \tau_e \circ \tau_{\bar e}^{-1}(gZ_\vartheta)$ in $L_v$.  Hence $d_{L_v}(\psi_e(x),\psi_e(y)) $ as uniformly bounded.
\end{proof}

We can now use the map $\psi_e$ from Lemma \ref{lem:psi_coarsely_lipschitz} to define a map $\rho_v$ for vertices of $T$ that are at least distance 2 from $v$.  We start with the case where $w \in T^{(0)}$ is exactly distance 2 from $v$. In this case, there is a unique vertex $u$ at distance 1 from both $v$ and $w$. If $f$ is the oriented edge of $T$ from $w$ to $u$ and $e$ is the oriented edge from $u$ to $v$, define $\beta_e^f$ to be $\mf{p}_{\bar{e}}(\ell_f)$.  We then define $$\rho_v(w) := \psi_e(\beta_e^f) = p_v \circ \tau_e \circ \tau_{\bar e}^{-1} \circ \iota_u^{-1}(\beta_e^f).$$
To define $\rho_v(w)$ when $w$ is more than 2 away from $v$ in $T$, let $\bar w$ be the unique vertex of $T$ that is distance exactly 2 from $v$ and on the geodesic in $T$ from $w$ to $v$. Define $\rho_v(w) := \rho_v(\bar w)$.

The first thing to verify is that $\rho_v(w)$ is uniformly bounded.

\begin{lem}
	There exist $\kappa_0 \geq 0$ so that for any $v\in T^{(0)}$, if $w \in T^{(0)}$ with $d_T(v,w) \geq 2$, then $\diam(\rho_v(w)) \leq \kappa_0$.
\end{lem}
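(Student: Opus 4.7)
The only case to analyse is $d_T(v,w)=2$, since the case $d_T(v,w)>2$ reduces to this by the definition $\rho_v(w):=\rho_v(\bar w)$. So fix the vertex $u$ at distance $1$ from both $v$ and $w$, the oriented edges $e$ (from $u$ to $v$) and $f$ (from $w$ to $u$), and recall
\[
\rho_v(w)\;=\;\psi_e(\beta_e^f)\;=\;\psi_e\bigl(\mathfrak{p}_{\bar e}(\ell_f)\bigr)\subseteq L_v.
\]
Since $\psi_e\colon \ell_{\bar e}\to L_v$ is uniformly coarsely Lipschitz by Lemma \ref{lem:psi_coarsely_lipschitz}, it suffices to bound $\diam_{H_u}\bigl(\mathfrak{p}_{\bar e}(\ell_f)\bigr)$ by a constant depending only on $\mathcal{G}$.

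The plan is to use the standard fact from relative hyperbolicity that in a space hyperbolic relative to a collection of peripheral subsets, the closest-point projection of one peripheral subset onto a \emph{different} peripheral subset has uniformly bounded image. By Lemma \ref{lem:cone-off_edge_groups}, the graph $H_u$ is (via left multiplication by $g^{-1}$) isometric to $\cay(G_{\check u},J_{\check u}\cup Z_{\check u})$, which is hyperbolic relative to the collection of all cosets of the subgroups $\tau_\alpha(G_\alpha)$ for edges $\alpha$ of $\mathcal{G}$ with $\alpha^+=\check u$. The subsets $\ell_{\bar e}$ and $\ell_f$ are precisely two such peripheral cosets in $H_u$. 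The bounded-projection property then gives a constant $D$, depending only on $\mathcal{G}$, such that $\diam_{H_u}(\mathfrak{p}_{\bar e}(\ell_f))\leq D$ whenever $\ell_{\bar e}\neq \ell_f$ as subsets of $H_u$.

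The main thing to verify is that $\ell_{\bar e}$ and $\ell_f$ are genuinely distinct peripheral cosets. Since $\bar e$ and $f$ are oriented edges of the Bass--Serre tree $T$ both terminating at $u$, but with distinct initial vertices $v\neq w$, they are distinct edges of $T$. Distinct oriented edges of $T$ incident to $u$ correspond to distinct cosets of the subgroups $\tau_\alpha(G_\alpha)$ inside $G_{\check u}$ (this is exactly how the Bass--Serre tree is built), so $\tau_{\bar e}(X_e)\neq \tau_f(X_f)$ as subsets of $X_u$, and hence $\ell_{\bar e}\neq \ell_f$ as subsets of $H_u$. Applying the bounded-projection statement and then the uniform coarse-Lipschitz constant of $\psi_e$ produces the desired uniform $\kappa_0$, where uniformity across all $v,w$ follows from the fact that $\mathcal{G}$ has finitely many orbits of vertices and edges.

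The step I expect to be mildly subtle is citing the correct form of the bounded-projection result: because $\mathfrak{p}_{\bar e}$ is only coarsely defined and the peripheral structure on $H_u$ comes from Lemma \ref{lem:cone-off_edge_groups} (via the relative hyperbolicity of $F_{\check u}$ established in Lemma \ref{lem:F_mu_are_relatively_hyp}), the bound on $\diam_{H_u}(\mathfrak{p}_{\bar e}(\ell_f))$ should be invoked from a standard reference such as Sisto's projection paper, with constants depending only on the relative hyperbolicity constants of $H_u$, which in turn depend only on $\mathcal{G}$ since there are finitely many orbits of vertex spaces.
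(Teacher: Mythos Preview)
Your proposal is correct and follows essentially the same approach as the paper: reduce to $d_T(v,w)=2$, invoke the bounded projection of one peripheral onto a distinct peripheral in the relatively hyperbolic space $H_u$ to bound $\diam_{H_u}(\beta_e^f)$, and then apply the uniform coarse Lipschitz property of $\psi_e$. Your added justification that $\ell_{\bar e}\neq\ell_f$ (via distinct edges of $T$ corresponding to distinct peripheral cosets) is a welcome detail that the paper leaves implicit.
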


\begin{proof}
	By the definition of $\rho_v$, it suffices to verify the lemma when $d_T(v,w) =2$. Let $u$ be the unique vertex of $T$ that is distance 1 from both $v$ and $w$. Let $e$ be the edge of $T$ from $u$ to $v$ and $f$ be the edge from $w$ to $u$. Since $\ell_{\bar e}$ and $\ell_f$ are distinct peripheral  subsets in the relatively hyperbolic space $H_u$, there is a uniform bound on the diameter of $\mf{p}_{\bar e}(\ell_f) = \beta_e^f$; see, e.g., \cite{S:proj}. Because the map $\psi_e$ is coarsely Lipschitz (Lemma \ref{lem:psi_coarsely_lipschitz}), this implies $\rho_v(w) = \psi_e(\beta_e^f)$ will be uniformly bounded in $L_v$.
\end{proof}

Next we verify that when two vertex spaces $X_w$ and $X_{w'}$ are close in $X$, we have that $\rho_v(w)$ and $\rho_v(w')$ are close in $L_v$. This will be a key step to showing that pairs of vertices of $Y_\Delta$ that are joined by a $W$--edge are sent to  uniformly bounded diameter set in $L_v$. 

\begin{lem}\label{lem:2-away}
	For every $q \geq 0$ there exists $\kappa_1\geq 0$ such that the following holds for each $v \in T^{(0)}$. Let $w,w'$ be vertices of $T$ with $d_T(w,w') \leq 2$ and $d_T(w,v), d_T(w',v) \geq 2$. If $d_{\BS}(X_w,X_{w'})\leq q$,  then $d_{L_v}(\rho_v(w),\rho_v(w'))\leq \kappa_1.$
\end{lem}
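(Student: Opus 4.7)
The plan is to first reduce to a single configuration via tree combinatorics and then exploit closeness in the Bass--Serre space together with the relative hyperbolicity of $H_u$. As a first step, I would carry out a case analysis on the positions of $w, w'$ in $T$ under the hypotheses $d_T(w,w') \leq 2$ and $d_T(w,v), d_T(w',v) \geq 2$. Every configuration except one forces $\bar w = \bar{w'}$---and hence $\rho_v(w) = \rho_v(w')$ by definition, making the conclusion trivial. The exceptional configuration is when $w$ and $w'$ both lie at distance exactly $2$ from $v$ and share a common neighbor $u$ that is adjacent to $v$; in all other configurations the tree forces the geodesics from $w$ and $w'$ to $v$ to merge before reaching distance $2$ from $v$, so the distance-$2$ vertex on each geodesic coincides. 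We may therefore assume this exceptional configuration.

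In this case, let $e$ be the oriented edge from $u$ to $v$ and $f, f'$ the oriented edges from $w, w'$ to $u$; by definition $\rho_v(w) = \psi_e(\mf p_{\bar e}(\ell_f))$ and $\rho_v(w') = \psi_e(\mf p_{\bar e}(\ell_{f'}))$. Since the edge spaces disconnect $\BS$, any path in $\BS$ from $X_w$ to $X_{w'}$ must enter and exit $X_u$ through the edge spaces $X_f$ and $X_{f'}$. Hence, from $d_\BS(X_w, X_{w'}) \leq q$ we can extract points $y_f \in \tau_f(X_f)$ and $y_{f'} \in \tau_{f'}(X_{f'})$ with $d_\BS(y_f, y_{f'}) \leq q$. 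Since both points lie in $X_u$, Lemma~\ref{lem:uniform_distortion} promotes this to $d_{X_u}(y_f, y_{f'}) \leq h(q)$, and then since $\iota_u$ is distance non-increasing with $\iota_u(\tau_f(X_f)) = \ell_f$ and $\iota_u(\tau_{f'}(X_{f'})) = \ell_{f'}$, we conclude $d_{H_u}(\ell_f, \ell_{f'}) \leq h(q)$.

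To finish, I would use that $H_u$ is hyperbolic by Lemma~\ref{lem:cone-off_edge_groups} and that the peripherals $\ell_{e'}$ are quasi-convex in $H_u$, so the closest point projection $\mf p_{\bar e}$ is coarsely Lipschitz. Applied to the pair $y_f, y_{f'}$ and combined with the uniform diameter bound on each $\beta_e^{(\cdot)} = \mf p_{\bar e}(\ell_{(\cdot)})$, this shows that $\beta_e^f$ and $\beta_e^{f'}$ lie within uniformly bounded $H_u$-distance of one another. Lemma~\ref{lem:psi_coarsely_lipschitz} then yields the desired bound on $d_{L_v}(\rho_v(w), \rho_v(w'))$, with $\kappa_1$ depending only on $q$ and $\mc G$. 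The main obstacle is this projection step: verifying that closest point projection onto a peripheral is coarsely Lipschitz, either via the relatively hyperbolic projection machinery of \cite{S:proj} or by a direct quasi-convexity argument in $H_u$ using that each peripheral becomes a quasi-line after coning off $Z_\vartheta$.
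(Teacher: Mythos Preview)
Your proposal is correct and follows essentially the same approach as the paper's proof: reduce via tree combinatorics to the case $w=\bar w$, $w'=\bar w'$ with common neighbour $u$ adjacent to $v$, then use separation by edge spaces and Lemma~\ref{lem:uniform_distortion} to get $\ell_f$ and $\ell_{f'}$ close in $H_u$, and conclude via the coarse Lipschitz property of $\mf p_{\bar e}$ and Lemma~\ref{lem:psi_coarsely_lipschitz}. The paper likewise relies on \cite{S:proj} for the coarse Lipschitz property of the peripheral projection, so what you flag as the ``main obstacle'' is handled in exactly the same way.
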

\begin{proof}
	Let $\bar w$ be the vertex of $T$ at distance exactly $2$ from $v$ and along the geodesic from $w$ to $v$, let $u$ be the unique vertex of $T$ at distance $1$ from $v$ and $\bar w$. Let $f$ and $e$ be the oriented edges of $T$ from $u$ to $v$ and from $\bar w$ to $u$ respectively. Define $\bar w',u',f',e'$ analogously, using $w'$ rather than $w$.
	
	If $\bar w=\bar w'$, then $\rho_v(w)=\rho_v(w')$ by definition and we are done. Otherwise, because $d_T(w,w')\leq 2$,   we must have  $w= \bar{w}$ and $w'= \bar{w}'$ and $u= u'$.  This implies $e=e'$ as well.

	Because each  edge and vertex space of $\BS$ separates $\BS$,  $$d_{\BS}(X_w,X_{w'})\leq q \implies d_{X}(\tau_{f}(X_f),\tau_{f'}(X_{f'})) \leq q.$$  Applying Lemma \ref{lem:uniform_distortion}  produces  a $\kappa = \kappa(r,\mc{G})\geq 1$ so that $d_{X_u}(\tau_{f}(X_f),\tau_{f'}(X_{f'})) \leq \kappa$. As the map $\iota_u \colon X_u \to H_u$ is distance non-increasing, we have $d_{H_u}(\ell_f,\ell_{f'}) \leq \kappa$. Because  $\mf{p}_{\bar e}$ is coarsely Lipschitz,  there is now a  uniform bound on the distance between $\beta_e^f$ and $\beta_e^{f'}$. Since $\psi_e \colon \ell_{\bar e} \to L_v$ is a coarsely Lipschitz (Lemma \ref{lem:psi_coarsely_lipschitz}),  this implies $d_{L_v}(\rho_v(w),\rho_v(w'))$ is uniformly bounded as well.
\end{proof}

We now present the proof of the quasi-isometric embedding of $\mc{C}(\Delta)$ into $Y_\Delta$.

\begin{proof}[Proof of Lemma \ref{lem:quasilines_qi}]
	By Proposition \ref{prop:Hyperbolicity_of_links}, the identity map on vertices is a quasi-isometry $L_v \to \mc{C}(\Delta)$ with constants independent of $\Delta$. Hence, the composition of this quasi-isometry with a  coarsely Lipschitz coarse map $\eta \colon Y_\Delta \to L_v$ that is the the identity on the vertices $L_v^{(0)} = X_v^{(0)} \subseteq Y_\Delta^{(0)}$ will produce a coarse retraction $Y_\Delta \to \mc{C}(\Delta)$. By Lemma \ref{lem:coarse_retract}, this suffices to prove the inclusion is a quasi-isometric embedding.
	
	By Lemma~\ref{lem:descriptions_of_saturations},  $\Sat(\Delta) = \{v\} \cup \{ t\in \ST : \nu(t) \in \lk_T(v)\}$. Since $ Y^{(0)}_\Delta = \ST^{(0)} - \Sat(\Delta)$, we have  $$Y_\Delta^{(0)}  = \left\{t \in \ST^{(0)} - \{v\} \colon \nu(t) =v \text{ or } d_T(\nu(t),v) \geq 2\right\}.$$ We now use the $\rho_v(w)$ from above to define the desired map $\eta \colon Y_\Delta \to L_v$.  If $t \in Y_\Delta^{(0)}$ and  $d_T(\nu(t),v) \geq 2$, then  we can define $\eta(t) = \rho_v(\nu(t)) \subseteq L_v$. If instead $\nu(t) =v$, then $t$ is a vertex of both $X_v$ and $L_v$, and we define  $\eta_v(t) = p_v(t) = t \in L_v$. Lemma \ref{lem:psi_coarsely_lipschitz} ensures $\diam(\eta(t))\leq \kappa_0$ for all $t \in Y_\Delta^{(0)}$. We can extend this definition of $\eta$ to a coarsely Lipschitz map on all of $Y_\Delta$ if we can show that $d_{L_v}(\eta(t_1),\eta(t_2))$ is uniformly bounded whenever $t_1$ and $t_2$ are joined by an edge of $Y_\Delta$.
	
	Let $t_1,t_2 \in Y_\Delta^{(0)}$ be joined by an edge. By the definition of the $W$--edge (Definition \ref{defn:W}), this implies $d_T(\nu(t_1),\nu(t_2)) \leq 2$. First assume both $\nu(t_1)$ and $\nu(t_2)$ are $v$. Thus $t_1,t_2 \in \mc{C}(\Delta)$ and are joined be an edge. Since $\eta(t_1) =t_1$ and $\eta(t_2) =t_2$, the quasi-isometry between $\mc{C}(\Delta)$ and $L_v$ ensures $d_{L_V}(\eta(t_1),\eta(t_2))$ is uniformly bounded. 
	
	Next suppose neither $\nu(t_1)$ or $\nu(t_2)$ equals $v$. If $d_T(\nu(t_1),\nu(t_2)) = 0$, then $\eta(t_1) = \eta(t_2)$ by definition. If $d_T(\nu(t_1),\nu(t_2)) =1$, then, with out loss of generality, the geodesic in $T$ from $\nu(t_1)$ to $v$ must contain $\nu(t_2)$.  Since each $\nu(t_i)$ are at least distance 2 from $v$, the definition of $\rho_v(\cdot)$ then implies $\eta(t_1) = \rho_v(\nu(t_1)) = \rho_v(\nu(t_2)) = \eta(t_2)$. Finally, if $d_T(\nu(t_1),\nu(t_2)) = 2$, then the edge between $t_1$ and $t_2$ must be a $W$--edge. This implies $X_{\nu(t_1)}$ and $X_{\nu(t_2)}$ are uniformly close in $X$. Hence the desired bound on $d_{L_v}(\eta(t_1),\eta(t_2))$ is a consequence of Lemma \ref{lem:2-away}.
	
	Finally consider the case where $\nu(t_1) = v$, but $\nu(t_2) \neq v$. In this case, $d_T(\nu(t_1), \nu(t_2)) =2$, and so the edge between $t_1$ and $t_2$ must be a $W$--edge. Hence, $d_X(\sigma_{r}(t_1), \sigma_{r}(t_2)) \leq R +2$ in either case of  Definition \ref{defn:W}. Let $u$ be the vertex distance 1 from both $v =\nu(t_1)$ and $\nu(t_2)$, then let $f$ be the oriented edge of $T$ from $\nu(t_2)$ to $u$ and $e$ be the oriented edge from $u$ to $\nu(t_1)$. 
	
	Let $\sigma_{e} =\tau_e^{-1}(\sigma_{r}(t_1))$ and $\sigma_f = \tau_{\bar f}^{-1}(\sigma_{r}(t_2))$. Our choice of $r$ is large enough that Lemma~\ref{lem:R_for_sigmas_to_intersect} ensures $\sigma_{e}$ and $\sigma_f$ are both non-empty.  Recalling that $\psi_e$ is the restriction of $p_v \circ \tau_e \circ \tau_{\bar e}^{-1} \circ \iota_u^{-1}$, we have that 
	\begin{equation*}
		\psi_e( \iota_u\circ \tau_{\bar e} (\sigma_e)) = p_v(\sigma_{r}(t_1)). \tag{$\ast$} \label{eq:psi_into_sigma}
	\end{equation*}

	\begin{claim}\label{claim:beta_close_to_sigma}
		There exists  $\kappa' \geq 1$ depending only on $\mc{G}$ so that $d_{H_u}(\iota_u\circ \tau_{\bar e} (\sigma_e),\beta_e^f)\leq \kappa'$.
	\end{claim}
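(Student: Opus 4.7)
The plan is to break the claim into two parts: first show that $\iota_u\circ \tau_{\bar e}(\sigma_e)$ has uniformly bounded diameter in $H_u$, and then show that at least one point of this set lies uniformly close to $\beta_e^f$. Combining these will give the claimed bound on the set-wise distance.

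For the bounded-diameter part, I would apply Claim \ref{claim:sigma_L_product}.\eqref{item:vertical} to $t_1$ and the edge $e$: this produces $g\in G$ with $gZ_\omega\subseteq \tau_{\bar e}(X_e)$ (where $\omega=\check u$) such that $\sigma_r(t_1)\cap \tau_e(X_e)$ lies in a uniform $X$--neighborhood of $\tau_e(\tau_{\bar e}^{-1}(gZ_\omega))$. Pulling back through $\tau_e$ (using Remark \ref{rmk:tau_coarsely_Lipschitz}) and pushing forward through $\tau_{\bar e}$, and then invoking Lemma \ref{lem:uniform_distortion} to pass from $d_X$ to $d_{X_u}$, this yields that $\tau_{\bar e}(\sigma_e)$ lies in a uniform $X_u$--neighborhood of $gZ_\omega\subseteq X_u$. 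Since the $Z_\omega$--generators are edges of $H_u$, the set $\iota_u(gZ_\omega)$ has diameter at most $1$ in $H_u$; because $\iota_u$ is simplicial (hence distance non-increasing), $\iota_u\circ\tau_{\bar e}(\sigma_e)$ therefore has uniformly bounded diameter in $H_u$.

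For the "nearby point" part, I would use the $W$--edge assumption. Since $t_1,t_2$ are joined by a $W$--edge, Definition \ref{defn:W} yields $y\in\sigma_r(t_1)$ and $z\in\sigma_r(t_2)$ with $d_X(y,z)\le R+2$. As vertex spaces separate $X$ and $d_T(v,\nu(t_2))=2$ via $u$, any geodesic from $y$ to $z$ crosses $\tau_e(X_e)$ and $\tau_{\bar f}(X_f)$. Applying Claim \ref{claim:sigma_L_product}.\eqref{item:close_to_edge} to each of $(y,t_1,e)$ and $(z,t_2,\bar f)$ produces points $y'\in \tau_e(X_e)\cap \sigma_r(t_1)$ and $z'\in \tau_{\bar f}(X_f)\cap \sigma_r(t_2)$ with $d_X(y',z')$ bounded in terms of $R$ and $\mathcal G$. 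Setting $y_e=\tau_e^{-1}(y')\in\sigma_e$ and $z_f=\tau_{\bar f}^{-1}(z')\in\sigma_f$, Remark \ref{rmk:tau_coarsely_Lipschitz} gives a uniform bound on $d_X(\tau_{\bar e}(y_e),\tau_f(z_f))$, and Lemma \ref{lem:uniform_distortion} converts this to a uniform bound on $d_{X_u}(\tau_{\bar e}(y_e),\tau_f(z_f))$.

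Finally, I would push through $\iota_u$ (distance non-increasing) to get points $p:=\iota_u\circ\tau_{\bar e}(y_e)\in\iota_u\circ\tau_{\bar e}(\sigma_e)\subseteq \ell_{\bar e}$ and $q:=\iota_u\circ\tau_f(z_f)\in \ell_f$ with $d_{H_u}(p,q)$ uniformly bounded. Since $\mathfrak{p}_{\bar e}$ is coarsely Lipschitz and $p\in\ell_{\bar e}$ implies $\mathfrak{p}_{\bar e}(p)$ is uniformly close to $p$, while $\mathfrak{p}_{\bar e}(q)\in\beta_e^f$, the triangle inequality in $H_u$ gives a uniform bound on $d_{H_u}(p,\beta_e^f)$. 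Combined with the bounded diameter of $\iota_u\circ\tau_{\bar e}(\sigma_e)$ from the first paragraph, this yields the desired uniform bound on $d_{H_u}(\iota_u\circ\tau_{\bar e}(\sigma_e),\beta_e^f)$. The only delicate bookkeeping is ensuring each invocation of a constant ($c$ from Claim \ref{claim:sigma_L_product}, $\Phi'$, $h$, the projection constant for $\mathfrak{p}_{\bar e}$) depends only on $\mathcal G$ and $r$, which is clear since there are finitely many orbits of vertices and edges of $T$.
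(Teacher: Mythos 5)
Your proposal is correct and follows essentially the same route as the paper: produce a point of $\tau_{\bar e}(\sigma_e)$ and a point of $\tau_f(\sigma_f)\subseteq \ell_f$ that are uniformly close in $X_u$ (hence in $H_u$ after applying the distance non-increasing map $\iota_u$), and then conclude via the coarsely Lipschitz closest-point projection $\mf{p}_{\bar e}$ onto the quasiconvex set $\ell_{\bar e}$. Your first paragraph (bounded diameter of $\iota_u\circ\tau_{\bar e}(\sigma_e)$ via Claim~\ref{claim:sigma_L_product}.\eqref{item:vertical}) is not needed for the claim, which only bounds the minimal distance between the two sets, but it is correct, and your explicit use of Claim~\ref{claim:sigma_L_product}.\eqref{item:close_to_edge} to pass from $d_X(\sigma_r(t_1),\sigma_r(t_2))\leq R+2$ to nearby points on the edge spaces carefully fills in a step the paper states rather tersely.
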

	\begin{proof}
		Because  $d_X(\sigma_{r}(t_1), \sigma_{r}(t_2)) \leq R +2$, we have $ d_X( \tau_{\bar{e}}(\sigma_e),\tau_f(\sigma_f)) \leq R + 6$.
		Applying Lemma \ref{lem:uniform_distortion} produces  $\kappa = \kappa(R,\mc{G}) \geq 1$ so that $d_{X_u}( \tau_{\bar{e}}(\sigma_e),\tau_f(\sigma_f)) \leq \kappa$.  As $\iota_u \colon X_u \to H_u$ is distance non-increasing, we have $$d_{H_u}( \iota_u\circ \tau_{\bar{e}}(\sigma_e)),\iota_u \circ \tau_f(\sigma_f)) \leq \kappa.$$
		Recall that $\beta_e^f = \mf{p}_{\bar e}(\ell_f)$, that $\mf{p}_{\bar e}$ is a coarse closest point projection to $\ell_{\bar e}$, which is a quasiconvex subset of a hyperbolic space. Hence, there is some $\kappa'$, determined by $\kappa$ and the hyperbolicity constant, so that $d_{H_u}(\iota_u\circ \tau_{\bar e} (\sigma_e),\beta_e^f)\leq \kappa'$.
	\end{proof}

	Since $\psi_e$ is a coarsely Lipschitz, Claim \ref{claim:beta_close_to_sigma} plus \eqref{eq:psi_into_sigma} implies that $\psi_e(\beta_e^f) = \eta(t_2)$ is uniformly close to $p_v(\sigma_{r}(t_1))$. Since $t_1 \in \sigma_{r}(t_1)$ and $\diam(p_v(\sigma_{r}(t_1))) \leq 2r$, this implies $\eta(t_2)$ is uniformly close to $\eta(t_1) = p_v(t_1)$ in $L_v$ as desired.
\end{proof}

\bibliography{squiding}{}

\begin{thebibliography}{CCMT15}

\bibitem[ABD21]{ABD}
Carolyn Abbott, Jason Behrstock, and Matthew~G. Durham.
\newblock Largest acylindrical actions and stability in hierarchically
  hyperbolic groups. {W}ith an appendix by {D}aniel {B}erlyne and {J}acob
  {R}ussell.
\newblock {\em Trans. Amer. Math. Soc. Ser. B}, 8:66--104, 2021.

\bibitem[ABO19]{ABO}
Carolyn Abbott, Sahana~H. Balasubramanya, and Denis Osin.
\newblock Hyperbolic structures on groups.
\newblock {\em Algebr. Geom. Topol.}, 19(4):1747--1835, 2019.

\bibitem[AFW15]{AFW}
Matthias Aschenbrenner, Stefan Friedl, and Henry Wilton.
\newblock {\em 3-manifold groups}.
\newblock EMS Series of Lectures in Mathematics. European Mathematical Society
  (EMS), Z\"{u}rich, 2015.

\bibitem[ANS19]{ANS:exponential}
Carolyn Abbott, Thomas Ng, and Davide Spriano.
\newblock Hierarchically hyperbolic groups and uniform exponential growth.
\newblock {\em arXiv preprint arXiv:1909.00439}, 2019.
\newblock with appendix by Radhika Gupta and Harry Petyt.

\bibitem[BBF15]{Old_BBF}
Mladen Bestvina, Ken Bromberg, and Koji Fujiwara.
\newblock Constructing group actions on quasi-trees and applications to mapping
  class groups.
\newblock {\em Publications math{\'e}matiques de l'IH{\'E}S}, 122(1):1--64,
  2015.

\bibitem[Ber21]{Berlyne}
Daniel James~Solomon Berlyne.
\newblock {\em Hierarchical {H}yperbolicity of {G}raph {P}roducts and {G}raph
  {B}raid {G}roups}.
\newblock ProQuest LLC, Ann Arbor, MI, 2021.
\newblock Thesis (Ph.D.)--City University of New York.

\bibitem[BHMS20]{CHHS}
Jason Behrstock, Mark Hagen, Alexandre Martin, and Alessandro Sisto.
\newblock A combinatorial take on hierarchical hyperbolicity and applications
  to quotients of mapping class groups.
\newblock {\em arXiv preprint arXiv:2005.00567}, 2020.

\bibitem[BHS17a]{BHS3}
Jason Behrstock, Mark~F. Hagen, and Alessandro Sisto.
\newblock Asymptotic dimension and small-cancellation for hierarchically
  hyperbolic spaces and groups.
\newblock {\em Proc. Lond. Math. Soc. (3)}, 114(5):890--926, 2017.

\bibitem[BHS17b]{BHSI}
Jason Behrstock, Mark~F. Hagen, and Alessandro Sisto.
\newblock Hierarchically hyperbolic spaces {I}: {C}urve complexes for cubical
  groups.
\newblock {\em Geom. Topol.}, 21(3):1731--1804, 2017.

\bibitem[BHS19]{BHSII}
J.~Behrstock, M.F. Hagen, and A.~Sisto.
\newblock {Hierarchically hyperbolic spaces {II}: {C}ombination theorems and
  the distance formula}.
\newblock {\em Pacific J. Math.}, 299:257--338, 2019.

\bibitem[BHS21]{BHS4}
Jason Behrstock, Mark~F. Hagen, and Alessandro Sisto.
\newblock Quasiflats in hierarchically hyperbolic spaces.
\newblock {\em Duke Math. J.}, 170(5):909--996, 2021.

\bibitem[Bow12]{Bowditch_Rel_Hyp}
B.~H. Bowditch.
\newblock Relatively hyperbolic groups.
\newblock {\em Internat. J. Algebra Comput.}, 22(3):1250016, 66, 2012.

\bibitem[BR20]{BerlaiRobbio}
Federico Berlai and Bruno Robbio.
\newblock A refined combination theorem for hierarchically hyperbolic groups.
\newblock {\em Groups Geom. Dyn.}, 14(4):1127--1203, 2020.

\bibitem[BR22]{BerlyneRussell}
Daniel Berlyne and Jacob Russell.
\newblock Hierarchical hyperbolicity of graph products.
\newblock {\em Groups Geom. Dyn.}, 16(2):523--580, 2022.

\bibitem[BW13]{BigdelyWise}
Hadi Bigdely and Daniel~T. Wise.
\newblock Quasiconvexity and relatively hyperbolic groups that split.
\newblock {\em Michigan Math. J.}, 62(2):387--406, 2013.

\bibitem[CC07]{charney_crisp_rel_hyp}
Ruth Charney and John Crisp.
\newblock Relative hyperbolicity and {A}rtin groups.
\newblock {\em Geom. Dedicata}, 129:1--13, 2007.

\bibitem[CCMT15]{CCMT}
Pierre-Emmanuel Caprace, Yves Cornulier, Nicolas Monod, and Romain Tessera.
\newblock Amenable hyperbolic groups.
\newblock {\em J. Eur. Math. Soc. (JEMS)}, 17(11):2903--2947, 2015.

\bibitem[CCS23]{CCS_omega_Morse_boundary}
Ruth Charney, Matthew Cordes, and Alessandro Sisto.
\newblock Complete topological descriptions of certain {M}orse boundaries.
\newblock {\em Groups Geom. Dyn.}, 17(1):157--184, 2023.

\bibitem[CK02]{CrokeKleiner}
C.~B. Croke and B.~Kleiner.
\newblock The geodesic flow of a nonpositively curved graph manifold.
\newblock {\em Geom. Funct. Anal.}, 12(3):479--545, 2002.

\bibitem[CN05]{ChatterjiNiblo}
Indira Chatterji and Graham Niblo.
\newblock From wall spaces to {$\rm CAT(0)$} cube complexes.
\newblock {\em Internat. J. Algebra Comput.}, 15(5-6):875--885, 2005.

\bibitem[Dah03]{Dahmani:combination}
Fran\c{c}ois Dahmani.
\newblock Combination of convergence groups.
\newblock {\em Geom. Topol.}, 7:933--963, 2003.

\bibitem[DDLS20]{Veech}
Spencer Dowdall, Matthew~G Durham, Christopher~J Leininger, and Alessandro
  Sisto.
\newblock Extensions of {V}eech groups {II}: Hierarchical hyperbolicity and
  quasi-isometric rigidity.
\newblock {\em arXiv preprint arXiv:2111.00685}, 2020.

\bibitem[DHS17]{DHS_boundary}
Matthew~Gentry Durham, Mark~F. Hagen, and Alessandro Sisto.
\newblock Boundaries and automorphisms of hierarchically hyperbolic spaces.
\newblock {\em Geom. Topol.}, 21(6):3659--3758, 2017.

\bibitem[DHS20]{DHS_corr}
Matthew~Gentry Durham, Mark~F. Hagen, and Alessandro Sisto.
\newblock Correction to the article {B}oundaries and automorphisms of
  hierarchically hyperbolic spaces.
\newblock {\em Geom. Topol.}, 24(2):1051--1073, 2020.

\bibitem[DMS20]{DMS:cube}
Matthew~G Durham, Yair~N Minsky, and Alessandro Sisto.
\newblock Stable cubulations, bicombings, and barycenters.
\newblock {\em arXiv preprint arXiv:2009.13647}, 2020.

\bibitem[EF97]{EpsteinFujiwara}
David B.~A. Epstein and Koji Fujiwara.
\newblock The second bounded cohomology of word-hyperbolic groups.
\newblock {\em Topology}, 36(6):1275--1289, 1997.

\bibitem[Hag14]{Hagen:crystallographic}
Mark~F. Hagen.
\newblock Cocompactly cubulated crystallographic groups.
\newblock {\em J. Lond. Math. Soc. (2)}, 90(1):140--166, 2014.

\bibitem[HHP20]{HHP}
Thomas Haettel, Nima Hoda, and Harry Petyt.
\newblock The coarse helly property, hierarchical hyperbolicity, and
  semihyperbolicity.
\newblock {\em arXiv preprint arXiv:2009.14053}, 2020.

\bibitem[HMS21]{HMS:artin}
Mark Hagen, Alexandre Martin, and Alessandro Sisto.
\newblock Extra-large type artin groups are hierarchically hyperbolic.
\newblock {\em arXiv preprint arXiv:2109.04387}, 2021.

\bibitem[HO13]{HullOsin}
Michael Hull and Denis Osin.
\newblock Induced quasicocycles on groups with hyperbolically embedded
  subgroups.
\newblock {\em Algebr. Geom. Topol.}, 13(5):2635--2665, 2013.

\bibitem[Hod20]{Hoda:crystallographic}
Nima Hoda.
\newblock Crystallographic helly groups.
\newblock {\em arXiv preprint arXiv:2010.07407}, 2020.

\bibitem[HP15]{HP_charge}
Mark~F. Hagen and Piotr Przytycki.
\newblock Cocompactly cubulated graph manifolds.
\newblock {\em Israel J. Math.}, 207(1):377--394, 2015.

\bibitem[HP22]{HagenPetyt}
Mark~F. Hagen and Harry Petyt.
\newblock Projection complexes and quasimedian maps.
\newblock {\em Algebr. Geom. Topol.}, 22(7):3277--3304, 2022.

\bibitem[HS20]{HS}
Mark~F. Hagen and Tim Susse.
\newblock On hierarchical hyperbolicity of cubical groups.
\newblock {\em Israel J. Math.}, 236(1):45--89, 2020.

\bibitem[Hug22]{Hughes}
Sam Hughes.
\newblock Lattices in a product of trees, hierarchically hyperbolic groups and
  virtual torsion-freeness.
\newblock {\em Bull. Lond. Math. Soc.}, 54(4):1413--1419, 2022.

\bibitem[KK13]{KimKoberda}
Sang-hyun Kim and Thomas Koberda.
\newblock Embedability between right-angled {A}rtin groups.
\newblock {\em Geom. Topol.}, 17(1):493--530, 2013.

\bibitem[KL98]{KL_flip}
M.~Kapovich and B.~Leeb.
\newblock {$3$}-manifold groups and nonpositive curvature.
\newblock {\em Geom. Funct. Anal.}, 8(5):841--852, 1998.

\bibitem[KR14]{KapovichRafi}
Ilya Kapovich and Kasra Rafi.
\newblock On hyperbolicity of free splitting and free factor complexes.
\newblock {\em Groups Geom. Dyn.}, 8(2):391--414, 2014.

\bibitem[Man05]{Manning:pseud}
Jason Manning.
\newblock Geometry of pseudocharacters.
\newblock {\em Geom. Topol.}, 9(2):1147--1185, 2005.

\bibitem[Mil20]{Miller}
Marissa Miller.
\newblock Stable subgroups of the genus two handlebody group.
\newblock {\em arXiv preprint arXiv:2009.05067}, 2020.

\bibitem[Min01]{Mineyev}
Igor Mineyev.
\newblock Straightening and bounded cohomology of hyperbolic groups.
\newblock {\em Geom. Funct. Anal.}, 11(4):807--839, 2001.

\bibitem[NQ22]{NguyenQing}
Hoang~Thanh Nguyen and Yulan Qing.
\newblock Sublinearly morse boundary of {CAT}(0) admissible groups.
\newblock {\em arxiv preprint arxiv:2203.00935}, 2022.

\bibitem[Pet21]{Petyt:quasicubical}
Harry Petyt.
\newblock Mapping class groups are quasicubical.
\newblock {\em arXiv preprint arXiv:2112.10681}, 2021.

\bibitem[PS23]{PetytSpriano}
Harry Petyt and Davide Spriano.
\newblock Unbounded domains in hierarchically hyperbolic groups.
\newblock {\em Groups Geom. Dyn.}, 17(2):479--500, 2023.

\bibitem[RS20]{RobbioSpriano}
Bruno Robbio and Davide Spriano.
\newblock Hierarchical hyperbolicity of hyperbolic-2-decomposable groups.
\newblock {\em arXiv preprint arXiv:2007.13383}, 2020.

\bibitem[RST23]{RST_convexity}
Jacob Russell, Davide Spriano, and Hung~C. Tran.
\newblock Convexity in hierarchically hyperbolic spaces.
\newblock {\em Algebr. Geom. Topol.}, 23(3):1167--1248, 2023.

\bibitem[Rus21]{Russell:multicurve}
Jacob Russell.
\newblock Extensions of multicurve stabilizers are hierarchically hyperbolic.
\newblock {\em arXiv preprint arXiv:2107.14116}, 2021.

\bibitem[Rus22]{Russell_Relative_Hyperbolicity}
Jacob Russell.
\newblock From hierarchical to relative hyperbolicity.
\newblock {\em Int. Math. Res. Not. IMRN}, (1):575--624, 2022.

\bibitem[Sco83]{Scott:geometry}
Peter Scott.
\newblock The geometries of {$3$}-manifolds.
\newblock {\em Bull. London Math. Soc.}, 15(5):401--487, 1983.

\bibitem[Sis12]{Sisto_Met_Rel}
Alessandro Sisto.
\newblock On metric relative hyperbolicity.
\newblock \textit{ arXiv preprint arXiv:1210.8081}, 2012.

\bibitem[Sis13]{S:proj}
Alessandro Sisto.
\newblock Projections and relative hyperbolicity.
\newblock {\em Enseign. Math. (2)}, 59(1-2):165--181, 2013.

\bibitem[Sis19]{HHS_Survey}
Alessandro Sisto.
\newblock What is a hierarchically hyperbolic space?
\newblock In {\em Beyond hyperbolicity}, volume 454 of {\em London Math. Soc.
  Lecture Note Ser.}, pages 117--148. Cambridge Univ. Press, Cambridge, 2019.

\bibitem[Spr17]{spriano_hyperbolic_1}
Davide Spriano.
\newblock Hyperbolic {HHS I}: Factor systems and quasi-convex subgroups.
\newblock {\em arXiv preprint arXiv:1711.10931}, 2017.

\bibitem[SW79]{ScottWall}
Peter Scott and Terry Wall.
\newblock Topological methods in group theory.
\newblock In {\em Homological group theory (Proc. Sympos., Durham, 1977)},
  volume~36, pages 137--203, 1979.

\bibitem[Vok22]{vokes}
Kate~M. Vokes.
\newblock Hierarchical hyperbolicity of graphs of multicurves.
\newblock {\em Algebr. Geom. Topol.}, 22(1):113--151, 2022.

\end{thebibliography}
\bibliographystyle{alpha}
\end{document}